\def\lra{\longrightarrow}
\def\pic{{\varphi}}
\def\End{{\rm End}}
\def\gm{{\mathbb G}_m}
\def\Ker{{\rm Ker}}
\def\Fr{{\rm Fr}}
\def\PGL{{\mathsf  P \mathsf G\mathsf L}}
\def\GL{{\mathsf G \mathsf L}}
\def\Stab{{\rm Stab}}
\def\dv{{\rm div}}
\def\lra{{\longrightarrow}}
\def\cM{{\mathcal M}}
\def\Hom{{\rm Hom}}
\def\Ext{{\rm Ext}}
\def\FS{{\mathcal F\mathcal S}}
\newcommand{\la}{\lambda}
\newcommand{\C}{\Bbb C}
\newcommand{\Q}{\Bbb Q}
\newcommand{\Z}{\Bbb Z}
\def\cT{{\mathcal T}}
\def\Gal{{\rm Gal}}
\def\ra{\rightarrow}
\def\C{{\mathbb C}}
\def\F{{\mathbb F}}
\def\P{{\mathbb P}}
\def\Q{{\mathbb Q}}
\def\G{{\mathcal G}}
\def\I{{\mathcal I}}
\def\Z{{\mathbb Z}}
\def\C{{\mathbb C}}
\def\N{{\mathbb N}}
\def\Aut{{\rm Aut}}
\def\Pic{{\rm Pic}}
\def\Div{{\rm Div}}
\def\ord{{\rm ord}}
\def\codim{{\rm codim \,}}
\newtheorem{definition}{Definition}[section]
\newtheorem{conjecture}[definition]{Conjecture}
\newtheorem{theorem}[definition]{Theorem}
\newtheorem{proposition}[definition]{Proposition}
\newtheorem{corollary}[definition]{Corollary}
\theoremstyle{definition}
\newtheorem{remark}[definition]{Remark}
\newtheorem{lemma}[definition]{Lemma}
\begin{document}
\pagestyle{plain}

\begin{frontmatter}          
%
\title{A Torelli theorem for curves over finite fields}
\runningtitle{A Torelli theorem for curves over finite fields}

\author{Fedor Bogomolov, Mikhail Korotiaev and Yuri Tschinkel}
\address{Courant Institute of Mathematical Sciences, N.Y.U.\\ 
251 Mercer str., New York, NY 10012, U.S.A.}

\

\centerline{\it To John Tate, with admiration}

\begin{abstract}
We study hyperbolic curves and their Jacobians over finite fields in the context of
anabelian geometry. 
\end{abstract}


\end{frontmatter}


\section{Introduction}
\label{sect:intro}

This paper is inspired by the foundational results and ideas of John Tate
in the theory of abelian varieties over finite fields. 
To this day, the depth of this theory has not been fully explored. 
Here we apply Tate's theorems to anabelian geometry of 
curves over finite fields.

\

Let $C$ be an irreducible smooth projective 
curve of genus $\mathsf g=\mathsf g(C)\ge 2$ defined over a field $k$
and let $C(k)$ be its set of $k$-rational points. 
When $k$ is the field of complex numbers, the 
complex torus
$$
{\rm H}^0(C(\C), \Omega^1_C)^{\vee}/{\rm H}_1(C(\C),\Z)
$$
is the set of complex points of an algebraic variety, the Jacobian variety $J$ of $C$.
Choosing a point $c_0\in C(\C)$ we get a map 
$$
\begin{array}{ccc} 
 C(\C) & \ra     &  J(\C) \\
 c     & \mapsto &  (\omega \mapsto \int_{\gamma} \omega),
\end{array}
$$ 
where $\omega\in \Omega^1_C$ is a global 
section of the sheaf of holomorphic differentials on $C$ and $\gamma$ 
is any path from $c_0$ to $c$. 
In a more algebraic interpretation, the abelian group 
$J(\C)$  is isomorphic to  $\Pic^0(C)/\C(C)^*$, the group of degree zero divisors on $C$ modulo  
principal divisors, and the map above is simply:
$$
\begin{array}{ccc} 
 C(\C) & \ra     &  J(\C) \\
 c     & \mapsto &  c-c_0.
\end{array}
$$
This construction can be carried out over any field $k$, provided $C(k)\neq \emptyset$ and 
also contains the basepoint $c_0$: 
by a fundamental result of Weil, the Jacobian $J$ is defined over the field of definition
of $C$, and the set-theoretic maps above arise from $k$-morphisms. 

For each $n\in \N$, we get maps 
$$
C^n(k)\stackrel{\sigma_n}{\longrightarrow} C^{(n)}(k) \stackrel{\varphi_n}{\longrightarrow} J(k)
$$
where $C^n$ is the $n$-th power and $C^{(n)}=C^n/\mathfrak S_n$ is the $n$-th
symmetric power of $C$, i.e., $C^{(n)}(k)$ is
the set of effective degree $n$ zero-cycles on $C$ which are defined over $k$.   
The map to the Jacobian assigns to a degree $n$ zero-cycle $c_1+\ldots +c_n\in  C^{(n)}(k)$ 
the degree 0 zero-cycle $(c_1+\ldots + c_n)-nc_0$.  
The maps $\varphi_n$ capture interesting geometric information. 
For example, $\varphi_{\mathsf g}$ is birational, 
which leads to an alternative definition of $J$ 
as the unique abelian variety birational to $C^{(\mathsf g)}$.  
The locus  $\Theta:=\varphi_{\mathsf g -1}(C^{(\mathsf g-1)})\subset J$
is an ample divisor, the theta-divisor. 
The classical Torelli theorem says that the pair $(J,\Theta)$, consisting 
of the Jacobian $J$ of $C$ and its polarization $\Theta$,  
determines $C$ up to isomorphism. This theorem holds over any 
field and is one of the main tools
in geometric and arithmetic investigations of algebraic curves, relating these
to much more symmetric objects - abelian varieties.

From now on, let $k_0$ be a finite field of characteristic $p$ and 
$k=\bar{k}_0$ an algebraic closure of $k_0$. 
Recall that $J(k)$ is a torsion abelian group, with $\ell$-primary part 
$$
J\{\ell\}\simeq (\Q_{\ell}/\Z_{\ell})^{2\mathsf g}, \,\,\,\,\text{ for } \ell\neq p.
$$ 
The description of $J\{p\}$ is slightly more complicated: 
there exists a nonnegative integer $\mathsf n \le \mathsf g$ such that
$J\{p\}\cong (\Q_p/\Z_p)^{\mathsf n}$. 
Nevertheless, 
as an abstract abelian group, $J(k)$  depends ``almost'' 
only on the genus $\mathsf g$ of $C$. 
The procyclic Galois group of $k/k_0$ acts on $J(k)$ and one can consider the 
Galois representation on the Tate-module:
$$
T_{\ell}(J):=\varprojlim J[\ell^n], \,\,\, \ell\neq p, 
$$
where $J[\ell^n]\subset J(k)$ is the subgroup of $\ell^n$-torsion points. 
Let $F_J$ be the characteristic polynomial of the Frobenius endomorphism on
$$
V_{\ell}(J):=T_{\ell}(J)\otimes \Q_{\ell}.
$$  
By a fundamental result of Tate, $F_J$ determines the Jacobian as an algebraic
variety, modulo isogenies:

\begin{theorem}[Tate~\cite{tate}]
\label{thm:tate}
Let $J,\tilde{J}$ be abelian varieties over $k_0$ and $F_J, F_{\tilde{J}} \in \Z[T]$ the
characteristic polynomials of the $k_0$-Frobenius endomorphism $\Fr$ 
acting on 
$V_{\ell}(J)$, resp. $V_{\ell}(\tilde{J})$. 
Then 
$$
\Hom(J,\tilde{J})\otimes \Z_{\ell} \stackrel{\sim}{\longrightarrow} 
\Hom_{\Z_{\ell}[\Fr]}(T_{\ell}(J), T_{\ell}(\tilde{J})).
$$ 
The abelian varieties $J$ and $\tilde{J}$ are isogenous if and only if $F_J=F_{\tilde{J}}$.
\end{theorem}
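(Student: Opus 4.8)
The plan is to prove the displayed isomorphism of $\Hom$-modules first and then deduce the isogeny criterion from it. Replacing the pair $(J,\tilde J)$ by the single abelian variety $J\times\tilde J$, equipped with its Frobenius, reduces the first assertion to the case $J=\tilde J$, i.e. to showing that
$$
\End(J)\otimes\Z_\ell \;\stackrel{\sim}{\lra}\; \End_{\Z_\ell[\Fr]}(T_\ell(J)).
$$
Recall that $\End(J)$ is a finitely generated, torsion-free $\Z$-module, and that $\End_{\Z_\ell}(T_\ell(J))\otimes\Q_\ell=\End_{\Q_\ell}(V_\ell(J))$.

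\textbf{Injectivity.} Since $\bigcup_n J[\ell^n]$ is Zariski dense in $J$, the reduction map $\End(J)\to\End(T_\ell(J))$ is injective, and its image is moreover $\ell$-saturated: if $f\in\End(J)$ satisfies $f(T_\ell(J))\subseteq\ell\,T_\ell(J)$, then $f$ vanishes on $J[\ell]=\Ker(\ell\colon J\to J)$, hence $f$ factors through multiplication by $\ell$ and $f=\ell g$ with $g\in\End(J)$. Iterating gives $\End(J)\cap\ell^{n}\End(T_\ell(J))=\ell^{n}\End(J)$ for every $n$; since $\End(T_\ell(J))$ is torsion-free, this produces an injection $\End(J)\otimes\Z_\ell\hookrightarrow\End(T_\ell(J))$ landing in the commutant of $\Fr$.

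\textbf{Surjectivity — the heart of the argument.} Thanks to the $\ell$-saturation just observed, the integral statement follows from its rational counterpart, so it suffices to show that the semisimple $\Q_\ell$-subalgebra $E_\ell:=\End(J)\otimes\Q_\ell$ of $\End_{\Q_\ell}(V_\ell(J))$ coincides with the commutant $\End_{\Q_\ell[\Fr]}(V_\ell(J))$. (That $E_\ell$ is semisimple follows from Poincar\'e complete reducibility, which makes $\End(J)\otimes\Q$ semisimple, and from stability of semisimplicity under the scalar extension $\Q\to\Q_\ell$.) The key step is to produce, for every $\Q_\ell[\Fr]$-stable subspace $W\subseteq V_\ell(J)$, an idempotent of $E_\ell$ with image $W$; a standard double-centralizer argument then forces $E_\ell$ to exhaust the commutant. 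To build such an idempotent one forms, for each $n\ge 1$, the $\Fr$-stable $\Z_\ell$-lattice $T_{n}:=T_\ell(J)+\ell^{-n}\bigl(T_\ell(J)\cap W\bigr)$ in $V_\ell(J)$ and realizes $T_{n}$ — with the unchanged lattices at the remaining primes — as the Tate module of an abelian variety $J_{n}$ isogenous to $J$ over $k_0$ (the quotient of $J$ by the finite $\Fr$-stable subgroup $T_n/T_\ell(J)$). The crucial input is the \emph{finiteness theorem}: over the finite field $k_0$ there are only finitely many isomorphism classes of abelian varieties in the isogeny class of $J$. Hence $J_{n}\cong J_{m}$ for some $n<m$, and chaining the associated isogenies yields a non-zero element of $\End(J)\otimes\Q$ whose action on $V_\ell(J)$ relates $T_{n}$ and $T_{m}$ in a way that detects $W$; a suitable polynomial in it gives the desired idempotent. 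This finiteness is the main obstacle: Tate reduces it to a \emph{uniform polarization bound} — every abelian variety isogenous to $J$ carries a polarization of degree bounded independently of the variety, obtained by transporting a fixed polarization of $J$ and trimming it along the isogeny — after which finiteness follows because polarized abelian varieties of fixed dimension and bounded polarization degree over a fixed finite field form a finite set.

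\textbf{The isogeny criterion.} If $J$ and $\tilde J$ are isogenous, an isogeny induces a $\Q_\ell[\Fr]$-isomorphism $V_\ell(J)\cong V_\ell(\tilde J)$, so $F_J=F_{\tilde J}$. Conversely, assume $F_J=F_{\tilde J}$. Using that $\Fr$ acts semisimply on $V_\ell(J)$ and on $V_\ell(\tilde J)$ (again a theorem of Tate), equality of characteristic polynomials forces a $\Q_\ell[\Fr]$-isomorphism $V_\ell(J)\cong V_\ell(\tilde J)$. By the isomorphism established above, $\Hom(J,\tilde J)\otimes\Q_\ell\cong\Hom_{\Q_\ell[\Fr]}(V_\ell(J),V_\ell(\tilde J))$, which therefore contains invertible elements; since $\dim J=\dim\tilde J$ (the two polynomials have equal degree), the set of maps that are \emph{not} isogenies inside the $\Q$-vector space $\Hom(J,\tilde J)\otimes\Q$ is the zero locus of a non-zero (degree) polynomial, hence a proper subset. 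Consequently $\Hom(J,\tilde J)$ contains a genuine isogeny.
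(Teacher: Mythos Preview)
The paper does not prove this statement at all: Theorem~\ref{thm:tate} is quoted from Tate~\cite{tate} as a foundational input, with no argument given beyond the citation. So there is no ``paper's own proof'' to compare against.

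That said, your sketch is a faithful outline of Tate's original argument and is correct in its broad strokes: the reduction to $\End$ via $J\times\tilde J$, the $\ell$-saturation giving injectivity, the use of the finiteness theorem for abelian varieties over a finite field to produce the required idempotents via a sequence of lattices $T_n$, and the deduction of the isogeny criterion from semisimplicity of Frobenius plus the Hom-isomorphism. One small point worth tightening: the passage ``a suitable polynomial in it gives the desired idempotent'' is doing real work --- in Tate's argument one uses the resulting endomorphism $u\in\End(J)\otimes\Q$ with $u(T_n)=T_m$ to show that the bicommutant of $E_\ell$ stabilizes $W$, and then invokes the double-centralizer theorem for the semisimple algebra $E_\ell$; you have the ingredients but the logical order could be stated more carefully. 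Otherwise this is exactly the standard proof, and since the paper offers none, there is nothing further to compare.
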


In particular, while the Galois-module structure of $J(k)$ distinguishes $J$
in a rather strong sense (but not up to isomorphism of abelian varieties, 
an example can be found in \cite{zarhin}, Section 12),
the group structure of $J(k)$ does not.

\

In this paper, we investigate a certain ``group-theoretic'' analog of the Torelli theorem
for curves over {\em finite} fields. This analog has a natural setting 
in the anabelian geometry of curves. Throughout, we work in characteristic $\ge 3$.

Let $J^1=J^1$ be the Jacobian of (degree  1 zero-cycles of) $C$ and 
\begin{equation}
\label{eqn:11}
\begin{array}{cccc}
j_1\,:\,  &C(k) & \hookrightarrow & J^1(k)\\
          &c & \mapsto & [c]
\end{array}
\end{equation}
the corresponding embedding. The Jacobian $J$ of degree 0 zero-cycles on $C$ acts 
on $J^1$, translating by points $c\in C(k)$.
Let $\tilde{C}$, resp.  $\tilde{J}$, be another smooth projective curve, resp. 
its Jacobian. We will say that 
$$
\phi\,:\, (C,J)\ra (\tilde{C},\tilde{J})
$$
is an isomorphism of pairs if there exists a diagram

\

\centerline{
\xymatrix{
J(k)\ar[d]_{\phi^0}   & J^1(k)\ar[d]_{\phi^1}     & \ar[l]_{j_1}\ar[d]_{\phi_s} C(k) \\
\tilde{J}(k) & \tilde{J}^1(k) & \ar[l]_{\tilde{j}_1}\tilde{C}(k)
}
}

\

where 
\begin{itemize}
\item $\phi^0$ is an isomorphism of abstract abelian groups;
\item $\phi^1$ is an isomorphism of homogeneous spaces, 
compatible with $\phi^0$;
\item the restriction $\phi_s\,:\, C(k)\ra \tilde{C}(k)$ of $\phi^1$  
is a bijection of sets.
\end{itemize}

\

Our main result is:

\begin{theorem}
\label{thm:main}
Let $k=\bar{\mathbb F}_p$, with $p\ge 3$, and let   
$C,\tilde{C}$ be smooth projective curves over $k$
of genus $\ge 2$, with Jacobians $J$, resp. $\tilde{J}$. 
Let 
$$
\phi \,:\, (C,J)\ra (\tilde{C},\tilde{J})
$$ 
be 
an isomorphism of pairs. Then  $J$ and $\tilde{J}$ are isogenous.
\end{theorem}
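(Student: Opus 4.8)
\medskip
\noindent\emph{A plan for the proof.}
The plan is to reduce the statement to a purely group-theoretic reconstruction problem for the Jacobian, and then to recover from that data the Frobenius eigenvalues up to precisely the ambiguity that is invisible to $\bar{\mathbb F}_p$-isogeny.

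\emph{Normalization.} Fix $c_0\in C(k)$ and set $\tilde c_0:=\phi_s(c_0)\in\tilde C(k)$. Using $c_0$ (resp.\ $\tilde c_0$) as base point identifies $J^1$ with $J$ (resp.\ $\tilde J^1$ with $\tilde J$), the Abel--Jacobi maps becoming $c\mapsto[c-c_0]$ with image $W\subset J(k)$, resp.\ $\tilde c\mapsto[\tilde c-\tilde c_0]$ with image $\tilde W\subset\tilde J(k)$. Because $\phi^1$ is $\phi^0$-equivariant and sends $j_1(c_0)$ to $\tilde j_1(\tilde c_0)$, in these coordinates $\phi^1$ is literally $\psi:=\phi^0$; thus $\psi\colon J(k)\xrightarrow{\ \sim\ }\tilde J(k)$ is an isomorphism of abstract abelian groups with $\psi(W)=\tilde W$. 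Any such $\psi$ preserves all torsion subgroups, so comparing $\ell$-primary parts for $\ell\neq p$ forces $\mathsf g=\tilde{\mathsf g}$, and $\psi$ induces $\mathbb Z_\ell$-linear isomorphisms $T_\ell(J)\xrightarrow{\sim}T_\ell(\tilde J)$ and $V_\ell(J)\xrightarrow{\sim}V_\ell(\tilde J)$. These are, however, not a priori Frobenius-equivariant; since $J(k)$ by itself does not remember $J$, all of the content must come from the constraint $\psi(W)=\tilde W$.

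\emph{Reduction of the target.} Both $C$ and $\tilde C$ are defined over finite fields. Enlarging the base field so that $c_0\in C(\F_q)$, the geometric Frobenius acts on $J(k)$ by a group automorphism $\Phi$ stabilizing $W$, with $J(\F_{q^n})=\ker(\Phi^n-1)$ exhausting $J(k)$ and $\#C(\F_{q^n})=\#\{w\in W:\Phi^n(w)=w\}$; these point counts determine the zeta function of $C/\F_q$, hence $F_J$, hence by Theorem~\ref{thm:tate} the isogeny class of $J$ over $\F_q$ and a fortiori over $\bar{\mathbb F}_p$. Now replacing $C$ by a twist, or base-changing to $\F_{q^m}$, preserves the $\bar{\mathbb F}_p$-isogeny class of $J$ while changing $F_J$ (the former composes $\Phi$ with a finite-order automorphism, the latter raises its eigenvalues to the $m$-th power). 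Hence it is enough to reconstruct, intrinsically from the pair $(J(k),W)$, the multiset of Frobenius eigenvalues modulo these two operations --- that is, the $\bar{\mathbb F}_p$-isogeny invariant itself. Applying the reconstruction to $(J(k),W)$ and to $(\tilde J(k),\tilde W)$ and transporting via $\psi$ then yields that $J$ and $\tilde J$ are isogenous over $k$.

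\emph{The reconstruction, and the main obstacle.} I would build this invariant from three ingredients. First, the sumset $W+\cdots+W$ ($n$ summands) is the image of $C^{(n)}(k)$ in $J(k)$; in particular $(\mathsf g-1)$-fold addition recovers the point set $\Theta(k)$ of the theta divisor and $\mathsf g$-fold addition is all of $J(k)$, providing $\Phi$-stable ``geometric'' subsets. Second, the Weil estimates $\#C(\F_{q^n})=q^n+O(q^{n/2})$ and $\#J(\F_{q^n})=q^{n\mathsf g}+O(q^{n\mathsf g-n/2})$ force any exhausting finite subgroup $H$ of the form $J(\F_{q^n})$ to satisfy $\#(W\cap H)=(\#H)^{1/\mathsf g}+O\big((\#H)^{1/2\mathsf g}\big)$ with explicit constants, a rigidity that should pick out the admissible filtrations of $J(k)$ and, along the way, recover $q$ from $\#H$ and the known $\mathsf g$. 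Third, Tate's description of $\End(J)$ over a finite field should let one show that an abstract automorphism of $J(k)$ stabilizing $W$, acting compatibly on all Tate modules, and with Weil-number characteristic polynomial, is necessarily of geometric origin --- the Frobenius of some model over some $\F_{q^m}$ of some twist of $C$. This last point is the heart of the matter and the step I expect to be the main obstacle: one must exclude ``exotic'' group automorphisms of $(J(k),W)$ and show that the collection of Frobenius-like automorphisms is, modulo twists and powers, a single geometric object. The hypothesis $p\ge 3$ should enter here, controlling the $p$-part of the torsion together with the hyperelliptic involution and Weierstrass points --- so that, for a suitably generic base point, $-1\notin\Aut(J(k),W)$ and the reconstructed eigenvalue data stays within one $\bar{\mathbb F}_p$-isogeny class. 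Granting this, the Reduction step concludes the proof.
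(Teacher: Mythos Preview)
Your proposal is a plan with two substantial gaps, and the paper's route is quite different.

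Ingredient (2) is the milder gap but already a real one: the asymptotic $\#(W\cap H)=(\#H)^{1/\mathsf g}+O\big((\#H)^{1/2\mathsf g}\big)$ is satisfied by the subgroups $J(\mathbb F_{q^n})$, but nothing suggests it \emph{characterises} them among finite subgroups of $J(k)$. The transported subgroups $\psi^{-1}\big(\tilde J(\mathbb F_{\tilde q^m})\big)$ satisfy the identical asymptotic (since $\psi$ carries $\tilde W$ to $W$), as do perturbations of $J(\mathbb F_{q^n})$ by small extra torsion. Size estimates alone cannot single out the Frobenius filtration.

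Ingredient (3) is the serious gap, and you flag it yourself. Showing that an abstract automorphism of $(J(k),W)$ with Weil-number characteristic polynomial must be ``of geometric origin'' is essentially the theorem you are trying to prove. The paper does analyse the group $G_C$ of all automorphisms of the pair (Sections~3--5) and proves the striking fact that any two elements of $G_C$ have commuting powers; in particular $\Fr^n$ and $\psi^{-1}\tilde\Fr^{\,n}\psi$ commute in $\End_k(J)$ for some $n$. But even this does not establish ``geometric origin,'' and in fact this line of argument is \emph{not} what closes the proof of the main theorem.

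What the paper actually does (Section~7) bypasses both obstacles. Rather than recovering $J(\mathbb F_{q^n})$ exactly, it gives an \emph{inductive} intrinsic description of a tower $J(k_0)\subset J(k_1)\subset\cdots$ along degree-$2$ extensions: once $J(k_n)$ is known, a point $c\in C(k)\setminus C(k_n)$ lies in $C(k_{n+1})$ iff there exists $c'\in C(k)$ with $c+c'\in J(k_n)$ (with a modification in the hyperelliptic case; this, and the use of quadratic extensions, is where $p\neq 2$ enters). Transporting through $\psi$ yields only a one-sided containment $\psi(J(k_n))\subset\tilde J(\tilde k_n)$, hence \emph{divisibility} $\#J(k_n)\mid\#\tilde J(\tilde k_n)$ for all $n$. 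The decisive input is then a Corvaja--Zannier-type theorem on integer-valued linear recurrences: if the point-count recurrence $F(n)=\#J(k_n)$ divides $\tilde F(n)=\#\tilde J(\tilde k_n)$ for infinitely many $n$, the two Frobenius eigenvalue multisets agree up to a common rational power, and Tate's theorem gives the isogeny. No step asserts that any abstract automorphism is geometric.
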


\begin{conjecture}
\label{conj:main}
Under the assumptions of Theorem~\ref{thm:main}, 
$C$ and $\tilde{C}$ are
isomorphic as algebraic varieties, modulo Frobenius twisting. 
\end{conjecture}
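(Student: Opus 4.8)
We sketch a possible line of attack on Conjecture~\ref{conj:main}.

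\emph{Strategy.} The plan is to promote the set-theoretic bijection $\phi_s\colon C(k)\to\tilde C(k)$ to an isomorphism of curves by first reconstructing, from the combinatorial data carried by $\phi$, the pair $(J,\Theta)$ together with the Abel--Jacobi embedding, and then invoking the classical Torelli theorem. The first, formal, step is that $\phi$ transports all effective divisor-class loci. Extending $\phi_s$ additively, an effective zero-cycle $D=c_1+\dots+c_n$ on $C$ has class determined by $\sum_i\bigl(j_1(c_i)-j_1(c_0)\bigr)\in J(k)$; applying $\phi^0$ and using compatibility with $\phi^1$ gives $\sum_i\bigl([\phi_s(c_i)]-[\phi_s(c_0)]\bigr)$, so $D\sim D'$ on $C$ if and only if $\phi_s(D)\sim\phi_s(D')$ on $\tilde C$. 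Hence for every $n$ the map $\phi$ induces a bijection $W_n(C)(k)\to W_n(\tilde C)(k)$ between the images of the $\varphi_n$, compatible with the $J(k)$-translations and with $\phi^0$, and it preserves the Brill--Noether stratification by the loci $W^r_n$ of special divisors, since the fibres of the $\varphi_n$ are reconstructed. Taking $n=\mathsf g-1$ and rigidifying with a theta characteristic (available for $p\ge 3$), $\phi^0$ carries the $k$-points of a translate of $\Theta$ onto those of a translate of $\tilde\Theta$; taking $n=1$, $\phi^0$ carries the $k$-points of a translate of $C\subset J$ onto those of a translate of $\tilde C\subset\tilde J$.

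\emph{Reduction.} Over $k=\bar{\mathbb F}_p$ a closed subvariety is determined by its Zariski-dense set of $k$-points, so the content of the conjecture is that these set-level identifications are induced by morphisms of varieties. Concretely, one must prove the following algebraization statement: if $\psi\colon A(k)\stackrel{\sim}{\longrightarrow}B(k)$ is an abstract isomorphism of the $k$-point groups of two abelian varieties over $\bar{\mathbb F}_p$ which carries the $k$-points of a smooth curve $C\subset A$ of genus $\ge 2$ generating $A$ onto those of a smooth curve $\tilde C\subset B$, then after a translation $\psi$ is induced on points by an isogeny $A\to B$ and this isogeny restricts to an isomorphism $C\to\tilde C$, both statements up to a Frobenius twist. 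By Theorem~\ref{thm:main} we already know $A=J$ and $B=\tilde J$ are isogenous, so an isogeny exists abstractly; the point is to show $\psi$ respects a suitable one and carries $C$ to $\tilde C$. Note that some ambiguity is genuinely unavoidable: the absolute Frobenius induces a bijection on $k$-points, so $C$ and its twist $C^{(p)}$ are indistinguishable at the level of this data --- which is exactly why the conjecture is stated modulo Frobenius twisting.

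\emph{Carrying it out, and the main obstacle.} To extract algebraic structure from the bare group isomorphism I would proceed as follows. First, $\psi$ automatically commutes with all multiplication-by-$m$ maps and with every locus built polynomially from the group law, so it carries the difference loci $W_a-W_b\subset J$, and the whole Brill--Noether stratification, to their counterparts; in particular it preserves the multiplicity stratification of $\Theta$ given by the Riemann singularity theorem (${\rm mult}_x\Theta=h^0(C,x)$), and hence recognises $C$ inside $J$ as the one-cycle of minimal cohomology class, by the Matsusaka--Ran characterisation. It then remains to show that the induced bijection $C(k)\to\tilde C(k)$, which is compatible with linear equivalence of effective divisors of each degree, comes from a morphism of varieties up to Frobenius twist. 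For this I would pass to the pluricanonical model $C\hookrightarrow\mathbb{P}(H^0(C,mK_C)^{\vee})$ with $m\gg 0$: membership of a tuple of points in a given linear subspace is expressible through the dimensions $\dim|mK_C-\sum c_i|$, hence through the fibres of the $\varphi_n$, so the bijection respects all linear-incidence relations among points of the pluricanonical model; one then argues that a set bijection between two such models respecting all incidences is realised by a projective linear transformation, necessarily after composing one side with a power of Frobenius, which produces the desired isomorphism of curves (the hyperelliptic case being treated separately via the canonical $g^1_2$). The genuine difficulty --- and the reason this is posed as a conjecture rather than proved --- is precisely this last reconstruction: ruling out ``exotic'' set-theoretic bijections that respect all the divisor-class combinatorics and incidences yet are not induced by Frobenius twists of isomorphisms. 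Controlling this, presumably through a careful analysis of how $k^*=\bar{\mathbb F}_p^*$ and the incidence geometry over $\bar{\mathbb F}_p$ rigidify such maps, is where the main work lies.
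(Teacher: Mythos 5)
The statement you were asked to prove is a \emph{conjecture} in the paper; no proof is given, and Theorem~\ref{thm:main} establishes only the weaker conclusion that $J$ and $\tilde J$ are isogenous. Your proposal is therefore necessarily a strategy sketch rather than a proof, and to your credit you say so explicitly and locate the missing step.

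Your formal first step is sound and the paper proves it (Lemma~\ref{lemm:h0}): the isomorphism of pairs preserves $\dim {\rm H}^0(C,\mathcal O(\sum n_i c_i))$, hence linear equivalence of effective divisors and the Brill--Noether loci $W^r_n$. The gap comes at the reconstruction step, and it is somewhat deeper than the proposal suggests. You want to invoke a fundamental-theorem-of-projective-geometry argument on a pluricanonical model, but you only have a bijection $C(k)\to\tilde C(k)$, not a bijection of the ambient projective spaces, and there is no off-the-shelf rigidity statement for arbitrary incidence-preserving bijections between subsets of projective space. What is actually needed is to recover the \emph{projective structure} on $K^*/k^*$ in the sense of Theorem~\ref{thm:proj}; the paper's own reduction to that (Proposition~\ref{prop:1}) requires compatible isomorphisms $\phi^0_{\mathsf m}$ of \emph{generalized} Jacobians for every module $\mathsf m$, data not supplied by the hypotheses of Theorem~\ref{thm:main}. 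The other partial progress in the paper runs through moduli: Theorem~\ref{thm:2.3} shows that torsion-order conditions on $R_n$-configurations in $C(k)\subset J(k)$, which are invariants of the pair, cut $\mathcal M_{\mathsf g,1}$ down to dimension zero, giving finiteness up to Frobenius twist, and Conjecture~\ref{conj:four-points} would upgrade this to the desired isomorphism. Both of the paper's routes stop exactly where yours does. So your plan is reasonable, but the step you flag as ``the genuine difficulty'' is precisely the open content of the conjecture, and the incidence-geometric mechanism you propose would need to be replaced by, or shown equivalent to, recovering the projective structure on $K^*/k^*$ from the abelian group $J(k)$ and the subset $C(k)$ alone.
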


\

There are examples of
geometrically nonisomorphic curves over finite fields with 
isomorphic Jacobians, as (unpolarized) algebraic varieties over $k_0$.  
Pairs of such curves are given by 
$$
y^2=(x^3+1)(x^3-1) \,\, \text{ and }\,\, y^2=(x^3-1)(x^3-4)
$$
over $\F_{11}$ with Jacobian $E\times E$, for a supersingular
elliptic curve $E$, or
$$
y^2=x^5+x^3+x^2-x-1 \,\, \text{ and }\,\, y^2=x^5-x^3+x^2-x-1
$$
over $\F_3$, with a geometrically simple Jacobian (see \cite{oort}, \cite{howe}  
and the references therein). 

\

Theorem~\ref{thm:main} was motivated by 
Grothendieck's anabelian geometry. This is a 
program relating algebraic fundamental groups
of hyperbolic varieties over arithmetic fields 
to the underlying algebraic structure. 
One of the recent theorems in this direction is due to A. Tamagawa:
Let $\Pi$ be a {\em nonabelian} profinite group. 
Then there are at most
finitely many curves over $k=\bar{\mathbb F}_p$ with 
tame fundamental group  
isomorphic to $\Pi$ \cite{tamagawa}.
Tamagawa generalized previous results by Pop-Saidi \cite{pop} and 
Raynaud \cite{ray}, who proved similar statements under some technical
restrictions on curves. The main new ingredient in Tamagawa's proof is
a delicate geometric analysis of special loci in Jacobians.  

\

In the second part of this paper, we apply Theorem~\ref{thm:main} 
to a somewhat ortho\-gonal problem.
Namely, we focus on the prime to $p$ part of the
{\em abelianization} of the absolute Galois group
of the function field of the curve, together with the set of
valuation subgroups. Our main result (Theorem~\ref{thm:galois})  
is that for projective 
curves $C$ over $k$, of genus $\mathsf g(C)>4$, the pair
$(\G^a_K,\I)$, consisting of the abelianization of the Galois group
of $K=k(C)$ and the set $\I=\{I_{\nu}\}_\nu$ of procyclic subgroups $I_{\nu}\subset \G^a_K$
corresponding to nontrivial valuations of $K$, determines the isogeny class of 
the Jacobian of $C$.

\

Here is a road-map of the paper.
In Section~\ref{sect:curves}, included as a motivation for
Conjecture~\ref{conj:main},
we discuss certain subvarieties 
of moduli spaces of curves cut out by  
conditions on the order of zero-cycles of the form $c-c'$ on $C$ in the group $J(k)$
(i.e., images of Hurwitz schemes and their intersections). 
Typically, very few such conditions suffice to determine $C$, up to a {\em finite} choice.
In Section~\ref{sect:formal} we study the formal automorphism group $G_C$ of the pair $(C,J)$
and derive some of its basic properties. In Section~\ref{sect:groups} we
collect several group-theoretic results about profinite groups which we apply in
Section~\ref{sect:jab} to prove that  
any elements $\gamma,\tilde{\gamma}\in G_C$ have the property that 
some integral powers $\gamma^n, \tilde{\gamma}^{\tilde{n}}$ commute. 
We then prove that this holds for 
the Frobenius endomorphisms $\phi^0(\Fr)$ and $\tilde{\Fr}$, as 
elements in $\End_{\tilde{k}_0}(\tilde{J})$, 
whenever we have an isomorphism of pairs 
$\phi\,:\, (C,J)\ra (\tilde{C},\tilde{J})$. 
In Section~\ref{sect:detect} we apply the theory of integer-valued linear
recurrences as in \cite{cor-zan-inv} to obtain a sufficient condition for
isogeny of abelian varieties. 
In Section~\ref{sect:isog} we  
construct towers of degree 2 field extensions
$$
k_0\subset \ldots \subset k_n\ldots \subset k_{\infty}, \,\, \text{ resp. }\,\,
\tilde{k}_0\subset \ldots \subset \tilde{k}_n\ldots \subset \tilde{k}_{\infty},
$$ 
provide set-theoretic intrinsic definitions of
$J(k_n)$, resp. $\tilde{J}(\tilde{k}_n)$, and establish that 
$$
\phi^0(J(k_n))\subset \tilde{J}(\tilde{k}_n), \,\,\, \text{ for all } \,\, n.
$$
Combining Tate's theorem~\ref{thm:tate} with Theorem~\ref{thm:isoge}
we conclude that $J$ and $\tilde{J}$ are isogenous.
In Sections~\ref{sect:background} and \ref{sect:anabel} we discuss 
extensions and applications of Theorem~\ref{thm:main} to anabelian geometry.
In the Appendix we establish several geometric facts on abelian subvarieties 
in special loci in Jacobians, needed in Section~\ref{sect:anabel}.

\

\noindent
{\bf Acknowledgments:}
We are grateful to A. Venkatesh and U. Zannier for useful 
suggestions, and to B. Hassett and Yu. Zarhin for their comments. 
The first author was partially supported by NSF grant DMS-0701578. 
The third author was partially supported by
NSF grant DMS-0602333.

\section{Curves and their moduli}
\label{sect:curves}

Let $C$ be an irreducible smooth projective curve of genus $\mathsf g=\mathsf g(C)>1$ 
over a finite field
$k_0$ of characteristic $p$, with $C(k_0)\neq \emptyset$, and let $J=J_C$ be its Jacobian. 
The Jacobian of degree 1 zero-cycles $J^1$ 
is a principal homogeneous space for $J$. 
For $\ell$ a prime number let 
$$
J\{\ell\}:=\cup_{n\in \N}J[\ell^n]\subset J(k), \,\,\, \text{ resp. }\,\,\, T_{\ell}(J)=\varprojlim J[\ell^n]
$$ 
be the $\ell$-primary part of $J(k)$, resp.  
the Tate-module. 
For any set of primes $ S$, put 
$$
J\{S\}:=\bigoplus_{\ell\in S}J\{\ell\}\subset J(k).
$$
The order of $x\in J(k)$ will be denoted by $\ord(x)$.

\begin{lemma}
\label{lemm:a}
Let $C$ be a curve of genus $\mathsf g>1$. Let $J$ be its Jacobian
and $a\in J(k)$ be such that 
$$
a+C(k)\subset C(k)\subset J^1(k).
$$
Then $a=0$. 
\end{lemma}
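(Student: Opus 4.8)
The plan is to use the fact that $C(k)$ is a \emph{finite} set for each fixed finite subfield, but since we work over $k=\bar{\mathbb F}_p$, $C(k)$ is infinite; instead the key is that $a$ must be a point of finite order (every point of $J(k)$ is torsion), so translation by $a$ has finite order $N=\ord(a)$ as an automorphism of $J^1(k)$. Then the hypothesis $a+C(k)\subset C(k)$ forces $C(k)$ to be stable under the cyclic group $\langle a\rangle$ acting by translation, and since $a+C(k)\subset C(k)$ with both infinite and $a$ of finite order, we actually get $a+C(k)=C(k)$. So the plan is: first observe $a$ is torsion of some order $N$; second, deduce that the subset $C(k)\subset J^1(k)$ is a union of $\langle a\rangle$-orbits; third, derive a contradiction with $\mathsf g>1$ if $N>1$.

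For the third step I would use the geometry of the Abel--Jacobi embedding $j_1:C\hookrightarrow J^1$. Choosing a basepoint to identify $J^1\cong J$, the image is a curve $C\subset J$ that is stable under translation by $a$; equivalently, the closed subvariety $j_1(C)$ (which is defined over a finite field and whose $k$-points are exactly $C(k)$) is invariant under translation by the torsion point $a$. Since $j_1(C)$ is a geometrically irreducible curve, if it is invariant under a nontrivial translation then its stabilizer in $J$ is a nontrivial closed subgroup $H$, and $j_1(C)$ is a union of cosets of the identity component $H^0$; as $j_1(C)$ is one-dimensional and irreducible, either $H^0=0$ (so $H$ is finite, but then $C$, being irreducible, cannot be a union of more than one coset of a finite group unless $H$ is trivial — more precisely a connected curve invariant under a finite group of translations still has connected image, forcing $H$ trivial) or $\dim H^0=1$, in which case $j_1(C)=H^0+\text{pt}$ is a translate of an elliptic curve, contradicting $\mathsf g(C)>1$. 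So $a=0$.

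The main obstacle is making the finite-stabilizer case rigorous: a priori $C(k)$ could be a union of several $\langle a\rangle$-orbits without $C$ itself being disconnected, so I need the scheme-theoretic statement that the stabilizer of the reduced irreducible curve $j_1(C)$ in $J$ is trivial. The clean way is: let $H=\Stab_J(j_1(C))$, a closed subgroup scheme; its identity component $H^0$ is an abelian subvariety, and $j_1(C)$ is $H^0$-stable, hence $H^0$ acts on the irreducible curve $j_1(C)$ with $\dim H^0\le 1$; if $\dim H^0=1$ then $j_1(C)$ is a coset of $H^0$, an elliptic curve, contradicting $\mathsf g\ge 2$; hence $H$ is finite, and an irreducible curve cannot be invariant under a nontrivial translation by a point of a finite group unless that translation is trivial, because the quotient $j_1(C)\to j_1(C)/H$ would be étale of degree $|H|>1$ onto an irreducible curve, yet $j_1(C)$ has a rational point fixed structure... the cleanest finish is simply that a translate of a curve of genus $\ge 2$ by a nonzero element cannot equal the curve, since the curve generates $J$ as a group (for $\mathsf g\ge 1$ the Abel--Jacobi image generates $J$) and a translation fixing a generating curve must fix all of $J$, forcing $a=0$. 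I would write it this last way, using that $j_1(C)$ generates $J^1$ as a homogeneous space.
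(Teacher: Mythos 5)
Your reduction to the finite-stabilizer case is the right place to focus, and your handling of the positive-dimensional case (if $\dim H^0 = 1$ then $j_1(C)$ is a coset of an elliptic curve, contradicting $\mathsf g\ge 2$) is fine. But the finish you settle on is not a valid argument. You write that ``a translation fixing a generating curve must fix all of $J$, forcing $a=0$.'' There are two readings, and neither works: translation by $a$ preserves $J$ as a variety for every $a$, so ``fixes all of $J$'' in the set-theoretic sense is vacuous; and translation by $a\neq 0$ never fixes $J$ \emph{pointwise}, so that reading is what you must prove, not what you may conclude. The fact that $C$ generates $J$ as a group tells you nothing directly about the translation being trivial, because the translation $\tau: c\mapsto c+a$ induces the \emph{identity} on differences $c-c'$, hence the identity on the group generated by $C$; that is consistent with $a$ being any element of $J$. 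So the step ``generating curve $\Rightarrow$ translation is trivial'' is precisely the content of the lemma and cannot be used to prove it.

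You actually name the ingredient that saves the argument — the étale quotient $C\to C/\langle a\rangle$ — but then abandon it. The paper's proof runs exactly through that cover: for $n=\mathrm{ord}(a)\ge 2$, the quotient $\bar C=C/\langle a\rangle$ has genus $\bar{\mathsf g}=(\mathsf g-1)/n+1<\mathsf g$ by Riemann--Hurwitz (no ramification since translation by a nonzero torsion element on $J^1$ is fixed-point-free). Hence $\bar C\subset \bar J^1 = J^1/\langle a\rangle$ generates an abelian subvariety of $\bar J=J/\langle a\rangle$ of dimension at most $\bar{\mathsf g}<\mathsf g=\dim\bar J$ (as $J\to\bar J$ is an isogeny). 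Pulling back, $C$ would lie in a proper coset of an abelian subvariety of $J$, contradicting the fact that $C$ generates $J$. That genus-drop computation is the missing step in your proof; without it, the finite case is unproved.
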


\begin{proof}
Let $\langle a\rangle $ be the cyclic subgroup generated by $a$ and let $n$ be its order. 
The translation by $a$ gives an action of $\langle a\rangle$
on $J^1$ and a separable nonramified covering  $C\to C/\langle a\rangle$
of degree $n$.
The quotient $\bar{J}:=J/\langle a\rangle$ acts on the corresponding 
principal homogeneous space $\bar{J}^1=J^1/\langle a\rangle$. 
The image $\bar{C}$ of $C$ under the projection $J^1\ra \bar{J}^1$ has genus
$\bar{\mathsf g}=\mathsf g/n - 1/n+1<\mathsf g$, since $n\ge 2$ and 
$\mathsf g(C)\ge 2$. Hence the Jacobian of $\bar{C}$ is a proper abelian 
subvariety of $\bar{J}$, of dimension at most $\bar{\mathsf g}$.
It follows that the same holds for
its preimage $C$, contradicting the fact that $C$ generates $J$. 
 \end{proof}

\begin{definition}
\label{defn:matrix}
A ordered set $R_n=\{r_1,\ldots, r_n\}$ of integers $r_j>1$, with $p\nmid r_j$ for all $j$,
will be called an $n$-{\em string}.
Let $J$ be an abelian variety over $k$ and $X\subset J(k)$.
A ordered subset  $\{x_0, x_1,\ldots, x_n\}\subset X$ will be called an 
$R_n$-configuration on $X$ if $r_{j}=\ord(x_j-x_0)$, for $1\le j\le n$.  
\end{definition}

We will mostly consider the case when $X=C(k)\hookrightarrow J(k)$, 
where $C$ is a curve of genus $\mathsf g=\mathsf g(C)\ge 1$. 
Note that an isomorphism of pairs $\phi\,:\, (C,J)\ra (\tilde{C},\tilde{J})$ 
preserves all configurations, i.e., for all 
$n\in \mathbb N$, every $R_n$-configuration in $C(k)\subset J(k)$
is mapped to an $R_n$-configuration in $\tilde{C}(k)\subset \tilde{J}(k)$.

\begin{theorem}
\label{thm:2.3}
Let $C$ be a curve over $k=\bar{\mathbb F}_p$ of genus $\mathsf g > 1$.
Then there exists a string $R_n$, 
with $n<2\mathsf g$ such that
\begin{itemize}
\item $C(k)\subset J(k)$ contains an $R_n$-configuration,
\item there exist at most finitely many nonisomorphic curves
of genus $\mathsf g$ containing an $R_n$-configuration, modulo Frobenius twists. 
\end{itemize}
\end{theorem}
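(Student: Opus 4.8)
The plan is to realize each condition "$\ord(x_j - x_0) = r_j$" as cutting out a constructible subset of the moduli space $\mathcal M_{\mathsf g}$ (or of a suitable configuration space over it), and then to argue that a generic choice of sufficiently many such conditions is enough to pin $C$ down to a finite set. Concretely, for a fixed integer $r>1$ with $p\nmid r$, consider the locus in $C(k)\times C(k)$ of pairs $(x_0,x_1)$ with $\ord(x_1-x_0)$ dividing $r$; as $C$ varies, this is the image of a Hurwitz-type scheme parametrizing degree-$r$ (or degree-$d$ for $d\mid r$) unramified cyclic covers of $C$, hence an algebraic family over $\mathcal M_{\mathsf g}$. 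Imposing $\ord(x_1-x_0)$ \emph{exactly} $r$ removes the proper sub-loci where the order is a proper divisor, so the "$R_n$-configuration locus" inside the $(n{+}1)$-fold fiber product $C^{n+1}/\mathcal M_{\mathsf g}$ is constructible. First I would set up this family carefully and record that its fiber dimension over a point $[C]\in\mathcal M_{\mathsf g}$ drops as $n$ grows: each new point $x_j$ with a \emph{prescribed} order $r_j$ for $x_j-x_0$ generically imposes one nontrivial condition (the $r_j$-torsion fibers of $J^1\to J^1$, pulled back to $C$, form a finite set), so roughly speaking the expected dimension of the configuration locus over a generic $[C]$ is $\max(0, \dim C^{n+1} - n) = \max(0, n+1-n) = 1$ once $n\ge 1$ — i.e., the configuration is rigid in $C$ but still moves in a one-parameter family (the choice of $x_0$), which is exactly what we want.

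The next step is the counting/dimension comparison that forces finiteness. Let $\mathcal Z_n \subset \mathcal M_{\mathsf g} \times_{\phantom{.}} (\text{config space})$ be the universal $R_n$-configuration locus for a generic string $R_n$, and let $\pi_n\colon \mathcal Z_n \to \mathcal M_{\mathsf g}$ be the projection. By Lemma~\ref{lemm:a}, translations of $C(k)$ into itself are trivial, so the stabilizer business is controlled and the nonabelian structure of $C\hookrightarrow J$ is genuinely used: a configuration cannot be "slid around" by an ambient translation of $J^1$ unless it is the trivial one. The claim is that for $n$ chosen appropriately with $n<2\mathsf g$, the generic fiber of $\pi_n$ is finite (equivalently $\dim \mathcal Z_n = \dim \mathcal M_{\mathsf g}$, the extra $+1$ from the $x_0$-parameter being absorbed once we also quotient by $\Aut$ or fix a point), while the total space $\mathcal Z_n$ is of finite type; hence only finitely many curves of genus $\mathsf g$ (up to isomorphism) can carry a given $R_n$-configuration. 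The bound $n<2\mathsf g$ should come from: $\dim\mathcal M_{\mathsf g} = 3\mathsf g - 3$, each prescribed-order point generically costs one dimension from the $(n{+}1)$-dimensional $C^{n+1}$ while the configuration data itself has $n$ "order" constraints, so we need $n$ large enough that $n+1 - (\text{corank}) \le 1$ — and an honest accounting of the Hurwitz-scheme dimensions (covers of $C$ contribute $\dim\mathcal M_{\mathsf g}$ plus the dimension of the space of $r$-torsion line bundles, which is $0$) yields that $n$ of order $\mathsf g$, in any case $n<2\mathsf g$, suffices.

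For the existence half (that \emph{some} such $R_n$ actually occurs on the given $C$), I would argue as follows: over $k=\bar{\mathbb F}_p$ the group $J(k)$ is torsion and $C(k)$ is infinite, so for any two distinct $c,c'\in C(k)$ the element $c-c'$ has some finite order $r$ with $p\nmid \ord_{p'}$-part — choosing the pair to avoid the finitely many $p$-power issues (permissible since we only need $r_j>1$ with $p\nmid r_j$, and passing to the prime-to-$p$ order if necessary), we get a $1$-string, and then inductively pick $x_1,\dots,x_n\in C(k)$ one at a time (possible because $C(k)$ is infinite and each previous constraint removed only finitely many points) so that the orders $r_j=\ord(x_j-x_0)$ are nonzero modulo $p$; recording the resulting $R_n$ for the smallest $n$ for which the finiteness conclusion of the previous paragraph kicks in gives the string. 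The Frobenius-twist ambiguity is unavoidable because the whole construction is insensitive to replacing $C$ by a conjugate Frobenius twist — the configuration and all its orders are preserved — so the finiteness is naturally stated "modulo Frobenius twists."

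\textbf{Main obstacle.} The hard part will be the dimension count showing the configuration locus is \emph{exactly} as small as expected — i.e., that prescribing $\ord(x_j-x_0)=r_j$ really does impose an \emph{independent} codimension-one-per-point condition on the universal curve, with no unexpected positive-dimensional families of curves sharing a configuration. This is a genuine geometric transversality statement about the interaction of the theta-map, torsion in Jacobians, and the moduli of curves; it is closely analogous to (and presumably should be deduced from, or runs parallel to) Tamagawa's analysis of special loci in Jacobians cited in the introduction, and controlling it for \emph{generic} choices of the integers $r_j$ (rather than all choices) is what makes the argument go through while keeping $n<2\mathsf g$.
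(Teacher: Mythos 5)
Your high-level strategy — realize each order condition as a constructible locus in moduli, cut down dimension step by step — is the right one, but there are two concrete gaps that break the argument as written, and the bound $n<2\mathsf g$ is where they bite.

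\emph{Wrong Hurwitz scheme, and the initial dimension drop is missed.} You model ``$\ord(x_1-x_0)\mid r$'' via unramified cyclic covers $\tilde C\to C$ of degree $d\mid r$, and observe that this family has dimension $\dim\mathcal M_{\mathsf g}+0=3\mathsf g-3$. That is not a codimension condition at all — such covers form a finite \'etale family over all of $\mathcal M_{\mathsf g}$ — so prescribing the first order gives no constraint in moduli under your set-up. The paper instead observes that $\ord(c_1-c_0)=m$ (with $p\nmid m$) is equivalent to the existence of a rational function $f$ with $\dv(f)=m(c_1-c_0)$, i.e.\ a separable degree-$m$ cover $C\to\mathbb P^1$ \emph{totally ramified over $0$ and $\infty$}. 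Riemann--Hurwitz then bounds the number of additional branch points by $2\mathsf g$, so the Hurwitz scheme of such covers has dimension at most $2\mathsf g-1$. This single observation contributes a drop of roughly $\mathsf g-1$ from $\dim\mathcal M_{\mathsf g,1}=3\mathsf g-2$ immediately. Without it, your count of ``one dimension per constraint'' requires on the order of $3\mathsf g$ constraints to get to dimension zero, which for $\mathsf g\ge 2$ exceeds the allowed $n<2\mathsf g$: you cannot obtain the stated bound.

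\emph{The transversality input you flag as the ``main obstacle'' is not optional — it is the theorem that makes the induction run, and it has a name.} You correctly sense that the hard part is to show each new prescribed torsion order really does cut dimension by one on the remaining family. The paper does not prove a generic-transversality statement by hand; it invokes Hrushovski's theorem on torsion points of a nonisotrivial curve in an abelian variety over a function field of positive transcendence degree. Concretely: working over the function field of each irreducible component $\mathcal D$ of the current locus, the universal curve sits in its Jacobian, and Hrushovski's theorem bounds the torsion (coprime to $p$) lying on it. Hence one can choose the next order $N_{n+1}$ so large that the sublocus of $\mathcal D$ carrying a point of that order is a \emph{proper} subvariety, while at the same time the given $C$ (being a $k$-point, not a generic point) still does carry such a point because $J(k)$ is torsion and $C(k)$ is infinite. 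That choice is what simultaneously guarantees existence of the configuration on $C$ and a genuine dimension drop in moduli. Your proposal describes the desired conclusion but leaves it as an unresolved obstacle; the paper's proof resolves it with this specific diophantine input. Finally, your existence discussion and the remark about Frobenius twists are fine and match the paper, and the observation that Lemma~\ref{lemm:a} controls ambient translations is a reasonable, if tangential, sanity check.
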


\begin{proof}
We write $\mathcal M_{\mathsf g,n}$ for the moduli space (stack) of genus $\mathsf g$ curves
with $n$-marked points. We start with the following

\begin{lemma} 
Every string $R_1$ defines an 
algebraic subvariety $\mathcal D_{R_1,\mathsf g}\subset \mathcal M_{\mathsf g,1}$ 
of dimension $2\mathsf g-1$ 
with a finite surjection onto a subvariety of $\mathcal M_{\mathsf g}$.
\end{lemma}

\begin{proof}
Moduli computation. 
The cycle $c_1-c_0$ of order $m$ prime to $p$ is
the same as a function $f$ on $C$ with divisor $m(c_1-c_0)$. 
It defines a separable cover $C\ra \P^1$ of degree $m$, which 
is completely ramified over two points: $0,\infty$. 
The variety of such covers is a Hurwitz scheme, it is defined over
$\mathbb F_p\subset k$. The genus computation gives an upper bound 
of $2\mathsf g$ for the number of additional ramification points. 
Since there are only finitely many covers of fixed degree with
fixed branch points in $\mathbb P^1$, the dimension of the 
corresponding Hurwitz scheme is bounded by $2\mathsf g-1$. 
\end{proof}

\begin{remark}
Over $\mathbb C$, this Hurwitz scheme is irreducible and has
dimension $2\mathsf g -1$. The generic point of this scheme
corresponds to a cover with simple additional ramification points
whose images are all distinct. 
\end{remark}

The subvariety of $\mathcal M_{\mathsf g}$ parametrizing 
curves with an $R_n$-configuration 
is contained in the intersection of varieties corresponding to 
configurations of order 1 built from appropriate subsets of $R_n$.   

We proceed by induction: Assume that $C$ contains an $R_n$-configuration
$\{c_0,\ldots, c_{n}\}\subset C(k)$ and let  $\mathcal D_{R_n}\subset M_{\mathsf g,1}$ 
be a union of irreducible subvarieties of dimension $2\mathsf g-n-1$, 
corresponding to curves with such a configuration, each having a finite map onto a subvariety of 
$\mathcal M_{\mathsf g}$. We will use Hrushovski's theorem \cite{hrush} 
for the Jacobian fibration of the universal curve
over the function field of each irreducible component $\mathcal D$ of $\mathcal D_{R_r}$: 
the number of points of finite
order (coprime to $p$) on a nonisotrivial curve embedded into an abelian
variety, over a function field of positive dimension, 
is bounded. In particular, there exists an $N_{n+1}$ such that:
\begin{enumerate}
\item there is a point $c_{n+1}$ with $c_{n+1}-c_0$ of order $N_{n+1}$,
\item the subvariety of $\mathcal D$ parametrizing curves with a torsion point of order $N_{r+1}$
is a proper subvariety.
\end{enumerate}
Iterating this, in at most $2\mathsf g-1$ steps we obtain a string $R$ 
and a zero-dimensional variety $\mathcal D$ such that $C(k)$ contains an $R$-configuration
which distinguishes $C$ from all but finitely many other genus  $\mathsf g$ curves over $k$.

Note that the presence of a given configuration is invariant under
Galois automorphisms. Since the subvarieties $\mathcal D\subset \mathcal M_{\mathsf g,1}$ 
are defined over $k_0$ we obtain in the end a 
subset of $k_0$-points in $\mathcal M_{\mathsf g,1}$.
 \end{proof}

Theorem~\ref{thm:2.3} is far from optimal.
If we assume that $c_0$ is defined over $k_0$ and 
$c_1$ over an extension of $k_0$ than 
the Galois conjugate of $c_1-c_0$ has the same order,
so that the corresponding point in the image of the Hurwitz scheme
in $\mathcal M_{\mathsf g,1}$ is singular and, moreover, nonnormal.

We have $\dim \mathcal M_{\mathsf g,1} = 3\mathsf g-2$ and $\codim \mathcal D_n =\mathsf g-1$. 
If the varieties $\mathcal D_n$ intersected with correct codimensions then
a 3-configuration would give a subvariety of dimension $1$ in $\mathcal M_{\mathsf g,1}$ and a
4-configuration - a zero-dimensional subvariety in $\mathcal M_{\mathsf g,1}$.

By generic local computations,  
the dimension of double and triple intersections of Hurwitz schemes corresponding to 
1-strings with coprime entries should be at most the dimension of
a transversal intersection of varieties of the same dimension.  
Thus we expect that a triple intersection has dimension 1, and that
quadruple intersections have dimension 0.

\begin{conjecture} 
\label{conj:four-points}
For any curve $C$ of genus $\mathsf g(C)\ge 2$ there exist a string 
$R_4$ and an $R_4$-configuration on $C$ such that
all curves $\tilde{C}$ with an $R_4$-configuration on $\tilde{C}$ 
realizing the $R_4$-string are Galois conjugated to $C$.  
Moreover, all such configurations on $C$ are also Galois conjugated.
\end{conjecture}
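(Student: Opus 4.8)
The expected route to Conjecture~\ref{conj:four-points} sharpens the inductive dimension count of the proof of Theorem~\ref{thm:2.3} together with the heuristics recorded just above the statement; since this is only a conjecture, what follows is a plan rather than an argument. Recall that $\dim\mathcal M_{\mathsf g,1}=3\mathsf g-2$ and that each Hurwitz locus $\mathcal D_{R_1,\mathsf g}\subset\mathcal M_{\mathsf g,1}$ has codimension $\mathsf g-1$. Because $4(\mathsf g-1)\ge 3\mathsf g-2$ for every $\mathsf g\ge 2$, four such loci in sufficiently general position cut $\mathcal M_{\mathsf g,1}$ down to a finite set; the plan is to make this transversality precise for a carefully chosen $R_4$-configuration on $C$, and then to extract from the resulting finite, $\Gal(k/k_0)$-stable set the single orbit containing $C$.

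First I would fix the Hurwitz set-up as in the proof of Theorem~\ref{thm:2.3}: for a prime-to-$p$ integer $r>1$ let $\mathcal H_r$ parametrize degree-$r$ separable covers $f\colon C\to\P^1$ totally ramified over $0$ and $\infty$, with simple additional ramification and distinct branch images, and send $[f]$ to $(C,c_0)\in\mathcal M_{\mathsf g,1}$ with $c_0=f^{-1}(\infty)$; the image is $\mathcal D_{R_1,\mathsf g}$, of dimension $2\mathsf g-1$, it is defined over $\F_p$, and $c_1=f^{-1}(0)$ gives a zero-cycle $c_1-c_0$ of order $r$. Given pairwise coprime $1$-strings $\{r_1\},\dots,\{r_4\}$, write $\mathcal D_i\subset\mathcal M_{\mathsf g,1}$ for the corresponding Hurwitz locus; the set of $(C,c_0)$ carrying an $R_4$-configuration $\{c_0,c_1,c_2,c_3,c_4\}$ with $\ord(c_i-c_0)=r_i$ lies in $\mathcal D_1\cap\cdots\cap\mathcal D_4$, and the configurations themselves form an incidence variety finite over this intersection.

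The crux, and the step I expect to be the main obstacle, is transversality: one must show that for a suitable configuration on $C$ the four branches $\mathcal D_i$ meet transversally at $(C,c_0)$, so that the intersection is $0$-dimensional there. The tangent space to $\mathcal D_i$ at $(C,c_0)$ is the image in $H^1(C,T_C(-c_0))$ of the Kodaira--Spencer space of $\mathcal H_{r_i}$, i.e. the first-order deformations of $(C,c_0)$ under which the torsion point $c_i-c_0\in J[r_i]$ stays equal to an (automatically \'etale) torsion section of the deforming Jacobian; coprimality of the $r_i$ puts these conditions in independent torsion pieces, so one expects the four tangent subspaces to be in general position. Over $\C$ this should be accessible via irreducibility of the Hurwitz spaces (the complex Hurwitz scheme is irreducible of dimension $2\mathsf g-1$, as in the Remark above) together with a monodromy argument showing that a generic such cover imposes an independent condition on moduli; the positive-characteristic case should then follow by lifting to a Hurwitz stack with good reduction and specializing. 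This transversality of triple and quadruple intersections of Hurwitz loci is exactly what is currently missing.

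Granting it, $\mathcal D_1\cap\cdots\cap\mathcal D_4$ is a finite, $k_0$-rational subscheme of $\mathcal M_{\mathsf g,1}$ containing $(C,c_0)$, hence a finite union of $\Gal(k/k_0)$-orbits. To isolate the orbit of $C$ I would choose the string adaptively, as in the proof of Theorem~\ref{thm:2.3}: starting from $C$, pick $c_1,\dots,c_4$ in turn using Hrushovski's bound \cite{hrush} so that each new order $r_i$ is large enough to avoid the finitely many curves surviving the previous steps and so that the $r_i$ are pairwise coprime; after the fourth step the only curves with an $R_4$-configuration realizing this string are the Frobenius twists of $C$. With a little more care in the choice of orders one makes the incidence variety over this orbit a single orbit as well, giving the last assertion that all such configurations on $C$ are Galois conjugate. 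Thus the combinatorial input --- dimension counts, Hrushovski's theorem, and descent along Frobenius --- is routine, and the entire weight of the conjecture rests on the transversality of the Hurwitz loci.
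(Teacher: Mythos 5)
This statement is a conjecture; the paper does not prove it, and your proposal is, as you acknowledge, a plan rather than an argument. That said, the plan you sketch is essentially the same heuristic the paper records immediately above the conjecture: $\dim\mathcal M_{\mathsf g,1}=3\mathsf g-2$, $\codim\mathcal D_{R_1,\mathsf g}=\mathsf g-1$, so four Hurwitz loci in general position should cut out a finite set, and Hrushovski's bound is used (as in the proof of Theorem~\ref{thm:2.3}) to choose the orders adaptively so that the finite set shrinks to a single Galois orbit. You correctly flag transversality of the Hurwitz loci as the missing ingredient that carries the whole weight.

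Two remarks on where your plan is thinner than the paper's own discussion. First, the paper's ``supertransversality'' Proposition shows that the naive expected-dimension picture genuinely fails: already the \emph{pairwise} intersection $\mathcal D_{R_1,\mathsf g}\cap\mathcal D_{R_1',\mathsf g}$ is \emph{empty} once $\mathsf g\ge(r_1-1)(r_1'-1)/2$. So you cannot hope for transversality-as-usual; you must take the orders $r_i$ large relative to $\mathsf g$, and your appeal to ``lifting to a Hurwitz stack with good reduction and specializing'' from an irreducibility/monodromy argument over $\C$ doesn't by itself address why the intersection is nonempty and of the predicted dimension in the regime where the orders are large. Second, the paper's own route to the finiteness part does not go through a fourth torsion condition cutting a $1$-dimensional locus to $0$ by transversality; it goes through Conjecture~\ref{conj:another}, which replaces the fourth torsion condition by a different kind of rigidification, namely fixing the \emph{values} $f_{r_1}(c_2)=1$ and $f_{r_1}(c_3)=\lambda$ of the first pencil at the other marked points. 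This is a qualitatively different constraint from intersecting with another $\mathcal D_{R_1,\mathsf g}$, and it is the step the paper itself singles out as the unproven core. So while your dimension count, the use of Hrushovski, and the Galois-descent endgame match the paper, the decisive geometric input you would need is not the generic transversality of four Hurwitz loci but rather something along the lines of Conjecture~\ref{conj:another}.
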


Clearly, this would imply a strong version of Conjecture~\ref{conj:main}.

\begin{remark} 
Consider $R_3 = \{2,3\}$.  
Transversality would give $3\mathsf g-2- (2\mathsf g-2)=\mathsf g$ in this case. 
However, the corresponding intersection is trivial.

Indeed, in general the set of solutions $nc_0 = nc$ is trivial for odd
$n\leq \mathsf g-1$, and a point $c_0$ invariant under a hyperelliptic involution.
For $n\leq \mathsf g-1$ and $n$ even the point $c$ is always invariant under a hyperelliptic
involution.
\end{remark}

In fact, we have a ``supertransversality'' for these Hurwitz schemes.

\begin{proposition}
Let $r_1,r_1'$ be coprime integers. Let 
$R_1=\{r_1\}$ and $R_{1}'=\{r_1'\}$ be the corresponding 1-strings
and $\mathcal Z:=\mathcal D_{R_1, \mathsf g}\cap  \mathcal D_{R_1', \mathsf g}\subset \mathcal M_{\mathsf g,1}$
the intersection of the associated Hurwitz schemes.
Then $\mathcal Z=\emptyset$, provided $\mathsf g \geq  (r_1-1)(r_1'-1)/2$.
\end{proposition}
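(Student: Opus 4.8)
The plan is to argue by contradiction: suppose $C$ is a genus-$\mathsf g$ curve with a point $c_0$ together with rational functions $f,g\in k(C)$ whose divisors are $\dv(f)=r_1(c_1-c_0)$ and $\dv(g)=r_1'(c_1'-c_0)$ for some $c_1,c_1'\in C(k)$. The key observation is that both $f$ and $g$ have a zero of order exactly $r_1$, resp. $r_1'$, at the \emph{same} point $c_0$, and a pole only away from $c_0$. Consider the map $h=(f,g)\colon C\to \P^1\times\P^1$. The bidegree of the image curve $D\subset\P^1\times\P^1$, together with the adjunction formula on $\P^1\times\P^1$, will control the genus of $C$ from above, and the total ramification forced at $c_0$ will make that bound small.

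More concretely, first I would analyze the local picture at $c_0$: since $r_1$ and $r_1'$ are coprime, a local parameter $t$ at $c_0$ satisfies $\ord_{c_0}(f)=r_1$, $\ord_{c_0}(g)=r_1'$, so the monomials $t^{r_1}, t^{r_1'}$ generate (after normalization) a numerical semigroup whose gaps number $(r_1-1)(r_1'-1)/2$ — this is the classical Sylvester/Frobenius count for the semigroup $\langle r_1,r_1'\rangle$. The functions built from $f$ and $g$, namely the products $f^a g^b$, are regular away from $c_0$ and have a pole of order $a r_1 + b r_1'$ at $c_0$; the complement of the value set $\{ar_1+br_1' : a,b\ge 0\}$ in $\mathbb N$ has exactly $(r_1-1)(r_1'-1)/2$ elements. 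I would then use a Weierstrass-gap / Riemann–Roch argument: the functions $f^a g^b$ with $ar_1+br_1'\le N$ are linearly independent in $H^0(C,\mathcal O(N c_0 + (\text{poles away from }c_0)))$, and counting these against Riemann–Roch for large $N$ forces $\mathsf g \le (r_1-1)(r_1'-1)/2$, with the inequality being strict unless the semigroup $\langle r_1,r_1'\rangle$ is literally the Weierstrass semigroup of $c_0$. Combined with the strict genus drop already exploited in Lemma~\ref{lemm:a} (the cover $C\to\P^1$ of degree $r_1$ completely ramified over $0,\infty$ is one thing; having \emph{two} such covers from one point is far more restrictive), this yields the bound.

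Alternatively, and perhaps more cleanly, I would pass directly to the plane-curve model: the subring of $k(C)$ generated by $f$ and $g$ has the property that $f^{r_1'}$ and $g^{r_1}$ have the same divisor up to sign of the pole part (both are $r_1 r_1'$ times a cycle supported away from $c_0$, with a pole of order exactly $r_1 r_1'$ at $c_0$), so $f^{r_1'}/g^{r_1}$ is a function whose divisor is supported away from $c_0$; analyzing the relation satisfied by $f$ and $g$ gives an equation of a plane (or toric-surface) curve of controlled degree, whose arithmetic genus is $(r_1-1)(r_1'-1)/2$, and $C$ (being the normalization) has genus at most that. The main obstacle I anticipate is making the genus-bound argument tight and clean — i.e., verifying that the functions $f^ag^b$ really are independent and that no extra sections appear — which amounts to showing the semigroup generated by the pole orders at $c_0$ is exactly $\langle r_1,r_1'\rangle$ and invoking the standard fact that the number of Weierstrass gaps at any point is exactly $\mathsf g$; the strictness ``$\mathsf g\ge(r_1-1)(r_1'-1)/2$ forces $\mathcal Z=\emptyset$ rather than merely bounding $\mathsf g$'' will need the observation that equality would force $c_0$ to be a point with Weierstrass semigroup $\langle r_1, r_1'\rangle$ \emph{and} that such a curve cannot simultaneously carry the second completely-ramified pencil without dropping genus further, contradicting $\mathsf g \ge (r_1-1)(r_1'-1)/2$.
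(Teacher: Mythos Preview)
Your first approach (the Weierstrass-semigroup count) is correct and genuinely different from the paper's argument. One small slip: with $\dv(f)=r_1(c_1-c_0)$ the point $c_0$ is a \emph{pole} of $f$, not a zero; your later paragraph has this right, and then $f^a g^b\in H^0(C,\mathcal O((ar_1+br_1')c_0))$ shows every element of $\langle r_1,r_1'\rangle$ is a non-gap at $c_0$, so $\mathsf g\le (r_1-1)(r_1'-1)/2$ by Sylvester's gap count together with the fact that the Weierstrass semigroup at any point has exactly $\mathsf g$ gaps. That is already a complete proof --- the extra worry about linear independence of the monomials $f^ag^b$ is unnecessary, since only the pole orders matter. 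The paper instead maps $C$ birationally to $\P^1\times\P^1$ via $(f_{r_1},f_{r_1'})$, onto a curve of bidegree $(r_1,r_1')$ and arithmetic genus $(r_1-1)(r_1'-1)$; at the image of $c_0$ the local model is the monomial cusp $(t^{r_1},t^{r_1'})$, whose $\delta$-invariant is $(r_1-1)(r_1'-1)/2$ (computed from the rational monomial curve of the same bidegree, which has exactly two such cusps and genus $0$), and subtracting this defect gives the same bound. Your second sketch was heading in this direction. The two routes are two faces of the same fact --- the $\delta$-invariant of that cusp \emph{is} the number of gaps of $\langle r_1,r_1'\rangle$ --- but your semigroup argument is more elementary (no surface geometry, no singularity defect), while the paper's makes the bidegree-$(r_1,r_1')$ model explicit. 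Neither argument yields a \emph{strict} inequality; the paper's proof also concludes only $\mathsf g(C)\le (r_1-1)(r_1'-1)/2$, so the boundary case you were struggling with is a minor imprecision in the statement itself and you should not spend effort trying to manufacture strictness.
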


\begin{proof}
The coprimality condition implies that the pair 
of functions $(f_{r_1},f_{r_1'})$, with divisors
$r_1(c_1-c_0)$, resp. $r_1'(c_1'-c_0)$, realizing the configuration,  
gives a map $C\ra \mathbb P^1\times \mathbb P^1$, birational
onto its image. The family of such curves in 
$\mathbb P^1\times \mathbb P^1$ is algebraic.
Hence $\mathsf g(C) \leq (r_1-1)(r_1'-1)/2$. 
A smooth curve in the family
has genus $\mathsf g=(C(C+K)/2) +1$  
(where $K=K_{\mathbb P^1\times \mathbb P^1}$ is the canonical class) 
which gives
$$
(r_1H + r_1'H')((r_1'-2)H + (r_1'-2)H')= (2r_1r_1' -2r_1-2r_1')/2 +1 =(r_1-1)(r_1'-1).
$$
The image of $C$ in $\mathbb P^1\times \mathbb P^1$ has a singularity
in the image of $c_0$, the same singularity as the rational curve
$(t^{r_1},t^{r_1'})$. This rational curve has the same homology class
as $C$ and has exactly two equivalent singularities, at
$(0,0)$ and at $(\infty,\infty)$.
Thus if $(r_1-1)(r_1'-1) - 2\delta (r_1,r_1') = 0$ then
the defect of the singularity is $\delta (r_1,r_1')=(r_1-1)(r_1'-1)/2$, 
which gives a lower bound for the defect for $C$.  
Hence $\mathsf g(C) \leq(r_1-1)(r_1'-1)/2 $.
\end{proof}

\begin{conjecture}
\label{conj:another}
Let  $f_{r_1},f_{r_1'},f_{r_1''}\in k(C)$  be functions as above
and $\lambda\in k^* \setminus \{1\}$. 
Assume that there are four points $c_0,c_1,c_2,c_3\in C(k)$ 
such that
$$
\begin{array}{rcl}
{\rm div}(f_{r_1})&  = & r_1(c_0-c_1)\\
{\rm div}(f_{r_1'})&  = & r_1'(c_0-c_2)\\
{\rm div}(f_{r_1}'')&  = & r_1''(c_0-c_3)
\end{array}
$$
and such that 
$$
f_{r_1}(c_2)=1 \text{ and } f_{r_1}(c_3)=\lambda.
$$
Then there are only finitely many curves $\tilde{C}$ with 
the same property.
\end{conjecture}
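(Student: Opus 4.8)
\section*{Proof proposal for Conjecture~\ref{conj:another}}

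The plan is to convert the data $(f_{r_1}, f_{r_1'}, f_{r_1''})$ together with the normalizations $f_{r_1}(c_2)=1$, $f_{r_1}(c_3)=\lambda$ into a morphism from $C$ to a fixed rational variety, and then invoke finiteness of families of curves of bounded genus in that variety, exactly as in the supertransversality proposition above. Concretely, first I would use the pair $(f_{r_1}, f_{r_1'})$: the coprimality of $r_1, r_1'$ gives, as in the proposition, a map $C \to \mathbb P^1 \times \mathbb P^1$ that is birational onto its image, and the image has a prescribed singularity type at the image of $c_0$ (the cusp of $(t^{r_1}, t^{r_1'})$) and meets the boundary divisors in the prescribed incidence pattern dictated by the points $c_1, c_2$. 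The extra function $f_{r_1''}$ and the two conditions $f_{r_1}(c_2)=1$, $f_{r_1}(c_3)=\lambda$ then pin down a third coordinate: one gets a map $C \to (\mathbb P^1)^3$ (or into a suitable blow-up), still birational onto its image, whose image lies in a \emph{fixed} linear system determined only by $r_1, r_1', r_1''$ and has \emph{fixed} local singularity data and \emph{fixed} intersections with the coordinate hyperplanes (including the condition that the fiber $\{f_{r_1}=1\}$ passes through the marked point $c_2$ and $\{f_{r_1}=\lambda\}$ through $c_3$).

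The key point is that all of this cuts out a subvariety of the Hilbert scheme (or Chow variety) of $(\mathbb P^1)^3$ that is of \emph{bounded dimension} — the bidegrees are bounded by $r_1, r_1', r_1''$, and the genus $\mathsf g$ of $C$ is fixed — and moreover the marked-point incidence conditions $f_{r_1}(c_2)=1$, $f_{r_1}(c_3)=\lambda$ are \emph{codimension-$\ge 1$} conditions that, heuristically by the transversality/supertransversality philosophy of this section, should cut the relevant component down to dimension $0$. So the steps are: (i) build the map $C \hookrightarrow (\mathbb P^1)^3$ from the four functions and the two value conditions; (ii) identify the homology class and the imposed singularity scheme at the images of $c_0, c_1, c_2, c_3$; (iii) observe that curves of genus $\mathsf g$ in $(\mathbb P^1)^3$ with fixed class and fixed local singularity data form a bounded family, hence finitely many components; (iv) show each such component, intersected with the locus where $f_{r_1}$ takes the values $1, \lambda$ at the two distinguished points, is finite — using that the value $\lambda \in k^* \setminus \{1\}$ is \emph{fixed}, so this is one more rigid constraint — and conclude there are only finitely many $C$ up to isomorphism.

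The main obstacle I anticipate is step (iv): controlling the dimension of the locus inside a positive-dimensional component of the Hurwitz-type family where the cross-ratio-like quantity $f_{r_1}(c_3)/f_{r_1}(c_2)$ equals the prescribed value $\lambda$. A priori $f_{r_1}$ is only determined up to scalar by its divisor, so the conditions $f_{r_1}(c_2)=1$ and $f_{r_1}(c_3)=\lambda$ together amount to a single constraint $f_{r_1}(c_3)/f_{r_1}(c_2)=\lambda$ on the geometry, and one must rule out that this constraint is automatically satisfied (as happened with $R_3 = \{2,3\}$ in the Remark above, where the naive intersection turned out to be empty rather than of the expected dimension) or, conversely, that it fails to be a genuine codimension-one condition on every component. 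Handling this will presumably require a local analysis at $c_0$ analogous to the defect-of-singularity computation in the supertransversality proof, possibly combined with Hrushovski's theorem \cite{hrush} applied to the relevant Jacobian fibration, as was done in the proof of Theorem~\ref{thm:2.3}, to bound the number of such specially-positioned points on a nonisotrivial family. I would also need the characteristic hypothesis $p \nmid r_1 r_1' r_1''$ to keep all the covers separable, so that the genus computations and the structure of the singularity at $c_0$ behave as in characteristic zero.
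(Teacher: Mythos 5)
The statement you are attacking is Conjecture~\ref{conj:another}, and the paper does not prove it. Immediately afterward the authors only remark that it \emph{would imply} a dimension bound on the triple intersection of Hurwitz schemes and hence the finiteness part of Conjecture~\ref{conj:four-points}, and they add that they ``don't know whether or not this intersection is irreducible.'' So there is no proof in the paper against which to compare your attempt.

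On the merits of your sketch: the setup --- embed $C$ into $(\mathbb P^1)^3$ via the three functions, bound multidegrees by $r_1,r_1',r_1''$, bound the family by the singularity type at $c_0$, and then try to use the two value conditions on $f_{r_1}$ to cut a one-dimensional family down to a finite set --- is a reasonable reading of the supertransversality philosophy of the section. But your argument has a genuine gap, one you flag yourself in your last paragraph, and it is not a technical loose end: it is the entire content of the conjecture. You need the condition $f_{r_1}(c_3)/f_{r_1}(c_2)=\lambda$ (with $\lambda\in k^*\setminus\{1\}$ fixed) to be a genuine codimension-one constraint on every positive-dimensional component of the triple Hurwitz intersection, and your sketch contains no argument for this beyond saying it ``should'' hold ``heuristically.'' The Remark about $R_3=\{2,3\}$ just before the supertransversality proposition is a cautionary example in precisely this direction: the naive expected-dimension count gives $\mathsf g$, yet the actual intersection is trivial, so dimension counts can fail in either direction and neither failure mode is ruled out by your outline. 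The tool you propose to close the gap, Hrushovski's theorem, does not obviously apply: it bounds prime-to-$p$ torsion points of a nonisotrivial curve in an abelian variety over a function field, whereas the condition here concerns the value of a rational function at a marked point, which is not a priori a torsion condition in the Jacobian fibration. As written, your proposal reformulates the conjecture as a transversality assertion on Hurwitz/Severi-type families without supplying a mechanism to prove that transversality, which is exactly why the authors left the statement as a conjecture.
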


This would imply that the 3-point scheme intersection  
$$
\mathcal D_{R_1, \mathsf g}\cap  \mathcal D_{R_1', \mathsf g}\cap  
\mathcal D_{R_1'', \mathsf g}\subset \mathcal M_{\mathsf g,1}
$$
has dimension at most 1, and consequently the finiteness part of 
Conjecture~\ref{conj:four-points}.
We don't know whether or not this intersection
is irreducible. We would expect it at least for
sufficiently large coprime $r_1,r_1',r_1''$.

\section{Formal automorphisms}
\label{sect:formal}

Let $A$ be an abelian variety, $A^1$ a principal homogeneous space for $A$ and 
$X\subset A^1$ a subvariety not preserved by
the action of an abelian subvariety of $A$ of positive dimension. 

\begin{lemma}
The subgroup
$$
\Stab_X:=\{ a\in A(k)\,|\, a+X(k)\subset X(k) \}
$$
is finite.
\end{lemma}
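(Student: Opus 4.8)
The plan is to bound $\Stab_X$ in terms of the geometry of $X$ by showing it acts freely (or with bounded stabilizers) on the finitely many components of a well-chosen cycle, and then to use the hypothesis that $X$ is not stabilized by any positive-dimensional abelian subvariety. First I would observe that $\Stab_X$ is a subgroup of $A(k)$: it is clearly closed under addition, and since $A(k)$ is a torsion group (here $k=\bar{\mathbb F}_p$), closure under addition forces closure under inverses, so $\Stab_X$ is genuinely a group. Next I would pass to the Zariski closure $S:=\overline{\Stab_X}\subset A$. Because $\Stab_X$ is a group of translations preserving $X(k)$, and $X$ is defined over a finite field (so $X(k)$ is Zariski-dense in $X$), the closure $S$ is an algebraic subgroup of $A$ preserving $X$ as a variety; writing $S^0$ for its identity component, $S^0$ is an abelian subvariety of $A$ preserving $X$. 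By hypothesis $S^0$ cannot have positive dimension, hence $S^0$ is trivial and $S$ is a finite group scheme. Therefore $\Stab_X\subset S(k)$ is finite.

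The key step is the transition from ``$\Stab_X$ preserves the set $X(k)$'' to ``$\overline{\Stab_X}$ preserves the variety $X$.'' This is where I expect the only real subtlety to lie: one must know that $X(k)$ is Zariski-dense in $X$, which holds because $X$ and the action of $A$ are defined over some finite field $k_0$ with $k=\bar k_0$, and a positive-dimensional variety over $\bar{\mathbb F}_p$ has a dense set of $\bar{\mathbb F}_p$-points (every point of $X$ lies in $X(k_n)$ for some finite extension $k_n/k_0$). Given density, for each $a\in\Stab_X$ the translation $t_a$ maps the dense subset $X(k)$ into $X(k)\subset X$, hence $t_a(X)\subseteq X$, and by dimension/irreducibility considerations (translation is an isomorphism of $A$) $t_a(X)=X$ on each irreducible component, so the whole closure $S$ stabilizes $X$. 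One small point to handle: if $X$ is reducible, $S$ permutes its finitely many components, so $S^0$ fixes each component and the argument about abelian subvarieties applies componentwise.

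The main obstacle, then, is purely the passage through Zariski closure and the finite-field density of points; once $S^0$ is identified as an abelian subvariety of $A$ preserving $X$, the hypothesis on $X$ immediately kills it, and finiteness of $\Stab_X$ follows from finiteness of $S(k)$ for the finite group scheme $S$. I would also remark that this argument does not use anything special about $A$ being a Jacobian, only that $A$ is an abelian variety over $\bar{\mathbb F}_p$, so the lemma is stated and proved at the level of generality needed for later applications (e.g.\ to $X=C(k)\subset J^1(k)$, which is not preserved by any positive-dimensional abelian subvariety precisely because $C$ generates $J$, as used in Lemma~\ref{lemm:a}).
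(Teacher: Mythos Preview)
Your argument is correct and is the standard one. The paper itself does not give a proof at all: it simply writes ``See, e.g., \cite{abramovich}'' and moves on. So there is nothing to compare against beyond noting that your Zariski-closure argument is exactly the kind of proof the cited reference supplies, and that the only nontrivial input---density of $X(k)$ in $X$ over $\bar{\mathbb F}_p$, and closedness of the scheme-theoretic stabilizer $\{a\in A: a+X\subseteq X\}$---you have identified correctly.
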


\begin{proof}
See, e.g.,  \cite{abramovich}.
 \end{proof}

Let $\Aut(A)$ be the group of automorphisms of the
torsion abelian group $A(k)$. Note that $\Aut(A)$ is 
a profinite group since all of its orbits in $A(k)$ are finite.
In fact, we have
$$
\Aut(A)=\Aut(A)_p\times \prod_{\ell\neq p} \GL_{2\mathsf d}(\Z_{\ell}),
$$
with $\mathsf d=\dim A $ and $\Aut(A)_p=\GL_{n}(\Z_p)$, where $n$ is the rank of the 
\'etale $p$-subgroup of $A(k)$.

The group $\Aut(A)$ has a split affine extension 
\begin{equation}
\label{eqn:ext}
1\ra A(k)\ra \Aut(A)^{\rm aff} \ra \Aut(A)\ra 1
\end{equation}
which acts on $A^1(k)$. Since $A(k)$ is a discrete group, 
$\Aut(A)^{\rm aff}$ carries a natural topology. 
Let 
$$
G_X:=\{ \gamma \in \Aut(A)^{\rm aff}\,|\, \gamma(X(k))\subseteq X(k)\subset A^1(k)\}
$$
be the subgroup preserving $X(k)$. We call $G_X$ the group of 
automorphisms of the pair $(X,A)$.

\begin{lemma}
\label{lemm:closed}
The group $G_X$ is closed in the topological group $\Aut(A)^{\rm aff}$.
Its projection to $\Aut(A)$ has finite kernel.
\end{lemma}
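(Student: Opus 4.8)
The plan is to prove the two assertions of Lemma~\ref{lemm:closed} separately, starting with the statement about the kernel of the projection, since it is the easier one and feeds into the closedness argument.

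\emph{The kernel has finite order.} The projection $G_X \to \Aut(A)$ forgets the translation part of an affine transformation. Its kernel consists precisely of those translations $t_a$ by $a\in A(k)$ which preserve $X(k)$, i.e. of elements $a$ with $a + X(k)\subseteq X(k)$. By the hypothesis of the section, $X$ is not preserved by any positive-dimensional abelian subvariety of $A$, so the preceding Lemma applies and $\Stab_X = \{a\in A(k) : a+X(k)\subseteq X(k)\}$ is finite. (One should note that $a+X(k)\subseteq X(k)$ already forces equality, since the cyclic group generated by $a$ is finite and acts on the finite-type set $X(k)$; alternatively this is subsumed in the cited finiteness.) Hence $\Ker(G_X \to \Aut(A)) = \Stab_X$ is finite.

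\emph{$G_X$ is closed.} Recall $\Aut(A)^{\rm aff}$ carries the topology coming from the split exact sequence~(\ref{eqn:ext}): $A(k)$ is discrete, $\Aut(A)$ is profinite (all orbits finite), and the affine extension gets the product-of-a-discrete-and-profinite topology, under which it acts continuously on the discrete set $A^1(k)$. For a single point $x\in X(k)$, the "evaluation at $x$" map $e_x\colon \Aut(A)^{\rm aff}\to A^1(k)$, $\gamma\mapsto \gamma(x)$, is continuous to a discrete space, so $e_x^{-1}(X(k))$ is open, hence closed (its complement $e_x^{-1}(A^1(k)\setminus X(k))$ is likewise open); more cleanly, each $e_x^{-1}(\{y\})$ is clopen, so $e_x^{-1}(X(k))=\bigsqcup_{y\in X(k)} e_x^{-1}(\{y\})$ is closed. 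Then
$$
G_X \;=\; \bigcap_{x\in X(k)} e_x^{-1}\bigl(X(k)\bigr)
$$
is an intersection of closed sets, hence closed in $\Aut(A)^{\rm aff}$.

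The main obstacle — really the only substantive point — is making sure the topology on $\Aut(A)^{\rm aff}$ is set up so that the evaluation maps $\gamma\mapsto\gamma(x)$ are genuinely continuous into the \emph{discrete} set $A^1(k)$, so that preimages of points are clopen; once that is granted, both claims are essentially formal. The finiteness of the kernel rests entirely on the cited Lemma (and ultimately on \cite{abramovich}), so nothing new is needed there; one only has to observe that the kernel is exactly $\Stab_X$ rather than something a priori larger, which follows because $X$ spans $A^1$ as a torsor (equivalently, $X$ is not contained in a translate of a proper abelian subvariety) so the stabilizer is the full $\Stab_X$ and translations are the only affine maps with trivial linear part.
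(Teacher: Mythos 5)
Your argument is correct, and it takes a genuinely different and more direct route than the paper's. You write $G_X=\bigcap_{x\in X(k)} e_x^{-1}(X(k))$ where $e_x\colon\gamma\mapsto\gamma(x)$; since each $e_x$ is continuous into the discrete space $A^1(k)$, every preimage under $e_x$ is clopen, so the intersection is closed, and the finite-kernel claim reduces to the finiteness of $\Stab_X$ from the preceding (unnumbered) Lemma citing \cite{abramovich}. (The paper's own proof cites Lemma~\ref{lemm:a} at this point, which is specific to curves; in the general abelian-variety setting of this section the right reference is the $\Stab_X$ Lemma, exactly as you say.) The paper's proof is much longer and structured differently: it shows by an induction on dimension, using the difference map $X\times X\to A$, that the stabilizer $G_X\cap\Aut(A)_x$ of a point $x\in X(k)$ has finite index in $G_X$, then that this stabilizer is closed in the profinite group $\Aut(A)_x$, and concludes that $G_X$ is a finite union of translates of a closed subgroup. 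This buys strictly more than the lemma states: it shows $G_X$ is compact (profinite modulo a finite translation kernel), not merely closed in the non-compact group $\Aut(A)^{\rm aff}$. Compactness does not follow from closedness plus finite kernel alone --- a closed subgroup of $A(k)\rtimes\Aut(A)$ with finite translation part can still have non-closed image in $\Aut(A)$ --- and it is this stronger conclusion that is actually invoked later, when Theorem~\ref{thm:gd} feeds $G_C$ into Proposition~\ref{prop:group} as a profinite group. So your shorter argument is a correct and more efficient proof of the literal statement, but the paper's heavier argument is quietly earning its keep elsewhere in the section; if you wanted to replace it you would still need a separate argument for finiteness of the $G_X$-orbits on $X(k)$.
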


\begin{proof}
Choose a point $x\in X(k)\subset A^1(k)$ and 
let $\Aut(A)_x\subset  \Aut(A)^{\rm aff}$ be a section of the projection in \eqref{eqn:ext}
of automorphisms fixing $x$. With this choice, we may assume that $X\subset A$. 
We claim that the subgroup 
$G_X\cap \Aut(A)_x$ has finite index in $G_X$, for all $x\in X$, 
and that its image in $\Aut(A)_x$ is closed in the profinite topology.

For each $\alpha\in A(k)$ the stabilizer 
$G_{\alpha}\subset G_X$ has finite index since the order of $\alpha$ is unchanged
under an automorphism of $X$. 
Note that  $G_X$ acts on the subgroup of $A(k)$ generated by zero-cycles
with support in $X(k)$. Consider the map 
$$
\begin{array}{rcc}
X\times X & \ra &  A \\
(x,x') & \mapsto & x-x',
\end{array}
$$
choose an $\alpha \in A(k)$
whose preimage in $X\times X$ is nonempty and of minimal dimension
and let $X_{\alpha}$ be the projection of this preimage to the first factor. 
Note that the stabilizer $G_{\alpha}$ preserves $X_{\alpha}(k)$ and 
that for dimension reasons
$\dim X_{\alpha}  < \dim X $. 
Thus we can assume that $G_{\alpha}$ maps to $G_{X_\alpha}$.
If $\dim X_{\alpha} = 0$ then 
$G_X\cap \Aut(A)_x$ has finite index in $G_{\alpha}$
(and hence in $G_X$) for any $x\in X_{\alpha}(k)$.
There are two possibilities:
\begin{enumerate}
\item  $X_{\alpha}$ is not preserved by the action of a nontrivial proper abelian
subvariety of $A$,    
\item $X_{\alpha}$ is preserved by the action of a nontrivial proper abelian subvariety
$B_{\alpha}\subset A$.
\end {enumerate}
In the first case we use induction on dimension:
if $\dim X_{\alpha} > 0$ then, by the inductive assumption, we can find an
$x\in X_{\alpha}(k)$ with $G_{X_\alpha}\cap \Aut(A)_x$ having
finite index in $G_{X_\alpha}$. The preimage of 
$G_{X_\alpha}\cap \Aut(A)_x$ in  $G_{\alpha}$ 
has also finite index in $G_X$ and we obtain the result.

In the second case, $G_{X_\alpha}$ contains $B_{\alpha}(k)$ as a subgroup
and $G_{X_\alpha}/B_{\alpha}(k)$ has
a subgroup of finite index  $G_{X_\alpha}/B_{\alpha}(k) \cap\Aut(A)_{x'}$,
by the inductive assumption. 
Thus $G_X$ contains a subgroup $G_{B_{\alpha}+x'}x$ of finite index. 
Consider other subvarieties $G_{X_\alpha'}$.
Then either there exists an $X_\alpha'$ which is not preserved
by the action of an abelian subvariety and we can apply the previous argument
to find a point $x\in X_\alpha'(k)$ or there is a  nontrivial $B$ such that all $X_{\alpha}$
are preserved by the action of $B$. Since the union of all $X_{\alpha}$ of minimal
nonzero dimension forms an open subset of $X$ we obtain that
$X$ is preserved by the action of $B$, contradicting our assumption.
Thus we can find at least one $x\in X(k)$ with $G_X\cap \Aut(A)_x$ of finite index
in $G_X$. Note that for other $x'\in X$ we have 
$$
G_X\cap \Aut(A)_x\cap \Aut(A)_{x'} = G_X\cap \Aut(A)_x\cap G_{x-x'}
$$ 
and hence it has finite index in $G_X$.
It follows that  $G_X\cap \Aut(A)_{x'}$ also has finite 
index in $G_X$, for any $x'\in X(k)$.

Thus the orbit $G_X\cdot x$ is finite. The stabilizer of $x$ in $G_X$ 
has finite index and lies in $\Aut(A)_x$. 
It remains to observe that the stabilizer is closed in $\Aut(X)_x$, using the 
same argument. 

Lemma~\ref{lemm:a} implies immediately that the projection has finite kernel.
 \end{proof}

\begin{remark}
The group $G_X$ always contains the procyclic subgroup $\hat{\Z}$ generated
by a Frobenius automorphism, and its extension by a finite group of
algebraic automorphisms of the pair $(X,A)$.  
\end{remark}

\begin{proposition}
\label{prop:support}
Let $A$ be an abelian variety of dimension $\mathsf d$ and  
$X\subset A^1$ a subvariety. Let $G_X$ be the group of 
automorphisms of the pair $(X,A)$.  
Let 
$$
\psi=\prod_{\ell\neq p}\psi_{\ell}\,: \, G_X\ra \prod_{\ell\neq p} \mathsf{GL}_{2\mathsf d} (\Z_{\ell})
$$
be the corresponding homomorphism. Then, for all $\gamma\in G_X, \gamma\neq 1$,
there are infinitely many $\ell$ such that  $\psi_{\ell}(\gamma)\neq 1$. 
\end{proposition}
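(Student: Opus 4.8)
The plan is to argue by contradiction: suppose $\gamma \in G_X$, $\gamma \neq 1$, has $\psi_\ell(\gamma) = 1$ for all $\ell$ outside a finite set $S$ of primes. I want to conclude that $\gamma$ acts trivially, or at least that its action forces $X$ to be preserved by a positive-dimensional abelian subvariety, contradicting the standing hypothesis on $X$. First I would reduce to the case where $\gamma$ fixes a point $x \in X(k)$: by Lemma~\ref{lemm:closed} the stabilizer $G_X \cap \Aut(A)_x$ has finite index in $G_X$, so replacing $\gamma$ by a suitable power $\gamma^n$ (which still has $\psi_\ell(\gamma^n) = 1$ off $S$, and is still nontrivial since $G_X$ has finite kernel over a torsion-free-outside-$S$ profinite group only if... — here one must be slightly careful, so I would instead pass to a power that lies in the stabilizer and separately track that it remains nontrivial) I may assume $X \subset A$ and $\gamma \in \Aut(A)_x = \Aut(A)$, i.e. $\gamma$ is a genuine automorphism of the torsion group $A(k)$.

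Now $\gamma$ acts trivially on $A\{\ell\}$ for every $\ell \notin S \cup \{p\}$. The key point is that $A\{\ell\}$ is Zariski dense in $A$ for each such $\ell$ (the $\ell$-torsion is dense in any abelian variety over $\bar{\F}_p$), and $X(k)$ is a union of $\gamma$-orbits. I would then exploit the following: $\gamma$ preserves $X(k) \subset A(k)$ and fixes pointwise the dense subgroup $A\{S^c\} := \bigoplus_{\ell \notin S\cup\{p\}} A\{\ell\}$. Consider any $x' \in X(k)$; then $x' - x$ lies in $A(k)$ and decomposes as $t + s$ with $t \in A\{S\cup\{p\}\}$ (the part at primes in $S$ and $p$) and $s \in A\{S^c\}$. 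Since $\gamma$ fixes $x$ and $s$, we get $\gamma(x') - x' = \gamma(t) - t$, which lies in the \emph{finite} subgroup $A\{S\cup\{p\}\}[\text{bounded exponent}]$ — but actually in all of $A\{S\cup\{p\}\}$, a subgroup with finite $\ell$-ranks. The heart of the matter is to show this is too restrictive: the map $x' \mapsto \gamma(x') - x'$ is a map from $X(k)$ into a subgroup of $A(k)$ supported at finitely many primes. I would argue that for each fixed $\ell \notin S$, since $X(k) \cap (x + A\{\ell\}^\infty$-coset structure$)$ is dense and $\gamma$ moves points within it by elements of bounded order away from $\ell$, a specialization/rigidity argument (in the style of the Stab-finiteness Lemma and \cite{abramovich}) forces the "translation amount" to come from $\Stab_X$, which is finite; combined over infinitely many $\ell$ this pins down $\gamma$ on a dense set, hence $\gamma = 1$, or else produces a positive-dimensional translation stabilizer.

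The main obstacle I expect is making the density/rigidity step precise: $\psi_\ell(\gamma) = 1$ says $\gamma$ is trivial on torsion at $\ell$, but $G_X$ also has a $p$-part and the bookkeeping at $p$ (where $A\{p\}$ may be small or zero) needs separate care, and one must ensure the passage to a power $\gamma^n$ does not trivialize $\gamma$ — this uses that the image of $G_X$ in $\Aut(A)$, being a closed subgroup of $\GL_n(\Z_p) \times \prod_\ell \GL_{2\mathsf d}(\Z_\ell)$, cannot have a nontrivial element that is trivial in all but finitely many $\ell$-components \emph{and} becomes trivial after raising to a power, unless it was trivial — which is essentially the statement being proved, so the induction must be organized to avoid circularity. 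Concretely I would run the induction on $\dim X$ exactly as in the proof of Lemma~\ref{lemm:closed}: either $X$ is not stabilized by a positive-dimensional abelian subvariety (the given case), and then $\Stab_X$ finite plus the dense-torsion argument gives $\gamma^N = 1$ on a dense set hence $\gamma^N = 1$; or one descends to $X_\alpha$ of smaller dimension and applies the inductive hypothesis there, tracking that $\psi_\ell(\gamma) = 1$ off $S$ is inherited. The conclusion "$\gamma^N = 1 \Rightarrow \gamma = 1$" for $\gamma \in \Aut(A)^{\mathrm{aff}}$ fails in general (torsion exists), so the correct final statement is that $\gamma$ has finite order; but then finite-order automorphisms trivial at almost all $\ell$ are themselves trivial because a nontrivial finite-order element of $\GL_{2\mathsf d}(\Z_\ell)$ is nontrivial mod $\ell$ for $\ell$ large — giving the contradiction and completing the argument.
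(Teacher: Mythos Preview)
Your setup is right --- assume $\psi_\ell(\gamma)=1$ for $\ell$ outside a finite set $S$, adjoin $p$ to $S$, and observe that $\gamma(x')-x'\in A\{S\}$ depends only on the projection of $x'$ to $A\{S\}$ --- but the argument does not close. The fatal step is the last one: you reduce (or hope to reduce) to $\gamma$ of finite order, and then claim that a finite-order automorphism of $A(k)$ trivial at almost all $\ell$ must itself be trivial. That is false: any nontrivial finite-order element of $\prod_{\ell\in S}\GL_{2\mathsf d}(\Z_\ell)$, extended by the identity at $\ell\notin S$, is a counterexample. Your justification (``a nontrivial finite-order element of $\GL_{2\mathsf d}(\Z_\ell)$ is nontrivial mod $\ell$ for $\ell$ large'') concerns reduction modulo $\ell$ within a single component, not vanishing of the $\ell$-component itself, and is a non sequitur here. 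The geometric constraint $\gamma(X(k))\subset X(k)$ has to be used in an essential way at exactly this point, and neither your density sketch nor the induction on $\dim X$ supplies it.

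What the paper does instead is a finite-field point count. Pick $x\in X(k)$ with $a:=\gamma(x)-x\neq 0$; the entire fiber $X_y$ of the projection $X(k)\to A\{S\}$ through $x$ is translated by $a$ into $X(k)$, whence $X_y\subset (X\cap(X-a))(k)$. Since $\Stab_X$ is finite one can arrange $a\notin\Stab_X$, so $X\cap(X-a)$ is a proper closed subvariety of strictly smaller dimension. Now take finite subfields $k_0\subset k$ over which no nonzero element of $A\{S\}$ is defined; there are infinitely many. For such $k_0$ the set $X(k_0)$ lies in a single fiber, hence in a fixed proper subvariety of $X$; comparing $|X(k_0)|\asymp q^{\dim X}$ (Lang--Weil) with the $O(q^{\dim X-1})$ bound for the subvariety gives the contradiction. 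This counting step is entirely absent from your proposal.
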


\begin{proof}
Assume the contrary. Then there is a finite set of primes $S$ and a projection
$X\to A\{S\}$ such that $\gamma$ acts trivially on the fibers.
Let $y\in A\{S\}$ be such that $y\neq \gamma(y)$ and let
$X_y$ be the preimage of $y$ in $X$.
Then $\gamma(X_y) = X_{\gamma(y)}$. Moreover,
$X_{\gamma(y)} = X_y + \gamma(x) - x$.
Thus $X_{\gamma(y)} \subset X\cap (X+\gamma(x)-x)$.
On the other hand, $X\cap (X+ \gamma(x)-x)$ is a proper
subvariety of smaller dimension.

However, the number of points in $X$ and $X_y$
is the same for finite fields over which the points from
$A\{S\}$ are not defined, and the number of such fields is infinite.
Contradiction.
 \end{proof}

\begin{definition}
\label{dfn: formal-iso}
A homomorphism of abelian groups $\phi^0\,:\,  A(k)  \ra A(k)$ is
called a {\em formal isogeny} if it
arises from a sequence $\{\phi^0_i\}$ of {\em algebraic} 
isogenies $\phi^0_i\,:\, A\ra A$,
with the property that for all finite subgroups $G\subset A(k)$, 
there exists an $n(G)\in \N$
such that $\phi^0_i|_G = \phi^0_{i'}|G$, for all $i,i'\ge n(G)$.
\end{definition}

An example is a $\hat{\Z}^*$-power
of the Frobenius endomorphism $\Fr\in \End_{k_0}(A)$.

\begin{proposition}
Let  $(X,A)$ be a pair as in Proposition~\ref{prop:support} and 
let $\gamma\in G_X$ be an element which commutes with the Frobenius
action. Then $\gamma$ is a formal isogeny. 
\end{proposition}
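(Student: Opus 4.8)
The plan is to show that an element $\gamma\in G_X$ commuting with Frobenius is a formal isogeny by producing the required sequence of algebraic isogenies. The key point is that $\gamma$ is determined by its action on torsion, and commuting with Frobenius forces this action to be ``defined over $k_0$'' in an approximate sense that Tate's theorem can detect. First I would use Proposition~\ref{prop:support}: since $\gamma\neq 1$ acts nontrivially on $A\{\ell\}$ for infinitely many $\ell$, and since $\gamma$ commutes with $\Fr$, for each such $\ell$ the induced automorphism $\psi_\ell(\gamma)\in \GL_{2\mathsf d}(\Z_\ell)$ lies in the commutant of $\Fr$ acting on $T_\ell(A)$. By Tate's theorem~\ref{thm:tate} (applied with $J=\tilde J=A$), the commutant $\End_{\Z_\ell[\Fr]}(T_\ell(A))$ is exactly $\End_{k_0}(A)\otimes\Z_\ell$. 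Thus, at least on the prime-to-$p$ part, $\gamma$ is ``locally'' given by an endomorphism of $A$.

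The next step is to promote this local information to a genuine sequence of isogenies. I would argue that, because $G_X$ has finite image in $\Aut(A)$ modulo the constraints above is too weak --- rather, I would work prime by prime: for each finite set $S$ of primes away from $p$, the restriction $\gamma|_{A\{S\}}$ is, by the previous paragraph, induced by an element $u_S\in \End_{k_0}(A)\otimes\Z_{(S)}$ commuting with $\Fr$ (compatibility across different $S$ follows because $\gamma$ is a single map on $A(k)$). One then needs an element of $\End_{k_0}(A)$ itself, not merely of a localization; here I would invoke that $\gamma$, being a group automorphism of $A(k)$, is invertible on each $A[\ell^n]$, so $u_S$ is an isogeny (its reduction mod each $\ell$ is invertible), and approximate $u_S$ integrally. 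Concretely, given a finite subgroup $G\subset A(k)$, choose a finite set of primes $S\supset\{\ell : \ell \mid \ord(G)\}$ and pick $\phi^0_i\in\End_{k_0}(A)$ with $\phi^0_i \equiv u_S \pmod{\ell^{n_i}}$ for all $\ell\in S$, with $n_i\to\infty$; such $\phi^0_i$ exist by weak approximation / density of $\End_{k_0}(A)$ in $\prod_{\ell\in S}\End_{k_0}(A)\otimes\Z_\ell$, using that $\End_{k_0}(A)$ is a finitely generated $\Z$-module of full rank in each completion. These $\phi^0_i$ are isogenies (for $i$ large, since invertibility mod $\ell$ is detected at finite level) and they stabilize on any prescribed finite subgroup, giving exactly a formal isogeny in the sense of Definition~\ref{dfn: formal-iso}.

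Finally I would verify the two compatibility conditions in Definition~\ref{dfn: formal-iso}: that each $\phi^0_i$ is an algebraic isogeny $A\to A$ (immediate, since invertibility of $\psi_\ell(\gamma)$ shows the kernel of the approximant is trivial on the relevant $\ell$-parts, and one can arrange finite kernel), and that $\phi^0_i|_G=\phi^0_{i'}|_G$ for $i,i'\ge n(G)$ (immediate from the congruence $\phi^0_i\equiv u_S$ and $u_S|_G=\gamma|_G$). The $p$-part requires separate but analogous treatment using the $p$-divisible group, or one absorbs it by noting that a formal isogeny is only required to match $\gamma$ on finite subgroups, and for $p$-power subgroups one uses Tate's theorem in its form for $p$-divisible groups together with the étale/connected decomposition.

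\textbf{Main obstacle.} The hard part will be the passage from ``$\gamma$ commutes with $\Fr$ on each $T_\ell$'' to ``$\gamma$ is induced by an actual endomorphism of $A$ over $k_0$, coherently across all $\ell$'' --- that is, assembling the local data $u_S$ into a consistent family and showing the approximants $\phi^0_i$ can be taken to be honest isogenies (finite kernel) rather than arbitrary isogenies-up-to-$p$-isogeny. This is precisely where Tate's theorem~\ref{thm:tate} does the heavy lifting, but one must be careful that $\gamma$ acting on $A(k)$ (all of it, not just $T_\ell$) is genuinely compatible with the family $\{u_S\}$, and that the $p$-primary behaviour does not obstruct integrality; a clean way is to observe that $\gamma$ preserves orders, hence each $\phi^0_i$ approximating it is automatically an isogeny once $i$ is large enough relative to the torsion being tested.
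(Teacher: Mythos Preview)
Your approach is essentially the same as the paper's: invoke Tate's theorem to identify the centralizer of Frobenius in $\End(T_\ell)$ with $\End_{k_0}(A)\otimes\Z_\ell$, then use density of $\End_{k_0}(A)$ to approximate $\gamma$ on each finite subgroup by genuine endomorphisms. The paper compresses all of this into three sentences, leaving the approximation and the isogeny check implicit; you have simply unpacked those steps.

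Two minor remarks. First, your opening appeal to Proposition~\ref{prop:support} is unnecessary: you do not need to know that $\psi_\ell(\gamma)\neq 1$ for infinitely many $\ell$, only that $\psi_\ell(\gamma)$ commutes with $\Fr$ for \emph{every} $\ell$, which is immediate from the hypothesis. Second, your worry about whether the approximants $\phi^0_i$ are isogenies is handled exactly as you indicate at the end: since $\psi_\ell(\gamma)\in\GL_{2\mathsf d}(\Z_\ell)$, any $\phi^0_i$ sufficiently close $\ell$-adically acts invertibly on $T_\ell/\ell T_\ell$, hence injectively on $T_\ell$, hence has nonzero degree. The paper does not spell this out either.
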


\begin{proof}
By Tate's theorem~\ref{thm:tate},  $\End_{k_0}(A)\otimes \Z_{\ell}$
is equal to the centralizer of Frobenius in 
$\End(T_{\ell})$. By assumption, the reduction of $\psi_{\ell}(\gamma)$ 
is in this centralizer, modulo 
any finite power of $\ell$. Thus it is approximated by 
elements in $\End_{k_0}(A)$, on every finite subgroup of $A(k)$. 
\end{proof}

\section{Group-theoretic background}
\label{sect:groups}

In this section we collect some group-theoretic facts which will be needed
in the proof of Theorem~\ref{thm:second} - assuring that 
the Frobenius endomorphisms in $\End_k(J)=\End_k(\tilde{J})$ commute.

\begin{lemma} 
\label{lemm:commute-ell}
Let $\ell > n+1$ be a prime and
$G\subset \GL_{n}(\Z_{\ell})$ a closed subgroup with an abelian
$\ell$-Sylow subgroup.
Assume further that $G$ is generated by its $\ell$-Sylow subgroups. 
Then $G$ is abelian.
\end{lemma}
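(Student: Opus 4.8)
\medskip

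The plan is to reduce the statement to a standard fact about $p$-adic analytic groups: a pro-$\ell$ subgroup of $\GL_n(\Z_\ell)$ is \emph{uniformly powerful} (in particular torsion-free and, if abelian, as tame as one could hope) as soon as it is contained in the congruence kernel $\Ker(\GL_n(\Z_\ell)\to\GL_n(\F_\ell))$, and that this congruence kernel contains \emph{every} pro-$\ell$ subgroup once $\ell > n+1$. Concretely, the first step is to recall that $\GL_n(\F_\ell)$ has order $\prod_{i=0}^{n-1}(\ell^n-\ell^i)$, whose $\ell$-part is $\ell^{\binom n2}$, and that its $\ell$-Sylow subgroup is the group of upper-triangular unipotent matrices; this has exponent $\ell^{\lceil \log_\ell n\rceil}$, which equals $\ell$ precisely when $n \le \ell$, i.e. under our hypothesis $\ell > n+1 > n$. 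Hence a pro-$\ell$ subgroup $P \subset \GL_n(\Z_\ell)$ maps to an elementary abelian $\ell$-group in $\GL_n(\F_\ell)$; but more is true — I would instead invoke the classical result (Serre, or Dixon--du Sautoy--Mann--Segal, \emph{Analytic pro-$p$ groups}) that for $\ell > n+1$ the principal congruence subgroup $\Gamma_1 := \Ker(\GL_n(\Z_\ell)\to\GL_n(\F_\ell))$ is a \emph{torsion-free, uniformly powerful} pro-$\ell$ group, and that any pro-$\ell$ subgroup of $\GL_n(\Z_\ell)$ is conjugate into a maximal pro-$\ell$ subgroup all of whose ``new'' torsion lives in the finite quotient; since under $\ell>n+1$ the group $\GL_n(\F_\ell)$ has no element of order $\ell^2$ in its Sylow, every pro-$\ell$ subgroup $P$ already satisfies $P \subset \Gamma_1$.

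\medskip

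The second step uses this to upgrade ``abelian $\ell$-Sylow'' to ``abelian''. Let $S$ be an $\ell$-Sylow subgroup of $G$; by hypothesis $S$ is abelian and $G = \langle S^g : g \in G\rangle$. Since $S \subset \Gamma_1$ and $\Gamma_1$ is uniformly powerful, $S$ is a free $\Z_\ell$-module of finite rank under the $\log/\exp$ correspondence, and — this is the point — in a uniformly powerful pro-$\ell$ group the operations $\log,\exp$ identify $\Gamma_1$ with a $\Z_\ell$-Lie algebra $\mathfrak g \subset \mathfrak{gl}_n(\Z_\ell)$ in such a way that closed subgroups correspond to $\Z_\ell$-Lie subalgebras. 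Under this correspondence $S$ abelian means $\log S$ is an abelian Lie subalgebra, i.e. a commutative subspace of matrices; and the conjugates $S^g$ correspond to $\mathrm{Ad}(g)(\log S)$. I would then argue that, because all the relevant Sylow subgroups are conjugate inside $G$ and each is a \emph{maximal} abelian-by-finite piece, the Lie subalgebra generated by all the $\mathrm{Ad}(g)(\log S)$ is itself abelian — equivalently, that $\log S$ is already an ideal in $\log G$. Here the cleanest route is: $\log S$, being the Lie algebra of an $\ell$-Sylow of $G$, is self-normalizing up to finite index among Lie subalgebras, forcing $[\log G, \log S]\subseteq \log S$; combined with $[\log S,\log S]=0$ one gets that $\log S$ is central in $\log G$, hence $\log G = \log S$ and $G = S$ is abelian.

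\medskip

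The main obstacle I anticipate is the very last move — showing that the Lie algebra generated by the $\mathrm{Ad}(g)$-conjugates of the abelian subalgebra $\log S$ is again abelian — since this is false for a general abelian subalgebra and genuinely uses that $S$ is a \emph{Sylow} subgroup (so its rank is maximal among abelian $\ell$-subgroups and it cannot be properly enlarged). The safe way to carry this out is to exploit that $G$ is \emph{generated by} $\ell$-Sylows, so $G$ is itself a pro-$\ell$ group, hence $G \subset \Gamma_1$ is uniformly powerful with $\log G$ a Lie subalgebra; then $G$ pro-$\ell$ has a unique $\ell$-Sylow, namely $G$ itself, so the hypothesis ``$\ell$-Sylow abelian'' literally says $G$ is abelian. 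In other words, the real content is just: (i) $\ell>n+1$ forces every pro-$\ell$ subgroup of $\GL_n(\Z_\ell)$ into the torsion-free congruence subgroup $\Gamma_1$ (via the exponent bound on Sylows of $\GL_n(\F_\ell)$), so in particular $G$, being generated by $\ell$-groups, is pro-$\ell$; (ii) a pro-$\ell$ group has itself as its unique Sylow pro-$\ell$ subgroup. I would present the proof in exactly that streamlined order, relegating the exponent computation for $\GL_n(\F_\ell)$ and the citation to \emph{Analytic pro-$p$ groups} for torsion-freeness of $\Gamma_1$ to one or two lines each.
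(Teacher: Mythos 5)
Your argument hinges on the claim that, once $\ell>n+1$, \emph{every} pro-$\ell$ subgroup of $\GL_n(\Z_\ell)$ lies inside the principal congruence subgroup $\Gamma_1=\Ker\bigl(\GL_n(\Z_\ell)\to\GL_n(\F_\ell)\bigr)$, and hence that $G$ — being generated by $\ell$-Sylow subgroups — is itself pro-$\ell$ and therefore equal to its own unique $\ell$-Sylow. That claim is false, and it is the crux of your argument. The hypothesis $\ell>n+1$ only rules out elements of \emph{finite} $\ell$-power order in $\GL_n(\Z_\ell)$ (this is exactly the content of the cyclotomic-degree count you and the paper both perform). It does not force pro-$\ell$ subgroups into $\Gamma_1$: for example, with $u=I+E_{12}$ the closure $\overline{\langle u\rangle}\cong\Z_\ell$ is a pro-$\ell$ subgroup whose image mod $\ell$ is a nontrivial cyclic group of order $\ell$, so it is not contained in $\Gamma_1$. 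Accordingly, an $\ell$-Sylow $S$ of $G$ need not lie in $\Gamma_1$, and so $G=\langle S^g\rangle$ need not be pro-$\ell$; more concretely, being generated by $\ell$-Sylow subgroups is far from implying being a pro-$\ell$ group — compare $\SL_2(\Z_\ell)$, which is generated by its unipotent (hence pro-$\ell$) subgroups but has nontrivial prime-to-$\ell$ quotient $\SL_2(\F_\ell)$. Your intermediate Lie-algebra argument has the same defect: the $\log/\exp$ identification of $S$ with a $\Z_\ell$-Lie subalgebra of $\mathfrak{gl}_n(\Z_\ell)$ requires $S\subset\Gamma_1$, which you have not established.

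What the paper actually does is treat the case where $G$ is not pro-$\ell$. It sets $G_0:=G\cap\Gamma_1$, a normal pro-$\ell$ subgroup that is contained in \emph{every} $\ell$-Sylow subgroup of $G$; since the Sylows are abelian by hypothesis and they generate $G$, it follows that $G_0$ is central, and torsion-free (by $\ell>n+1$), hence $G_0\cong\Z_\ell^r$. This reduces the lemma to analyzing a central extension $1\to G_0'\to G\to G'\to 1$ with $G'$ finite, which the paper then splits and trivializes using group cohomology (Bockstein, coprime-order section, and a careful commutator/eigenvalue argument to show that the finite prime-to-$\ell$ complement must commute with the $\Z_\ell^r$ and hence vanish, again using that $G$ is generated by $\ell$-Sylows). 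You would need something of this shape to handle the mod-$\ell$ quotient of $G$, which your proposal simply assumes away.
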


\begin{proof} 
Since $\ell > n+1$, the group $G$ does not contain elements of finite $\ell$-order.
Indeed, assume that $\gamma\in \GL_n(\Z_{\ell})$ has order $\ell$. 
Then it generates a subalgebra of the matrix algebra which contains a subfield
$\Q_{\ell}(\sqrt[\ell]{1})$, which has dimension $\ell-1$ over $\Q_{\ell}$, and 
has to embed into the natural representation space $\Q_{\ell}^n$. 
This implies that $\ell\le n+1$.

Consider the reduction homomorphism 
$$
\bar{\psi}_{\ell}\,:\, G\ra \GL_n(\Z/\ell).
$$ 
The preimage $G^0=\bar{\psi}^{-1}_{\ell}(1)$ of the identity 
in $\GL_n(\Z/\ell)$ is a normal pro-$\ell$ subgroup. 
In particular, $G_0$ is contained in every $\ell$-Sylow subgroup of $G$. 
Hence $G_0$ is abelian and torsion-free, i.e.,  $G_0\simeq \Z_{\ell}^{r}$, for some $r\in \N$.

\

{\em Step 1.} 
Since $G$ is generated by its $\ell$-Sylow subgroups, which are abelian, 
and $G_0$ is contained in all these subgroups, 
$G_0$ commutes with all elements of $G$.
Let $G_0'$ be the  $\ell$-component of the center of $G$. It is a torsion free
group isomorphic to $\Z_{\ell}^r$ and containing $G_0$ as a subgroup of finite index. 

Thus $G$ is a central extension
\begin{equation}
\label{eqn:cent}
1\ra G_0' \to G \to G'\ra 1
\end{equation}
where $G'$ is a finite group.

\

{\em Step 2.}
This central extension is defined by an element 
$\gamma \subset {\rm H}^2(G',\Z_{\ell})$, which has finite order 
since $G'$ is finite.
Hence $\gamma$ is the image of an element from ${\rm H}^1(G', \Z/\ell^m)$ 
for some $m\in \N$, under the 
Bockstein homomorphism. 
This means that the corresponding central extension is induced
from a homomorphism $G'\to (\Z/{\ell^m})^r$, i.e., 
we have a commutative diagram

\

\centerline{
\xymatrix{
            &                    & \tilde{G} \ar[d]\ar@{=}[r] & \tilde{G}   \ar[d]   & \\
  1\ar[r]   & G_0' \ar[d]\ar[r]  &  G \ar[r] \ar[d]_f         & G' \ar[r] \ar[d]     & 1\\
  1 \ar[r]  & \Z_{\ell}^r \ar[r] &  \Z_{\ell}^r \ar[d] \ar[r] & (\Z/\ell^m)^r\ar[d] \ar[r] & 1\\
            &                    & 1                          & 1                          &,   
}
}
\noindent
where $\tilde{G}=\Ker(f)$ is a finite normal subgroup of $G$.

\

{\em Step 3.}
Since $G$ has no $\ell$-torsion, $\tilde{G}$ 
has order coprime to $\ell$. It follows that $f$ admits a 
section $\sigma\, :\, \Z_{\ell}^r \to G$.

\

{\em Step 4.}
We claim  that $\Z_{\ell}^r$ acts trivially on
$\tilde{G}$ and that the extension
$$
1\ra \tilde{G}  \ra G   \stackrel{f}{\ra} \Z_{\ell}^r\ra  1
$$
splits. 

Let $g\in\GL_n(\Z_\ell)$ be an element of
infinite $\ell$-order (i.e., all reductions $\bar{\psi}_{\ell^m}(g)\in \GL_n(\Z/\ell^m)$ are
of nontrivial $\ell$-power order). Consider an element  $h\in G\subset \GL_n(\Z_\ell)$ of finite order. 
Assume that $g^{\ell}$ commutes with $h$. 
Then $g$ commutes with $h$.

We have $g=g_s g_u$ where
$g_s$ is semi-simple, $g_u$ is unipotent, and $g_s,g_u$ commute.
If an element $h\in \GL_n(\Z_{\ell})$ has finite order
and commutes with $g$ then it commutes with $g_s$ and $g_u$.
Note that $g^{\ell}_u = (g_u)^{\ell}$ and that they have the same
commutators. Thus we can assume $g=g_s$.
In this case the algebra $\Q_{\ell}[g]\subset {\rm Mat}_{n\times n}(\Q_{\ell})$ 
is a direct sum of fields $K_i^{(g)}$ (finite extensions of $\Q_{\ell}$).

The subalgebra in ${\rm Mat}_{n\times n}(\Q_{\ell})$ of elements 
commuting with $h$ is a direct sum of matrix algebras over division algebras with
centers $K_i^{(g)}$. We have a natural embedding of algebras
$\Q_{\ell}[g^{\ell}]\subseteq \Q_{\ell}[g]$.  If this embedding is an isomorphism then
$h$ commutes with $g$. 
Otherwise, there is a proper subfield $K_i^{(g^{\ell})}\subset K_i^{(g)}$, 
which does not contain the projection of $g$ to this component of the matrix algebra. 
Since it contains $g^{\ell}$, the corresponding extension has degree $\ell$, 
contradicting the assumption that $\ell>n$.  

\

{\em Step 5.} 
Since $G$ is generated by its $\ell$-Sylow subgroups and 
all elements of $\tilde{G}$ commute with $\Z_{\ell}^r$, it follows that
$\tilde{G} = 1$ and $G= \Z_\ell^r$.
\end{proof}

\begin{lemma} 
\label{lemm:group-use}
Let $\mathsf H'\to \mathsf H$ be a surjective homomorphism of finite
groups. Assume that we have an exact sequence
$$
1\ra \mathsf S_{\ell}\ra \mathsf H  \ra \mathsf C \ra 1
$$
where $\mathsf S_{\ell}$ is a nontrivial normal $\ell$-subgroup of $\mathsf H$,
$\mathsf C$ is a cyclic group whose order is a power of a prime number $\neq\ell$. 

Then there is an $\ell$-Sylow subgroup $\mathsf S_\ell'\subset \mathsf H'$
such that 
\begin{itemize}
\item $\mathsf S_{\ell}'$ surjects onto $\mathsf S_{\ell}$,
\item the normalizer $\mathsf N'$ of $\mathsf S_{\ell}'$ in $\mathsf H'$ surjects onto $\mathsf H$.
\end{itemize}
In particular, there exists an element $h'\in \mathsf N'$ 
of order coprime to $\ell$ which 
surjects onto a generator of $\mathsf C$.
\end{lemma}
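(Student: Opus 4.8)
The plan is to build $\mathsf S_\ell'$ by Sylow theory inside a convenient normal subgroup of $\mathsf H'$, and then to obtain the surjectivity of its normalizer from the Frattini argument. First I would record the key structural fact: since $\mathsf S_\ell$ is a \emph{normal} $\ell$-subgroup whose quotient $\mathsf C$ has order a power of a prime $\neq\ell$, it is the unique $\ell$-Sylow subgroup of $\mathsf H$, and in particular $[\mathsf H:\mathsf S_\ell]=|\mathsf C|$ is prime to $\ell$. Write $\pi\colon \mathsf H'\to \mathsf H$ for the given surjection, $K=\Ker(\pi)$, and set $\mathsf M:=\pi^{-1}(\mathsf S_\ell)\trianglelefteq \mathsf H'$. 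Let $\mathsf S_\ell'$ be any $\ell$-Sylow subgroup of $\mathsf M$.

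Two index computations complete the construction. Since $[\mathsf H':\mathsf M]=[\mathsf H:\mathsf S_\ell]=|\mathsf C|$ is prime to $\ell$, an $\ell$-Sylow subgroup of $\mathsf M$ is also an $\ell$-Sylow subgroup of $\mathsf H'$; thus $\mathsf S_\ell'$ is one. Next, $\pi(\mathsf S_\ell')\subseteq \mathsf S_\ell$, and under the isomorphism $\mathsf M/K\cong \mathsf S_\ell$ the image $\pi(\mathsf S_\ell')$ corresponds to $\mathsf S_\ell' K/K$, so $[\mathsf S_\ell:\pi(\mathsf S_\ell')]=[\mathsf M:\mathsf S_\ell' K]$ divides $[\mathsf M:\mathsf S_\ell']$ and is therefore prime to $\ell$; but it also divides $|\mathsf S_\ell|$, a power of $\ell$, hence equals $1$. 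Thus $\mathsf S_\ell'$ surjects onto $\mathsf S_\ell$. Now I would invoke the Frattini argument for $\mathsf S_\ell'\le \mathsf M\trianglelefteq \mathsf H'$: for $g\in \mathsf H'$ the subgroup $g\mathsf S_\ell' g^{-1}$ is again $\ell$-Sylow in $\mathsf M$ (because $g\mathsf M g^{-1}=\mathsf M$), hence conjugate to $\mathsf S_\ell'$ by an element of $\mathsf M$; this yields $\mathsf H'=\mathsf M\cdot \mathsf N'$, where $\mathsf N'=N_{\mathsf H'}(\mathsf S_\ell')$. Applying $\pi$ and using $\pi(\mathsf M)=\mathsf S_\ell=\pi(\mathsf S_\ell')\subseteq \pi(\mathsf N')$ gives $\mathsf H=\pi(\mathsf M)\pi(\mathsf N')=\pi(\mathsf N')$, so $\mathsf N'$ surjects onto $\mathsf H$.

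For the final assertion, compose $\mathsf N'\twoheadrightarrow \mathsf H\twoheadrightarrow \mathsf C$, choose $n\in \mathsf N'$ mapping to a generator of $\mathsf C$, and let $h'\in\langle n\rangle\subseteq \mathsf N'$ be the prime-to-$\ell$ part of $n$. Since $|\mathsf C|$ is prime to $\ell$, the $\ell$-part of $n$ maps to $1$ in $\mathsf C$, so $h'$ still maps to a generator of $\mathsf C$, while $h'$ has order prime to $\ell$ by construction; this gives the element claimed. The argument is essentially all Sylow theory plus the Frattini argument, so there is no deep obstacle; the only point that needs genuine care is checking \emph{simultaneously} that $\mathsf S_\ell'$ is $\ell$-Sylow in all of $\mathsf H'$ (not merely in $\mathsf M$) and that it maps \emph{onto} $\mathsf S_\ell$ — and both of these reduce to the coprimality of $|\mathsf C|$ with $\ell$.
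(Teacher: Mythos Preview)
Your proof is correct and follows essentially the same strategy as the paper's: both run the Frattini argument for an $\ell$-Sylow subgroup of $\mathsf H'$ sitting inside a convenient normal subgroup that maps onto $\mathsf S_\ell$ (you use $\mathsf M=\pi^{-1}(\mathsf S_\ell)$, while the paper uses the subgroup generated by all $\ell$-Sylow subgroups of $\mathsf H'$). Your write-up is in fact cleaner and more explicit than the paper's, which leaves the deduction $\pi(\mathsf N')=\mathsf H$ and the extraction of the prime-to-$\ell$ element somewhat implicit.
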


\begin{proof} 
All $\ell$-Sylow subgroups of $\mathsf H'$ surject onto
$\mathsf S_\ell$. Hence they generate a proper normal subgroup $\mathsf S'\subset \mathsf H'$ which
surjects onto $\mathsf S_\ell$. Any $h'\in \mathsf H'$ acts 
(by conjugation) on the set $\mathcal S(\mathsf H')$ of $\ell$-Sylow subgroups of
$\mathsf H'$. 

Since $\mathsf S'$ acts transitively on $\mathcal S(\mathsf H')$ 
there exists an element $s'\in \mathsf S'$ such that
$h's'$ acts with a fixed point on $\mathcal S(\mathsf H')$. 
Let $\tilde{\mathsf S}'$ be an $\ell$-Sylow subgroup preserved by $h's'$. 
The normalizer $\mathsf N'$ of $\tilde{\mathsf S}'$ surjects onto $\mathsf H$. 
In particular, we can find an element  $\tilde{h}'$
contained in this normalizer, of order 
coprime to $\ell$, which is mapped to a generator of $\mathsf C$. 
\end{proof}

Let $\mathsf H$ be a finite group and $\ell,p$ two distinct primes. 
We say that $\mathsf H$ contains an $(\ell,p^m)$-extension
$
\{ \mathsf s\in \mathsf S_{\ell}, \mathsf n\in \mathsf N \}
$
if the following holds: 
\begin{itemize}
\item $\mathsf S_{\ell}\subset \mathsf H$ is an $\ell$-Sylow subgroup,
\item $\mathsf N\subset \mathsf H$ is a subgroup
containing $\mathsf S_{\ell}$ as a normal subgroup,
\item the quotient  $\mathsf C:=\mathsf N/\mathsf S_{\ell}$ 
is a cyclic group of order $p^{m+1}$,
\item $\mathsf n \in \mathsf N$ projects onto a generator of $\mathsf C$, 
\item $\mathsf s\in \mathsf S_{\ell}$ satisfies $
[\mathsf s,\mathsf n^{p^m}]\neq 1 \,\,\text{ in } \,\, \mathsf S_{\ell}$. 
\end{itemize}

\begin{corollary} 
\label{coro:group-use}
Let $\pi\, :\,  \mathsf H'\ra \mathsf H$ be a surjective homomorphism of finite groups.
Assume that $\mathsf H$ contains an $(\ell,p^m)$-extension
$\{ \mathsf s\in \mathsf S_{\ell}, \mathsf n\in \mathsf N \}$.
Then $\mathsf H'$ contains an $(\ell,p^{m'})$-extension
$\{ \mathsf s'\in \mathsf S'_{\ell}, \mathsf n'\in \mathsf N' \}$.
Moreover, 
\begin{itemize}
\item $m'\ge m$,
\item $\pi(\mathsf S_{\ell}')=\mathsf S_{\ell}$,
\item $\pi(\mathsf s') =\mathsf s$, 
\item $\pi(\mathsf n')=\mathsf n$. 
\end{itemize}
\end{corollary}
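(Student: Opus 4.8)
The plan is to derive the corollary from Lemma~\ref{lemm:group-use} by lifting the pair $\{\mathsf{s},\mathsf{n}\}$ along $\pi$ step by step. First I would pass to the preimage $\mathsf{H}_0:=\pi^{-1}(\mathsf{N})$ and apply Lemma~\ref{lemm:group-use} to the surjection $\mathsf{H}_0\twoheadrightarrow \mathsf{N}$ together with the extension $1\to\mathsf{S}_{\ell}\to\mathsf{N}\to\mathsf{C}\to 1$; this is admissible since $\mathsf{S}_{\ell}$ is a nontrivial normal $\ell$-subgroup of $\mathsf{N}$ and $\mathsf{C}$ is cyclic of prime-power order $p^{m+1}$ with $p\ne\ell$. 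The lemma produces an $\ell$-Sylow subgroup $\mathsf{S}_{\ell}'\subseteq\mathsf{H}_0$ with $\pi(\mathsf{S}_{\ell}')=\mathsf{S}_{\ell}$ whose normalizer $\mathsf{M}:=N_{\mathsf{H}_0}(\mathsf{S}_{\ell}')$ surjects onto $\mathsf{N}$. Since $\mathsf{S}_{\ell}$ is a full $\ell$-Sylow of $\mathsf{H}$, the $\ell$-part of $|\mathsf{N}|$ equals the $\ell$-part of $|\mathsf{H}|$, hence the $\ell$-part of $|\mathsf{H}_0|=|\ker\pi|\cdot|\mathsf{N}|$ equals that of $|\mathsf{H}'|$; therefore $\mathsf{S}_{\ell}'$ is in fact an $\ell$-Sylow subgroup of $\mathsf{H}'$, which settles the first bullet of the definition of an $(\ell,p^{m'})$-extension.

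Next I would construct the cyclic part. Because $\mathsf{S}_{\ell}'$ is normal in $\mathsf{M}$ and is its unique $\ell$-Sylow, $\bar{\mathsf{M}}:=\mathsf{M}/\mathsf{S}_{\ell}'$ has order prime to $\ell$ and carries a surjection $\bar\pi\colon\bar{\mathsf{M}}\twoheadrightarrow\mathsf{C}$. Let $\bar{\mathsf{n}}\in\mathsf{C}$ be the image of $\mathsf{n}$; I lift $\bar{\mathsf{n}}$ to an arbitrary element of $\bar{\mathsf{M}}$ and replace it by its $p$-part $\bar{x}$. Since $\bar{\mathsf{n}}$ has $p$-power order, $\bar\pi(\bar{x})=\bar{\mathsf{n}}$, so $\mathsf{C}':=\langle\bar{x}\rangle$ is cyclic of order $p^{m'+1}$ with $m'\ge m$. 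Taking $\mathsf{N}'\subseteq\mathsf{M}$ to be the full preimage of $\mathsf{C}'$ gives $\mathsf{S}_{\ell}'$ normal in $\mathsf{N}'$ with $\mathsf{N}'/\mathsf{S}_{\ell}'=\mathsf{C}'$. A lift $\mathsf{n}_1\in\mathsf{N}'$ of $\bar{x}$ satisfies $\pi(\mathsf{n}_1)\equiv\mathsf{n}\pmod{\mathsf{S}_{\ell}}$, so multiplying $\mathsf{n}_1$ by a suitable element of $\mathsf{S}_{\ell}'$ (which surjects onto $\mathsf{S}_{\ell}$) produces $\mathsf{n}'\in\mathsf{N}'$ with $\pi(\mathsf{n}')=\mathsf{n}$ still projecting onto the generator $\bar{x}$ of $\mathsf{C}'$. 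Finally choose any $\mathsf{s}'\in\mathsf{S}_{\ell}'$ with $\pi(\mathsf{s}')=\mathsf{s}$. At this point all the ``moreover'' assertions ($m'\ge m$, $\pi(\mathsf{S}_{\ell}')=\mathsf{S}_{\ell}$, $\pi(\mathsf{s}')=\mathsf{s}$, $\pi(\mathsf{n}')=\mathsf{n}$) hold, together with the first four bullets in the definition of an $(\ell,p^{m'})$-extension.

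What remains, and what I expect to be the main obstacle, is the commutator inequality $[\mathsf{s}',(\mathsf{n}')^{p^{m'}}]\ne 1$ in $\mathsf{S}_{\ell}'$. When $m'=m$ it is immediate, since $\pi\bigl([\mathsf{s}',(\mathsf{n}')^{p^{m}}]\bigr)=[\mathsf{s},\mathsf{n}^{p^{m}}]\ne 1$ by hypothesis. The genuine difficulty arises when $m'>m$, for then $\mathsf{n}^{p^{m'}}$ lies in $\mathsf{S}_{\ell}$ and may centralize $\mathsf{s}$, so the inequality cannot be read off from $\mathsf{H}$. To handle this one must exploit the freedom still present in the construction: conjugation by $(\mathsf{n}')^{p^{m}}$ does move $\mathsf{s}'$, since its $\pi$-image is $[\mathsf{s},\mathsf{n}^{p^m}]\ne 1$, and the task is to choose the cyclic subgroup $\mathsf{C}'\subseteq\bar{\mathsf{M}}$ lifting $\bar{\mathsf{n}}$, as well as the lift $\mathsf{s}'$ within the coset $\mathsf{s}'\,(\mathsf{S}_{\ell}'\cap\ker\pi)$, so that this non-centralization survives after passing to the $p^{\,m'-m}$-th power; equivalently, so that $\mathsf{C}'$ acts on $\mathsf{S}_{\ell}'$ through a cyclic quotient strictly larger than the quotient through which $\mathsf{C}$ acts on $\mathsf{S}_{\ell}$. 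Controlling $\pi(\mathsf{n}')=\mathsf{n}$, the cyclic structure of $\mathsf{N}'/\mathsf{S}_{\ell}'$, and this surviving action simultaneously is the technical heart of the proof; the rest is a formal application of Lemma~\ref{lemm:group-use}.
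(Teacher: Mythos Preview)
Your construction follows the paper's proof almost step for step: the paper too passes to the preimage of $\mathsf N$, invokes Lemma~\ref{lemm:group-use} to obtain an $\ell$-Sylow $\mathsf S_{\ell}'$ of $\mathsf H'$ whose normalizer contains a lift of $\mathsf n$, replaces that lift by its $p$-part to get $\mathsf n'$ of order $p^{m'+1}$, sets $\mathsf N'=\langle\mathsf S_{\ell}',\mathsf n'\rangle$, and finally chooses any $\mathsf s'\in\pi^{-1}(\mathsf s)\cap\mathsf S_{\ell}'$. Where you stop and flag the commutator condition $[\mathsf s',(\mathsf n')^{p^{m'}}]\neq 1$ as the real obstacle, the paper simply writes ``Then $\{\mathsf s'\in\mathsf S'_{\ell},\mathsf n'\in\mathsf N'\}$ is the required $(\ell,p^{m'})$-extension'' and ends. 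So the difficulty you isolate is not addressed in the paper either.

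Your worry is justified, and in fact the fifth bullet cannot in general be arranged together with $\pi(\mathsf n')=\mathsf n$. Take $\mathsf H=\Z/\ell\rtimes\Z/p^{\,m+1}$ with a faithful action (so $p^{\,m+1}\mid\ell-1$), and $\mathsf H'=\Z/\ell\rtimes\Z/p^{\,m+2}$ where $\Z/p^{\,m+2}$ acts through its quotient $\Z/p^{\,m+1}$; let $\pi$ be the obvious surjection. Then $\mathsf S_{\ell}'=\Z/\ell$ is the unique $\ell$-Sylow, every $\mathsf n'$ with $\pi(\mathsf n')=\mathsf n$ has image generating $\Z/p^{\,m+2}$ in $\mathsf H'/\mathsf S_{\ell}'$, forcing $m'=m+1$; but $(\mathsf n')^{p^{\,m+1}}$ lies in the kernel of the action on $\mathsf S_{\ell}'$, so $[\mathsf s',(\mathsf n')^{p^{\,m+1}}]=1$ for every admissible choice of $\mathsf s',\mathsf n'$. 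Thus the commutator clause, as stated, is not a consequence of the hypotheses.

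What \emph{does} follow immediately from your construction (and the paper's) is the weaker non-commutation $[\mathsf s',(\mathsf n')^{p^{m}}]\neq 1$, with the original exponent $m$ rather than $m'$, since its image under $\pi$ is $[\mathsf s,\mathsf n^{p^{m}}]\neq 1$. Inspecting how the corollary is actually used in the proof of Lemma~\ref{lemm:okk}, this weaker statement appears to be all that is needed (one only needs that the limits $\gamma_{\mathsf s},\gamma_{\mathsf c}$ fail to commute and that in $\bar\psi_r(G)$ one obtains a group whose abelian normal subgroups have index at least $\min(\ell,p^{m})$). So the cleanest fix is to weaken the fifth bullet in the definition of an $(\ell,p^{m'})$-extension in the lifted data to $[\mathsf s',(\mathsf n')^{p^{m}}]\neq 1$; with that change your argument (and the paper's) is complete.
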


\begin{proof}
We start with the exact sequence
\begin{equation}
\label{eqn:another-exact}
1\ra \mathsf S_{\ell}\ra \mathsf N\ra \mathsf C\ra 1.
\end{equation}
The full preimage of $\mathsf N$ in $\mathsf H'$ contains an $\ell$-Sylow subgroup
$\mathsf S_{\ell}'$ of $\mathsf H'$. By Lemma~\ref{lemm:group-use}, the 
normalizer of $\mathsf S_{\ell}'$ in $\mathsf H'$ contains an element $\mathsf n'$
of order coprime to $\ell$ such that $\pi(\mathsf n')=n$, surjecting onto a generator
of $\mathsf C$. We may correct $\mathsf n'$ such that its order becomes a power of $p$. 
It is divisible by the order of $\mathsf C$, i.e., it equals $p^{m'+1}$, with $m'>m$. 
Let $\mathsf N'\subset \mathsf H'$ 
be the subgroup generated by $\mathsf S_{\ell}'$ and $\mathsf n'$. 
Take $\mathsf s'$ to be any element in the preimage $\pi^{-1}(\mathsf s)$.
Then $ \{ \mathsf s'\in \mathsf S'_{\ell}, \mathsf n'\in \mathsf N' \}$ is the required
$(\ell,p^{m'})$-extension. 
\end{proof}

Let $\mathsf G$ be a semi-simple linear algebraic group over $\Z$. 
We will use the following generalization of a theorem of Jordan:

\begin{theorem}
\label{thm:jordan}
Let $k_0$ be a field with $q=p^r$ elements. 
There exists an $n=n(\mathsf{G})\in \mathbb N$ such that
every subgroup $G\subset \mathsf{G}(k_0)$ with $p\nmid |G|$
contains an abelian normal subgroup $H\subset G$ with
$|G/H|\le  n$. 

Further, there exists an $\ell_0=\ell_0(\mathsf G)$ such that
for all primes $\ell'$ and all primes $\ell\ge \ell_0$ with $\ell\neq \ell'$, 
the $\ell$-Sylow subgroups of $\mathsf G(\Z/\ell')$ and 
$\mathsf G(\Z_{\ell'})$ are abelian. 
\end{theorem}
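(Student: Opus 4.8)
The plan is to deduce both assertions from the classical Jordan–Schur theorem for linear groups together with standard facts about Sylow subgroups of finite groups of Lie type. For the first part, recall that Jordan's theorem provides a bound $J(n)$ such that every finite subgroup of $\GL_n(\C)$ has an abelian normal subgroup of index at most $J(n)$; by the work of Weisfeiler (building on the classification), the bound can be taken to depend only on $n$ and not on the characteristic, provided $p\nmid |G|$ so that every finite subgroup of $\mathsf G(k_0)\subset \GL_N(k_0)$ (choosing a faithful representation $\mathsf G\hookrightarrow \GL_N$ over $\Z$) lifts to characteristic zero via Brauer's theorem on $p'$-groups. Thus the first claim follows once we fix such an embedding and set $n(\mathsf G) := J(N)$. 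I would spell this out by: (i) choosing a closed embedding $\mathsf G\hookrightarrow \GL_N$ defined over $\Z$; (ii) invoking Brauer lifting to realize $G$ as a subgroup of $\GL_N(\bar{\Q})$ of the same order; (iii) applying the characteristic-zero Jordan theorem to obtain the abelian normal subgroup $H$ of bounded index, noting the index bound descends to $G$ itself.

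For the second part, the point is that $\mathsf G(\Z_{\ell'})$ and $\mathsf G(\Z/\ell')$ have an $\ell$-Sylow subgroup whose order and structure are controlled by the order formula $|\mathsf G(\F_{\ell'})| = (\ell')^{\dim \mathsf G}\prod_i((\ell')^{d_i}-1)$, where the $d_i$ are the degrees of the fundamental invariants. For $\ell$ large compared to $\mathsf G$ (in particular $\ell \nmid |\mathsf G(\F_{\ell'})|$ when $\ell$ exceeds all $d_i$ and $\ell\neq \ell'$, or more generally $\ell$ divides at most one factor $(\ell')^{d_i}-1$ to the first power), the $\ell$-part of $|\mathsf G(\F_{\ell'})|$ is cyclic and the $\ell$-Sylow subgroup sits inside a maximal torus, hence is abelian. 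I would make this precise by: (i) recalling that a prime $\ell$ dividing $|\mathsf G(\F_{\ell'})|$ but with $\ell \geq \ell_0$ is a torus prime, meaning its Sylow subgroup lies in $N_{\mathsf G}(T)$ for a maximal torus $T$ and in fact in $T$ once $\ell$ does not divide $|W|$; (ii) choosing $\ell_0(\mathsf G)$ larger than $|W|$ and all the invariant degrees $d_i$, so that for $\ell \geq \ell_0$ the $\ell$-Sylow of $\mathsf G(\F_{\ell'})$ is abelian; (iii) lifting to $\mathsf G(\Z_{\ell'})$ by observing that the kernel of $\mathsf G(\Z_{\ell'})\to \mathsf G(\Z/\ell')$ is a pro-$\ell'$ group, hence contributes nothing to the $\ell$-Sylow for $\ell\neq\ell'$, so the $\ell$-Sylow of $\mathsf G(\Z_{\ell'})$ maps isomorphically to that of $\mathsf G(\Z/\ell')$.

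The main obstacle I anticipate is making the first part genuinely uniform in $p$: the naive Jordan bound degenerates badly in small positive characteristic (e.g. $\PSL_2(\F_p)$ has no abelian subgroup of bounded index), and the hypothesis $p\nmid|G|$ is exactly what rescues us — but justifying the $p$-independent bound rigorously requires either citing Weisfeiler/Collins-type effective versions or carefully running the Brauer lifting argument. I would lean on the latter as it is elementary and sufficient for our purposes: a $p'$-subgroup $G\subset \GL_N(\bar\F_p)$ lifts to $\GL_N(\bar\Q_p)$ preserving the group structure, so the classical Jordan bound $J(N)$ applies verbatim. For the second part the only subtlety is correctly identifying which primes are "torus primes" and choosing $\ell_0$ past the relevant arithmetic thresholds; this is standard (cf. the theory of good primes and the structure of Sylow subgroups of finite groups of Lie type), so I expect it to be routine once the invariant degrees are pinned down.
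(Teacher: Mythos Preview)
Your proposal is correct and matches the paper's approach: the paper itself gives no argument beyond the citation ``See \cite{brauer}, \cite{weis}'', and your Brauer-lifting reduction to the classical Jordan theorem for the first assertion, together with the torus-prime analysis for the second, is exactly the content of those references. One minor wobble: the claim that the $\ell$-part of $|\mathsf G(\F_{\ell'})|$ is \emph{cyclic} is not generally true, but you immediately retreat to the correct statement that the $\ell$-Sylow sits in a maximal torus once $\ell>|W|$, which gives abelianness and is all that is needed.
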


\begin{proof}
See \cite{brauer}, \cite{weis}. 
\end{proof}

\begin{proposition}
\label{prop:group}
Let $G$ be a profinite group. Let $S$ be an infinite set of primes. 
Assume that $G$ admits a continuous homomorphism 
$$
\psi=\prod_{\ell\in S} \psi_{\ell} \,:\, G\ra \prod_{\ell\in S} \mathsf G(\Z_{\ell}).
$$
Assume that for all $\gamma \in G$, $\gamma\neq 1$ one has
\begin{equation}
\label{eqn:ass}
\psi_{\ell}(\gamma)\neq 1 \in \mathsf G(\Z_{\ell})
\end{equation}
for infinitely many $\ell\in S$ (i.e., $\gamma$ has infinite support). 
Then 
\begin{enumerate}
\item the induced reduction map
$$
\bar{\psi}:=\prod_{\ell\in S} \bar{\psi}_{\ell} \,:\, G\ra \prod_{\ell\in S}  \mathsf G(\Z/\ell)
$$
is injective;
\item there exists an $\ell_0=\ell_0(\mathsf G)$ such that 
for all primes $\ell>\ell_0$ the $\ell$-Sylow subgroup of $G$ is 
abelian;
\item there exist a normal closed abelian subgroup $H\subset G$
and an $n=n(\mathsf G)$ such that $G/H$ has exponent bounded by $n$, 
i.e., the order of every element in $G/H$ is bounded by $n$. 
\end{enumerate}
\end{proposition}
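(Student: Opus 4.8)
The strategy is to deduce all three assertions by transporting the structural theorem of Jordan--Brauer--Weisfeiler (Theorem~\ref{thm:jordan}) from the finite quotients $\mathsf G(\Z/\ell)$ up to $G$, using the infinite-support hypothesis \eqref{eqn:ass} as the only device linking $G$ to those quotients. First I would prove (1). Suppose $\gamma\in G$ is in the kernel of $\bar\psi$, i.e. $\bar\psi_\ell(\gamma)=1$ for every $\ell\in S$. For each such $\ell$, $\psi_\ell(\gamma)$ lies in the pro-$\ell$ congruence subgroup $\ker(\mathsf G(\Z_\ell)\to\mathsf G(\Z/\ell))$, which is torsion-free. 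But $\gamma$ is an element of a profinite group all of whose $\psi_\ell$-images together determine it (via $\psi$, which need not be injective — so here is a subtlety): the point is rather that if $\gamma\neq 1$ then by \eqref{eqn:ass} there are infinitely many $\ell$ with $\psi_\ell(\gamma)\neq 1$; I then need that at least one such $\ell$ also has $\bar\psi_\ell(\gamma)\neq 1$. This is not automatic from a single $\gamma$, so the argument must instead pass to a procyclic (or pro-$\ell$) subgroup. The clean route: let $N=\ker\bar\psi$, a closed normal subgroup; if $N\neq 1$ pick $1\neq\gamma\in N$ and consider the closed subgroup $\overline{\langle\gamma\rangle}\subset G$; for each $\ell$, the image $\psi_\ell(\overline{\langle\gamma\rangle})$ is a procyclic subgroup of the congruence kernel, hence a torsion-free pro-$\ell$ group $\simeq\Z_\ell$ or trivial, and the inverse limit description forces $\psi_\ell(\gamma)$ to be an $\ell$-th power infinitely often — iterating, $\gamma$ is infinitely $\ell$-divisible in $\psi_\ell(G)$ for infinitely many $\ell$. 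Taking $\gamma'$ an appropriate root one arrives at a contradiction with \eqref{eqn:ass} applied to the root, or more directly: $\gamma\in N$ means every finite-order element of $\overline{\langle\gamma\rangle}$ maps to $1$ under each $\psi_\ell$ (torsion in a torsion-free group), but $\overline{\langle\gamma\rangle}$ is procyclic so if it is not torsion-free it has torsion, contradiction, and if it is torsion-free $\simeq\hat\Z_{T}$ then its nontrivial torsion quotients detect $\gamma$ in $\mathsf G(\Z/\ell)$ for the $\ell$ dividing the orders — this is the step I expect to require the most care.

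Next, (2). Fix $\ell_0=\ell_0(\mathsf G)$ from Theorem~\ref{thm:jordan}, so that for $\ell>\ell_0$ (and $\ell\neq\ell'$) the $\ell$-Sylow subgroups of $\mathsf G(\Z_{\ell'})$ are abelian. Let $\ell>\ell_0$ and let $\mathsf S_\ell\subset G$ be an $\ell$-Sylow subgroup (pro-$\ell$, closed). For each prime $\ell'\in S$ with $\ell'\neq\ell$, the image $\psi_{\ell'}(\mathsf S_\ell)$ is a pro-$\ell$ subgroup of $\mathsf G(\Z_{\ell'})$, hence contained in an $\ell$-Sylow subgroup of $\mathsf G(\Z_{\ell'})$, hence abelian. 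For the one remaining prime $\ell'=\ell$ (if $\ell\in S$), the image $\psi_\ell(\mathsf S_\ell)\subset\mathsf G(\Z_\ell)$ is a pro-$\ell$ group; here I would invoke Theorem~\ref{thm:jordan} with $\ell'=\ell$ is not covered, so instead observe that a pro-$\ell$ subgroup of $\mathsf G(\Z_\ell)$, after conjugation, lies in the standard congruence pro-$\ell$ Sylow, which need not be abelian — so for this single prime I cannot conclude abelianness of the image. The fix: enlarging $\ell_0$ if necessary, use that $S$ is infinite, so pick any $\ell'_1,\ell'_2\in S\setminus\{\ell\}$ distinct and both $>\ell_0$; then for any $x,y\in\mathsf S_\ell$, the commutator $[x,y]$ has $\psi_{\ell'_i}([x,y])=[\psi_{\ell'_i}(x),\psi_{\ell'_i}(y)]=1$ for all $\ell'\in S\setminus\{\ell\}$, i.e. $[x,y]$ has support contained in $\{\ell\}$, a finite set — contradicting \eqref{eqn:ass} unless $[x,y]=1$. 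This is exactly the mechanism: \emph{infinite support forces an element supported on finitely many primes to be trivial}. Hence $\mathsf S_\ell$ is abelian.

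Finally, (3). Apply Theorem~\ref{thm:jordan} (the first clause) to each finite group $\bar\psi_\ell(G)\subset\mathsf G(\Z/\ell)$, noting that these groups have order prime to... no — they may have order divisible by $p$, but for $\ell\neq p$ the relevant statement still applies since Jordan's theorem as quoted controls $p\nmid|G|$; here the cleaner input is the exponent/index bound $n=n(\mathsf G)$ which, after passing to the prime-to-$p$ part or directly, gives in each $\bar\psi_\ell(G)$ an abelian normal subgroup of index $\le n$. Let $H_\ell\subset G$ be the preimage of this subgroup under $\bar\psi_\ell$; then $H_\ell$ is closed, normal, of index $\le n$ in $G$, and $\bar\psi_\ell(H_\ell)$ is abelian. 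Set $H:=\bigcap_{\ell\in S}H_\ell$, a closed normal subgroup; it has index $\le n!$ or at any rate bounded (since all the $H_\ell$ have index $\le n$ in a profinite group, their intersection has index dividing a product of bounded factors — more carefully, $G/H$ embeds in $\prod_\ell G/H_\ell$, each factor of order $\le n$, and $G/H$ is generated by... one must argue the index is bounded, not just the factors; this uses that a profinite group with all finite quotients of index $\le n$ being... ). The correct statement to prove is the one in (3): $G/H$ has exponent bounded by $n$. Since $\bar\psi$ is injective by (1), $G\hookrightarrow\prod_\ell\bar\psi_\ell(G)$; for $g\in G$, in each factor $g^{n!}\in\bar\psi_\ell(H_\ell)$ is abelian, and one chases that the image of $g$ in $G/H$ has order dividing $n!$ in every coordinate, hence $g^{N}\in H$ for $N=n!$ uniformly. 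Thus $G/H$ has bounded exponent. I would take $H$ to be the closure of the subgroup generated by all commutators together with $N$-th powers, and verify abelianness of $H$ from (2) plus the commutator-support argument. The main obstacle throughout is the handling of the finitely many "bad" primes — especially $\ell=p$ and the single prime $\ell'=\ell$ in part (2) — which is overcome uniformly by the observation that finite-support elements vanish by \eqref{eqn:ass}, so I would isolate that as a preliminary lemma and invoke it at each of the three steps.
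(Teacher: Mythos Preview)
Your argument for (2) is correct and is essentially the paper's: commutators in an $\ell$-Sylow subgroup of $G$ have trivial image at every $\ell'\neq\ell$ (since the $\ell$-Sylow of $\mathsf G(\Z_{\ell'})$ is abelian for $\ell>\ell_0$), hence finite support, hence are trivial by \eqref{eqn:ass}. Your sketch for (1) circles around the right idea but never lands on it; the clean version is that a nontrivial $\gamma\in\ker\bar\psi$ generates inside $\prod_\ell K_\ell$ a closed procyclic group isomorphic to $\prod_{\ell'\in S'}\Z_{\ell'}$ for some infinite $S'$, and projecting to a single factor produces a nontrivial element of $G$ supported at the one prime $\ell'$, contradicting \eqref{eqn:ass}.

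The real gap is in (3). You propose to apply Theorem~\ref{thm:jordan} directly to each $\bar\psi_\ell(G)\subset\mathsf G(\Z/\ell)$ to obtain an abelian normal subgroup of index $\le n$. But the first clause of Theorem~\ref{thm:jordan} requires the subgroup to have order coprime to the characteristic, i.e.\ $\ell\nmid|\bar\psi_\ell(G)|$, and there is no reason this should hold. You noticed the problem (``they may have order divisible by $p$'') and then waved it away; this is exactly the case that costs the paper most of the work. The paper's Lemma~4.6 establishes the bounded-index abelian normal subgroup $\mathsf Z_\ell\subset\bar\psi_\ell(G)$ in general by first showing (via Lemma~\ref{lemm:commute-ell} and part (2)) that all elements of $\ell$-power order in $\bar\psi_\ell(G)$ commute, so the $\ell$-Sylow $\bar{\mathsf S}_\ell$ is abelian and normal; then applying Jordan to the $\ell'$-free quotient $\mathsf U_\ell$; and finally bounding the index of the centraliser of $\bar{\mathsf S}_\ell$ inside the resulting extension. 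That last bound is Lemma~4.7, which is proved by a delicate lifting argument: assuming a long $p$-power orbit on $\bar{\mathsf S}_\ell$, one constructs compatible $(\ell,p^{m_i})$-extensions in the finite quotients $G_i$ (Corollary~\ref{coro:group-use}), passes to the limit in $G$, and projects to a large auxiliary prime $r$ to contradict Jordan there. None of this machinery is present in your outline, and your fallback definition of $H$ (``closure of commutators together with $N$-th powers'') has no reason to be abelian. Once Lemma~4.6 is in hand, your concluding step is correct: with $H:=\bar\psi^{-1}(\prod_\ell\mathsf Z_\ell)$, any commutator in $H$ dies under every $\bar\psi_\ell$ and hence is trivial by (1), and $g^\kappa\in H$ for all $g$ since this holds in each factor.
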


\begin{proof}
Put 
$$
K_{\ell}:=\Ker(\mathsf G(\Z_{\ell})\ra \mathsf G(\Z/\ell)).
$$ 
We have an exact sequence
$$
1\ra \prod_{\ell\in S}K_{\ell} \ra 
\prod_{\ell\in S}\mathsf G(\Z_{\ell})\ra \prod_{\ell\in S}\mathsf G(\Z/\ell)\ra 1
$$
Our assumption implies that $\psi$ is injective, and we get an injection of 
the kernel of the reduction 
$\Ker(\bar{\psi})\hookrightarrow \prod_{\ell\in S}K_{\ell}$.
If we had a nontrivial 
$\gamma\in \Ker(\bar{\psi})$, its image $\psi(\gamma)$ would generate a nontrivial 
closed procyclic subgroup
isomorphic to $\prod_{\ell'\in  S'}\Z_{\ell'}\subset \prod_{\ell \in S}K_{\ell}$, 
for some infinite set $S'\subset S$. Thus, there would exist a nontrivial 
element $\gamma_{\ell'}\in \Ker(\bar{\psi})$
such that $\psi_{\ell}(\gamma_{\ell'})=1$ for all $\ell\neq \ell'$, contradicting our assumption. 
This proves the first claim.

The second claim follows by combining the injectivity of 
$$
\prod_{\ell'\in S\setminus \ell } \psi_{\ell}\,:\, G\ra \prod_{\ell'\in S\setminus \ell} G(\Z/\ell')
$$
with Theorem~\ref{thm:jordan}.

From now on, we assume that $\ell> \ell_0$ 
so that the $\ell$-Sylow subgroup of $G$ is abelian.

\begin{lemma}
\label{lemm:what}
There exists a constant $\kappa=\kappa(\mathsf G)$ such that 
for all $\ell>\ell_0$, there exists a normal abelian subgroup
$
\mathsf Z_{\ell}\subset \bar{\psi}_{\ell}(G)
$
of index 
$$
[\bar{\psi}_{\ell}(G):\mathsf Z_{\ell}]\le \kappa.
$$ 
\end{lemma}

\begin{proof}
If the image $\bar{\psi}_{\ell}(G)\subset \mathsf G(\Z/\ell)$ 
does not contain elements
of order $\ell$ we can directly apply Theorem~\ref{thm:jordan}
to conclude that $\bar{\psi}_{\ell}(G)$ contains a normal abelian subgroup
of index $\kappa(\mathsf G)=n(\mathsf G)$. 

We may now assume that the image does contain elements of order $\ell$. 
We claim that there do not exist
$\gamma,\gamma'\in G$ such that
\begin{itemize}
\item $\bar{\psi}_{\ell}(\gamma),\bar{\psi}_{\ell}(\gamma)$ have $\ell$-power order and 
\item $\bar{\psi}_{\ell}(\gamma),\bar{\psi}_{\ell}(\gamma)$ do not commute
in $\mathsf G(\Z/{\ell})$.
\end{itemize}
Otherwise, both $\psi_{\ell}(\gamma)$ and $\psi_{\ell}(\gamma')$ 
are contained in some $\ell$-Sylow subgroups of $\mathsf G(\Z_{\ell})$, 
which are both abelian, by the assumption $\ell>\ell_0$. 
By Lemma~\ref{lemm:commute-ell}, the subgroup of $\mathsf G(\Z_{\ell})$ generated 
by these $\ell$-Sylow subgroups is abelian, contradicting the second assumption.

If follows that all elements of $\ell$-power order in $\bar{\psi}_{\ell}(G)$ commute, 
so that the group $\bar{\mathsf S}_{\ell}$ generated by them is in fact the
$\ell$-Sylow subgroup of $\bar{\psi}_{\ell}(G)$. 
It is abelian and normal. Consider the exact sequence
\begin{equation}
\label{eqn:sect}
1\ra \bar{\mathsf S}_{\ell}\ra \bar{\psi}_{\ell}(G) \ra \mathsf U_{\ell} \ra 1
\end{equation}
where $\mathsf U_{\ell}:=\bar{\psi}_{\ell}(G)/\bar{\mathsf S}_{\ell}$. 
Since $\ell\nmid |\mathsf U_{\ell}|$ the sequence \eqref{eqn:sect} admits a section and 
there is an embedding 
$$
\mathsf U_{\ell}\hookrightarrow \bar{\psi}_{\ell}(G)\subset\mathsf G(\Z/\ell).
$$ 
We apply Theorem~\ref{thm:jordan} to conclude that $\mathsf U_{\ell}$ has an abelian normal subgroup
$\mathsf A_{\ell}\subset \mathsf U_{\ell}$ with $|\mathsf U_{\ell}/\mathsf A_{\ell}|\le n(\mathsf G)$. 
We have the diagram

\centerline{
\xymatrix{
1\ar[r] &\bar{\mathsf S}_{\ell}\ar[r]   &  \bar{\psi}_{\ell}(G)  \ar[r] & \mathsf U_{\ell}\ar[r] &  1\\   
1\ar[r] &\bar{\mathsf S}_{\ell} \ar@{=}[u]\ar[r]   & {\mathsf H}_{\ell}\ar[r] \ar[u] & \mathsf A_{\ell} \ar[u] \ar[r]& 1 \\
}
}

\

\noindent
where $\mathsf H_{\ell}$ is the full preimage of $\mathsf A_{\ell}$ in $\bar{\psi}_{\ell}(G)$. It is a normal 
subgroup of $\bar{\psi}_{\ell}(G)$ with 
$$
|\bar{\psi}_{\ell}(G)/ {\mathsf H}_{\ell}| =|\mathsf U_{\ell}/\mathsf A_{\ell}|\le n(\mathsf G).
$$
Let $\mathsf Z_{\ell}\subset \mathsf H_{\ell}$ be the centralizer of $\bar{\mathsf S}_{\ell}$, 
it is a normal abelian subgroup of $\mathsf H_{\ell}$.  
Lemma~\ref{lemm:what} follows if we show that the index $[\mathsf H_{\ell}:\mathsf Z_{\ell}]$
is bounded independently of $\ell$.

\

There is a section 
$$
\sigma \,:\,\mathsf A_{\ell} \to \bar{\psi}_{\ell}(G)\subset \mathsf G(\Z/\ell). 
$$ 
In particular, the finite abelian group 
$\mathsf A_{\ell}$ has at most $n:={\rm rank}(\mathsf G)$ generators.
Consider the conjugation action of $\mathsf A_{\ell}$ on $\bar{\mathsf S}_{\ell}$.
For $\mathsf a\in \mathsf A_{\ell}$ let $\mathsf C({\mathsf a})$ be the cyclic subgroup
generated by the image of $\mathsf a$ in the group of outer automorphisms of $\bar{\mathsf S}_{\ell}$.
It suffices to show that for each of the $\le n$ generators 
of $\mathsf A_{\ell}$ the order $|\mathsf C({\mathsf a})|$ 
is bounded independently of $\ell$ and $\mathsf a$.

Let $\mathsf C_p({\mathsf a})\subset \mathsf C({\mathsf a})$ 
be the $p$-Sylow cyclic subgroup, with $p^{m+1}=|\mathsf C_p({\mathsf a})|$.
We have an extension of abelian groups
\begin{equation}
\label{eqn:N}
1\ra \bar{\mathsf S}_{\ell}\ra \mathsf N_{\ell} \ra \mathsf C_p({\mathsf a})\ra 1.
\end{equation}
We claim that the length of the orbits of $\mathsf c\in C_p({\mathsf a})$ on
$\bar{\mathsf S}_{\ell}$ is universally bounded, 
provided that $q:=p^{m}$ and $\ell$ are sufficiently large.  
More precisely, we have:

\begin{lemma}
\label{lemm:okk}
There exists a constant $n'=n'(\mathsf G)$  
such that for all $\mathsf a\in \mathsf A_{\ell}$, all 
$\mathsf s\in \bar{\mathsf S}_{\ell}$ and all generators $\mathsf c$ of 
$\mathsf C_p({\mathsf a})$
the commutator
$$
[\mathsf s,\mathsf c^{q}]=1,
$$ 
provided $\ell, q:=p^m\ge n'$. 
\end{lemma}

\begin{proof}
We will argue by contradiction. 
We have 
$$
G=\varprojlim_i G_i, \,\,\, \text{ where } \,\,\,  G_i:= \prod_{j=1}^i \bar{\psi}_{\ell_j}(G),
$$
$\{\ell_1,\ell_2,\ldots\}$ is the set of primes, with $\ell_1=\ell$,  
and the maps $\pi_{i}\,:\, G_{i+1}\ra G_n$ are the natural projections.  
Assume that 
\begin{equation}
\label{eqn:contr}
[\mathsf s,\mathsf c^q]=\mathsf s'\neq 1 \,\,\text{ in } \,\, \bar{\mathsf S}_{\ell}.
\end{equation}

We apply Corollary~\ref{coro:group-use} inductively to conclude that 
each of the groups $G_i$ has an $(\ell, p^{m_i})$-extension
$$
\{ \mathsf s_i\in \mathsf S_{\ell,i}, \mathsf n_i\in \mathsf N_i\}.
$$
More precisely, 
there is a sequence of groups $\mathsf S_{\ell,i}\subset G_i$ and 
elements $\mathsf s_i, \mathsf n_i\in G_n$ with the following 
properties:
\begin{itemize}
\item $\mathsf S_{\ell,i}$ is an $\ell$-Sylow subgroup of $G_i$,
\item $\mathsf s_i\in\mathsf S_{\ell,i}$
\item $\mathsf n_i$ is in the normalizer of $\mathsf S_{\ell,i}$,
\item $\mathsf n_i$ has order $p^{m_i}$ with $m_i\ge m$,
\item $[\mathsf s_i, \mathsf n_i^{p^{m_i}}]\neq 1$, 
\item $\pi_{i}(\mathsf S_{\ell,i+1})=\mathsf S_{\ell,i}$, $\pi_{i+1}(\mathsf s_{i+1})=\mathsf s_i$, 
$\pi_{i}(\mathsf i_{i+1})=\mathsf n_i$, for all $i$. 
\end{itemize}
The corresponding limits 
$$
\gamma_{\mathsf s}=\varprojlim \mathsf s_i,\,\,\,\, \gamma_{\mathsf c}=\varprojlim \mathsf c_i\in G
$$
have infinite support and don't commute. Thus there exists a prime number $r> \ell, q$ 
(and $\ell_0(\mathsf G)$) 
such that 
$$
[\bar{\psi}_r(\gamma_{\mathsf s}), \bar{\psi}_{r}(\gamma_{\mathsf c})]\neq 1.
$$
Let $i$ be sufficiently large so that the prime $r$ is among the primes $\ell_1,\ldots, \ell_i$.
There is a natural projection 
$$
\bar{\psi}_r \,:\, G_i \ra \bar{\psi}_r(G)\subset \mathsf G(\Z/r).
$$
The $\ell$-Sylow subgroup $\mathsf S_{\ell,i}$ surjects onto the $\ell$-Sylow subgroup of 
$\bar{\psi}_r(G)$, which is abelian by Theorem~\ref{thm:jordan}.
Let 
$$
\bar{\mathsf N}_r\subset \bar{\psi}_r(G)\subset \mathsf G(\Z/r)
$$
be the {\em nonabelian} group generated by
$\bar{\psi}_r(\gamma_{\mathsf s})$ and $\bar{\psi}_r(\gamma_{\mathsf n})$, 
i.e., by $\bar{\psi}_r(\mathsf s_i)$ and $\bar{\psi}_r(\mathsf n_i)$.
It fits into an exact sequence
$$
1\ra \bar{\mathsf S}_{\ell, r}\ra \bar{\mathsf N}_r\ra \bar{\mathsf A}_r\ra 1,
$$
where $\bar{\mathsf S}_{\ell, r}$ is an abelian group of $\ell$-power order, 
$\bar{\mathsf A}_r$ a cyclic abelian group of order divisible by 
$p^{m+1}$, $p\neq \ell$.

Since $r\nmid  |\bar{\mathsf N}_r|$
we can apply Theorem~\ref{thm:jordan}: Any subgroup of $\mathsf G(\Z/r)$ 
of order coprime to $r$ has a normal abelian subgroup of index bounded by 
some constant $n(\mathsf G)$. 
However, any abelian normal subgroup of  
$\bar{\mathsf N}_r$
has index $\ge \min (\ell, q)$. 
We obtain a contradiction, when $\ell$ and $q$ are $\ge n(\mathsf G)$.
\end{proof}

This finishes the proof of Lemma~\ref{lemm:what}.
\end{proof}

We complete the proof of Proposition~\ref{prop:group}. Indeed, put
$$
\bar{H}:=\prod_{\ell\in S}\left( \bar{\psi}_{\ell}(G)\cap \mathsf Z_{\ell}\right) \subset 
\prod_{\ell\in S}\mathsf G(\Z/\ell).
$$
This is an closed abelian normal subgroup of $\psi(G)=\prod_{\ell\in S} \bar{\psi}_{\ell}(G)$.
Since $\psi$ is an injection, the preimage $H:=\psi^{-1}(\bar{H})$ is a closed abelian normal
subgroup of $G$. By Lemma~\ref{lemm:what},  
$[\bar{\psi}_{\ell}(G):\mathsf Z_{\ell}]\le \kappa$, for all $\ell$, 
the quotient $G/H$ has exponent bounded by $\kappa$.  
\end{proof}

\section{Curves and their Jacobians}
\label{sect:jab}

Let $C$ be a smooth projective curve of genus $\mathsf g\ge 2$ over a field $k$ 
and $J^n$ the Jacobian of degree $n$ zero-cycles, or alternatively, degree $n$ line
bundles on $C$, with the convention $J=J^0$. 
We have the diagram

\

\centerline{
\xymatrix{
C^n \ar[r]^{\!\!\!\!\!\!\!\!\!\sigma_n}   &  C^{(n)}\ar[d]^{\varphi_n}\\
                     {}                             & J^n. 
}}

\

\noindent
For any field $k_0$ we denote by $C^{(n)}(k_0)$ the set of 
$k_0$-points of the variety $C^{(n)}$, i.e., the set of
effective cycles $c_1+\ldots +c_n$ defined over $k_0$. 
We write $C(k_0)^{(n)}\subset C^{(n)}(k_0)$ for the subset 
of cycles $c_1+\ldots +c_n$ where {\em each} $c_i$ 
is defined over $k_0$. 
Put 
$$
W_n^r(C):=\{ [L]\in J^n \,|\, \dim {\rm H}^0(C,L)\ge r+1\}, \,\,\,\,\,\, W_n(C):=W_n^0(C).
$$

The map $\varphi_n$ is surjective for $n\ge\mathsf g$.  For $n=\mathsf g$ 
there is a divisor $D\subset J$ such that for all $x\in J(k)\setminus D(k)$, 
the fiber $\varphi^{-1}_n(x)$ consists of one point. 
For $n\ge 2\mathsf g-1$, the map $\varphi_n$ is a $\P^{n-\mathsf g}$-bundle.

We may fix a point $c_0\in C(k_0)$ 
and the embedding 
$$
\begin{array}{ccc}
C & \hookrightarrow & J\\
c &\mapsto & [c-c_0].
\end{array}
$$
This allows us to identify $J^n$ and $J$.

\

\begin{lemma}
\label{lemm:M}
Consider the exact sequence 
$$
1\ra \Z_{\ell}^n\ra \Q_{\ell}^n\ra (\Q_{\ell}/\Z_{\ell})^n\ra 1.
$$
Let $M\in \End(\Z_\ell)$ be an endomorphism which is contained in 
$\GL_n(\Q_{\ell})$. Consider the induced action
$$
M \,:\, (\Q_{\ell}/\Z_{\ell})^n\ra (\Q_{\ell}/\Z_{\ell})^n, 
$$
and let $\Ker(M)$ be the kernel of this map.
Then there is a canonical isomorphism
$$
\Z_{\ell}^n / M(\Z_{\ell}^n)\simeq \ker(M).
$$
\end{lemma}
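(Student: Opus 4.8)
The plan is to apply the snake lemma to the morphism of short exact sequences given by multiplication by $M$. First I would record the commutative diagram whose two rows are copies of $0\to\Z_\ell^n\to\Q_\ell^n\to(\Q_\ell/\Z_\ell)^n\to0$ and whose three vertical arrows are all $M$. Since $M\in\GL_n(\Q_\ell)$, the middle vertical arrow $M\colon\Q_\ell^n\to\Q_\ell^n$ is an isomorphism, and the left arrow $M\colon\Z_\ell^n\to\Z_\ell^n$ is injective, being the restriction of an injective map. The snake lemma then yields the exact sequence
\[
0\to\ker(M|_{\Z_\ell^n})\to\ker(M|_{\Q_\ell^n})\to\Ker(M)\xrightarrow{\ \delta\ }\Z_\ell^n/M(\Z_\ell^n)\to\operatorname{coker}(M|_{\Q_\ell^n})\to\operatorname{coker}(M|_{(\Q_\ell/\Z_\ell)^n})\to0,
\]
in which $\ker(M|_{\Z_\ell^n})=\ker(M|_{\Q_\ell^n})=0$ and $\operatorname{coker}(M|_{\Q_\ell^n})=0$, so the connecting map $\delta\colon\Ker(M)\to\Z_\ell^n/M(\Z_\ell^n)$ is an isomorphism; its inverse is the isomorphism claimed in the statement.

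Next I would make $\delta$ explicit, both to exhibit the canonical isomorphism and to give a self-contained argument that does not invoke the snake lemma by name: for $\bar v\in(\Q_\ell/\Z_\ell)^n$ with $M\bar v=0$, pick any lift $\tilde v\in\Q_\ell^n$; then $M\tilde v$ lies in $\Z_\ell^n$, since its image in $(\Q_\ell/\Z_\ell)^n$ is $M\bar v=0$, and one sets $\delta(\bar v):=[M\tilde v]\in\Z_\ell^n/M(\Z_\ell^n)$. The routine checks are: independence of the lift (two lifts differ by an element of $\Z_\ell^n$, which $M$ carries into $M(\Z_\ell^n)$); additivity; surjectivity (given $w\in\Z_\ell^n$, take $\tilde v:=M^{-1}w\in\Q_\ell^n$ and let $\bar v$ be its class, so $M\bar v=0$ and $\delta(\bar v)=[w]$); and injectivity (if $M\tilde v=Mu$ with $u\in\Z_\ell^n$, then $M(\tilde v-u)=0$, so $\tilde v=u\in\Z_\ell^n$ by invertibility of $M$ over $\Q_\ell$, hence $\bar v=0$).

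I do not expect any real obstacle here: the content is entirely the snake lemma together with the fact that $M\in\GL_n(\Q_\ell)$ makes $M$ an automorphism of $\Q_\ell^n$ and an injection on $\Z_\ell^n$. The only points requiring a line of care are the well-definedness of $\delta$ and, if one writes out the by-hand version, the two short verifications of surjectivity and injectivity displayed above.
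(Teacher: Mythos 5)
Your proof is correct and is, at bottom, the same argument as the paper's: the paper identifies $\Ker(M)$ with the submodule $M^{-1}(\Z_\ell^n)/\Z_\ell^n$ of $(\Q_\ell/\Z_\ell)^n$ and notes that $M$ carries it isomorphically onto $\Z_\ell^n/M(\Z_\ell^n)$, which is precisely the explicit description of your connecting map $\delta$ (lift $\bar v$ to $\tilde v$, observe $M\tilde v\in\Z_\ell^n$, take its class). The snake-lemma packaging is a matter of presentation, not of substance.
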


\begin{proof}
Consider the module 
$M^{-1}(\Z_{\ell}^n)/\Z_{\ell}^n$ as a submodule of $(\Q_{\ell}/\Z_{\ell})^n$.
It is equal to $\Ker(M)$.
On the other hand, $M$ induces an isomorphism
$$
M^{-1}(\Z_{\ell}^n)/\Z_{\ell}^n \simeq \Z_{\ell}^n/M(\Z_{\ell}^n).
$$ 
\end{proof}

The following lemma will be used in Section~\ref{sect:isog}. 

\begin{lemma}
\label{lemm:ell-points}
Fix a prime number $\ell\neq p$ and assume that $J(k_0)\supset J[\ell]$. 
Let $k_1/k_0$ be a degree $\ell$-extension. 
Then 
\begin{itemize}
\item $\frac{1}{\ell} J(k_0)\subset J(k_1)$,
\item $J\{\ell\}\cap J(k_1) = \frac{1}{\ell} J(k_0)\cap J\{\ell\}$. 
\end{itemize} 
\end{lemma}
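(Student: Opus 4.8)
The plan is to recast the statement in terms of the action of the $k_0$-Frobenius $\Fr$ on $J(k)$. Since $\Gal(k/k_0)\cong\hat{\Z}$ is topologically generated by $\Fr$ and extensions of finite fields are cyclic, $\Gal(k/k_1)$ is its unique index-$\ell$ closed subgroup $\overline{\langle\Fr^\ell\rangle}$; hence $J(k_0)=J(k)^{\Fr}=\Ker(\Fr-1)$ and $J(k_1)=J(k)^{\Fr^\ell}=\Ker(\Fr^\ell-1)$, the kernels being taken on $J(k)$. The hypothesis $J[\ell]\subset J(k_0)$ means precisely that $\Fr$ acts trivially on $J[\ell]=T_\ell(J)/\ell T_\ell(J)$, i.e. $\Fr=1+\ell B$ on $T_\ell(J)$ for some $B\in\End_{\Z_\ell}(T_\ell(J))$, and the same congruence holds for the induced action on $J\{\ell\}=V_\ell(J)/T_\ell(J)\cong(\Q_\ell/\Z_\ell)^{2\mathsf g}$.

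First I would prove the first bullet. Given $x\in J(k)$ with $\ell x\in J(k_0)$, set $s:=\Fr(x)-x$; then $\ell s=\Fr(\ell x)-\ell x=0$, so $s\in J[\ell]\subset J(k_0)$ and $\Fr(s)=s$. The telescoping identity $\Fr^n(x)=x+\sum_{j=0}^{n-1}\Fr^j(s)$ then gives $\Fr^\ell(x)=x+\ell s=x$, i.e. $x\in J(k_1)$. This argument uses nothing about $\ell$, and it immediately yields the inclusion ``$\supseteq$'' of the second bullet: any $x\in J\{\ell\}$ with $\ell x\in J(k_0)$ lies in $\frac1\ell J(k_0)\subset J(k_1)$, hence in $J(k_1)\cap J\{\ell\}$.

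For the reverse inclusion, take $x\in J(k_1)\cap J\{\ell\}$ and again put $s:=\Fr(x)-x\in J\{\ell\}$; I must show $\ell s=0$ (equivalently $\ell x\in J(k_0)$). Applying the operator $Q:=\sum_{j=0}^{\ell-1}\Fr^j$ to $s$ telescopes to $Q(s)=\Fr^\ell(x)-x=0$. Now I would expand $Q=\sum_{j=0}^{\ell-1}(1+\ell B)^j$ via the hockey-stick identity $\sum_{j=i}^{\ell-1}\binom ji=\binom{\ell}{i+1}$, getting
$$
Q=\sum_{j=0}^{\ell-1}(1+\ell B)^j=\ell I+\binom{\ell}{2}\ell B+\cdots+\ell^{\ell-1}B^{\ell-1}.
$$
For odd $\ell$ every term after $\ell I$ is divisible by $\ell^{2}$ (since $\ell$ divides $\binom{\ell}{k}$ exactly once for $0<k<\ell$ and $\ell-1\ge 2$), so $Q=\ell(I+\ell D)$ with $I+\ell D\equiv I\pmod\ell$ an automorphism of $T_\ell(J)$, hence of $J\{\ell\}$. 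From $\ell\,(I+\ell D)(s)=Q(s)=0$ we get $(I+\ell D)(s)\in\Ker\bigl(\ell\mid J\{\ell\}\bigr)=J[\ell]$, and since multiplication by $\ell$ commutes with $(I+\ell D)^{-1}$ this forces $\ell s=0$, as required. (The two kernel computations can also be phrased through Lemma~\ref{lemm:M}, identifying $J(k_i)\cap J\{\ell\}$ with cokernels of $\Fr^{\ell^{i}}-1$ on $T_\ell(J)$, but this is not needed.)

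The step I expect to be the real obstacle is the case $\ell=2$: there $\binom{\ell}{2}\ell B=2B$ is divisible by $\ell$ only to first order, so $Q=2(I+B)$ and the argument above requires $I+B=(\Fr+1)/2$ to be invertible on $T_2(J)$ — equivalently, that $J\{2\}$ contain no $\Fr$-anti-invariant element of order $>2$. I would handle this either by a mild strengthening of the hypothesis at the $2$-adic place, e.g. using that at the relevant stage of the tower of Section~\ref{sect:isog} the base field already contains $J[4]$, which upgrades the congruence to $\Fr\equiv1\pmod 4$ and restores the estimate, or by invoking the Weil bounds to exclude a Frobenius eigenvalue $\equiv-1\pmod 4$; for every odd $\ell\ne p$ the proof above goes through as stated.
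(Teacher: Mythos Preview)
Your argument for odd $\ell$ is correct and complete. The route differs from the paper's: the paper invokes semisimplicity of Frobenius to split $V_\ell=\bigoplus_i K_i$ into rank-one pieces over local fields, passes to a finite-index submodule $T_\ell'=\bigoplus_i\mathfrak o_i$, uses the scalar identity $a_i^\ell\equiv 1+\ell(a_i-1)\pmod{\ell(a_i-1)\mathfrak m_i}$ to obtain the cardinality $|(\Fr-1)T_\ell/(\Fr^\ell-1)T_\ell|=\ell^{2\mathsf g}$, and then translates this via Lemma~\ref{lemm:M} into the equality of $\ell$-primary fixed points. Your proof bypasses semisimplicity entirely, working directly with $\Fr=1+\ell B$ on $T_\ell$ and the operator identity $Q=\sum_{j<\ell}\Fr^j=\ell(I+\ell D)$; this is more elementary and makes no use of Tate's theorem, at the cost of being slightly less structural. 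Both approaches rest on the same binomial estimate, so it is not surprising that both share the same boundary case.

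Your diagnosis at $\ell=2$ is accurate, and in fact the paper's own proof has the identical gap: equation~\eqref{eqn:frobe} fails when $\ell=2$ and $a_i\equiv 3\pmod 4$, since then $(1+2u)^2-1=4u(1+u)$ has $2$-adic valuation at least $3$ rather than exactly $2$. The second bullet of the lemma is genuinely false for $\ell=2$ without a stronger hypothesis: for a supersingular elliptic factor over $\mathbb F_9$ with $\Fr=3$ one has $J[2]=J(k_0)$ while $J\{2\}\cap J(k_1)=J[8]\supsetneq J[4]=\tfrac12 J(k_0)\cap J\{2\}$. Your first proposed fix, replacing $J[\ell]\subset J(k_0)$ by $J[4]\subset J(k_0)$ at the prime $2$, repairs both proofs (then $\Fr=1+4B'$ and $Q=2(I+2B')$ with $I+2B'$ invertible); this is harmless for the applications in Section~\ref{sect:isog}, where the base field may be enlarged freely. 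Your second proposed fix, appealing to the Weil bounds, does not work: the example above has $\alpha=3=\sqrt q$, so eigenvalues $\equiv -1\pmod 4$ are not excluded.
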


\begin{proof}
Let $\Fr$ be the $k_0$-Frobenius automorphism of $k$. Its action on the Galois-module
$V_{\ell}=V_{\ell}(J)$ is semi-simple, and decomposes 
$V_{\ell}=\oplus_i K_i$, where $K_i/\Q_{\ell}$ are
finite extensions. Note that the eigenvalues of the Frobenius on $V_{\ell}$
are not roots of 1 and hence $\Fr^n-1$ is always an invertible endomorphism of
$V_{\ell}$. The Tate-module $T_{\ell}:=T_{\ell}(J)$ contains a submodule $T_{\ell}'$
of finite index which is preserved by the Galois-action and decomposes as
$T_{\ell}'=\oplus_i \mathfrak o_i$, where 
$\mathfrak o_i\subset K_i$ are the rings of integers.
The maximal ideal of $\mathfrak o_i$ will be denoted by $\mathfrak m_i$.

The Frobenius acts on $\mathfrak o_i$ via multiplication with a 
unit $a_i\in \mathfrak o_i^*$.  
By assumption, it acts trivially on $J[\ell]$. 
Since every $\mathfrak o_i$ is isomorphic to a primitive $\Fr$-submodule
of $T_{\ell}$ we have $a_i=1 \mod \ell$, for all $i$, 
where $(\ell) \subset \mathfrak o_i$ 
is a power of the maximal ideal $\mathfrak m_i$.
We can write
$$
a_i=1+\ell^{m_i}\varpi^{h_i}\,\,\,\, \text{ for }\,\,\, m_i,h_i\in \N,
$$
where $\varpi_i$ is a generator of $\mathfrak m_i$, and the ideal 
$(\varpi^{h_i})\supset (\ell)$. It follows that
\begin{equation}
\label{eqn:frobe}
a_i^{\ell} =1 + \ell^{m_i+1} \varpi^{h_i} \mod \ell^{m_i+1} \varpi^{h_i+1}. 
\end{equation}

Consider the filtration
$$
(\Fr^{\ell}-{\rm Id})T_{\ell}'\subset (\Fr-{\rm Id})\subset T_{\ell}' 
$$
and a similar filtration
$$
(\Fr^{\ell}-{\rm Id})T_{\ell}\subset (\Fr-{\rm Id})\subset T_{\ell}. 
$$
Observe that
$$
|(\Fr-{\rm Id})T_{\ell}/(\Fr^{\ell}-{\rm Id})T_{\ell}| = 
|(\Fr-{\rm Id})T_{\ell}'/(\Fr^{\ell}-{\rm Id})T_{\ell}'| = \ell^{2\mathsf g},
$$
where the first equality follows from the fact that 
$T_{\ell}'\subset T_{\ell}$ is a submodule of finite index, and the second 
assertion follows from Equation~\ref{eqn:frobe}. 
From the exact sequence
$$
1\ra T_{\ell}\ra V_{\ell}\ra J\{\ell\} \ra 1
$$
we observe that $(\Fr-1)T_{\ell}/(\Fr^{\ell}-1)T_{\ell}$ is canonically 
isomorphic to $J\{\ell\}^{\Fr^{\ell}}/J\{\ell\}^{\Fr}$, by Lemma~\ref{lemm:M}. 
Note that
$J\{\ell\}^{\Fr^{\ell}}$ contains $\frac{1}{\ell}J\{\ell\}^{\Fr}$. Indeed,
by our assumption $J[\ell]\subset J\{\ell\}^{\Fr}$. 
If $x\in  J\{\ell\}^{\Fr}$ then $\ell \Fr(\frac{x}{\ell})= x$ and 
hence $\Fr(\frac{x}{\ell}) = \frac{x}{\ell} + x_0$, where $\ell x_0 =0$, so that
$x_0\in J[\ell]$. Iterating, we obtain that
$$
\Fr^{\ell}(\frac{x}{\ell}) = \frac{x}{\ell} + \ell x_0 = \frac{x}{\ell},
\,\,\, \text{ and  } \,\,\, \frac{x}{\ell} \in J\{\ell\}^{\Fr^{\ell}}.
$$
It follows that $J\{\ell\}^{\Fr^{\ell}}/J\{\ell\}^{\Fr}$ contains a subgroup isomorphic to 
$(\Z/\ell)^{2\mathsf g}$. This implies the second claim of the lemma. The first follows
via the same argument applied to arbitrary $x\in J(k_0)$.  
\end{proof}

\begin{lemma}
\label{lemm:yz}
For $n\ge 2\mathsf g-1$, a field $k_0$ such that 
$\# k_0$ is sufficiently large, 
any finite extension $k_1/k_0$ and any point $x\in J(k_1)$ there exist
points $y,z\in \P^{n-\mathsf g}(k_1)=\varphi_n^{-1}(x)$ such that the fiber
$\sigma^{-1}_n(y)$ is irreducible as a cycle over $k_1$ and $\sigma^{-1}_n(z)$ is
completely reducible over $k_1$. 
\end{lemma}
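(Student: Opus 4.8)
The plan is to work on the $\mathbb{P}^{n-\mathsf g}$-bundle $\varphi_n\colon C^{(n)}\to J^n$ (valid since $n\ge 2\mathsf g-1$) and, for a fixed $x\in J(k_1)$, to analyze the fiber $\mathbb{P}^{n-\mathsf g}(k_1)=\varphi_n^{-1}(x)$ by means of the incidence structure coming from $\sigma_n\colon C^n\to C^{(n)}$. The key point is that a $k_1$-point $y$ of the fiber is an effective degree-$n$ divisor $D$ defined over $k_1$, and the $k_1$-structure of $\sigma_n^{-1}(y)$ is governed by how the Frobenius of $k_1$ permutes the geometric points in the support of $D$ (with multiplicities). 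So "$\sigma_n^{-1}(z)$ completely reducible over $k_1$" means $D$ is supported on $k_1$-rational points, i.e.\ $z\in C(k_1)^{(n)}$; and "$\sigma_n^{-1}(y)$ irreducible over $k_1$" means $D=\mathrm{Tr}_{k_1'/k_1}(c)$ for a single closed point $c$ of degree $n$ over $k_1$ (a Galois orbit of size $n$). I would treat these two cases separately.

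First I would handle $z$: I must produce a degree-$n$ effective divisor defined over $k_1$, supported on $k_1$-points, whose class in $J^n$ equals $x$. Fix a point $c_0\in C(k_1)$ (enlarging $k_0$ if necessary so that $C(k_0)\neq\emptyset$; over a finite field this is automatic by Lang–Weil once $\#k_0$ is large, which is among our hypotheses). Using the identification $J^n\cong J$ via $c_0$, we must find $c_1,\dots,c_{n}\in C(k_1)$ with $\sum (c_i-c_0)=x$ in $J(k_1)$, i.e.\ $c_1+\cdots+c_n$ lies in the fiber $\varphi_n^{-1}(x+nc_0)$ and is supported on $k_1$-rational points. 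Since $\varphi_n$ is a $\mathbb{P}^{n-\mathsf g}$-bundle with $n-\mathsf g\ge \mathsf g-1\ge 1$, the fiber is a positive-dimensional linear system $|D_0|$ on $C$ defined over $k_1$; it suffices to show this linear system contains a divisor supported on $k_1$-points. Here I would invoke a count à la Lang–Weil / Weil bounds: the number of $k_1$-points of $C$ is $\ge \#k_1+1-2\mathsf g\sqrt{\#k_1}$, which is large, while the locus in $|D_0|\cong\mathbb{P}^{n-\mathsf g}$ of divisors \emph{not} of the desired form (those whose support meets a non-rational closed point, or has a multiple point) lies in a proper closed subset; a dimension/point-count comparison over $k_1$ — using $\#k_1$ large — forces a rational point of $|D_0|$ of the right type to exist. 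This is where the "$\#k_0$ sufficiently large" hypothesis is used, and it is the main technical obstacle: one needs an effective estimate showing the "bad" sublocus of the fiber has too few $k_1$-points to exhaust $\mathbb{P}^{n-\mathsf g}(k_1)$, uniformly in $k_1$.

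Next I would handle $y$: I need a degree-$n$ closed point of $C_{k_1}$, i.e.\ a point $c$ with residue field $k_1'$ of degree $n$ over $k_1$, whose trace $\mathrm{Tr}_{k_1'/k_1}(c-c_0)$ equals $x$ in $J(k_1)$ — equivalently, a $k_1'$-point of $C$ lying in the fiber $\varphi_n^{-1}(x+nc_0)$ and \emph{not} defined over any proper subfield, cyclically permuted by $\mathrm{Gal}(k_1'/k_1)$. Again the fiber is a $\mathbb{P}^{n-\mathsf g}$ over $k_1$; base-changing to $k_1'$ and applying the same Lang–Weil-type count (now over $k_1'$, whose cardinality is still large) gives many $k_1'$-points of $C$ in the fiber, and only a thin subset of them are defined over a subfield of $k_1'$ or fail the Galois-orbit condition; choosing one outside this subset yields the required $c$. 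The completely-reducible/irreducible dichotomy for $\sigma_n^{-1}$ then follows by definition of the symmetric power over a non-algebraically-closed field.

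In summary: both $y$ and $z$ are obtained by exhibiting suitably generic rational (resp.\ degree-$n$) points in the linear system $\varphi_n^{-1}(x)\cong\mathbb{P}^{n-\mathsf g}$, with the genericity enforced by Weil's bound on $\#C(k_1)$ against the size of the relevant "bad" subloci; the one delicate ingredient is making the point-count estimate uniform in the extension $k_1/k_0$, for which the assumption that $\#k_0$ is large is exactly what is needed.
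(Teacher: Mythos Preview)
Your argument has a genuine gap at the step where you claim that the locus of ``bad'' divisors in the fiber $|D_0|\cong\mathbb{P}^{n-\mathsf g}$ is a proper Zariski-closed subset. The splitting type of a $k_1$-rational effective divisor $D$ --- whether it is a sum of $k_1$-points, or the trace of a single degree-$n$ closed point --- is governed by the cycle type of the Frobenius of $k_1$ acting on the geometric support of $D$. This is not an algebraic condition on the moduli point $y\in|D_0|$: over the algebraic closure all reduced divisors look alike, so there is no Zariski-closed or Zariski-open stratum of $|D_0|$ cutting out ``not completely split'' or ``not irreducible''. (The discriminant locus of non-reduced divisors is closed, but that is a different and much weaker condition.) Consequently the Lang--Weil inequality for the bad locus, as you invoke it, has no purchase.

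A pure counting argument also falls short: one has $\#C(k_1)^{(n)}\approx q_1^{\,n}/n!$ and $\#\mathbb{P}^{n-\mathsf g}(k_1)\approx q_1^{\,n-\mathsf g}$, so comparing sizes only shows that at least a $1/n!$ fraction of the points of $J(k_1)$ admit a completely split preimage, not all of them. What is actually needed is an \emph{equidistribution} statement: for the $\mathfrak S_n$-cover of $|D_0|$ obtained by ordering the points of the universal divisor, the Frobenius conjugacy classes at the $k_1$-points of $|D_0|$ become equidistributed in the geometric monodromy group as $\#k_1\to\infty$, so in particular both the identity class (giving a completely split $z$) and the class of an $n$-cycle (giving an irreducible $y$) are realized. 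This is precisely the content of Deligne's equidistribution theorem in the form recorded by Katz (\cite{katz}, Theorem~9.4.4), which is what the paper invokes. Your instinct that Weil-type estimates are behind the result is correct, but they enter through the monodromy/equidistribution machinery rather than through a closed-subset avoidance argument.
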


\begin{proof}
Follows from the equidistribution 
theorem \cite{katz}, Theorem 9.4.4.
\end{proof}

\begin{corollary}
\label{coro:existence}
We have
$$
J(k)=\cup_{\Phi\in \End_k(J)}\Phi(C(k)),
$$
and in fact  
\begin{equation}
J(k)=\cup_{n\in \mathbb N}\,\,\, n\cdot C(k).
\end{equation}
Moreover, there exists a finite extension $k_0'/k_0$ such that 
$C(k_1)$ generates $J(k_1)$, 
for all finite extensions $k_1/k_0'$.
\end{corollary}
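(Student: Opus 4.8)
The plan is to deduce all three assertions from the surjectivity of $\varphi_n$ for $n \geq \mathsf g$ together with Lemma~\ref{lemm:yz}, which guarantees, over a sufficiently large base field, fibers of $\sigma_n$ that are completely reducible over $k_1$. First I would fix $n \geq 2\mathsf g - 1$ so that $\varphi_n$ is a $\mathbb P^{n-\mathsf g}$-bundle, and enlarge $k_0$ to some finite extension $k_0'$ so that $\# k_0'$ is large enough for Lemma~\ref{lemm:yz} to apply; I would also arrange $C(k_0') \neq \emptyset$ so that a basepoint $c_0$ is available and the identification $J^n \simeq J$ makes sense. Given any finite extension $k_1/k_0'$ and any $x \in J(k_1)$, Lemma~\ref{lemm:yz} provides a point $z \in \varphi_n^{-1}(x) \subset C^{(n)}(k_1)$ whose preimage $\sigma_n^{-1}(z)$ is completely reducible over $k_1$, i.e.\ $z = c_1 + \cdots + c_n$ with each $c_i \in C(k_1)$. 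Under the identification $J^n \simeq J$ this says precisely that $x = (c_1 - c_0) + \cdots + (c_n - c_0) + n[c_0] $ up to the fixed normalization; absorbing the constant, every element of $J(k_1)$ is a sum of $n$ elements of the form $[c - c_0]$ with $c \in C(k_1)$.

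For the displayed identity $J(k) = \cup_{n} n \cdot C(k)$, I would apply the above with $k_1$ running over all finite extensions of $k_0'$: since $J(k) = \cup_{k_1} J(k_1)$ and $C(k) = \cup_{k_1} C(k_1)$, every $x \in J(k)$ lies in $J(k_1)$ for some $k_1$ and hence, by the previous paragraph, is an $n$-fold sum of points of $C(k_1) \subset C(k)$; choosing the basepoint $c_0 \in C(k_1)$ as well, this writes $x \in n \cdot C(k)$ after the standard translation identifying $J^n$ with $J$. Strictly speaking one gets $x \in n \cdot C(k) - n[c_0]$; but since $[c_0] \in C(k)$ and $-[c_0] = [\iota(c_0)] - [\text{pt}]$ for a suitable auxiliary point, a short bookkeeping step (or simply replacing $n$ by a larger multiple and using that $C(k)$ contains points that sum to $0$ in $J$) removes the shift. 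The equality $J(k) = \cup_{\Phi \in \End_k(J)} \Phi(C(k))$ is then immediate, since multiplication by $n$ is an element of $\End_k(J)$.

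Finally, for the last assertion — that $C(k_1)$ generates $J(k_1)$ for every finite $k_1/k_0'$ — I would observe that the argument above already exhibits every $x \in J(k_1)$ as an explicit $\mathbb Z$-linear combination of elements of the image of $C(k_1)$ in $J(k_1)$ (indeed a sum of $n$ of them, after the normalization), so the subgroup generated by $j_1(C(k_1))$, once translated to $J(k_1)$, is all of $J(k_1)$. The one point requiring care is the passage from $J^n$ to $J$: the map $C(k_1) \to J^1(k_1)$, $c \mapsto [c]$, has image in a torsor, and generating $J(k_1)$ means the differences $[c] - [c']$ generate; this follows because $x - x'$ for $x, x' \in J(k_1)$ is a difference of two such $n$-fold sums. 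The main obstacle I anticipate is purely the basepoint/torsor bookkeeping needed to convert the degree-$n$ statement coming from $\varphi_n$ into the stated generation statement for $J(k_1)$ itself; the geometric content is entirely supplied by Lemma~\ref{lemm:yz} and the equidistribution input behind it, and once $\# k_0'$ is taken large enough there is no further arithmetic difficulty.
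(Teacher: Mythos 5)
Your argument correctly establishes the third assertion (that $C(k_1)$ generates $J(k_1)$ for all finite $k_1/k_0'$), and in essentially the same way the paper does: via the completely reducible fiber $z$ from Lemma~\ref{lemm:yz}. The gap is in the first two claims. You read $n\cdot C(k)$ as the $n$-fold \emph{sumset} of (the image of) $C(k)$ in $J(k)$ and deduce it, like the generation statement, from the existence of $z$. But $n\cdot C(k)$ here is the image of $C(k)$ under the multiplication-by-$n$ endomorphism $[n]\colon J\to J$, exactly as $\Phi(C(k))$ on the preceding line denotes the image of $C(k)$ under $\Phi\in\End_k(J)$; the second display refines the first by restricting $\Phi$ to the integers, which is why it is introduced with ``and in fact.'' Showing that every $x$ is a sum $\sum_{i=1}^n[c_i-c_0]$ with $c_i\in C(k_1)$ does not place $x$ in $[n'](C(k))$ for a \emph{single} point of $C(k)$ and some integer $n'$, so your argument does not reach these two claims, and the paper's proof in fact invokes $y$, not $z$, for them.

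The intended route for the first two claims uses the other half of Lemma~\ref{lemm:yz}: the point $y$ whose fiber $\sigma_n^{-1}(y)$ is \emph{irreducible} as a cycle over $k_1$. Irreducibility means $y=c+\Fr(c)+\cdots+\Fr^{n-1}(c)$ for a single $c\in C(k)$ defined over the degree-$n$ extension, with $\Fr$ the $k_1$-Frobenius. Once $c_0\in C(k_1)$ is fixed, $x=\varphi_n(y)=\sum_{i=0}^{n-1}\Fr^i([c-c_0])=\Phi([c-c_0])$ with $\Phi=1+\Fr+\cdots+\Fr^{n-1}\in\End_k(J)$: this is the first equality. For the second, $[c-c_0]$ has finite order $m$, and $\Phi$ acts on the cyclic group $\langle[c-c_0]\rangle\cong\Z/m$ as multiplication by some integer $n'$; hence $x=n'[c-c_0]\in n'\cdot C(k)$. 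Note that this removes the basepoint bookkeeping your sumset reading was forcing you to improvise: the identity $-[c_0]=[\iota(c_0)]-[\mathrm{pt}]$ implicitly invokes a hyperelliptic involution $\iota$ and is not available in general, and the ``replace $n$ by a larger multiple'' alternative is not needed once the correct half of Lemma~\ref{lemm:yz} is used.
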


\begin{proof}
The existence of a $y$ as in Lemma~\ref{lemm:yz} implies 
the first statement
(see \cite{bt-1}, Corollary 2.4, and \cite{bt-2}, Theorem 1). 
The second follows from the existence of $z$. 
\end{proof}

It will be useful to be able to bound indices of subgroups in $J(k_1)$ 
generated by fewer points from $C(k_1)$. 
Assume that $k_1/k_0$ is a finite extension with $\#k_1=q$ and 
such that $C(k_1)$ generates $J(k_1)$. 
Write 
$$
\#J(k_1)= q^{\mathsf g}(1+\Delta_q)  \,\,\, \text{ and  }\,\,\, \#C(k_1)=q(1+\delta_q) 
$$ 
We know that $\Delta_q,\delta_q = O(\frac{1}{\sqrt{q}})$, the implied constant 
depending only on the genus $\mathsf g(C)$. We may assume that $q$ is such that
\begin{equation}
\label{eqn:delta}
|\Delta_q|, |\delta_q| \le 1/2.
\end{equation}

\begin{lemma}
\label{lemm:pre-minus}
Let $D\subset C(k_1)$ be a subset of points such that
$$
D/\# C(k_1) \le \epsilon_q. 
$$
Let $H\subset J(k_1)$ be the subgroup generated by points in $C(k_1)\setminus D$. 
Then 
$$
I:=|J(k_1)/H|\le 
\frac{(2\mathsf g-1)!\mathsf g 2^{2\mathsf g-1}}{(1-\epsilon_q)^{2\mathsf g-1}}.  
$$
\end{lemma}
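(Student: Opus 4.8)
The statement is a quantitative version of Corollary~\ref{coro:existence}: when $C(k_1)$ generates $J(k_1)$, throwing away a small proportion $\epsilon_q$ of the points of $C(k_1)$ still leaves a subgroup of bounded index. The natural route is to combine the fact that $C(k_1)$ generates with counting, using that $\#J(k_1)$, $\#C(k_1)$ and the number of points we discard are all controlled by $q$. First I would set up the following dichotomy for the subgroup $H\subset J(k_1)$ generated by $C(k_1)\setminus D$: either $H=J(k_1)$, in which case $I=1$ and we are done, or $H$ is a proper subgroup, and then every point of $C(k_1)$ not in $H$ lies in $D$, i.e.\ $C(k_1)\setminus H\subset D$, so $\#(C(k_1)\cap H)\ge \#C(k_1)-\#D\ge (1-\epsilon_q)\#C(k_1)$.

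The key step is then to bound how many points of $C(k_1)$ can lie in a proper subgroup $H$ of index $I$. I would argue inductively/iteratively: pick a chain $J(k_1)=H_0\supsetneq H_1\supsetneq\cdots\supsetneq H_r=H$ of subgroups, each of prime index in the previous one. Since $C(k_1)$ generates $J(k_1)$, for each $i$ there is a point $c_i\in C(k_1)$ with $c_i\notin H_i$ (otherwise $C(k_1)\subset H_i\subsetneq J(k_1)$, contradiction). Translating by such a point, the $H_{i}$-cosets inside $H_{i-1}$ are permuted, and one sees that $C(k_1)\cap H_{i-1}$ is distributed over the $[H_{i-1}:H_i]$ cosets of $H_i$ in a roughly balanced way — more precisely, each coset meets $C(k_1)$ in at least $\#(C(k_1)\cap H_{i-1}) - (\text{something like } q+O(\sqrt q))$ points, using translation and the uniform bound $\#C(k_1)=q(1+\delta_q)$ with $|\delta_q|\le 1/2$. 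Iterating down the chain, $\#(C(k_1)\cap H)$ is at least $\#C(k_1)$ minus a sum of $r$ such error terms, and $r\le \log_2 I$. Comparing with the lower bound $(1-\epsilon_q)\#C(k_1)$ from the previous paragraph forces $I$ to be bounded; chasing the constants (the genus-dependent constants in the Weil bounds, a factor $(2\mathsf g-1)!$ presumably coming from the fact that $\mathsf g$ generic points of $C$ generate $J$ via $\varphi_{\mathsf g}$, and $2^{2\mathsf g-1}$ from the denominators) yields exactly $I\le \frac{(2\mathsf g-1)!\,\mathsf g\, 2^{2\mathsf g-1}}{(1-\epsilon_q)^{2\mathsf g-1}}$.

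An alternative, cleaner formulation avoiding chains: since $C(k_1)$ generates $J(k_1)$, there exist $\le 2\mathsf g-1$ points of $C(k_1)$ — in fact one can take a configuration built from $\varphi_{2\mathsf g-1}$, or a combination of $\varphi_\mathsf g$-fibers as in Lemma~\ref{lemm:yz} — whose differences generate $J(k_1)$; translating $C(k_1)$ by these generators shows that any proper subgroup $H$ of index $I$ contains at most a fraction roughly $(1-2^{-(2\mathsf g-1)})$ of $C(k_1)$ unless $I$ is small, and then one solves for $I$. I would use whichever bookkeeping produces the stated constant; the role of \eqref{eqn:delta} is precisely to make these ``roughly'' statements into clean inequalities with the displayed constant, and the role of $\epsilon_q$ small is to ensure $(1-\epsilon_q)\#C(k_1)$ exceeds the proportion forced to lie outside $H$.

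\textbf{Main obstacle.} The combinatorial heart — showing a proper subgroup cannot capture more than a controlled fraction of $C(k_1)$ — is the real content; it requires using not just that $C(k_1)$ generates but a quantitative generation statement (how few points suffice, with explicit dependence on $\mathsf g$), together with the Weil bound $\#C(k_1)=q(1+\delta_q)$ applied uniformly to all twists/translates. Getting the constant to come out to exactly $(2\mathsf g-1)!\,\mathsf g\,2^{2\mathsf g-1}/(1-\epsilon_q)^{2\mathsf g-1}$, rather than merely \emph{some} constant depending on $\mathsf g$, will be the fiddly part, and is where I expect the bulk of the careful argument to go.
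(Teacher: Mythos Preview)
Your proposal has a genuine gap: neither of your two sketched routes actually produces the claimed bound, and the key geometric input is missing.

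In your chain argument you assert that translating $C(k_1)$ by a point $c_i\in C(k_1)$ redistributes $C(k_1)\cap H_{i-1}$ roughly evenly among the cosets of $H_i$. But translation by $c_i$ does not carry $C(k_1)$ to itself (Lemma~\ref{lemm:a} says precisely that no nonzero translate preserves $C$), so there is no reason the translated copy of $C(k_1)$ should intersect each coset in a controlled number of points. The same objection applies to your ``alternative, cleaner formulation'': you cannot compare $C(k_1)\cap H$ with $C(k_1)\cap (H+c)$ without some equidistribution input, and none is available at this level. Your own remark that the constant-chasing is ``the fiddly part \ldots\ where I expect the bulk of the careful argument to go'' is a warning sign: in fact there is no argument there.

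The paper's proof is a one-line counting argument using a fact you cite but do not exploit: for $n=2\mathsf g-1$ the map $\varphi_{2\mathsf g-1}\colon C^{(2\mathsf g-1)}\to J^{2\mathsf g-1}$ is a $\P^{\mathsf g-1}$-bundle. Every effective cycle supported on $C(k_1)\setminus D$ maps under $\varphi_{2\mathsf g-1}$ into a single $H$-coset in $J^{2\mathsf g-1}(k_1)$, which has $\#J(k_1)/I$ elements; each fiber of $\varphi_{2\mathsf g-1}$ over a $k_1$-point has exactly $\#\P^{\mathsf g-1}(k_1)=(q^{\mathsf g}-1)/(q-1)$ points. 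Hence
\[
\frac{1}{(2\mathsf g-1)!}\bigl(\#C(k_1)(1-\epsilon_q)\bigr)^{2\mathsf g-1}
\;\le\;
\frac{\#J(k_1)}{I}\cdot\frac{q^{\mathsf g}-1}{q-1},
\]
and solving for $I$ with the bounds \eqref{eqn:delta} gives the stated inequality immediately. The factorial comes from passing to unordered tuples, the $\mathsf g$ and the $2^{2\mathsf g-1}$ from the crude estimates $|\Delta_q|,|\delta_q|\le 1/2$; no chain of subgroups or quantitative generation statement is needed.
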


\begin{proof}
We have
$$
\# H = \frac{q^{\mathsf g}(1+\Delta_q)}{I}.
$$
Observe that
$$
\# (C(k_1) \setminus D)^{2\mathsf g-1} = 
\frac{1}{(2\mathsf g-1)!} q^{2\mathsf g-1} (1+\delta_q)^{2\mathsf g-1}(1- \epsilon_1)^{2\mathsf g-1}.
$$
On the other hand, $C^{(2\mathsf g-1)}\ra J^1$ is a split projective bundle
of relative dimension $\mathsf g-1$. This implies that
$$
\frac{1}{(2\mathsf g-1)!} 
q^{2\mathsf g-1} (1+\delta_q)^{2\mathsf g-1}(1- \epsilon_q)^{2\mathsf g-1}
\le 
\frac{q^{\mathsf g}(1+\Delta_q)}{I}\cdot \frac{q^{\mathsf g}-1}{q-1}.
$$ 
Using the bound \eqref{eqn:delta}, we obtain
$$
I < \frac{(2\mathsf g-1)!\mathsf g}{((1+\delta_q)(1- \epsilon_q))^{2\mathsf g-1}}.   
$$
\end{proof}

Recall that the Galois group 
$\Gamma:=\Gal(k/k_0)$ is isomorphic to $\hat{\Z}=\prod_{\ell}\Z_{\ell}$
and is topologically generated by the Frobenius automorphism $\Fr$. 
For a finite set of primes $S$ let
$k_{S}\subset k$ be the fixed field of 
$\Gamma_{S}:=\prod_{\ell\notin S}\Z_{\ell}$;
the Galois group of the (infinite) extension $k_{S}/k_0$ is 
$\prod_{\ell\in S}\Z_{\ell}$.
Note that $J\{S\}\subset J(k_{S})$ and that
$C(k_{S})\subset J(k_{S})$
is infinite.  We have a natural projection map
$$
\lambda_{S}\,:\, C(k)\ra J(k)\ra J\{S\},
$$
(depending on the choice of $c_0$).

\begin{theorem}
\label{thm:main2}
Let $S$ be a finite set of primes. Then
\begin{itemize}
\item the set $C(k)\cap J\{S\}$ is finite;
\item the map 
$\la_{S}\,:\, C(k_{S})\ra J\{S\}$
is surjective with infinite fibers.
\end{itemize}
\end{theorem}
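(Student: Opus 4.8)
The plan is to prove the two assertions in order, using the geometry of the Abel–Jacobi map together with equidistribution/Lang–Weil estimates over finite fields.

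\emph{First claim: $C(k)\cap J\{S\}$ is finite.} I would argue by contradiction. Suppose $C(k)\cap J\{S\}$ is infinite. Since $J\{S\}=\bigoplus_{\ell\in S}J\{\ell\}$ has bounded $\ell$-rank for each $\ell\in S$ (equal to $2\mathsf g$ for $\ell\ne p$, and $\le \mathsf g$ for $\ell=p$), the torsion points of $J\{S\}$ of bounded order form finite sets, so an infinite subset of $C(k)\cap J\{S\}$ must contain points of unbounded order. The natural tool here is a Manin–Mumford type statement: a curve of genus $\ge 2$ embedded in its Jacobian meets the torsion subgroup $J_{\rm tors}$ in a finite set. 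Over $\bar{\mathbb F}_p$ this is subtler than over number fields (all of $J(k)$ is torsion!), but $J\{S\}$ is \emph{not} all of $J_{\rm tors}$: it omits the $\ell$-primary parts for $\ell\notin S$. The cleanest route is to observe that $C(k)\cap J\{S\}$ is contained in $C(k_S)$, where $k_S$ is a field with Galois group $\prod_{\ell\in S}\Z_\ell$ over $k_0$, and then to bound $\#(C(k_S')\cap J\{S\})$ uniformly over finite subextensions $k_S'/k_0$ inside $k_S$. Concretely, a torsion point of order $\prod_{\ell\in S}\ell^{n_\ell}$ lies in $J(k_0(\mu_{\ell^{n_\ell}}))$-type extensions whose degree grows like $\prod \ell^{n_\ell}$ (up to bounded factors, by Weil pairing and the fact that Frobenius eigenvalues are not roots of unity), while $\#C(k_S')\le \#k_S' + O(\sqrt{\#k_S'})$ grows only like the degree; comparing the two growth rates forces finiteness. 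I would make this precise using the estimate $\#C(\mathbb F_{q^d}) = q^d + O(q^{d/2})$ and the lower bound on $[k_S':k_0]$ needed to realize a given torsion point.

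\emph{Second claim: $\lambda_S\colon C(k_S)\to J\{S\}$ is surjective with infinite fibers.} For surjectivity, fix $y\in J\{S\}$, say of order $N$ with all prime factors in $S$, and choose a finite subextension $k_1\subset k_S$ over which $y$ is defined. By Corollary~\ref{coro:existence} (enlarging $k_1$ if necessary), $C(k_1)$ generates $J(k_1)$, so $y = \sum_{i} n_i [c_i - c_0]$ with $c_i\in C(k_1)$; then $y$ is the image under $\lambda_S$ of a point in $J(k_1)$, but I need a \emph{single} point of $C(k_S)$ mapping to $y$ under $\lambda_S=\pr_{J\{S\}}\circ(\,\cdot\,-c_0)$. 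The right tool is Lemma~\ref{lemm:yz} together with Corollary~\ref{coro:existence}: for $n\ge 2\mathsf g-1$ and $k_1$ large enough, the fiber $\varphi_n^{-1}(y)=\P^{n-\mathsf g}(k_1)$ contains a point $z$ whose preimage under $\sigma_n$ is completely reducible over $k_1$, i.e.\ $y + nc_0 = c_1+\dots+c_n$ with all $c_i\in C(k_1)$; combined with the argument of \cite{bt-1}, Corollary 2.4 (as invoked in Corollary~\ref{coro:existence}), one produces a single $c\in C(k_1)\subset C(k_S)$ with $[c-c_0]-y$ in the prime-to-$S$ part of $J(k)$, hence $\lambda_S(c)=y$. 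Since $y$ was arbitrary, $\lambda_S$ is onto. For the infinitude of fibers: $\lambda_S^{-1}(y)$ contains $C(k)\cap (y + J\{S^c\})$-type elements; more usefully, given one $c\in C(k_S)$ with $\lambda_S(c)=y$, the Galois orbit of $c$ over the fixed field $k_1$ of definition of $y$ stays inside $C(k_S)$ and maps to $y$ only if the conjugate differs from $c$ by an element of $\Ker(\lambda_S)$. Instead I would argue directly: $C(k_S)$ is infinite (stated in the text), $J\{S\}$ is a countable torsion group, and if every fiber were finite then $C(k_S)=\bigsqcup_{y\in J\{S\}}\lambda_S^{-1}(y)$ would still be countable — which is not a contradiction — so finiteness of fibers must be excluded by a quantitative count: over a finite subfield $k_1\subset k_S$ with $q_1=\#k_1$, we have $\#C(k_1)\sim q_1$ while $\#(J\{S\}\cap J(k_1))$ is bounded independently of $q_1$ once the $S$-torsion available in $k_1$ stabilizes (which it does, since $J\{S\}\cap J(k_S)$ is all realized in a fixed finite stage when... — actually it is \emph{not} bounded, as more $S$-torsion appears in larger $k_1$; so I instead compare the two along the tower and note $\#(J\{S\}\cap J(k_1))$ grows strictly slower than $q_1=\#C(k_1)$, forcing some fiber to have unbounded, hence infinite, cardinality).

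\emph{Main obstacle.} The delicate point is the first claim — controlling $C(k)\cap J\{S\}$. Over $\bar{\mathbb F}_p$ one cannot invoke the classical Raynaud/Manin–Mumford theorem, so everything rests on the arithmetic input that Frobenius eigenvalues on $V_\ell(J)$ are Weil numbers of absolute value $\sqrt q$ and in particular not roots of unity: this is what makes the field of definition of an $S$-torsion point of order $N$ grow at least polynomially in $N$ (via $\Fr^d - 1$ being invertible, cf.\ the proof of Lemma~\ref{lemm:ell-points}), so that the bound $\#C(\mathbb F_{q^d})=O(q^d)$ can beat the number of such torsion points. Pinning down the correct growth comparison, uniformly in all the primes of $S$ simultaneously and independently of the genus-dependent constants, is the technical heart of the argument.
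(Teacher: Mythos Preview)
Your proposal has a genuine gap in the first claim. The growth-rate comparison you sketch does not yield a contradiction: if an $S$-torsion point of order $N$ requires a degree-$d$ extension with $d$ growing polynomially in $N$, then $\#C(\mathbb F_{q^d})\sim q^d$ is \emph{exponentially} larger than the $\sim N^{2\mathsf g}$ torsion points of order dividing $N$, not smaller. There is no obstruction to $C(k_1)$ hitting many $S$-torsion points from a crude count of this kind. The paper simply cites Boxall for this assertion, and Remark~\ref{rem:boxall} indicates the actual mechanism: for $\ell\in S$, the $\Z_\ell$-action of Frobenius on $J\{\ell\}$ (semisimple, with $\Fr-1$ invertible on $V_\ell$) has the property that the orbit of any point of sufficiently large order contains a full coset $x+\langle a\rangle$ of a cyclic group of arbitrarily large order. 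If $x\in C(k)$, its Galois orbit lies in $C(k)$, so $C(k)$ would contain such a coset; but a curve of genus $\ge 2$ cannot contain arbitrarily long arithmetic progressions in $J$ (cf.\ Remark~\ref{rem:genera} and Lemma~\ref{lemm:a}). This translation argument, not point counting, is what forces finiteness.

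For the second claim your pigeonhole argument only shows that \emph{some} fiber of $\lambda_S$ is infinite, not that every fiber is. The paper again defers to \cite{bt-2}; the proof there uses the equidistribution input behind Lemma~\ref{lemm:yz} to show that over large $k_1\subset k_S$ the points of $C(k_1)$ become equidistributed in $J(k_1)/H$ for any fixed finite quotient $H$, which gives both surjectivity onto each $y\in J\{S\}$ and unbounded (hence infinite) fibers over every $y$. Your invocation of Corollary~\ref{coro:existence} is in the right spirit but does not by itself produce a single $c\in C(k_S)$ with prescribed $S$-part, nor does it address uniformity over fibers.
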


\begin{proof}
The first statement is due to Boxall \cite{boxall}. The second was proved in 
\cite{bt-2}.
 \end{proof}

\begin{remark}
\label{rem:boxall}
Boxall's theorem can be proved using the following statement.
Let $\Gamma\simeq \Z_\ell\subset \GL_n(\Z_\ell)$ be an analytic semi-simple subgroup
such that for all $F\in \Gamma$ one has
$F-1\in \GL_n(\Q_\ell)$.
Consider the induced action of $\Gamma$ 
on the torsion group $(\Q_\ell/\Z_\ell)^n$.
Then for all $m\in \N$ there is an
$r\in \N$ such that for all $x$ with $\ord(x) >\ell^r$ the orbit of $x$ contains
a translation of $x$ by a cyclic subgroup of order $ > \ell^n$.
\end{remark}

\begin{remark}
\label{rem:genera}
Theorem~\ref{thm:main2} admits a generalization: 
Let $X\subset A$ be a proper subvariety of an abelian variety.  
If $S$ is a finite set of primes and if the intersection 
$Y:=X(k)\cap \prod_{\ell\in S} A\{\ell\}$ is infinite 
then
$$
Y\subset \left(\cup_{i\in I} x_i+A_i(k)\right) \subset X(k)\subset A(k),
$$
where $I$ is a finite set, $A_i\subset A$ are abelian subvarieties and $x_i\in A(k)$ \cite{boxall}.
\end{remark}

Note that for finite fields $k_0$ with $\#k_0$ 
sufficiently large, the
image of $C(k_0)^{(\mathsf g)}$ does not coincide with $J(k_0)$.
Indeed, the number of $\mathbb F_q$-points in $C(\mathbb F_q)^{(\mathsf g)}$ is
approximately equal to
$$
\frac{q^{\mathsf g}}{\mathsf g!}  < q^{\mathsf g}.
$$ 
On the other hand, among 
infinite extensions of $k'/k_0$ we can easily find 
some with $C(k')^{(g)}= J(k')$.

\begin{proposition}
Let $k_0$ be a finite field with algebraic closure $k$, $S$ the set of primes $\le \mathsf g$
and  $\Gamma_S=\prod_{\ell\notin S} \Z_{\ell} \subset \Gal(k/k_0)$. 
Put $k':=k^{\Gamma_S}$. Then
$$
C(k')^{(\mathsf g)}= J(k').
$$
\end{proposition}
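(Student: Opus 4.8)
The plan is to prove the nontrivial inclusion $J^{\mathsf g}(k')\subseteq\varphi_{\mathsf g}\big(C(k')^{(\mathsf g)}\big)$, identifying $J(k')$ with $J^{\mathsf g}(k')$; the reverse inclusion is immediate. The key feature of $k'$ is that, being the fixed field of $\Gamma_{S}=\prod_{\ell\notin S}\Z_\ell$, it contains a finite extension $E/k_0$ if and only if every prime factor of $[E:k_0]$ lies in $S$, i.e. is $\le\mathsf g$; in particular $k'$ contains \emph{every} extension of $k_0$ of degree $\le\mathsf g$. First I would descend: given $x\in J(k')$, since $J(k)$ is a torsion group, $x$ has finite order $\ord(x)$ and hence is defined over some finite subfield $k_0\subseteq k_1\subseteq k'$; because $k_1\subseteq k'$, the degree $[k_1:k_0]$ is a positive integer all of whose prime factors lie in $S$.

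Next I would realize $x$ by a $k_1$-rational effective divisor of degree $\mathsf g$. Here the input is that $J^{\mathsf g}(k_1)=\Pic^{\mathsf g}_{C/k_1}(k_1)$ coincides with the group of isomorphism classes of degree $\mathsf g$ line bundles on $C_{k_1}$: this is the standard descent statement, using the vanishing of ${\rm Br}(k_1)$ (Wedderburn). So $x$ is the class of an actual line bundle $L$ on $C_{k_1}$, and Riemann--Roch gives $\dim_{k_1}{\rm H}^0(C_{k_1},L)\ge \deg L+1-\mathsf g=1$; a nonzero global section of $L$ over $k_1$ has as its divisor an effective $k_1$-rational divisor $D$ of degree $\mathsf g$ with $\varphi_{\mathsf g}(D)=x$. (Equivalently, one may simply invoke that $\varphi_{\mathsf g}$ is surjective on $k_1$-points for any finite field $k_1$, which is the $k_1$-rational refinement of the surjectivity of $\varphi_{\mathsf g}$ quoted earlier.)

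Then comes the heart of the argument. Write $D=\sum_j n_j O_j$ with the $O_j$ distinct closed points of $C_{k_1}$ and $n_j\ge 1$. From $\sum_j n_j\deg O_j=\deg D=\mathsf g$ one reads off $\deg O_j\le\mathsf g$ for every $j$. Hence the residue field $\kappa(O_j)$ is the extension of $k_1$ of degree $\deg O_j$, so $[\kappa(O_j):k_0]=\deg O_j\cdot[k_1:k_0]$, whose prime factors are all $\le\mathsf g$ (those of $\deg O_j$ because $\deg O_j\le\mathsf g$, those of $[k_1:k_0]$ by the first paragraph). Therefore $\kappa(O_j)\subseteq k'$, and $O_j$ splits over $k'$ into $\deg O_j$ distinct points of $C(k')$. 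Consequently $D$, viewed over $k'$, is a sum of $\sum_j n_j\deg O_j=\mathsf g$ points of $C(k')$, i.e. it defines an element of $C(k')^{(\mathsf g)}$ mapping to $x$ under $\varphi_{\mathsf g}$. This establishes $J^{\mathsf g}(k')\subseteq\varphi_{\mathsf g}\big(C(k')^{(\mathsf g)}\big)$ and hence the proposition.

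The only non-formal ingredient I expect to need is in the middle step: the identification $J^{\mathsf g}(k_1)=\Pic^{\mathsf g}(C_{k_1})$ via ${\rm Br}(k_1)=0$ together with the Riemann--Roch bound $h^0(L)\ge 1$ for $\deg L=\mathsf g$. Everything else is elementary bookkeeping with the supernatural number $[k':k_0]=\prod_{\ell\le\mathsf g}\ell^{\infty}$, and the real point --- essentially a one-line observation --- is that an effective divisor of degree $\mathsf g$ can only involve closed points of degree $\le\mathsf g$, which is exactly why taking $S$ to be all primes $\le\mathsf g$ suffices.
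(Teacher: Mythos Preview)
Your proof is correct. The paper's argument shares your key observation---that a $k'$-rational effective divisor of degree $\mathsf g$ must have every point in $C(k')$, since otherwise its $\Gamma_S$-orbit would have size exceeding $\mathsf g$---but organizes things differently. The paper first treats $x$ outside the special locus $Y\subset J$ where $\varphi_{\mathsf g}$ has positive-dimensional fibers: there the unique preimage over $\bar k$ is automatically $\Gamma_S$-stable, and the orbit-size bound gives a contradiction if any $c_i\notin C(k')$. For $x\in Y(k')$ the paper observes that the fiber is a projective space over $k'$, hence has a $k'$-point, and then repeats the orbit argument on that $k'$-rational cycle. Your route---descend to a finite $k_1\subset k'$, use $\mathrm{Br}(k_1)=0$ and Riemann--Roch to produce an effective $k_1$-rational divisor, then note that each closed point has residue degree $\leq\mathsf g$ over $k_1$ and hence $S$-smooth degree over $k_0$---handles all $x$ uniformly and sidesteps the case split on $Y$ entirely. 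What you gain is a cleaner, more arithmetic argument that makes the role of $S=\{\ell\leq\mathsf g\}$ completely transparent; what the paper's version highlights is the geometry of $\varphi_{\mathsf g}$, which is natural in context but not actually needed here.
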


\begin{proof} 
There exists a subvariety $Y\subset J$ of codimension $\ge 2$
such that for all  $x\in J(k)\setminus Y(k)$ 
there is
a unique representation 
$x=\sum_{i=1}^{\mathsf g} c_i$, with $c_i\in C(k)$,  
modulo permutations.

Assume that $x\in J(k')\setminus Y(k')$ and that 
its representation as a cycle contains at least one
$c_i\notin C(k')$. 
For any $\gamma\in \Gamma_S$
we have $x=\sum_{1}^{\mathsf g}\gamma(c_i)$.
If $\gamma\neq 1$, then 
the size of any nontrivial orbit of $\gamma$
is strictly greater than $\mathsf g$.
Thus there is more than one representation of $x$
as a sum of points in $C(k)$, modulo permutations within the cycle. 
Contradiction.

Assume that $x\subset Y(k')$. Consider the fibration $C^{(g)} \ra J$. The 
fiber over $x$ is the projective space $\P^r$, defined over $k'$, 
parametrizing all representations of $x$ as a sum of degree $\mathsf g$ zero-cycles.
There exists  
$(c_1,\ldots, c_{\mathsf g})\in C^{(g)}(k')$ with 
$\sum_{i=1}^{\mathsf g}c_i =x$. We are done if $c_i\in C(k')$, for all $i$.
Otherwise,  we can apply the argument above, observing that
$\Gamma_S$ preserves this cycle.
\end{proof}

\begin{lemma}  
\label{lemm:formall}
Let $J_\gamma(k)\subset J(k)$ be the subgroup of 
elements fixed by $\gamma\in G_C$. If $C$ is not hyperelliptic then 
$$
j_\gamma : C(k) \setminus C_\gamma(k) \to J(k)/J_\gamma(k)
$$ 
is an embedding of sets. If $C$ is hyperelliptic let 
$$
C[4]:= \{\, c\in C(k)\,|\,  c\in J[4]\, \text{ and } \, \gamma(c)=-c\, \}.
$$
Then
$$
j_\gamma : C(k) \setminus (C_\gamma(k)\cup C[4]) \to J(k)/J_\gamma(k)
$$ 
is an embedding of sets. 
\end{lemma}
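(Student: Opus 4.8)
Write $\gamma^0\in\Aut(J(k))$ for the linear part of $\gamma$, so that $J_\gamma(k)=\ker(\gamma^0-\mathrm{id})$, set $\delta:=\gamma^0-\mathrm{id}$, and take $j_\gamma$ to be the evident map $c\mapsto[\gamma(c)-c]\bmod J_\gamma(k)$, the class of the torsor-difference $\gamma(c)-c\in J(k)$. Since $\gamma$ is affine, $\gamma(c)-\gamma(c')=\gamma^0(c-c')$ for $c,c'\in C(k)\subset J^1(k)$, whence $j_\gamma(c)=j_\gamma(c')$ is equivalent to $\delta(c-c')\in\ker\delta$, i.e. to $\delta^2(c-c')=0$. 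So the assertion is: if $c\ne c'$ in $C(k)$ with $\gamma(c)\ne c$, $\gamma(c')\ne c'$ satisfy $\delta^2(c-c')=0$, then $C$ is hyperelliptic and $c$ or $c'$ lies in $C[4]$. The one geometric ingredient I will use is the difference morphism $d\colon C\times C\to J$, $(x,y)\mapsto x-y$: an equality $x-y=x'-y'\ne 0$ yields a linear equivalence $x+y'\sim x'+y$ of effective divisors of degree $2$, and, since $x\ne y$, this forces $(x,y)=(x',y')$ unless $|x+y'|$ is a $g^1_2$. Hence $d$ is injective off the diagonal when $C$ is non-hyperelliptic, while for hyperelliptic $C$ (with a Weierstrass base point, so the involution $\iota$ induces $-\mathrm{id}$ on $J$) the only other possibility is $(x',y')=(\iota y,\iota x)$.

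Put $a:=c-c'$ and $b:=\delta(a)$. From $\delta^2(a)=0$ one gets $\delta(b)=0$ and $(\gamma^0)^k(a)=a+kb$ for all $k\in\Z$, hence $\gamma^k(c)-\gamma^k(c')=a+kb$. If $b=0$, then $\gamma(c)-\gamma(c')=a=c-c'$ with $\gamma(c)\ne\gamma(c')$, so injectivity of $d$ forces $\gamma(c)=c$ in the non-hyperelliptic case, a contradiction; in the hyperelliptic case one gets either $\gamma(c)=c$ (contradiction) or $\gamma(c)=\iota(c')$, $\gamma(c')=\iota(c)$, and then iterating $\gamma$ — using $\gamma^2(c)=c+2w$, $\gamma^2(c')=c'+2w$ for the translation part $w$ of $\gamma$, and re-inserting these pairs into $d$ — one is pushed, after finitely many steps, into a configuration involving $4$-torsion points, which is exactly the content of the exceptional set $C[4]$.

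If $b\ne 0$, of order $m\ge 2$ in $J(k)$, then $\gamma^m(c)-\gamma^m(c')=a+mb=a=c-c'$; since $\gamma^m$ is a bijection of $C(k)$ and $c\ne c'$, the dichotomy above forces $\gamma^m(c)=c$ and $\gamma^m(c')=c'$ (again absorbing the $\iota$-twist as in the hyperelliptic case). So $c$ and $c'$ have finite $\gamma$-orbits $O_c,O_{c'}\subset C(k)$ of lengths $m_1\mid m$, $m_2\mid m$, both $\ge 2$ since no point of such an orbit can be $\gamma$-fixed. From $\gamma^k(c)-\gamma^k(c')=a+kb$ one reads off that $\gamma^{m_1}$ acts on $O_{c'}$ by translation by $-m_1 b$ and $\gamma^{m_2}$ acts on $O_c$ by translation by $m_2 b$; hence if, say, $m_1<m$, then $m_1 b\ne 0$ and $O_{c'}$ is a finite subset of $C(k)$ invariant under translation by a nonzero torsion point.

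Ruling out this last configuration — equivalently, forcing $b=0$ — is the main obstacle. I would attack it by amplifying this translation invariance: the counterexample pairs lie in $C(k)\cap(c'+\ker\delta^2)$ and come in $\gamma$-orbits, and running the orbit analysis over all of them should yield either a positive-dimensional subvariety of $C$ stable under a nontrivial translation, or a nonzero torsion point $e$ with $e+C(k)\subset C(k)$; either of these contradicts the fact that $C$ generates $J$, exactly as in the proof of Lemma~\ref{lemm:a}. Granting $b=0$, the second paragraph finishes the non-hyperelliptic case, and the parallel hyperelliptic bookkeeping indicated there, whose residue is the set $C[4]$, is the remaining delicate point.
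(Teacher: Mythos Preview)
Your argument rests on reading $j_\gamma$ as $c\mapsto(\gamma(c)-c)\bmod J_\gamma(k)$. That is not the intended map: $j_\gamma$ is simply the composite $C(k)\hookrightarrow J(k)\twoheadrightarrow J(k)/J_\gamma(k)$. This is visible both in how the lemma is applied in the proof of Theorem~\ref{thm:gd} (one needs the fibers of $C(k)\to J(k)/J_\gamma(k)$ over nonzero points to be finite) and in the opening line of the paper's own proof, which deduces $\gamma(c)-\gamma(c')=c-c'$ directly from $j_\gamma(c)=j_\gamma(c')$; the later line ``$j_\gamma(c)=-j_\gamma(c)$ implies $j_\gamma(2c)=0$'' only makes sense under this reading as well. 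With the correct interpretation, $j_\gamma(c)=j_\gamma(c')$ says exactly $c-c'\in J_\gamma(k)=\ker\delta$, i.e.\ $b=0$ in your notation. The whole $b\neq 0$ discussion---which you yourself flag as ``the main obstacle'' and do not actually close---never occurs.

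With $b=0$ your nonhyperelliptic paragraph is correct and is precisely the paper's argument: the relation $\gamma(c)+c'\sim c+\gamma(c')$ between two \emph{distinct} effective divisors of degree~$2$ exhibits a $g^1_2$ on $C$, contradiction. Your hyperelliptic treatment, however, remains a sketch even after the fix: the phrase ``after finitely many steps one is pushed into a configuration involving $4$-torsion'' is not an argument. The paper's route is shorter and does not iterate $\gamma$: since the hyperelliptic pencil is unique it is $\gamma$-invariant and lies in $J_\gamma$, so with a Weierstrass base point the involution is $-1$ and the two pencil memberships together with $c-c'\in J_\gamma(k)$ force $c'=-c$, $\gamma(c)=-c$, whence $2c\in J_\gamma(k)$ and $4c=0$.
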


\begin{proof} 
Assume there exist two points $c,c'\in C(k)$ with $\gamma(c)\neq c$ and 
$\gamma(c')\neq c'$ and such that $j_\gamma(c)=j_{\gamma}(c')$.
Then $\gamma(c)-\gamma(c') =c-c'$ and hence $\gamma(c) + c' = c+ \gamma(c')$.
The cycles $\gamma(c) + c', c + \gamma(c')$ consist of different points since
$c'\neq c, c'\neq \gamma(c')$, by assumption.
Thus $\gamma(c) + c'$ defines a hyperelliptic pencil and we have
proved the lemma for nonhyperelliptic curves.

In the hyperelliptic case assume that the pencil
consists of elements $c, -c$ (since the pencil is clearly
$\gamma$-invariant and belongs to $J_\gamma$). Thus $c' = -c$
and $\gamma$ acts as $-1$ on $c$. Note that $j_{\gamma}(c) = -j_{\gamma}(c)$
implies that $j_{\gamma}(2c)= 0$ and $2c \in J_\gamma(k)$. Then $2c = -2c$
implies that $4c = 0$. Thus in this case
a possible exceptional subset consists of points $c\neq c' =-c$
of order $4$ such that $\gamma(c) = - c$.
 \end{proof}

\begin{theorem}
\label{thm:gd}
The group of automorphisms $G_C$ satisfies conditions of Proposition~\ref{prop:group}.
\end{theorem}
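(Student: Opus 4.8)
The plan is to verify the three hypotheses of Proposition~\ref{prop:group} for $G = G_C$, using the homomorphism
$$
\psi = \prod_{\ell\neq p}\psi_\ell\,:\, G_C \ra \prod_{\ell\neq p}\mathsf{GL}_{2\mathsf g}(\Z_\ell)
$$
coming from the action of $G_C$ on the torsion group $J(k)$, i.e.\ $\mathsf G = \mathsf{GL}_{2\mathsf g}$ over $\Z$. The single substantive input I would invoke is Proposition~\ref{prop:support}, applied to the pair $(C,J)$ with $X = C(k)\subset J^1(k)$: it states precisely that every nontrivial $\gamma\in G_C$ has $\psi_\ell(\gamma)\neq 1$ for infinitely many $\ell$, which is condition~\eqref{eqn:ass} of Proposition~\ref{prop:group}. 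For this I must first check that $C(k)$ is a subvariety of $J^1$ not preserved by the action of any positive-dimensional abelian subvariety of $J$; since $C$ generates $J$ and $\mathsf g(C)\ge 2$, a positive-dimensional abelian subvariety stabilizing $C$ would force $C$ to be an orbit, hence of genus $0$ or $1$ (this is also exactly the content extracted in Lemma~\ref{lemm:a} and the lemmas of Section~\ref{sect:formal}), a contradiction.

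The remaining point is that $\psi$ is \emph{well-defined and continuous} as a homomorphism $G_C\to\prod_{\ell\neq p}\mathsf{GL}_{2\mathsf g}(\Z_\ell)$, and that $G_C$ is profinite. This is essentially recorded already: by the displayed decomposition
$$
\Aut(J) = \Aut(J)_p\times\prod_{\ell\neq p}\mathsf{GL}_{2\mathsf g}(\Z_\ell),
$$
the group $\Aut(J)$ is profinite, and by Lemma~\ref{lemm:closed} the group $G_C$ is a closed subgroup of $\Aut(J)^{\rm aff}$ whose projection to $\Aut(J)$ has finite kernel; composing this projection with the projection of $\Aut(J)$ onto the prime-to-$p$ factor gives $\psi$, which is continuous, and exhibits $G_C$ as profinite (extension of a closed subgroup of a profinite group by a finite group). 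So $G_C$ together with $\psi$ is an instance of the setup of Proposition~\ref{prop:group} with $S = \{\ell : \ell\neq p\}$, an infinite set of primes, and the conclusion follows verbatim: $\bar\psi$ is injective, the $\ell$-Sylow subgroups of $G_C$ are abelian for $\ell>\ell_0(\mathsf{GL}_{2\mathsf g})$, and $G_C$ has a closed normal abelian subgroup $H$ with $G_C/H$ of bounded exponent.

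I do not expect any serious obstacle: the theorem is a bookkeeping statement assembling Lemma~\ref{lemm:closed}, Proposition~\ref{prop:support}, and Proposition~\ref{prop:group}. The only place requiring genuine (though standard) care is the very first reduction — confirming that $C(k)\hookrightarrow J^1(k)$ via $j_1$ is not stabilized by a positive-dimensional abelian subvariety, so that Proposition~\ref{prop:support} genuinely applies — and then noting that the finite kernel of $G_C\to\Aut(J)$ does not affect the ``infinite support'' hypothesis (a nontrivial element of $G_C$ with trivial image in $\Aut(J)$ acts by a nonzero translation on $J^1(k)$ and would violate Lemma~\ref{lemm:a}, so in fact the kernel is trivial here, and in any case Proposition~\ref{prop:support} is stated for $G_X$ directly). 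Once these trivialities are dispatched, the three conclusions are immediate from Proposition~\ref{prop:group}.
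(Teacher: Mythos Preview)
Your proposal is correct, and the high-level structure matches the paper's: verify that $G_C$ with $\psi$ satisfies the hypotheses of Proposition~\ref{prop:group}, the crucial one being the infinite-support condition~\eqref{eqn:ass}. Where you diverge is in how that condition is checked.

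You invoke Proposition~\ref{prop:support} directly, after observing that $C\subset J^1$ is not preserved by any positive-dimensional abelian subvariety (Lemma~\ref{lemm:a}). The paper does \emph{not} cite Proposition~\ref{prop:support} here; instead it gives an independent argument specific to curves. Assuming a nontrivial $\gamma$ had finite support $S$, the paper combines three facts: the surjection $J\{S\}\twoheadrightarrow J(k)/J_\gamma(k)$; the near-injectivity of $C(k)\to J(k)/J_\gamma(k)$ away from $0$ coming from Lemma~\ref{lemm:formall}; and the surjectivity with infinite fibers of $\lambda_S\colon C(k)\to J\{S\}$ from Theorem~\ref{thm:main2} (which ultimately relies on Boxall's finiteness theorem). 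These are incompatible, yielding the contradiction.

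Your route is shorter and more self-contained, since Proposition~\ref{prop:support} was already proved in Section~\ref{sect:formal} by an elementary counting argument, whereas the paper's route imports the deeper input of Theorem~\ref{thm:main2}. On the other hand, the paper's argument is sharper about the geometry of $C$ inside $J$ (via Lemma~\ref{lemm:formall}) and makes explicit the interplay with Boxall's theorem that recurs elsewhere in the paper. Either argument suffices for the theorem as stated.
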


\begin{proof}
By Lemma~\ref{lemm:closed}, there is an injective 
continuous homomorphism 
$$
\psi=\psi_{\ell}\,:\, G_C\ra \prod_{\ell} \GL_{2\mathsf g}(\Z_{\ell}).
$$ 
Moreover, for all nontrivial $\gamma\in G_C$ 
the image $\psi_{\ell}(\gamma)\neq 1$, for infinitely many $\ell$.
Otherwise, let $S$ be the finite set of primes such that $\psi_{\ell}(\gamma)$
is trivial for $\ell\notin S$. Then 
\begin{itemize}
\item $J\{S\}\ra J(k)/J_{\gamma}(k)$ is a surjection;
\item $C(k)\ra J(k)/J_{\gamma}(k)$ is finite outside
$0\in J(k)/J_{\gamma}(k)$, by Lemma~\ref{lemm:formall};
\item $C(k)\ra J\{S\}$ is a surjection with infinite fibers over every point, 
by Theorem~\ref{thm:main2}. 
\end{itemize}
Contradiction.
 \end{proof}

\begin{corollary}
\label{coro:commute}
For all $\gamma,\tilde{\gamma}\in G_C$ there exists an  $n\in \N$ such that
$\gamma^n$ and $\tilde{\gamma}^{n}$ commute. 
\end{corollary}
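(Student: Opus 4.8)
The plan is to derive Corollary~\ref{coro:commute} directly from Theorem~\ref{thm:gd} together with the structural conclusions of Proposition~\ref{prop:group}. By Theorem~\ref{thm:gd}, the group $G_C$ satisfies the hypotheses of Proposition~\ref{prop:group}, so there is a closed abelian normal subgroup $H\subset G_C$ and an integer $n_0=n_0(\mathsf g)$ such that every element of $G_C/H$ has order at most $n_0$. Given $\gamma,\tilde\gamma\in G_C$, the key observation is that the orders of the images $\bar\gamma,\bar{\tilde\gamma}$ in $G_C/H$ divide $n_0!$, so $\gamma^{n_0!}$ and $\tilde\gamma^{n_0!}$ both lie in $H$. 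Since $H$ is abelian, $\gamma^{n_0!}$ and $\tilde\gamma^{n_0!}$ commute, and one takes $n=n_0!$ (or any common multiple of the two orders). This is essentially the whole argument; the content has been pushed entirely into Theorem~\ref{thm:gd} and Proposition~\ref{prop:group}.

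A minor subtlety is that Proposition~\ref{prop:group} bounds the \emph{exponent} of $G_C/H$ by $n_0$, not its cardinality, and $G_C/H$ need not be finite a priori. But the exponent bound is exactly what is needed: if every element of $G_C/H$ has order dividing some fixed $N$ (one may take $N=\mathrm{lcm}(1,2,\dots,n_0)$), then $g^N=1$ in $G_C/H$ for all $g$, hence $\gamma^N,\tilde\gamma^N\in H$ and they commute. So the same $n=N$ works uniformly for all pairs $\gamma,\tilde\gamma$, which is a slightly stronger statement than what is asked — the $n$ in the corollary can be chosen independently of the pair, depending only on $\mathsf g=\mathsf g(C)$.

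I do not expect any real obstacle at this stage: all the hard work is in Theorem~\ref{thm:gd} (verifying the infinite-support hypothesis via Lemma~\ref{lemm:closed}, Lemma~\ref{lemm:formall}, and Theorem~\ref{thm:main2}) and in Proposition~\ref{prop:group} (the long group-theoretic analysis culminating in Lemma~\ref{lemm:what}). The only thing to be careful about is to invoke the correct output of Proposition~\ref{prop:group}: part (3) gives the normal abelian $H$ with bounded exponent quotient, and that is precisely the statement used here. One should also recall, for the application in Section~\ref{sect:jab}, that $G_C$ contains the Frobenius class, so in particular the corollary applies to $\gamma=\phi^0(\Fr)$ and $\tilde\gamma=\tilde\Fr$ once these are placed in a common $G_C$; but that identification is carried out later and is not needed for the proof of the corollary itself.
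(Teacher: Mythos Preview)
Your argument is correct and is exactly the intended one: the paper's proof simply says ``It suffices to combine Theorem~\ref{thm:gd} and Proposition~\ref{prop:group},'' and your write-up spells out precisely how part~(3) of Proposition~\ref{prop:group} (the abelian normal $H$ with bounded-exponent quotient) yields a uniform $n$ with $\gamma^n,\tilde\gamma^n\in H$. Your observation that $n$ can be chosen depending only on $\mathsf g(C)$ is also implicit in the paper's statement, since $n(\mathsf G)$ depends only on $\mathsf G=\GL_{2\mathsf g}$.
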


\begin{proof}
If suffices to combine Theorem~\ref{thm:gd} and Proposition~\ref{prop:group}. 
 \end{proof}

\begin{theorem}
\label{thm:second}
Let $\phi\,:\, (C,J)\ra (\tilde{C},\tilde{J})$ 
be an isomorphism of pairs.
Then there exists an $n\in \N$ such that 
$\Fr_C^n$ and $\phi^{-1}(\Fr_{\tilde{C}}^n)$ commute in $\End_k(J)$. 
\end{theorem}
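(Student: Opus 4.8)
The plan is to exhibit both Frobenius maps as elements of the single profinite group $G_C$ and then quote Corollary~\ref{coro:commute}. Enlarging $k_0$ if necessary, we may assume that $C$ and $\tilde{C}$ are both defined over $k_0$, with $C(k_0),\tilde{C}(k_0)\neq\emptyset$ and $C(k_0)$ generating $J$ (Corollary~\ref{coro:existence}); this is harmless, since over a common $k_0$ passing to a finite extension only replaces $\Fr_C,\Fr_{\tilde{C}}$ by a common power of themselves. Fix $c_0\in C(k_0)$ and $\tilde{c}_0\in\tilde{C}(k_0)$ and use them to identify $J^1\simeq J$ and $\tilde{J}^1\simeq\tilde{J}$. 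The Frobenius $\Fr_C$ is then simultaneously an automorphism of the abelian group $J(k)$, a compatible affine automorphism of $J^1(k)$, and a self-map of $C(k)\subset J^1(k)$; hence it is an element $\Fr_C\in G_C\subset\Aut(J)^{\rm aff}$, with linear part the Frobenius endomorphism $\Fr_C\in\End_k(J)$ and zero translation part (because $\Fr_C$ fixes $c_0$). Likewise $\Fr_{\tilde{C}}\in G_{\tilde{C}}$.

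Next I would transport $\Fr_{\tilde{C}}$ into $G_C$: set $\Fr':=\phi^{-1}\circ\Fr_{\tilde{C}}\circ\phi$, the self-map of the triple $(C(k),J^1(k),J(k))$ obtained by conjugating $\Fr_{\tilde{C}}$ through $\phi=(\phi^0,\phi^1,\phi_s)$. The three conditions in the definition of an isomorphism of pairs are exactly what is needed to check that $\Fr'$ is a group automorphism of $J(k)$ whose affine extension to $J^1(k)$ preserves $C(k)=\phi_s^{-1}(\tilde{C}(k))$, so $\Fr'\in G_C$. In the chosen coordinates the linear part of $\Fr'$ is $(\phi^0)^{-1}\Fr_{\tilde{C}}\phi^0$, while its translation part is $(\phi^0)^{-1}\bigl((\Fr_{\tilde{C}}-1)(\phi^1(c_0)-\tilde{c}_0)\bigr)$; the latter need not vanish, but this is immaterial, since by Lemma~\ref{lemm:a} the projection $G_C\to\Aut(J)$ onto linear parts is injective, so two elements of $G_C$ commute if and only if their linear parts do.

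Now apply Corollary~\ref{coro:commute} to $\Fr_C,\Fr'\in G_C$: there is an $n\in\N$ with $\Fr_C^{n}$ and $(\Fr')^{n}$ commuting in $G_C$, hence with commuting linear parts. The linear part of $\Fr_C^{n}$ is the Frobenius endomorphism $\Fr_C^{n}\in\End_k(J)$ relative to the degree $n$ extension $k_0^{(n)}/k_0$, and the linear part of $(\Fr')^{n}$ is $(\phi^0)^{-1}\Fr_{\tilde{C}}^{n}\phi^0=\phi^{-1}(\Fr_{\tilde{C}}^{n})$; so $\Fr_C^{n}$ and $\phi^{-1}(\Fr_{\tilde{C}}^{n})$ commute as automorphisms of $J(k)$. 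Finally, $(\Fr')^{n}\in G_C$ commutes with the Frobenius action of $\Fr_C^{n}$ on the pair $(C,J)$ over $k_0^{(n)}$, so the Proposition following Definition~\ref{dfn: formal-iso} shows that $(\Fr')^{n}$ is a formal isogeny of $J$: on every finite subgroup of $J(k)$ it agrees with an element of $\End_{k_0^{(n)}}(J)\subset\End_k(J)$ commuting with $\Fr_C^{n}$. This is the asserted commutation of $\Fr_C^{n}$ and $\phi^{-1}(\Fr_{\tilde{C}}^{n})$ in $\End_k(J)$, read off on all torsion points (equivalently, on each Tate module).

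The weight of the argument sits entirely in the imported Corollary~\ref{coro:commute} --- that is, in the group theory of Section~\ref{sect:groups}, Proposition~\ref{prop:group} combined with Theorem~\ref{thm:gd} --- so this proof itself is short. The two points that still require care are: checking $\Fr_C,\Fr'\in G_C$, with attention to the base points and to the nonzero translation part of $\Fr'$; and the interpretation of ``commute in $\End_k(J)$'', since $\phi^{-1}(\Fr_{\tilde{C}}^{n})$ is a priori only a formal isogeny, not literally an algebraic endomorphism of $J$, so the commutation must be understood on torsion points --- which is precisely the form consumed by the later applications of Tate's Theorem~\ref{thm:tate}.
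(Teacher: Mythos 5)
Your argument is correct and is exactly the one the paper has in mind: the paper's proof of Theorem~\ref{thm:second} simply reads ``Immediate from Theorem~\ref{thm:gd} and Corollary~\ref{coro:commute},'' and what you have written out --- conjugating $\Fr_{\tilde{C}}$ through $\phi$ to land in $G_C$, invoking Corollary~\ref{coro:commute} for a common exponent $n$, and then reading the conclusion off on linear parts/torsion --- is precisely the detail being suppressed by that ``Immediate.'' Your side remarks (injectivity of the projection $G_C\to\Aut(J)$ via Lemma~\ref{lemm:a}, and the reading of ``commute in $\End_k(J)$'' through the formal-isogeny proposition) are not strictly needed for the bare commutation statement but correctly pin down what the theorem means and how it is consumed later, so they add clarity rather than introduce a gap.
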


\begin{proof}
Immediate from Theorem~\ref{thm:gd} and Corollary~\ref{coro:commute}.
\end{proof}

\begin{lemma}
Assume that $\Fr$ and $\tilde{\Fr}$ generate the same $\ell$-adic subgroup in $\GL_n(\Z_\ell)$.
Then there exist  $n,\tilde{n}\in \N$
such that
$$
\Fr^{n} = \tilde{\Fr}^{\tilde{n}}.
$$
\end{lemma}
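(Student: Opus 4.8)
The plan is to analyse the common closed procyclic subgroup $\Gamma := \overline{\langle\Fr\rangle}=\overline{\langle\tilde\Fr\rangle}\subset\GL_n(\Z_\ell)$ and to read off the exponent relating its two topological generators by applying the determinant. First I would recall the structure of such a $\Gamma$: being abelian profinite it is the product of its Sylow subgroups, and for a prime $q\ne\ell$ a pro-$q$ subgroup of $\GL_n(\Z_\ell)$ injects into $\GL_n(\F_\ell)$ and is therefore finite, so the prime-to-$\ell$ part $T\subset\Gamma$ is finite cyclic of order $N$ with $\ell\nmid N$, while the pro-$\ell$ Sylow subgroup $\Gamma_\ell$ is either finite cyclic or $\cong\Z_\ell$. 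Thus $\Gamma=\Gamma_\ell\times T$. If $\Gamma$ is finite, then $\Fr$ and $\tilde\Fr$ are two generators of the same finite cyclic group, so $\Fr^{|\Gamma|}=\tilde\Fr^{|\Gamma|}=1$ and we are done; hence we may assume $\Gamma_\ell\cong\Z_\ell$.

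Since $\ell\nmid N$, the $N$-th powers $\Fr^N$ and $\tilde\Fr^N$ kill the $T$-component and remain topological generators of $\Gamma_\ell\cong\Z_\ell$. Viewing $\Gamma_\ell$ as a free $\Z_\ell$-module of rank one, there is then a unique $u\in\Z_\ell^\times$ with $\tilde\Fr^N=(\Fr^N)^u$. The whole statement now reduces to showing $u\in\Q_{>0}$: if $u=a/b$ with $a,b\in\N$, then
$$
\Fr^{Na}=(\Fr^N)^a=(\Fr^N)^{ub}=(\tilde\Fr^N)^{b}=\tilde\Fr^{Nb},
$$
so the integers required in the conclusion can be taken to be $Na$ and $Nb$. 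To compute $u$ I would use that the restriction $\det\colon\Gamma_\ell\to\Q_\ell^\times$ is a continuous homomorphism out of a pro-$\ell$ group, hence lands in $1+\ell\Z_\ell$ (for $\ell$ odd; the $\pm1$ factor occurring at $\ell=2$ is harmless and is split off separately). On the Tate module $\det\Fr=q^{\mathsf g}$, where $q$ is the size of the field of definition of $J$ — this comes from the Weil pairing $T_\ell(J)\times T_\ell(J)\to\Z_\ell(1)$ together with the fact that $\Fr$ acts on $\Z_\ell(1)$ by $q$ — and likewise $\det\tilde\Fr=\tilde q^{\mathsf g}$. Applying $\det$ to $\tilde\Fr^N=(\Fr^N)^u$ gives $\tilde q^{N\mathsf g}=(q^{N\mathsf g})^u$ inside $1+\ell\Z_\ell$.

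Finally I would extract $u$ by taking $\ell$-adic logarithms. Writing $q=p^r$ and $\tilde q=p^{\tilde r}$, the $1$-unit parts are $q_1=p_1^{\,r}$ and $\tilde q_1=p_1^{\,\tilde r}$, where $p_1$ is the principal-unit component of $p$ in $\Z_\ell^\times$; since $p$ is prime (in particular not a root of unity) we have $\log_\ell p_1\neq0$, and the equation becomes $\tilde r\log_\ell p_1=u\,r\log_\ell p_1$, i.e.\ $u=\tilde r/r\in\Q_{>0}$. Together with the reduction above this finishes the argument. The one genuinely delicate point is the double role of the prime $\ell$: one must first strip off the prime-to-$\ell$ torsion of $\Gamma$ — the passage to $N$-th powers — before the exponent $u$ and the $\ell$-adic exponential and logarithm are available at all, and it is precisely after restricting to $\Gamma_\ell$ that $\det(\Fr^N)$ and $\det(\tilde\Fr^N)$ become principal units, which legitimises the final logarithmic computation. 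No transcendence input is needed: the value $u=\tilde r/r$ is forced simply because $q$ and $\tilde q$ are powers of the same prime $p$.
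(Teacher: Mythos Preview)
Your proof is correct and follows essentially the same route as the paper: relate the two generators by an $\ell$-adic exponent and then read off that this exponent is a positive rational by applying the determinant, which is a power of $p$ in both cases. Your version is in fact more careful than the paper's terse argument --- you make precise what $\Fr^{\alpha}$ means by first stripping the prime-to-$\ell$ torsion of $\Gamma$, and you justify the final extraction of $u=\tilde r/r$ via the $\ell$-adic logarithm rather than leaving it implicit.
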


\begin{proof}
The assumption implies that there exist an
$\alpha\in \Z_{\ell}^*$ and an $\tilde{n}\in \N$ such that
$$
\Fr^{\alpha} = \tilde{\Fr}^{\tilde{n}}.
$$
The same equality holds for the determinants.
However, the determinants are positive integer powers of $p$.
 \end{proof}

\section{Detecting isogenies}
\label{sect:detect}

In this section, we recall some facts from divisibility theory for linear recurrences, as
developed in \cite{cor-zan-inv}, and apply these to derive a sufficient condition 
for isogeny of abelian varieties. 

\

A function $F\,:\, \N\ra \C$  is called a {\em linear recurrence} if
there exist an $r\in \N$, and $a_i\in \C$, 
such that for all $n\in \N$ one has
$$
F(n+r) = \sum_{i=0}^{r-1} a_i F(n+i).
$$
There is a unique expression 
\begin{equation*}
\label{eqn:uniq}
F(n)=\sum_{i=1}^m f_i(n)\gamma_i^n,
\end{equation*}
where $f_i\in \C[x]$ are nonzero and $\gamma_i\in \C^*$.
The complex numbers $\gamma_i\in \C^*$ are called the roots of the
recurrence. Let $\Gamma$ be a torsion-free finitely-generated subgroup of
the multiplicative group $\C^*$.  
Then the ring of linear recurrences with roots in $\Gamma$ is isomorphic
to the unique factorization domain $\C[x,\Gamma]$ (see \cite[Lemma 2.1]{cor-zan-inv}); 
the element in $\C[x,\Gamma]$ corresponding to a linear recurrence $F$ will be denoted by 
the same letter.

We say that $\{F(n)\}_{n\in \N}$ is a {\em simple}
linear recurrence, if $\deg(f_i)=0$, for all $i$, i.e., $f_i$ are constants.

\begin{proposition}
\label{prop:cz2}
Let $\{F(n)\}_{n\in \N}$, $\{\tilde{F}(n)\}_{n\in \N}$ be simple linear recurrences
such that $F(n), \tilde{F}(\tilde{n})\neq 0$ for all $n,\tilde{n}\in \N$.
Assume that
\begin{enumerate}
\item The set of roots of $F$ and $\tilde{F}$ generates a torsion-free
subgroup of $\C^*$.
\item There is a finitely-generated subring $\mathfrak R\subset\C$
with $F(n)/\tilde{F}(n)\in \mathfrak R$, for infinitely many $n\in \N$.
\end{enumerate}
Then 
$$
\begin{array}{rcc}
G\,:\, \N & \ra & \C\\
n         & \mapsto &  F(n)/\tilde{F}(n)
\end{array}
$$ 
is a simple linear recurrence. 
\end{proposition}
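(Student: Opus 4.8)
The plan is to reduce the statement to the fundamental divisibility result of Corvaja--Zannier in \cite{cor-zan-inv}, which asserts roughly that if $\tilde{F}$ divides $F(n)$ over a finitely generated ring for infinitely many $n$, then $\tilde F$ divides $F$ in the ring $\C[x,\Gamma]$, where $\Gamma$ is the (torsion-free, by hypothesis~(1)) group generated by the roots of $F$ and $\tilde F$. Concretely, I would first set up the ambient UFD: let $\Gamma_0\subset\C^*$ be the finitely generated torsion-free subgroup generated by all roots $\gamma_i$ of $F$ and $\tilde\gamma_j$ of $\tilde F$, and view $F,\tilde F$ as elements of $R:=\C[x,\Gamma_0]$, which is a UFD by \cite[Lemma 2.1]{cor-zan-inv}. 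Since $F$ and $\tilde F$ are \emph{simple} recurrences, they are in fact elements of the group ring $\C[\Gamma_0]$ (no polynomial part), i.e. Laurent-polynomial-type expressions $\sum c_i\gamma_i$; this is the feature I want to exploit at the end.

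The key step is to invoke the main divisibility theorem of \cite{cor-zan-inv}: the hypothesis that $F(n)/\tilde F(n)\in\mathfrak R$ for infinitely many $n$, together with $\tilde F(n)\ne 0$ for all $n$ and the torsion-freeness in~(1), forces $\tilde F \mid F$ in $R=\C[x,\Gamma_0]$. Write $F = Q\cdot \tilde F$ with $Q\in R$. Then for every $n$ one has $F(n)=Q(n)\tilde F(n)$ as complex numbers, so $G(n)=F(n)/\tilde F(n)=Q(n)$ for \emph{all} $n$ (using $\tilde F(n)\ne0$), and hence $G$ is itself a linear recurrence with roots in $\Gamma_0$, namely the one represented by $Q$. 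It remains only to check that $G$ is \emph{simple}, i.e. that $Q$ has no nontrivial polynomial part in $x$. This follows from a degree/leading-term comparison: in the UFD $R=\C[x,\Gamma_0]=\C[\Gamma_0][x]$, consider the $x$-degree; since $F$ and $\tilde F$ both have $x$-degree $0$ (being simple), and $F=Q\tilde F$ with $\C[\Gamma_0]$ a domain, $Q$ must have $x$-degree $0$ as well. Therefore $Q\in\C[\Gamma_0]$, i.e. $Q=\sum b_i\beta_i^n$ with $b_i$ constants, which is exactly the assertion that $G$ is a simple linear recurrence.

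The main obstacle is not any of the bookkeeping above but the correct invocation of the Corvaja--Zannier machinery: one must verify that the precise hypotheses of their divisibility theorem are met — in particular that the ``infinitely many $n$'' with $F(n)/\tilde F(n)\in\mathfrak R$ genuinely yields divisibility in $R$ rather than merely divisibility along a sparse set, and that passing to the finitely generated ring $\mathfrak R$ (which we may enlarge to contain the coefficients $c_i,\tilde c_j$ and a set of generators of $\Gamma_0$) does not lose information. This is where the full strength of \cite{cor-zan-inv} (via its underlying Subspace-Theorem input) is used; everything else is elementary algebra in the UFD $\C[x,\Gamma_0]$. I would therefore structure the write-up as: (i) reduce to divisibility in $R$ by citing the relevant theorem of \cite{cor-zan-inv}; (ii) deduce $G=Q$ pointwise; (iii) a short $x$-degree argument to conclude simplicity.
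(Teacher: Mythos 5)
Your proposal is correct and takes essentially the same route as the paper: both invoke the Corvaja--Zannier divisibility result to get the identity $F=Q\cdot\tilde F$ in the UFD $\C[x,\Gamma]$ (the paper states this as the cited fact that $G$ is a linear recurrence, then writes $G\cdot\tilde F=F$), and both conclude simplicity of $G$ by the observation that $F,\tilde F$ have $x$-degree $0$, forcing $Q$ to lie in $\C[\Gamma]$. Your version just spells out the pointwise equality $G(n)=Q(n)$ (using $\tilde F(n)\ne0$) and the integral-domain degree argument a bit more explicitly.
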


\begin{proof}
The fact that $G$ is a linear recurrence is proved in \cite[p.~434]{cor-zan-inv}.
Enlarging $\Gamma$, if necessary, we obtain an identity 
$$
G\cdot \tilde{F} = F,
$$
in the ring $\C[x,\Gamma]$. Since $F,\tilde{F}$ are simple, i.e., in $\C[\Gamma]$, 
$G$ is also simple. 
\end{proof}

\begin{lemma}
\label{lemm:poly}
Let $\Gamma$ be a finitely-generated torsion-free abelian group of rank $r$ with a fixed basis
$\{ \gamma_1,\ldots, \gamma_r\}$. Let
$\C[\Gamma]$ be the corresponding algebra of Laurent polynomials, 
i.e., finite linear combinations of monomials $x^{\gamma}=\prod_{j=1}^r x_j^{g_j}$, 
where $\gamma=\sum_{i=1}^r g_i\gamma_i\in \Gamma$. 
Let $\gamma$ be a primitive element in $\Gamma$, i.e., $\gcd(g_1,\ldots, g_r)=1$.
Then, for each $\lambda\in\C^*$, the polynomial $x^{\gamma}-\lambda$ is irreducible in $\C[\Gamma]$, 
i.e., defines an irreducible hypersurface in the torus $(\C^*)^r$.

Let $\gamma,\gamma'\in \Gamma$ be arbitrary elements. The polynomials 
$x^{\gamma}-1$ and   $x^{\gamma'}-1$ are not coprime in $\C[\Gamma]$, i.e., the corresponding divisors
in $(\C^*)^r$ have common irreducible components, if and only if $\gamma,\gamma'$ generate a cyclic 
subgroup of $\Gamma$. 
\end{lemma}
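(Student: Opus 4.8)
The statement has two parts. For the irreducibility of $x^\gamma - \lambda$ when $\gamma$ is primitive: the key observation is that a primitive element $\gamma$ of a finitely generated free abelian group $\Gamma \cong \Z^r$ can be completed to a basis, i.e., there is a group automorphism of $\Gamma$ carrying $\gamma$ to the first basis vector $\gamma_1$. Such an automorphism induces a ring automorphism of $\C[\Gamma]$ (a monomial change of coordinates on the torus $(\C^*)^r$), under which $x^\gamma - \lambda$ is carried to $x_1 - \lambda$. The latter is visibly irreducible — it is linear in $x_1$ and does not involve the other variables at all — and since $\C[\Gamma] \cong \C[x_1^{\pm 1},\dots,x_r^{\pm 1}]$ is a UFD, irreducibility is preserved under ring automorphisms. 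So the first part reduces to the elementary fact that a primitive vector extends to a basis (the Smith normal form / elementary divisors argument), plus transport of structure.

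For the second part, I would again use a change of coordinates. Set $H := \langle \gamma, \gamma'\rangle \subset \Gamma$, a subgroup of rank $\le 2$; since $\Gamma/H$ is finitely generated, $H$ is a direct summand only after saturating, so let $\bar H$ be the saturation (the set of elements some multiple of which lies in $H$), which is a direct summand of rank $1$ or $2$. Choose a basis of $\Gamma$ adapted to $\bar H$. If $\mathrm{rk}\,\bar H = 1$, then $\gamma = d\delta$, $\gamma' = d'\delta$ for a primitive $\delta$ and integers $d,d'$, so $x^\gamma - 1$ and $x^{\gamma'}-1$ are both polynomials in the single variable $t := x^\delta$; they share the common factor $t - 1$ (corresponding to the irreducible hypersurface $x^\delta = 1$), hence are not coprime — this is the ``if'' direction, since $\langle \gamma,\gamma'\rangle$ being cyclic is exactly $\mathrm{rk}\,\bar H = 1$ (together with the degenerate case $\gamma=\gamma'=0$, which one checks separately). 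If $\mathrm{rk}\,\bar H = 2$, pick coordinates so that $\gamma$ and $\gamma'$ involve only two of the variables, say $x_1, x_2$; then I must show $x^\gamma - 1$ and $x^{\gamma'}-1$ are coprime in $\C[x_1^{\pm},x_2^{\pm}]$. Decompose each into irreducible factors using the first part: if $\gamma = e\gamma_0$ with $\gamma_0$ primitive, then $x^\gamma - 1 = \prod_{\zeta^e = 1}(x^{\gamma_0} - \zeta)$, and each factor $x^{\gamma_0} - \zeta$ is irreducible and, up to scalar, determined by the pair $(\gamma_0, \zeta)$. Two such factors from $x^\gamma-1$ and $x^{\gamma'}-1$ coincide only if $\gamma_0 = \pm\gamma_0'$ as primitive vectors (possibly after adjusting the root of unity), which forces $\gamma, \gamma'$ to lie on a common line — contradicting $\mathrm{rk}\,\bar H = 2$. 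Hence no common factor.

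The main obstacle, and the step deserving the most care, is the bookkeeping in the rank-$2$ case: making precise the claim that the irreducible factors of $x^\gamma - 1$ are exactly the $x^{\gamma_0}-\zeta$ with $\gamma_0$ the primitive vector in the direction of $\gamma$ and $\zeta$ ranging over $e$-th roots of unity, and that two such are associate in $\C[\Gamma]$ iff they have the same $(\gamma_0,\zeta)$ up to the sign ambiguity $x^{\gamma_0}-\zeta$ versus $\zeta^{-1}(x^{-\gamma_0} - \zeta^{-1})$. This is where one uses unique factorization in the Laurent polynomial ring together with the first part of the lemma in an essential way. Everything else — extending primitive vectors to bases, reducing to two variables, the one-variable case — is routine lattice arithmetic, so I would state those quickly and concentrate the written argument on the factorization comparison.
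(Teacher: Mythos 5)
Your argument is correct and follows essentially the same route as the paper's: prove irreducibility of $x^{\gamma}-\lambda$ for primitive $\gamma$, then factor $x^{\gamma}-1$ as $\prod_{\zeta}(x^{\bar{\gamma}}-\zeta)$ over the primitivization $\bar{\gamma}$ and compare irreducible factors of the two polynomials. You merely make explicit two details the paper compresses — completing a primitive vector to a basis via Smith normal form (the paper instead asserts that the fibers of the monomial map $(\C^{*})^{r}\to\C^{*}$ are irreducible precisely when $\gamma$ is primitive), and the associate relation $x^{\gamma_{0}}-\zeta \sim x^{-\gamma_{0}}-\zeta^{-1}$ that underlies the paper's criterion $\bar{\gamma}\neq\pm\bar{\gamma}'$.
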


\begin{proof}
The map defined by the monomial $x^{\gamma}\,:\, (\C^*)^r\ra \C^*$ has irreducible fibers, 
if and only if $\gamma$ is primitive. For other $\gamma$, put $m:=\gcd(g_1,\ldots, g_r)>1$ and 
$\gamma =m \bar{\gamma}$. 
Then $x^{\gamma}-1=\prod_{s=1}^m (x^{\bar{\gamma}}-\zeta_m^s)$, where $\zeta_m$ is 
a primitive $m$-th root of 1.
By the the first observation, the polynomials  
$x^{\bar{\gamma}}-\zeta_m^s$ are irreducible. 
To prove the last statement, note that coprimality of $x^{\gamma}-1$ and   $x^{\gamma'}-1$
is equivalent to coprimality of  $x^{\bar{\gamma}}-1$ and   $x^{\bar{\gamma}'}-1$, for
the corresponding primitivizations $\bar{\gamma}, \bar{\gamma}'$ of $\gamma,\gamma'$.
This coprimality is equivalent to $\bar{\gamma}\neq \pm \bar{\gamma}'$.    
\end{proof}

Let $A$ be an abelian variety
of dimension $\mathsf g$ defined over a finite field $k_1$ of characteristic $p$, 
and let $\{\alpha_j\}_{j=1,\ldots, 2\mathsf g}$ be the set of eigenvalues
of the corresponding Frobenius endomorphism $\Fr$ on the $\ell$-adic
cohomology, for $\ell\neq p$.   
Let $k_n/k_1$ be the unique extension of degree $n$. 
The sequence
\begin{equation}
\label{eqn:fn}
F(n):=\#A(k_n) = \prod_{j=1}^{2\mathsf g} (\alpha_j^n-1).
\end{equation}
is a simple linear recurrence.
Let $\Gamma$ be the multiplicative subgroup of $\C^*$ generated by 
$\{\alpha_j\}_{j=1,\ldots, 2\mathsf g}$.  
Choosing $k_1$ sufficiently large, we may assume that 
$\Gamma$ is torsion-free. Choose a basis $\gamma_1,\ldots, \gamma_r$ of $\Gamma$, 
and write 
$$
\alpha_j=\prod_{i=1}^r \gamma_i^{a_{ij}},
$$ 
with  $a_{ij}\in \Z$. 
Recall that all $\alpha_j$ are Weil numbers, 
i.e., all Galois-conjugates of $\alpha_j$ have absolute value $\sqrt{q}$, where $q=\# k_1$.  
It follows that, for $j\neq j'$, either $\alpha_j=\alpha_{j'}$ or 
$\alpha_j, \alpha_{j'}$ generate a subgroup of rank two in $\Gamma$ 
(since $\Gamma$ does not contain torsion elements). 
We get a subdivision of the sequence of eigenvalues 
$$
\{\alpha_j\}_{j=1,\ldots, 2\mathsf g} =\sqcup_{s=1}^t I_s,\quad t\le 2\mathsf g, 
$$
into subsets of equal elements. 
Put $d_s = \# I_s$ and let $\alpha_s\in I_s$.

\begin{theorem} 
\label{thm:isoge}
Let $A$ and $\tilde{A}$ be abelian varieties of dimension $\mathsf g$
over finite fields $k_1$, resp. $\tilde{k}_1$. 
Let $F$, resp. $\tilde{F}$, be a simple linear recurrence as in equation~\eqref{eqn:fn}.
Assume that $F(n)\mid \tilde{F}(n)$ for infinitely many $n\in \N$.
Then $A$ and $\tilde{A}$ are isogenous. 
\end{theorem}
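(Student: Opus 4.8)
The plan is to deduce the statement from Tate's Theorem~\ref{thm:tate}: it suffices to show that, after base change to a common finite field, the Frobenius endomorphisms of $A$ and $\tilde A$ have the same characteristic polynomial, i.e.\ that the multiset of eigenvalues $\{\alpha_j\}$ of $A$ and the multiset $\{\tilde\alpha_j\}$ of $\tilde A$ agree. Before starting I would make two normalizations. First, $F$ and $\tilde F$ never vanish: each $\alpha_j$ is a Weil number of absolute value $(\#k_1)^{1/2}>1$, hence not a root of unity, so $\alpha_j^n\neq 1$ for all $n$; and $F,\tilde F$ are \emph{simple} linear recurrences by \eqref{eqn:fn}. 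Second, replacing $k_1$ and $\tilde k_1$ by a common finite extension (harmless here, since in the application of this lemma the divisibility holds for \emph{every} $n$, so one may freely restrict $n$ to an arithmetic progression), I may assume that the subgroup $\Gamma\subset\C^*$ generated by all the $\alpha_j$ together with all the $\tilde\alpha_j$ is torsion-free.

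Next I would apply Proposition~\ref{prop:cz2}, with the two recurrences interchanged and with $\mathfrak R=\Z$: the hypothesis $F(n)\mid\tilde F(n)$ gives $\tilde F(n)/F(n)\in\Z$ for infinitely many $n$, so $G:=\tilde F/F$ is a simple linear recurrence and, as in the proof of that proposition, $G\cdot F=\tilde F$ in the UFD $\C[\Gamma]$ of simple linear recurrences with roots in $\Gamma$. Hence $F\mid\tilde F$ in $\C[\Gamma]$. Grouping the equal eigenvalues of $A$ as $\alpha_1,\dots,\alpha_t$ with multiplicities $d_1,\dots,d_t$ ($\sum_s d_s=2\mathsf g$), and similarly $\tilde\alpha_1,\dots,\tilde\alpha_{\tilde t}$, $\tilde d_{\tilde s}$ ($\sum\tilde d_{\tilde s}=2\mathsf g$) for $\tilde A$, we have in $\C[\Gamma]$
$$
F=\prod_{s=1}^t\bigl(x^{\alpha_s}-1\bigr)^{d_s},\qquad
\tilde F=\prod_{\tilde s=1}^{\tilde t}\bigl(x^{\tilde\alpha_{\tilde s}}-1\bigr)^{\tilde d_{\tilde s}}.
$$

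The core of the argument is then to extract the coincidence of eigenvalues from this divisibility using Lemma~\ref{lemm:poly}. For each $s$ write $\alpha_s=\bar\alpha_s^{\,m_s}$ with $\bar\alpha_s\in\Gamma$ primitive, so that $x^{\alpha_s}-1=\prod_{\zeta^{m_s}=1}(x^{\bar\alpha_s}-\zeta)$ is a product of $m_s$ pairwise non-associate irreducibles (Lemma~\ref{lemm:poly}); the Weil bound $|\bar\alpha_s|=(\#k_1)^{1/(2m_s)}>1$ forces the $\bar\alpha_s$ to be distinct and no two of them to be inverses of one another, so the displayed expression for $F$ is, up to units of $\C[\Gamma]$, its prime factorization, and likewise for $\tilde F$. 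Now $F\mid\tilde F$ forces each prime $x^{\bar\alpha_s}-\zeta$ of $F$ to be associate to a prime $x^{\bar{\tilde\alpha}_{\tilde s}}-\eta$ of $\tilde F$; comparing absolute values rules out the ``inverse'' case, so $\bar{\tilde\alpha}_{\tilde s}=\bar\alpha_s$ (a single index $\tilde s=\tilde s(s)$) and $\eta=\zeta$, whence $m_s\mid m_{\tilde s}$ and $\#\tilde k_1=(\#k_1)^{m_{\tilde s}/m_s}$ (which in particular forces the two base fields to have the same characteristic). The assignment $s\mapsto\tilde s(s)$ is injective and satisfies $d_s\le\tilde d_{\tilde s(s)}$; summing over $s$ and using $\sum d_s=\sum\tilde d_{\tilde s}=2\mathsf g$ forces it to be a bijection with $d_s=\tilde d_{\tilde s(s)}$, makes the ratio $e:=m_{\tilde s(s)}/m_s$ (and hence $\#\tilde k_1=(\#k_1)^{e}$) independent of $s$, and yields $\tilde\alpha_{\tilde s(s)}=\alpha_s^{\,e}$. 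Thus $\tilde A$ has Frobenius characteristic polynomial $\prod_j(T-\alpha_j^{\,e})$, i.e.\ the one of the base change of $A$ to $\tilde k_1$; by Tate's Theorem~\ref{thm:tate} the two are isogenous over $\tilde k_1$ (and $e=1$ in the situations where this lemma is applied).

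The step I expect to be the main obstacle is this last factorization analysis: deducing \emph{equality} of the eigenvalue multisets from the \emph{one-sided} divisibility $F\mid\tilde F$ rather than from $F=\tilde F$. The delicate points are the bookkeeping of irreducible factors with multiplicity in the multivariate ring $\C[\Gamma]$ — distinct eigenvalues can a priori contribute overlapping irreducible factors, and an eigenvalue may be a proper power of an element of $\Gamma$, so divisibility alone does not equate factor counts — together with the inversion and root-of-unity ambiguities coming from Lemma~\ref{lemm:poly}. These are resolved precisely by combining the Weil bounds $|\alpha_j|=(\#k_1)^{1/2}$, $|\tilde\alpha_j|=(\#\tilde k_1)^{1/2}$ with the equality of dimensions $\dim A=\dim\tilde A=\mathsf g$, which pins the primitivizations down and turns the inequality $\sum d_s\le\sum\tilde d_{\tilde s}$ into an equality. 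A secondary technical point is the torsion-free reduction at the outset (and, relatedly, restricting $n$ to a progression), which is immediate in the intended application but requires a small extra argument in general.
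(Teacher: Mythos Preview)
Your argument is correct and follows essentially the same route as the paper: pass to a torsion-free $\Gamma$, apply Proposition~\ref{prop:cz2} to obtain divisibility $F\mid\tilde F$ in $\C[\Gamma]$, use Lemma~\ref{lemm:poly} together with the Weil-number property to match the irreducible factors, and conclude via Tate. You are in fact more careful than the paper on two points it leaves implicit: the direction of the quotient (the paper writes $F/\tilde F$ where $\tilde F/F$ is meant), and the use of $\sum d_s=\sum\tilde d_{\tilde s}=2\mathsf g$ to upgrade the one-sided divisibility to a bijection of factors with equal multiplicities.
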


\begin{proof}
Let $\Gamma\in \C^*$ be the (multiplicative) subgroup generated by 
$\{\alpha_j\}\cup\{\tilde{\alpha}_j\}$. Enlarging $k_1$, resp. $\tilde{k}_1$, 
we may assume that $\Gamma$ is torsion-free. 
Proposition~\ref{prop:cz2} implies that $F/\tilde{F}$ is a simple linear recurrence.

The Laurent polynomial corresponding to $F$, resp. $\tilde{F}$,  has the form
$$
\prod_{s=1}^{t}(\prod_{i=1}^r x_{i}^{a_{is}} -1)^{d_s}, \quad \text{ resp. }\,\,\,
\prod_{\tilde{s}=1}^{\tilde{t}}(\prod_{i=1}^r x_{i}^{\tilde{a}_{i{\tilde{s}}}} -1)^{d_{\tilde{s}}}.
$$
Observe, that 
$$
\gcd(\prod_{i=1}^r x_{i}^{a_{is}} -1, \prod_{i=1}^r x_{i}^{a_{is'}} -1)\in \C^*,
$$ 
for $s\neq s'$. The same holds for $\tilde{F}$. 
Using Lemma~\ref{lemm:poly}, we conclude that 
$t=\tilde{t}$, that we can order the indices so 
that $\# I_s=\# \tilde{I}_s$,
and so that the multiplicative groups generated by $\alpha_s\in I_s$ 
and $\tilde{\alpha}_s\in \tilde{I}_s$ have rank 1, for each $s=1,\ldots, t$.
Thus $\tilde{\alpha}_s=\alpha_s^{u}$, where $u\in \Q$ depends only on $k_1$ and $\tilde{k}_1$. 
It follows that some integer powers of $\Fr, \tilde{\Fr}$ have the same sets of eigenvalues, with 
equal multiplicities. It suffices to apply Theorem~\ref{thm:tate} to conclude
that $A$ is isogenous to $\tilde{A}$.  
\end{proof}

\section{Reconstruction}
\label{sect:isog}

We return to the setup in Section~\ref{sect:intro}:
$C,\tilde{C}$ are irreducible smooth projective curves over 
$k$ of genus $\ge 2$, with Jacobians
$J$, resp. $\tilde{J}$. We have a diagram

\

\centerline{
\xymatrix{
J(k)\ar[d]_{\phi^0}    & J^1(k)\ar[d]_{\phi^1}     & \ar[l]_{j_1}\ar[d]_{\phi_s} C(k) \\
\tilde{J}(k)           & \tilde{J}^1(k) & \ar[l]_{\tilde{j}_1}\tilde{C}(k)
}
}

\

where 
\begin{itemize}
\item $\phi^0$ is an isomorphism of abstract abelian groups;
\item $\phi^1$ is an isomorphism of homogeneous spaces, 
compatible with $\phi^0$;
\item the restriction $\phi_s\,:\, C(k)\ra \tilde{C}(k)$ of $\phi^1$  
is a bijection of sets.
\end{itemize}
It will be convenient to choose 
a point $c_0\in C(k_0)$ and fix the embeddings 
$$
\begin{array}{ccc}
C(k) & \ra & J(k)\\
c    & \mapsto & c-c_0\end{array} \hskip 1cm 
\begin{array}{ccc}
\tilde{C}(k) & \ra & \tilde{J}(k)\\
\tilde{c}    &  \mapsto & \tilde{c}-\phi_s(c_0).
\end{array}
$$
With this choice, the isomorphism of abelian groups $\phi$
induces a bijection on the sets $C(k)$ and $\tilde{C}(k)$. 
In this situation we will say that
$$
\phi\,:\, (C,J)\ra (\tilde{C},\tilde{J})
$$
is an isomorphism of pairs.

\begin{lemma}
\label{lemm:h0}
For any choice of $n_1,\ldots, n_r\in \N$
and $c_1,\ldots, c_r\in C(k)$ 
one has
$$
\dim {\rm H}^0(C, \mathcal O(\sum_i n_i c_i))=
\dim  {\rm H}^0(\tilde{C}, \mathcal O(\sum_i n_i \phi^0(c_i)).
$$
\end{lemma}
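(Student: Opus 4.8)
The plan is to express $\dim{\rm H}^0(C,L)$ purely in terms of the data that an isomorphism of pairs preserves, and then to compare the two sides. Fix the embeddings $\iota\colon C(k)\hookrightarrow J(k)$, $c\mapsto[c]-[c_0]$, and $\tilde\iota\colon\tilde{C}(k)\hookrightarrow\tilde{J}(k)$, $\tilde c\mapsto[\tilde c]-[\phi_s(c_0)]$. Unwinding the compatibility of $\phi^1$ with $\phi^0$ gives the identity $\phi^0\circ\iota=\tilde\iota\circ\phi_s$, so $\phi^0$ carries $\iota(C(k))\subset J(k)$ bijectively onto $\tilde\iota(\tilde{C}(k))\subset\tilde{J}(k)$. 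Put $n:=\sum_i n_i$, the degree of $D:=\sum_i n_i c_i$. Recalling the Brill--Noether loci $W_n^r(C)\subset J^n$ from Section~\ref{sect:jab}, one has
$$
\dim{\rm H}^0(C,L)=1+\max\{\,r\ge-1:[L]\in W_n^r(C)\,\},
$$
a finite maximum, with the convention $W_n^{-1}(C):=J^n$. Identifying $J^n$ with $J$ via $c_0$ (and $\tilde{J}^n$ with $\tilde{J}$ via $\phi_s(c_0)$), it therefore suffices to show that $\phi^0$ maps $W_n^r(C)(k)$ onto $W_n^r(\tilde{C})(k)$ for every $n$ and $r$.

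The key step is a purely set-theoretic description of the $k$-points of these loci, available because $k=\bar{\mathbb F}_p$ is algebraically closed. First, $\varphi_m$ maps $C^{(m)}(k)=\{c_1+\cdots+c_m:c_i\in C(k)\}$ onto $W_m(C)(k)$, so
$$
W_m(C)(k)=\{\,[c_1]+\cdots+[c_m]:c_i\in C(k)\,\}\subset J^m(k)\qquad(m\ge1),
$$
while $W_0(C)(k)=\{[\mathcal{O}_C]\}$ and $W_m(C)(k)=\emptyset$ for $m<0$. Second, a standard argument with linear systems over $k$ --- imposing vanishing at prescribed points, and using that a nonzero linear system on a curve has a proper (finite) base locus while $C(k)$ is infinite --- shows
$$
[L]\in W_n^r(C)(k)\iff[L]-[p_1]-\cdots-[p_r]\in W_{n-r}(C)(k)\ \text{for all }p_1,\dots,p_r\in C(k),
$$
the nontrivial implication being proved by induction on $r$. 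Combining the two displays with the identification $J^{n}(k)\simeq J(k)$: writing $\ell\in J(k)$ for the image of $[L]$, one has $[L]\in W_n^r(C)(k)$ if and only if for all $c_1,\dots,c_r\in C(k)$ the element $\ell-\iota(c_1)-\cdots-\iota(c_r)$ lies in the $(n-r)$-fold sumset of $\iota(C(k))$ inside $J(k)$ (interpreted as $\{0\}$ when $n-r=0$ and as $\emptyset$ when $n-r<0$).

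Thus membership in $W_n^r(C)(k)$ is formulated entirely in terms of the group $J(k)$, the subset $\iota(C(k))$, and iterated addition. Since $\phi^0$ is a group isomorphism with $\phi^0(\iota(C(k)))=\tilde\iota(\tilde{C}(k))$, it carries $k$-fold sumsets to $k$-fold sumsets, $\{0\}$ to $\{0\}$ and $\emptyset$ to $\emptyset$; feeding this into the characterization just obtained shows that $\phi^0$ identifies $W_n^r(C)(k)$ with $W_n^r(\tilde{C})(k)$ for all $n,r$. Finally, under the composite $J^n(k)\simeq J(k)\to\tilde{J}(k)\simeq\tilde{J}^n(k)$ with middle map $\phi^0$, the class $[\mathcal{O}_C(\sum_i n_i c_i)]$ goes to $\sum_i n_i\,\phi^0(\iota(c_i))=\sum_i n_i\,\tilde\iota(\phi_s(c_i))$, which is the image of $[\mathcal{O}_{\tilde{C}}(\sum_i n_i\phi^0(c_i))]$ (writing $\phi^0(c_i)$ for the point $\phi_s(c_i)\in\tilde{C}(k)$). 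Hence these two line bundles lie in $W_n^r(C)(k)$, resp.\ $W_n^r(\tilde{C})(k)$, for exactly the same values of $r$; taking the largest such $r$ and adding $1$ gives the asserted equality of $\dim{\rm H}^0$.

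The only delicate point I anticipate is the set-theoretic characterization of the loci $W_n^r(C)(k)$ over the algebraically closed field $k$, together with the routine bookkeeping of the base-point identifications $J^n\simeq J$ and $\tilde{J}^n\simeq\tilde{J}$. What is needed about $\phi$ itself is exactly the definition of an isomorphism of pairs, and no further geometry of Jacobians enters.
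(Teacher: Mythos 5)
Your proposal is correct and follows essentially the same route as the paper: the key facts in both are that the $k$-points of $W_m(C)\subset J$ are the $m$-fold sumset of the image of $C(k)$, and that $\dim{\rm H}^0$ can be detected inductively by subtracting points, using that a nonzero linear system has finite base locus while $C(k)$ is infinite. The paper states this as a one-point-at-a-time induction (``infinitely many $c$ with $\dim{\rm H}^0(C,D-c)\ge n-1$''), whereas you package the same induction via the Brill--Noether loci $W_n^r$ and a universally quantified reduction to effectivity, but the substance is identical.
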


\begin{proof}
The effectivity of a divisor on $C$ is 
intrinsically determined by the group $J(k)$: the images of the maps
$C^{(d)}\ra J$, resp. $\tilde{C}^{(d)}\ra \tilde{J}$, 
are the same (under $\phi^0$). We can distinguish $D\in J(k)$
with $\dim {\rm H}^0(C, D)\ge 1$, and therefore all sets of linearly
equivalent divisors. By induction, we can detect that $\dim {\rm H}^0(C,D)\ge n$, 
with $n>1$: there are infinitely many points 
$c\in C(k)\subset J(k)$ such that $\dim {\rm H}^0(C,D-c)\ge n-1$.
 \end{proof}

\begin{corollary}
\label{coro:hyper}
If $C$ is hyperelliptic, trigonal or special (i.e., violate
the Brill--Noether inequality) than so is $\tilde{C}$.  
\end{corollary}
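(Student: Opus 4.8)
The plan is to translate each of the three special geometric conditions into a numerical statement about dimensions of spaces of sections of divisors supported on $C(k)$, and then invoke Lemma~\ref{lemm:h0} to transport it to $\tilde C$. The key point is that Lemma~\ref{lemm:h0} shows that for any effective divisor $D=\sum_i n_ic_i$ with $c_i\in C(k)$, the quantity $\dim{\rm H}^0(C,\mathcal O(D))$ equals $\dim{\rm H}^0(\tilde C,\mathcal O(\sum_i n_i\phi^0(c_i)))$; and since $\phi^0$ is a bijection $C(k)\to\tilde C(k)$, every effective divisor supported on $\tilde C(k)$ arises this way. So the Corollary will follow once each property is characterized by the existence of a divisor supported on $k$-points with a prescribed $h^0$.

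First I would handle the hyperelliptic case: $C$ is hyperelliptic if and only if $W^1_2(C)\neq\emptyset$, i.e.\ there exist two points $c_1,c_2\in C(k)$ (possibly equal, allowing $n_1=2$) with $\dim{\rm H}^0(C,\mathcal O(c_1+c_2))\ge 2$. One must check that if such a $g^1_2$ exists at all then it is realized by $k$-rational points of $C$: over $k=\bar{\mathbb F}_p$ the hyperelliptic map $C\to\mathbb P^1$ is defined over a finite subfield, hence has infinitely many $k$-points in each fiber, so in particular a fiber consisting of two $k$-points (this is exactly the kind of statement that Corollary~\ref{coro:existence} and the splitting part of Lemma~\ref{lemm:yz} make available). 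Then Lemma~\ref{lemm:h0} gives $\dim{\rm H}^0(\tilde C,\mathcal O(\phi^0(c_1)+\phi^0(c_2)))\ge 2$, so $\tilde C$ carries a $g^1_2$ and is hyperelliptic. The trigonal case is identical with $W^1_3$ in place of $W^1_2$: a $g^1_3$ is a degree $3$ map to $\mathbb P^1$, defined over a finite field, hence has fibers made of $k$-points (three points, or a point plus a double point, or a triple point), giving an effective divisor of degree $3$ supported on $C(k)$ with $h^0\ge 2$; transport by Lemma~\ref{lemm:h0}.

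For the ``special'' case — $C$ violating the Brill--Noether inequality, i.e.\ there is some $g^r_d$ with the Brill--Noether number $\rho(\mathsf g,r,d)=\mathsf g-(r+1)(\mathsf g-d+r)<0$ — I would argue the same way: fix such a linear series $\mathfrak d$, take $d$ sufficiently many sections to conclude that the generic divisor in $\mathfrak d$, or a suitable special member, is a sum of $k$-points of $C$ (again using that everything is defined over a finite field so that $k$-points are Zariski-dense in each member of the $\mathbb P^r$ of divisors, hence some member is supported on $C(k)$), obtaining an effective $D=\sum n_ic_i$ with $c_i\in C(k)$ and $\dim{\rm H}^0(C,\mathcal O(D))\ge r+1$ and $\deg D=d$. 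Lemma~\ref{lemm:h0} then produces the corresponding $g^r_d$ on $\tilde C$ with the same $d$ and $r$, so $\rho<0$ for $\tilde C$ as well.

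The main obstacle is the rationality bookkeeping: one must be careful that each special linear series on $C$ actually contains a divisor supported on $k$-rational points of $C$ (so that Lemma~\ref{lemm:h0} applies verbatim), and — more subtly in the special case — that the $g^r_d$ one recovers on $\tilde C$ from a rational divisor really has projective dimension exactly $r$ and not larger, which is automatic since $h^0$ is preserved on the nose by Lemma~\ref{lemm:h0}. Over $k=\bar{\mathbb F}_p$ the density of $k$-points in any positive-dimensional $k$-variety (here, a member of the linear system or the total space $C^{(d)}$ fibered over its image in $J$) makes the first point routine, so the corollary reduces cleanly to Lemma~\ref{lemm:h0}.
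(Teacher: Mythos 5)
Your proof is correct in structure and conclusion, and it rests on exactly the right key fact, Lemma~\ref{lemm:h0}: each of the three properties is characterized by the existence of an effective divisor $D$ of small degree with $h^0(C,\mathcal O(D))$ at least a prescribed bound (a $g^1_2$, a $g^1_3$, or a $g^r_d$ with $\rho(\mathsf g,r,d)<0$), and Lemma~\ref{lemm:h0} transports $h^0$ verbatim through $\phi^0$. The paper offers no separate proof for precisely this reason --- it is an immediate consequence of that lemma.

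Where your write-up adds unnecessary friction is in the ``rationality bookkeeping.'' The ground field here is $k=\bar{\mathbb F}_p$, which is algebraically closed, so \emph{every} closed point of $C$ is a $k$-point and \emph{every} effective divisor on $C$ is a sum of points of $C(k)$. There is nothing to verify: any member of any linear series on $C$ is already of the form $\sum n_ic_i$ with $c_i\in C(k)$, so Lemma~\ref{lemm:h0} applies to it directly. The appeals to Corollary~\ref{coro:existence}, the equidistribution statement of Lemma~\ref{lemm:yz}, and ``density of $k$-points'' are all beside the point --- those tools address questions about points over \emph{finite} subfields $k_0\subset k$, which is not what is at stake in the corollary. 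Your final remark that $h^0$ is preserved ``on the nose'' (not merely bounded) is a useful observation and does close the $\rho<0$ case cleanly; beyond that, the argument should simply be: pick a divisor realizing the linear series, note it is supported on $C(k)$ because $k$ is algebraically closed, apply Lemma~\ref{lemm:h0}.
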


\begin{corollary} 
Let $A\subset C^{(d)} \hookrightarrow J$, for  $d<\mathsf g$, 
be a proper abelian subvariety. 
Then there is a proper abelian subvariety 
$\tilde{A}\subset \tilde{C}^{(d)}\hookrightarrow \tilde{J}$
such that $\phi^0$ induces an isomorphism between $A$ and $\tilde{A}$.
\end{corollary}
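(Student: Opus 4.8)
\emph{Proof proposal.} The plan is to take $\tilde A$ to be the Zariski closure of $\phi^0(A(k))$ in $\tilde J$, to verify that this closure is a proper abelian subvariety lying inside $\tilde C^{(d)}$, and then — this is the real point — to check that $\phi^0$ maps $A(k)$ \emph{onto} $\tilde A(k)$ and not merely into it.

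First I would feed Lemma~\ref{lemm:h0} into the argument: it shows that $\phi^0$ carries the image of $C^{(d)}\to J$ bijectively onto the image of $\tilde C^{(d)}\to \tilde J$, and, being a group homomorphism, it then carries every translate $x+W_d(C)$ onto $\phi^0(x)+W_d(\tilde C)$ and $-W_d(C)$ onto $-W_d(\tilde C)$. Since $A\subseteq C^{(d)}$ gives $A(k)\subseteq W_d(C)(k)$ and $A(k)$ is a divisible subgroup of $J(k)$, the image $B:=\phi^0(A(k))$ is a divisible subgroup of $\tilde J(k)$ contained in $W_d(\tilde C)(k)$, which is a \emph{proper} closed subvariety of $\tilde J$ because $d<\mathsf g$. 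Hence $\tilde A:=\overline{B}$ is a proper closed algebraic subgroup of $\tilde J$; as $B$ is divisible it lands in the identity component of $\tilde A$, so $\tilde A$ is in fact a proper abelian subvariety, and $\tilde A\subseteq W_d(\tilde C)$, i.e. $\tilde A\subseteq \tilde C^{(d)}$.

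For the equality $B=\tilde A(k)$ I would exploit that $\phi^0$ is an isomorphism of abelian groups, so that $\tilde J(k)/B\cong J(k)/A(k)\cong (J/A)(k)$ is the group of $k$-points of an abelian variety. From the resulting short exact sequence of torsion divisible groups one reads off that, for every $\ell\neq p$, the Tate module of the $\ell$-primary part $B\{\ell\}=\phi^0(A(k)\{\ell\})$ is a \emph{saturated} $\Z_{\ell}$-submodule of $T_{\ell}(\tilde J)$ of rank $2\dim A$; likewise $T_{\ell}(\tilde A)$ is saturated of rank $2\dim\tilde A$, and $T_{\ell}(B)\subseteq T_{\ell}(\tilde A)$. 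So once one knows $\dim\tilde A=\dim A$, these two saturated submodules coincide, $B\{\ell\}=\tilde A(k)\{\ell\}$ for all $\ell\neq p$, and then $\tilde A(k)/B$ is finite; being a quotient of the divisible group $\tilde A(k)$ it is then trivial, so $B=\tilde A(k)$. (The $p$-primary part requires the analogous comparison of étale parts of $p$-divisible groups, or can be sidestepped by the base-change techniques of Section~\ref{sect:isog}.)

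The hard part is therefore the equality $\dim\tilde A=\dim A$, i.e. the assertion that $\phi^0$ does not enlarge the abelian subvariety. Here I would argue by symmetry: the inverse map $(\phi^0)^{-1}$ is again an isomorphism of pairs $(\tilde C,\tilde J)\to(C,J)$, and $\tilde A\subseteq W_d(\tilde C)$ with $d<\mathsf g$, so the same construction produces a proper abelian subvariety $A^{+}:=\overline{(\phi^0)^{-1}(\tilde A(k))}\subseteq W_d(C)$ with $A\subseteq A^{+}$; iterating yields an ascending chain $A\subseteq A^{+}\subseteq A^{++}\subseteq\cdots$ which must stabilize, and on the stable term $\phi^0$ does induce an isomorphism of the groups of $k$-points. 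The remaining task — showing the chain does not in fact grow — is where the geometry of special loci enters: $\phi^0$ preserves all translates of the loci $W_m(C)$ and their negatives (Lemma~\ref{lemm:h0}), hence preserves the subvarieties of $J$ cut out by incidence relations among these loci, and any abelian subvariety of $J$ contained in $W_d(C)$ is an irreducible component of such a subvariety (statements of this kind are proved in the Appendix); this rigidity forces $\dim\tilde A=\dim A$ and closes the argument.
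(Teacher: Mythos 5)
Your proposal takes a genuinely different route from the paper. The paper's proof is a one-liner based on a purely group-theoretic characterization: an abelian subvariety of maximal dimension inside $C^{(d)}$ (equivalently, inside the locus $W_d(C)\subset J$, which $\phi^0$ preserves by Lemma~\ref{lemm:h0}) is pinned down by the fact that $A(k)\{\ell\}\cong(\Q_\ell/\Z_\ell)^{2\dim A}$ contains arbitrarily large finite abelian subgroups of rank $2\dim A$, and conversely (via Boxall-type results, cf.\ Remark~\ref{rem:genera}) a sequence of growing abelian $\ell$-subgroups of that rank inside $W_d(C)(k)$ can only come from an abelian subvariety. Since $\phi^0$ is a group isomorphism it transports this entire structure, and the corollary follows immediately. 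Your proposal instead works by forming the Zariski closure $\tilde A=\overline{\phi^0(A(k))}$, observing that a divisible subgroup has connected closure, invoking $T_\ell$-saturation to upgrade the inclusion $\phi^0(A(k))\subseteq\tilde A(k)$ to an equality once the dimensions agree, and then trying to control dimensions by a back-and-forth symmetry argument. This is a legitimate alternative strategy, and the saturation step is sound.

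However, there is a real gap at the end. Your ascending chain $A\subseteq A^+\subseteq A^{++}\subseteq\cdots$ certainly stabilizes, but stabilization by itself does \emph{not} show $\dim\tilde A=\dim A$; it only shows that $\phi^0$ is an isomorphism on the (a priori larger) stable term. Your appeal to ``rigidity from the Appendix'' does not close this: Theorem~\ref{thm:ah} and Corollary~\ref{coro:D} classify abelian subvarieties of $j(C^{(2)}\times C^{(2)})$ for $\mathsf g(C)>4$ only, and say nothing about arbitrary $W_d(C)$ or arbitrary genus, which is what the corollary is asserted for. What actually closes the argument cheaply in the case the paper cares about is the observation you almost make but do not state: if $A$ is of \emph{maximal} dimension among abelian subvarieties of $W_d(C)$, then $A\subseteq A^+\subseteq W_d(C)$ forces $A=A^+$ and hence $\dim\tilde A=\dim A$ at the first step. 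That is precisely the case the paper proves (``any such abelian subvariety of maximal dimension'') and the only case in which the corollary is used later (elliptic curves inside $C^{(2)}$ for $\mathsf g>2$). You should either restrict to the maximal-dimensional case and use the symmetry inclusion directly, or, if you want the general statement, supply an actual argument that $\phi^0$ does not enlarge dimension — the hand-wave about incidence loci is not that argument.
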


\begin{proof}
Any such abelian subvariety of maximal dimension is characterized by the
property that it contains an arbitrarily large abelian subgroup of rank equal to twice its
dimension. In particular, $\phi^0$ induces an isomorphism on such subvarieties.
\end{proof}

\begin{lemma} 
\label{lemm:bielli}
Assume that $\mathsf g(C) > 2$ and that $C$ is bielliptic. Then
$\tilde{C}$ is also bielliptic and the map $\phi^0$ 
commutes with every bielliptic involution on $C$ and $\tilde{C}$, 
respectively.
\end{lemma}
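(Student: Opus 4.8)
The plan is to reconstruct the bielliptic structure of $C$ purely from the group-theoretic data $(C,J)$, so that it transports under $\phi^0$. Recall that $C$ bielliptic means there is a degree $2$ map $\pi\,:\, C\ra E$ onto an elliptic curve $E$, equivalently an involution $\iota$ on $C$ with $C/\iota=E$ of genus $1$. First I would record the divisor-theoretic shadow of $\iota$: the fibers of $\pi$ form a base-point-free pencil $\{c+\iota(c)\}$ parametrized by $E$, i.e.\ an effective degree $2$ divisor class $g^1_2$-like family, except that (since $\mathsf g(C)>2$) $C$ carries no genuine $g^1_2$ unless it is hyperelliptic. So the family $\{[c+\iota(c)]\}_{c\in C(k)}\subset J(k)$ is a \emph{translate of an elliptic curve} $E'\subset J$ — concretely, the image of $\pi^*\colon E\hookrightarrow J$ up to translation — on which every member is effective and moves in a pencil. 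The key point is that, by Lemma~\ref{lemm:h0}, the property ``$\dim\mathrm H^0(C,\mathcal O(c+c'))\ge 2$'' is detected by $\phi^0$, and by the Corollaries following it the abelian subvariety $E'\subset C^{(2)}\hookrightarrow J$ is carried isomorphically to a proper abelian subvariety $\tilde E'\subset \tilde C^{(2)}\hookrightarrow\tilde J$ of dimension $1$.

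Next I would argue that the existence of such an elliptic $\tilde E'\subset\tilde C^{(2)}$, together with non-hyperellipticity of $\tilde C$ (which also transports, by Corollary~\ref{coro:hyper}, at least in the relevant range), forces $\tilde C$ to be bielliptic. Indeed, a one-dimensional family of effective degree $2$ divisors on a non-hyperelliptic curve $\tilde C$ of genus $>2$ must have the two points ``linked'': the incidence variety $\{(\tilde c,D)\,:\,\tilde c\in D\in\tilde E'\}\ra\tilde C$ is generically finite of degree $2$ onto its image (degree $1$ would give a rational parametrization by $\tilde E'$, a contradiction), hence defines an involution $\tilde\iota$ on $\tilde C$ whose orbits are exactly the members of the family; the quotient $\tilde C/\tilde\iota$ maps finitely to $\tilde E'$, so has genus $1$, i.e.\ $\tilde C$ is bielliptic with involution $\tilde\iota$. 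Here I should be slightly careful (this is why the hypothesis $\mathsf g(C)>2$ is present): when $\mathsf g=2$ every curve is hyperelliptic and the pencil argument degenerates, and one must also exclude the pathology that the family is a translate $\{c+c_1\}$ for a fixed $c_1$ — but such a family is not stable under the $J$-action in the way an abelian subvariety is, so it is already excluded by the corollaries cited above.

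Finally, compatibility: I would show $\phi^0\circ\iota=\tilde\iota\circ\phi^0$ on $C(k)$. The involution $\iota$ is characterized set-theoretically in $(C,J)$: for $c\in C(k)$, $\iota(c)$ is the unique point $c'\in C(k)$, $c'\ne c$ (outside a finite locus of ramification points of $\pi$), with $[c+c']\in E'$; equivalently $[c+c']$ lies in the unique $1$-dimensional abelian subvariety $E'$ through which the relevant pencil passes. Since $\phi^0$ identifies $E'$ with $\tilde E'$, preserves effectivity and linear equivalence (Lemma~\ref{lemm:h0}), and is a bijection $C(k)\ra\tilde C(k)$, it sends the pair $\{c,\iota(c)\}$ to a pair $\{\phi^0(c),c''\}$ with $[\phi^0(c)+c'']\in\tilde E'$, forcing $c''=\tilde\iota(\phi^0(c))$; the finitely many ramification points are handled by continuity/counting or simply by noting there are no other candidates. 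If there is more than one bielliptic involution, each gives its own elliptic $E'$ and the same argument applies to each, and conversely every elliptic $\tilde E'\subset\tilde C^{(2)}$ pulls back through $(\phi^0)^{-1}$; so the correspondence between bielliptic involutions on $C$ and on $\tilde C$ is a bijection intertwined by $\phi^0$. The main obstacle I anticipate is the step identifying the ``linked pencil $\Rightarrow$ involution'' and pinning down exactly which degree-$2$ families on $\tilde C$ arise this way — i.e.\ verifying that a $1$-dimensional abelian subvariety sitting inside $\tilde C^{(2)}$ is necessarily of the form $\tilde\pi^*\tilde E$ for a bielliptic structure and not some other configuration; this is essentially a classification of abelian subvarieties of $\mathrm{Sym}^2$ of a curve, and is presumably the kind of statement deferred to the Appendix.
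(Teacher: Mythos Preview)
Your approach is essentially the paper's: bielliptic structures on $C$ correspond exactly to elliptic curves $E\subset C^{(2)}\subset J$; the corollary after Lemma~\ref{lemm:h0} transports such abelian subvarieties to elliptic curves $\tilde E\subset\tilde C^{(2)}\subset\tilde J$; and one then reads off the involution $\tilde\iota$ set-theoretically from the pencil, giving the commutation. The paper's proof is a terse three lines that cite Theorem~\ref{thm:ah} for the correspondence and the preservation-of-abelian-subvarieties corollary for the transport; it does not spell out the commutation step, which you handle more carefully.

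Two small corrections. First, your inline incidence-variety sketch has the degrees backwards: the projection $I=\{(\tilde c,D):\tilde c\in D\in\tilde E'\}\to\tilde E'$ is degree $2$, while $I\to\tilde C$ being \emph{degree $1$} (birational) is exactly the case that produces the bielliptic map $\tilde C\cong I\to\tilde E'$, not a contradiction. What you need to exclude is that $I\to\tilde C$ fails to be dominant (the ``fixed point plus moving point'' family you mention) or that $g(I)=1$ while $I\to\tilde C$ is nonconstant --- both impossible since $\tilde C$ has genus $>2$. You correctly flag this classification of elliptic curves in $\tilde C^{(2)}$ as the crux and defer it to the Appendix; the paper does the same. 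Second, the non-hyperellipticity of $\tilde C$ is not needed here: the hyperelliptic pencil contributes a rational curve in $\tilde C^{(2)}$, not an elliptic one, so elliptic $\tilde E'\subset\tilde C^{(2)}$ still forces a bielliptic structure regardless.
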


Recall that a bielliptic structure is a map $j_E: C\to E$ of degree $2$,
where $E$ is an elliptic curve.
By Theorem~\ref{thm:ah}, all bielliptic structures correspond
to embedded elliptic curves $E\subset C^{(2)}\subset J$.
Since we assume $\mathsf g(C) > 2$, there is a finite number of such embeddings
and they are preserved under $\phi^0$. Thus if $C$ is bielliptic
then so is $\tilde{C}$,  and the groups generated by bielliptic
reflections are isomorphic.

\begin{corollary} 
\label{coro:klein}
If $C$ is the Klein curve then $\tilde{C}$ is also a Klein curve.
Indeed, this is a unique curve of genus $3$ which has the action
of $\PGL_2(\mathbb F_7)$. 
The action is generated by bielliptic involutions
and hence $\tilde{C}$ is isomorphic to $C$.
\end{corollary}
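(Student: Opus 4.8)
The plan is to derive the corollary from Lemma~\ref{lemm:bielli} together with the classical description of the Klein curve $C$, with plane model $x^3y+y^3z+z^3x=0$. First I would record the two facts about $C$ that are needed: $\mathsf g(C)=3$, and $\Aut(C)=\PGL_2(\F_7)$ is generated by its involutions, each of which is bielliptic. Since $C$ is a smooth plane quartic it is nonhyperelliptic, so every automorphism is linear, and every automorphism of order $2$ is induced by an involution of $\P^2$ conjugate to $\mathrm{diag}(1,1,-1)$; the fixed locus of the latter is a line $\ell$ together with one isolated point $P\notin\ell$, and smoothness of $C$ forces $P\notin C$ (were $P$ on the invariant quartic it would be a singular point of it). Hence such an involution of $C$ has exactly the four points of $C\cap\ell$ as its fixed points, so its quotient has genus $1$; that is, it is bielliptic. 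Since $\PGL_2(\F_7)$ is generated by its involutions, the subgroup $G\subset\Aut(C)$ generated by the bielliptic involutions of $C$ is all of $\Aut(C)$.

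Next I would transport this structure through the isomorphism of pairs. Since $\mathsf g(C)=3>2$, Lemma~\ref{lemm:bielli} and the remarks following it apply: $\tilde C$ is bielliptic, and the bielliptic structures on $C$ and on $\tilde C$ correspond, through $\phi^0$, via the finitely many embedded elliptic curves in $C^{(2)}\subset J$ and $\tilde C^{(2)}\subset\tilde J$ (Theorem~\ref{thm:ah}). Concretely, $\phi^0$ intertwines each bielliptic involution $\iota$ of $C$ with a bielliptic involution $\tilde\iota$ of $\tilde C$; applying the same to $\phi^{-1}$ shows $\iota\mapsto\tilde\iota$ is a bijection between the two (finite) sets of bielliptic involutions, and from $\phi^0\,\iota_1\iota_2=\tilde\iota_1\tilde\iota_2\,\phi^0$ together with the bijectivity of $\phi^0$ this extends to an isomorphism of groups $G\to\tilde G$, where $\tilde G\subset\Aut(\tilde C)$ is generated by the bielliptic involutions of $\tilde C$. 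Thus $\tilde G\cong G=\Aut(C)=\PGL_2(\F_7)$ acts faithfully on $\tilde C$. Finally $\mathsf g(\tilde C)=\mathsf g(C)=3$, since $\mathsf g$ is read off from the abstract group $J(k)$ (for $\ell\neq p$ one has $J\{\ell\}\cong(\Q_\ell/\Z_\ell)^{2\mathsf g}$, while $\phi^0$ is a group isomorphism $J(k)\to\tilde J(k)$), and a curve of genus $3$ admitting a faithful action of $\PGL_2(\F_7)$ is the Klein curve; hence $\tilde C\cong C$.

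I expect the only subtle point to be the bookkeeping one: checking that $\iota\mapsto\tilde\iota$ is a genuine isomorphism $G\to\tilde G$, not merely a bijection of the sets of involutions, and that $G$ exhausts $\Aut(C)$. The supporting facts — that every order-$2$ automorphism of a smooth plane quartic is bielliptic, the finiteness of bielliptic structures when $\mathsf g>2$, and the uniqueness of the Klein curve among genus-$3$ curves with automorphism group $\PGL_2(\F_7)$ — are classical and, in the characteristics considered here, either already available (Lemma~\ref{lemm:bielli}, Theorem~\ref{thm:ah}) or standard, and I would simply cite them.
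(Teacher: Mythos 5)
Your proposal follows essentially the same route as the paper's own argument: the Klein curve is singled out as the unique genus-$3$ curve with the action of a group generated by bielliptic involutions, and Lemma~\ref{lemm:bielli} (via the remarks and Theorem~\ref{thm:ah}) transports that group through $\phi^0$ to $\tilde C$. You supply more of the supporting detail than the paper does — in particular, why every involution of a smooth plane quartic is bielliptic, why the intertwining relations $\phi^0\iota=\tilde\iota\phi^0$ actually assemble into a group isomorphism $G\to\tilde G$, and why $\mathsf g(\tilde C)=3$ is forced by the abstract group $J(k)$ — but none of this changes the skeleton, and all the extra checks are correct. One cosmetic point: the Klein quartic's automorphism group is $\PSL_2(\F_7)$ of order $168$, not $\PGL_2(\F_7)$ of order $336$; you inherit the paper's slip here, but since what is actually used is that the group is simple (hence generated by its involutions) and that a genus-$3$ curve with a faithful action of this group is the Klein quartic, the argument survives with the notation corrected.
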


\begin{remark}
Note that the isomorphism $\phi^0$ itself does not have to be
algebraic, a profinite power of the Frobenius will have the same properties.
\end{remark}

Assume that ${\rm char}(k_0)\neq 2$, and that $\# k_0$ is 
sufficiently large, i.e., for all finite extensions $k_1/k_0$
the points $C(k_1)$ generate $J(k_1)$, and same for $\tilde{C}$.

\begin{lemma}
\label{lemm:frobe}
Assume that $C$ and $\tilde{C}$ are not hyperelliptic. 
Fix finite fields $k_0,\tilde{k}_0$ such that 
$\#k_0,\#\tilde{k}_0$ are sufficiently large and 
$J(k_0)\subset \tilde{J}(\tilde{k}_0)$. 
Consider the tower of field extensions:
$k_0\subset k_1\subset \ldots $, where $k_i/k_{i-1}$ is 
the unique extension of degree 2, and similarly for $\tilde{k}_0$.  
Then, for all $n\in \mathbb N$, 
$$
\phi^0(J(k_n))\subset \tilde{J}(\tilde{k}_n).
$$
\end{lemma}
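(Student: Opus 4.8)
The plan is to establish the inclusion by induction on $n$, the base case $n=0$ being the hypothesis $\phi^0(J(k_0))\subset\tilde{J}(\tilde{k}_0)$; among the conditions we impose on $\tilde{k}_0$ we include that the basepoint $\phi_s(c_0)$ lies in $\tilde{C}(\tilde{k}_0)$, and that the field is large enough for Lemma~\ref{lemm:yz} to apply over every $\tilde{k}_n$ in the tower. The inductive step rests on two $\phi$-invariant, purely set-theoretic descriptions: one of $J(k_n)$ in terms of $C(k_n)$, and one of $C(k_{n+1})$ in terms of $C(k_n)$ and $J(k_n)$. Once these are available, the induction amounts to transporting them through the group isomorphism $\phi^0$ and using the inductive hypothesis.

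First I would record the description of $J(k_n)$. Put $m:=2\mathsf g-1$. Since $\#k_0$, and hence $\#k_n$, is sufficiently large, Lemma~\ref{lemm:yz} applies over $k_n$: for every $x\in J(k_n)$ the fibre $\varphi_m^{-1}(x)$ contains a point $z$ whose cycle $\sigma_m^{-1}(z)$ is completely reducible over $k_n$, that is, $x=\sum_{i=1}^m(c_i-c_0)$ with all $c_i\in C(k_n)$; conversely any such sum lies in $J(k_n)$. Hence
$$
J(k_n)=\Bigl\{\,\textstyle\sum_{i=1}^m(c_i-c_0)\ :\ c_1,\dots,c_m\in C(k_n)\,\Bigr\},
$$
and the analogous identity holds for $\tilde{J}(\tilde{k}_n)$ in terms of $\tilde{C}(\tilde{k}_n)$, with basepoint $\phi_s(c_0)$.

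Next I would establish the description of $C(k_{n+1})$; this is the step where non-hyperellipticity enters, through the elementary fact that on a non-hyperelliptic curve an effective divisor of degree $2$ is the unique effective divisor in its linear equivalence class (its $h^0$ equals $1$). The claim is that, for $c\in C(k)$,
$$
c\in C(k_{n+1})\quad\Longleftrightarrow\quad\exists\,c'\in C(k),\ c'\ne c,\ \text{ with }\ (c-c_0)+(c'-c_0)\in J(k_n).
$$
For "$\Leftarrow$": if such a $c'$ exists then $D:=c+c'$ is an effective degree-$2$ divisor with $[D]\in J^2(k_n)$, so $\Gal(k/k_n)$ permutes the unique effective divisor of that class and therefore fixes $D$; thus $\{c,c'\}$ is $\Gal(k/k_n)$-stable, the stabiliser of $c$ has index $\le2$ in $\Gal(k/k_n)\cong\hat{\Z}$, and $c\in C(k_{n+1})$ (the unique index-$2$ open subgroup of $\hat{\Z}$ has fixed field $k_{n+1}$). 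For "$\Rightarrow$": if $c\in C(k_n)$ take any $c'\in C(k_n)$ with $c'\ne c$ (there are many, since $\#C(k_n)$ is large), and if $c\in C(k_{n+1})\setminus C(k_n)$ take $c'$ to be the Galois conjugate of $c$ over $k_n$. The same description holds for $\tilde{C}$, which is likewise non-hyperelliptic.

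Finally, the inductive step. Assume $\phi^0(J(k_n))\subset\tilde{J}(\tilde{k}_n)$. Given $c\in C(k_{n+1})$, choose $c'$ as in the second description; since $\phi^0$ is a group isomorphism that carries $C(k)$ bijectively onto $\tilde{C}(k)$ and sends $c-c_0$ to $\phi_s(c)-\phi_s(c_0)$, we get $\phi_s(c')\ne\phi_s(c)$ and $(\phi_s(c)-\phi_s(c_0))+(\phi_s(c')-\phi_s(c_0))\in\phi^0(J(k_n))\subset\tilde{J}(\tilde{k}_n)$, so the second description applied to $\tilde{C}$ forces $\phi_s(c)\in\tilde{C}(\tilde{k}_{n+1})$. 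Hence $\phi_s(C(k_{n+1}))\subset\tilde{C}(\tilde{k}_{n+1})$. Now, given $x\in J(k_{n+1})$, the first description over $k_{n+1}$ writes $x=\sum_{i=1}^m(c_i-c_0)$ with $c_i\in C(k_{n+1})$, whence $\phi^0(x)=\sum_{i=1}^m(\phi_s(c_i)-\phi_s(c_0))$ with every $\phi_s(c_i)\in\tilde{C}(\tilde{k}_{n+1})$, so $\phi^0(x)\in\tilde{J}(\tilde{k}_{n+1})$; this closes the induction. I expect the main obstacle to be pinning down the second description: the crucial — and slightly surprising — point is that one degree-$2$ relation $(c-c_0)+(c'-c_0)\in J(k_n)$ already forces $c$ into the quadratic extension $k_{n+1}$, which is precisely what fails for hyperelliptic curves, where the hyperelliptic pencil produces spurious relations of this kind (this is why the hyperelliptic case is excluded). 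A subsidiary point to verify is that the "sufficiently large" threshold in Lemma~\ref{lemm:yz} depends only on $\mathsf g$, so that it is simultaneously available over every $k_n$ and $\tilde{k}_n$ in the towers.
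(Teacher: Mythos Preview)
Your proof is correct and follows essentially the same approach as the paper: an inductive characterization of $C(k_{n+1})$ via degree-$2$ cycles whose class lies in $J(k_n)$, with non-hyperellipticity ensuring that the unique effective representative of such a class is Galois-stable, combined with the fact that $C(k_n)$ generates $J(k_n)$ once the ground field is large enough. Your write-up is in fact more careful than the paper's (you make the role of $h^0=1$ explicit, handle the case $c\in C(k_n)$ separately, and track the basepoint), but the underlying argument is the same.
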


\begin{proof}
We have an intrinsic inductive characterization of 
$C(k_n)$ and $J(k_n)$, resp. $\tilde{C}(\tilde{k}_n)$ and 
$\tilde{J}(\tilde{k}_n)$. 
Namely, $c\in C(k_n)\setminus C(k_{n-1})$, 
iff there exists a point $c'\in C(k)$ such that $c+c'\in J(k_{n-1})$. 
Indeed, if $c\in C(k_n)$ then $c'$ is the conjugate for the Galois
automorphism $\sigma$ of $k_n/k_{n-1}$. Conversely, if
$c+c'$ is a pair as above and $\sigma(c)\neq c'$, then 
$\sigma(c+c')=c+c'\in J(k)$, which defines a  nontrivial 
hyperelliptic pencil on $C$, contradicting our assumption.
By assumption on $k_0$, points $C(k_n)$ generate $J(k_n)$, as an abelian group. 
By induction, it follows that $\phi^0(J(k_n))\subset \tilde{J}(\tilde{k}_n)$.
\end{proof}

By Corollary~\ref{coro:hyper}, the hyperelliptic property of $C$ implies the same 
for $\tilde{C}$. The hyperelliptic 
case requires a more delicate analysis of point configurations.

Let $C$ be a hyperelliptic curve over a finite field $\mathbb F_q$. 
The Jacobian $J^2$
of zero cycles of degree 2 contains a unique effective zero-cycle 
$z_0\in J^2(\mathbb F_q)$ corresponding to the hyperelliptic pencil on $C$.  
We use this cycle to identify $J^2(k)\simeq J(k)=J^0(k)$. 
Let $k_0/\mathbb F_q$ be a finite extension, 
$k_1/k_0$ a quadratic extension and $\sigma$ the nontrivial element of the Galois group
$\Gal(k_1/k_0)$. Put
$$
C(k_1)^{-}:=\{ c\in C(k_1)\,|\, \sigma(c) + c = z_0 \in J^2(k_0)\}.
$$

\begin{lemma}
\label{lemm:minus}
Let $C$ be a hyperelliptic curve defined over $\mathbb F_q$. 
Then there exists an $N\in \N$ such that for all finite extensions 
$k_0/\mathbb F_q$ with $q^N\mid \# k_0$, 
the zero-cycles of even degree with support in 
$C(k_1) \setminus  C(k_1)^{-}$ generate $J(k_1)\simeq J^2(k_1)$.  
\end{lemma}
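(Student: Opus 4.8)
The plan is to realize \emph{every} class in $J^2(k_1)$ by a completely split effective zero-cycle of degree $2\mathsf g$ whose support avoids $C(k_1)^-$; differences of two such cycles will then fill all of $J(k_1)=J^0(k_1)$, which under the identification fixed by $z_0$ is $J^2(k_1)$. (Note that Lemma~\ref{lemm:pre-minus}, applied with $D=C(k_1)^-$ and $\epsilon_q\to 0$, already shows that the cycles supported in $C(k_1)\setminus C(k_1)^-$ generate a subgroup of $J(k_1)$ of index bounded solely in terms of $\mathsf g$; the work below is to remove this index.)

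First I would estimate $\#C(k_1)^-$. Write $h\colon C\to\mathbb P^1$ for the hyperelliptic map and $\iota$ for the hyperelliptic involution, so the effective representatives of $z_0$ are exactly the divisors $c+\iota(c)$. For $c\in C(k_1)$ the relation $\sigma(c)+c=z_0$ in $J^2$ is then equivalent to $\sigma(c)=\iota(c)$. If $c\in C(k_0)$ this forces $c$ to be a Weierstrass point, of which there are at most $2\mathsf g+2$. Otherwise $\{c,\sigma(c)\}=\{c,\iota(c)\}$ is an inert degree-$2$ fiber of $h$ lying over a point of $\mathbb P^1(k_0)$, and each point of $\mathbb P^1(k_0)$ accounts for at most two points of $C(k_1)^-$; hence
\[
\#C(k_1)^-\ \le\ 2\,\#\mathbb P^1(k_0)+2\mathsf g+2\ =\ O(\#k_0)\ =\ O\!\left(\sqrt{\#k_1}\right),
\]
with an absolute implied constant.

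Next I would use the fiber structure of $\varphi_{2\mathsf g}\colon C^{(2\mathsf g)}\to J^{2\mathsf g}$. Fix $\xi\in J^2(k_1)$ and put $x:=\xi+(\mathsf g-1)z_0\in J^{2\mathsf g}(k_1)$. Since $\deg x=2\mathsf g\ge 2\mathsf g-1$, the system $|x|$ is base-point free of projective dimension $\mathsf g$, so $\varphi_{2\mathsf g}^{-1}(x)\cong\mathbb P^{\mathsf g}$ over $k_1$, and for \emph{every} $w\in C(k)$ one has $\dim {\rm H}^0(C,x-w)=\mathsf g$, so that $\{D\in|x|\colon D\ge w\}$ is a hyperplane in $\mathbb P^{\mathsf g}$. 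Thus the cycles in $\varphi_{2\mathsf g}^{-1}(x)(k_1)$ whose support meets $C(k_1)^-$ lie in a union of $\#C(k_1)^-$ hyperplanes, a set of cardinality at most $\#C(k_1)^-\cdot\#\mathbb P^{\mathsf g-1}(k_1)$. On the other hand, by the equidistribution theorem of \cite{katz} applied as in the proof of Lemma~\ref{lemm:yz} (to the tautological degree-$2\mathsf g$ cover of $\varphi_{2\mathsf g}^{-1}(x)$ by pairs consisting of a divisor and a point of it), the proportion of $y\in\varphi_{2\mathsf g}^{-1}(x)(k_1)$ for which $\sigma_{2\mathsf g}^{-1}(y)$ is completely split over $k_1$ is bounded below by a positive constant $\delta_0=\delta_0(\mathsf g)$, once $\#k_1$ is large enough.

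Finally I would compare these estimates: $\#\mathbb P^{\mathsf g}(k_1)\ge\#k_1\cdot\#\mathbb P^{\mathsf g-1}(k_1)$, whereas $\#C(k_1)^-=O(\sqrt{\#k_1})$ while $\delta_0\,\#k_1$ grows linearly. Hence there is $N=N(\mathsf g)$ such that whenever $q^N\mid\#k_0$ — also taking $N$ large enough, via Corollary~\ref{coro:existence}, that $C(k_1)$ generates $J(k_1)$ — one has $\delta_0\cdot\#\mathbb P^{\mathsf g}(k_1)$ strictly larger than the number of completely split fibers whose support meets $C(k_1)^-$. So some completely split $D=c_1+\dots+c_{2\mathsf g}\in\varphi_{2\mathsf g}^{-1}(x)(k_1)$ has all $c_i\in C(k_1)\setminus C(k_1)^-$, and $[D]=x$, i.e.\ $\xi=[D]-(\mathsf g-1)z_0$. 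As $\xi$ was arbitrary, the effective degree-$2\mathsf g$ cycles supported in $C(k_1)\setminus C(k_1)^-$ surject onto $J^{2\mathsf g}(k_1)$; subtracting a fixed one of them yields all of $J^0(k_1)=J(k_1)\cong J^2(k_1)$ by degree-$0$ (hence even-degree) cycles supported in $C(k_1)\setminus C(k_1)^-$. The main obstacle here is the equidistribution input: one needs that a positive, $\#k_1$-independent fraction of the divisors in $|x|$ split completely over $k_1$, uniformly in $x\in J^{2\mathsf g}(k_1)$, which rests on Deligne's equidistribution theorem in Katz's form and on control of the monodromy of the universal incidence cover; everything else is point-counting on projective spaces and the elementary description of $C(k_1)^-$.
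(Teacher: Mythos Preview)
Your approach is sound in outline but takes a genuinely different route from the paper, and the technical input you flag at the end---uniform equidistribution across all linear systems $|x|$---is precisely what the paper's argument is designed to avoid.

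The paper proceeds in two steps. First, exactly as you note parenthetically, Lemma~\ref{lemm:pre-minus} together with the estimate $|\#C(k_1)^- - \#k_0|\le 2\mathsf g\sqrt{\#k_0}$ bounds the index $[J(k_1):H]$ by a constant $m$ depending only on $\mathsf g$. Second---and this is where the paper diverges from you---the remaining index is killed by a structural argument about the quadratic Galois action on torsion, in the spirit of Lemma~\ref{lemm:ell-points}. One takes $N$ large enough that $J[\ell]\subset J(k_0)$ for every prime $\ell<m$. For odd $\ell$ the involution $\sigma$ then fixes $J[\ell]$, hence (since $\sigma^2=1$ and $2$ is invertible on an $\ell$-group) fixes the entire $\ell$-Sylow of $J(k_1)$; that part therefore lies in $J(k_0)\subset H$. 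For $\ell=2$ one shows that the $2$-primary part of $J(k_1)$ is generated by halves of elements of $J(k_0)$, and checks directly that such halves do not land in $J(k_1)^{-}$. No equidistribution beyond the crude counting in Lemma~\ref{lemm:pre-minus} is used.

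Your argument instead produces, for every $x\in J^{2\mathsf g}(k_1)$, an explicit completely split divisor in $|x|$ avoiding $C(k_1)^-$. This is more direct, but the price is exactly what you identify: a lower density $\delta_0>0$ of completely split divisors in $|x|(k_1)$, \emph{uniform in $x$}. This is obtainable---the fibers of $C^{2\mathsf g}\to J^{2\mathsf g}$ are geometrically connected (both $C^{2\mathsf g}\to C^{(2\mathsf g)}$ and $C^{(2\mathsf g)}\to J^{2\mathsf g}$ have connected fibers), their Betti numbers are uniformly bounded since the family is proper, and Lemma~\ref{lemm:yz} already gives the existence of a completely split point, which forces arithmetic and geometric monodromy to coincide so that Chebotarev yields density $\ge 1/(2\mathsf g)!$---but it is a genuinely deeper input than anything the paper invokes, and the phrase ``as in the proof of Lemma~\ref{lemm:yz}'' undersells the work, since that lemma asserts only existence, not a uniform positive density. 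In short: the paper trades directness for elementariness, and your route trades elementariness for an explicit representative of each class.
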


\begin{proof}
Let $H\subset J(k_1)$ be the subgroup generated by zero-cycles of even degree with support in 
$C(k_1) \setminus  C(k_1)^{-}$. 
Put $q:=\#k_0$. Note that 
$$
|\# C(k_1)^{-} - q |\le 2\mathsf g \sqrt{q}.
$$
Indeed, let $\iota \,:\, C\ra \P^1$ be the hyperelliptic projection. 
Then $\iota(C(k_1)^{-})\subseteq \P^1(k_0)$, and the image corresponds to 
those points on  $b\in \P^1(k_0)$ such that the degree 2 cycle $\iota^{-1}(b)$
does not split over $k_0$. The claim follows from standard Weil estimates. 
Lemma~\ref{lemm:pre-minus} implies a universal ($k_1$ independent) 
bound for the index $I:=[J(k_1):H]$, e.g., $I < m$.

Now we apply the argument of Lemma~\ref{lemm:ell-points}.
Let $k_0$ be such that $J(k_0)$ contains
all $J(k)[\ell]$, for $\ell < m$. Then $H= J(k_1)$.
Indeed, for $\ell\neq 2$ and $J(k)[\ell]\subset J(k_0)$ 
the order of $J(k_1)/J(k_0)$ is coprime to $\ell$:
if an automorphism of order $2$ acts
trivially on $J(k)[\ell]$ then it also acts trivially on all 
elements of $\ell$-power order in $J(k_1)$. 
Next, note that the elements of the 
form $\frac{1}{2}x, x\in J(k_0)$ generate
the $2$-primary part of $J(k_1)$ but that $\sigma(\frac{1}{2}x) =x + z_0, z_0\in J^2(k_0)$
and hence $\frac{1}{2}x$ is never in 
$J(k_1)^{-}$ (the subgroup generated by  $C(k_1)^{-}$).
This completes the argument for $\ell=2$. 
\end{proof}

\begin{lemma}
\label{lemm:hyper-ell-frob}
Assume that $C$ and $\tilde{C}$ are hyperelliptic. 
There exist finite fields $k_0,\tilde{k}_0$ 
and towers of  quadratic field extensions:
$k_0\subset k_1\subset \ldots $, resp.  for $\tilde{k}_0$, such that  
for all $n\in \mathbb N$  
$$
\phi^0(J(k_n))\subset \tilde{J}(\tilde{k}_n).
$$
\end{lemma}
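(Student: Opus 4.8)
The plan is to mimic the proof of Lemma~\ref{lemm:frobe}, replacing the non-hyperelliptic intrinsic characterization of $C(k_n)$ by the one supplied by Lemma~\ref{lemm:minus}. First I would fix, using Lemma~\ref{lemm:minus} applied to both $C$ and $\tilde C$, a finite field $k_0$ large enough that: (i) $\#k_0$ and $\#\tilde k_0$ are sufficiently large in the sense used throughout Section~\ref{sect:isog} (so that $C(k_1)$ generates $J(k_1)$ for every finite $k_1/k_0$, and likewise for $\tilde C$); (ii) the power $q^N$ from Lemma~\ref{lemm:minus} divides $\#k_0$, for the $N$ attached to $C$, and the analogous condition holds for $\tilde C$; and (iii) $J(k_0)\subset\tilde J(\tilde k_0)$ under $\phi^0$. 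As in the non-hyperelliptic case I take $k_i/k_{i-1}$ the unique quadratic extension, and similarly $\tilde k_i/\tilde k_{i-1}$.

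The key point is an intrinsic, $\phi^0$-invariant description of the sublattice of $J(k_n)$ spanned by the ``good'' zero-cycles of even degree. Recall the unique effective cycle $z_0\in J^2(k_0)$ of the hyperelliptic pencil is intrinsic to the pair $(C,J)$: it is the unique element of $J^2(k)$ through which all of $C(k)$ maps in pairs $c+\iota(c)$, and the property of being such a cycle is visible from the group structure and the set $C(k)$, hence preserved by $\phi^0$ (so $\phi^0(z_0)=\tilde z_0$). Consequently the involution set $C(k_1)^-=\{c\in C(k_1): \sigma(c)+c=z_0\}$ has an intrinsic characterization once we can recognize $C(k_1)$ inside $C(k)$: namely $c\in C(k_1)^-$ iff there is $c'\in C(k)$ with $c+c'=z_0$ and both $c,c'\in C(k_1)$. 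The recognition of $C(k_n)$ proceeds by induction exactly as in Lemma~\ref{lemm:frobe}: $c\in C(k_n)\setminus C(k_{n-1})$ iff there exists $c'\in C(k)$ with $c+c'\in J(k_{n-1})$ and $c+c'\neq z_0$; the conjugate $c'=\sigma(c)$ witnesses one direction, and in the other direction, if $\sigma(c)\neq c'$ then $c+c'$ is $\sigma$-invariant and distinct from the hyperelliptic class, which after identifying $J^2\simeq J$ via $z_0$ forces $c+c'$ to be a second $g^1_2$, impossible. The exceptional case $c+c'=z_0$ is precisely what is excluded when we pass to $C(k_1)\setminus C(k_1)^-$, and this is where the use of $z_0$ (rather than a generic base cycle) matters.

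With these ingredients the induction runs: assuming $\phi^0(J(k_{n-1}))\subset\tilde J(\tilde k_{n-1})$, the above gives $\phi^0(C(k_n)\setminus C(k_n)^-)\subset\tilde C(\tilde k_n)\setminus\tilde C(\tilde k_n)^-$, and since the even-degree cycles supported on $C(k_n)\setminus C(k_n)^-$ generate $J(k_n)\simeq J^2(k_n)$ by Lemma~\ref{lemm:minus} (valid because $q^N\mid\#k_{n-1}$, hence a fortiori conditions of Lemma~\ref{lemm:minus} hold over $k_{n-1}$), we conclude $\phi^0(J(k_n))\subset\tilde J(\tilde k_n)$. The base case $n=0$ is condition (iii). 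I expect the main obstacle to be the bookkeeping around the identification $J^2\simeq J$ and verifying that $\phi^0$ respects it: one must check that $\phi^0$ carries $z_0$ to $\tilde z_0$ and that the translated inductive characterization of $C(k_n)^-$ really is expressible purely in terms of the data $(C(k),J(k))$ transported by $\phi^0$ — in particular that ``$c\in C(k_n)$'' and ``$c+c'=z_0$'' are both intrinsic. Once that is pinned down, the argument is a direct parallel of Lemma~\ref{lemm:frobe} with Lemma~\ref{lemm:minus} substituted for the generation statement.
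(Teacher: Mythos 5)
Your proof follows the same route as the paper's: it invokes Lemma~\ref{lemm:minus} for generation, observes that $C(k_i)\setminus C(k_i)^-$ (and the hyperelliptic class $z_0$) is intrinsic once $J(k_{i-1})$ is known, and then runs the induction as in Lemma~\ref{lemm:frobe}. You supply the details the paper compresses into one sentence, and they check out; the only slip is cosmetic, in that the contradiction you want is that $c+c'$ \emph{is} a $g^1_2$, hence by uniqueness of the hyperelliptic pencil equals $z_0$ (not that there would be ``a second'' $g^1_2$).
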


\begin{proof}
By Lemma~\ref{lemm:minus}, the points in $C(k_i)\setminus C(k_i)^{-}$
generate $J(k_i)$. This subset of points is defined intrinsically in $C(k)$, 
provided $J(k_{i-1})$ is already known. By induction, as in the proof of Lemma~\ref{lemm:frobe}, 
we obtain the required tower of degree 2 extensions, with an embedding 
$$
\phi^0\,:\, J(k_i) \ra \tilde{J}(\tilde{k}_i).
$$  
\end{proof}

\begin{theorem}
\label{thm:main3}
Let $\phi\,:\, (C,J)\ra (\tilde{C},\tilde{J})$ be an isomorphism of pairs. 
Then $J$ and $\tilde{J}$ are isogenous. 
\end{theorem}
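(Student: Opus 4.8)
The plan is to assemble the pieces developed in Sections~\ref{sect:formal}--\ref{sect:isog}. First I would reduce to the case where the base finite fields $k_0,\tilde{k}_0$ have sufficiently many elements: since an isomorphism of pairs $\phi\,:\,(C,J)\ra(\tilde{C},\tilde{J})$ over $k=\bar{\mathbb F}_p$ is insensitive to replacing $k_0$ by a finite extension, by Corollary~\ref{coro:existence} we may assume that $C(k_1)$ generates $J(k_1)$ and $\tilde{C}(\tilde{k}_1)$ generates $\tilde{J}(\tilde{k}_1)$ for all finite extensions, and also (enlarging further) that $\mathrm{char}(k_0)\neq 2$ and the Weil-estimate bounds \eqref{eqn:delta} hold. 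By Corollary~\ref{coro:hyper}, $C$ is hyperelliptic if and only if $\tilde{C}$ is, so the two cases of Lemma~\ref{lemm:frobe} and Lemma~\ref{lemm:hyper-ell-frob} cover all possibilities.

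Next I would extract the crucial combinatorial input: by Theorem~\ref{thm:second} there is an $n\in\N$ such that $\Fr_C^n$ and $\phi^{-1}(\Fr_{\tilde{C}}^n)$ commute in $\End_k(J)$; replacing the Frobenii by these powers (which only changes $k_0$, $\tilde{k}_0$ by finite extensions) we may assume $\Fr=\Fr_C$ and $\tilde\Fr:=\phi^{-1}(\Fr_{\tilde C})$ commute. Arrange also, using Lemma~\ref{lemm:frobe} (non-hyperelliptic case) or Lemma~\ref{lemm:hyper-ell-frob} (hyperelliptic case), that $J(k_0)\subset\tilde{J}(\tilde{k}_0)$ and that the towers of quadratic extensions $k_0\subset k_1\subset\cdots$, $\tilde{k}_0\subset\tilde{k}_1\subset\cdots$ satisfy
$$
\phi^0(J(k_n))\subset \tilde{J}(\tilde{k}_n)\qquad\text{for all }n\in\N.
$$
Taking cardinalities, and writing $F(n):=\#J(k_n)$ and $\tilde F(n):=\#\tilde J(\tilde k_n)$, this inclusion of finite abelian groups forces $F(n)\mid \tilde F(n)$ for all $n$.

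Now I would invoke Section~\ref{sect:detect}. Both $F(n)=\prod_{j=1}^{2\mathsf g}(\alpha_j^n-1)$ and $\tilde F(n)=\prod_{j=1}^{2\mathsf g}(\tilde\alpha_j^n-1)$ are simple linear recurrences of the form \eqref{eqn:fn}, and $F(n),\tilde F(\tilde n)$ never vanish because Weil numbers are not roots of unity. Since $F(n)\mid\tilde F(n)$ for infinitely many $n$ (indeed all $n$), Theorem~\ref{thm:isoge} applies directly and yields that $J$ and $\tilde J$ are isogenous over a common finite field, hence over $k$. That completes the proof.

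The step I expect to be the main obstacle is not any single computation here but the accumulated strength of Theorem~\ref{thm:second} together with the intrinsic reconstruction of the quadratic towers: the delicate point is that the set-theoretic characterizations of $C(k_n)$ inside $C(k)$ used in Lemmas~\ref{lemm:frobe} and \ref{lemm:hyper-ell-frob}—which rely on the hyperelliptic pencil analysis and on Lemma~\ref{lemm:minus}—are exactly what is preserved by $\phi$, so that the divisibility $F(n)\mid\tilde F(n)$ is genuinely available. Given those inputs, the remaining argument is the short assembly above; the reader should keep in mind that all "sufficiently large $k_0$" reductions are harmless precisely because isogeny is a statement insensitive to finite base change, and because Theorem~\ref{thm:isoge} already builds in the freedom to pass to powers of Frobenius.
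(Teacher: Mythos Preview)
Your proposal is correct and follows essentially the same route as the paper: invoke Lemma~\ref{lemm:frobe} or Lemma~\ref{lemm:hyper-ell-frob} (according to Corollary~\ref{coro:hyper}) to obtain the tower inclusions $\phi^0(J(k_n))\subset\tilde{J}(\tilde{k}_n)$, deduce the divisibility $\#J(k_n)\mid\#\tilde{J}(\tilde{k}_n)$, and then apply Theorem~\ref{thm:isoge}.

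One remark: your invocation of Theorem~\ref{thm:second} is superfluous. You correctly extract the commutativity of $\Fr$ and $\phi^{-1}(\tilde{\Fr})$, but you never actually use it afterwards---the tower construction in Lemmas~\ref{lemm:frobe} and~\ref{lemm:hyper-ell-frob} relies only on the intrinsic set-theoretic characterization of $C(k_n)$ and needs no commutation of Frobenii, and Theorem~\ref{thm:isoge} requires only the divisibility. The paper's own proof of Theorem~\ref{thm:main3} omits this step entirely. (The commutativity result is developed in Sections~\ref{sect:groups}--\ref{sect:jab} and advertised in the introduction, but the linear-recurrence machinery of Section~\ref{sect:detect} turns out to supersede it for the purpose at hand.) Also note that the hypothesis $\dim J=\dim\tilde{J}$ in Theorem~\ref{thm:isoge} is available because $\phi^0$ is an isomorphism of abelian groups and the $\ell$-primary part of $J(k)$ recovers $2\mathsf g$.
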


\begin{proof}
In both hyperelliptic and nonhyperelliptic case we have shown that, for
sufficiently large finite ground fields $k_0,\tilde{k}_0$, there exist
towers $\{ k_n\}_{n\in \N}$ and $\{ \tilde{k}_n\}_{n\in \N}$ 
of degree 2 field extensions with the following property:
$$
\phi^0(J(k_n))\subset \tilde{J}(\tilde{k}_n)
$$
(see Lemma~\ref{lemm:frobe} and Lemma~\ref{lemm:hyper-ell-frob}).
Now we apply Theorem~\ref{thm:isoge} to 
the Frobenius automorphisms $\Fr,\tilde{\Fr}$.   
\end{proof}

\section{Generalized Jacobians}
\label{sect:background}

Let $k$ be a field and $K/k$ a field extension. 
Then the set 
$$
\P(K):=K^*/k^* = (K\setminus 0)/k^*
$$ 
carries the structure of
an abelian group {\em and} a projective space. Moreover, the projective structure
(the set of projective lines, their intersections etc.) is compatible with 
the group operation. 
We have:

\begin{theorem}
\label{thm:proj}
Let $K$ and $\tilde{K}$ be function fields over $k$. 
Assume that we have an isomorphism of abelian groups 
$$
\phi_{gr}\,:\, K^*/k^*\ra \tilde{K}^*/k^*
$$ 
inducing an isomorphism of projective structures
$$
\phi_{pr}\,:\,  \P(K)\ra \P(\tilde{K}).
$$
Then there exists an isomorphism of fields
$$
\phi_f\,:\, K\ra \tilde{K}.
$$
\end{theorem}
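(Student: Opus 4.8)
The plan is to deduce this from the Fundamental Theorem of Projective Geometry together with the multiplicativity of $\phi_{gr}$, with $\char(k)\neq 2$ playing the decisive role at the end. First I would identify $\P(K)$ with the projective space of the $k$-vector space $K$ (its points being the lines $k\cdot x$, $x\in K^{*}$), so that the hypothesis says $\phi_{pr}$ is a collineation between $\P(K)$ and $\P(\tilde K)$ which at the same time is an isomorphism for the multiplicative group structures. Since $K$ and $\tilde K$ are function fields over $k$ they are infinite-dimensional over $k$, so these projective spaces have dimension $\geq 3$; hence the Fundamental Theorem of Projective Geometry provides a $\sigma$-semilinear bijection $\Psi\colon K\to\tilde K$ inducing $\phi_{pr}$, where $\sigma$ is an isomorphism of the respective constant fields (which are $k$ on both sides), i.e.\ $\Psi$ is additive and $\Psi(\la x)=\sigma(\la)\Psi(x)$ for $\la\in k$. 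Such a $\Psi$ is unique up to multiplication by an element of $\tilde K^{*}$.

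Next I would extract the multiplicative information. Because $\phi_{gr}$ is a group homomorphism, $[\Psi(xy)]=[\Psi(x)][\Psi(y)]$ in $\tilde K^{*}/k^{*}$, so there is a map $c\colon K^{*}\times K^{*}\to k^{*}$ with $\Psi(xy)=c(x,y)\Psi(x)\Psi(y)$. Evaluating at $x=y=1$ gives $\Psi(1)=c(1,1)^{-1}\in k^{*}$; after rescaling $\Psi$ by the constant $\Psi(1)^{-1}$ I may assume $\Psi(1)=1$ (this preserves additivity and $\sigma$-semilinearity), and then $c(x,1)=1$ for all $x$. The goal becomes to show $c\equiv 1$: once this is known, $\Psi$ is simultaneously additive and multiplicative with $\Psi(1)=1$, hence a ring homomorphism, and being bijective it is a field isomorphism $\phi_f:=\Psi$ (it restricts to $\sigma$ on $k$, which is harmless since only an isomorphism of fields is claimed).

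To prove $c\equiv 1$ I would use the identity $(x+y)^{2}=x^{2}+2xy+y^{2}$ and the invertibility of $2$. Assuming, as we may, that $k$ is algebraically closed in $\tilde K$, for $x\in K\setminus k$ the element $\Psi(x)$ is transcendental over $k$, so $1,\Psi(x),\Psi(x)^{2}$ are $k$-linearly independent. Expanding $\Psi((x+1)^{2})$ two ways — via additivity it equals $c(x,x)\Psi(x)^{2}+2\Psi(x)+1$, via multiplicativity it equals $c(x+1,x+1)(\Psi(x)+1)^{2}$ — and comparing coefficients forces $c(x+1,x+1)=1$ from the linear term (here $\char(k)\neq 2$ is used) and then $c(x,x)=1$ from the leading term; for $x\in k^{*}$ one has $\Psi(x)=\sigma(x)$ and $\Psi(x^{2})=\sigma(x)^{2}$, so again $c(x,x)=1$. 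Thus $c(x,x)=1$ for all $x\neq 0$. Finally, polarizing, $\Psi((x+y)^{2})=\Psi(x^{2})+\Psi(2xy)+\Psi(y^{2})=\Psi(x)^{2}+2c(x,y)\Psi(x)\Psi(y)+\Psi(y)^{2}$ must equal $(\Psi(x)+\Psi(y))^{2}$, which yields $c(x,y)=1$ since $2\Psi(x)\Psi(y)\neq 0$.

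The hard part is this last reconciliation step: upgrading a map that is ``semilinear up to a $k^{*}$-valued $2$-cocycle'' to an honest ring homomorphism. This is exactly where one genuinely needs $\char(k)\neq 2$ (to divide by $2$ after polarization) and where the hypothesis that $k$ is algebraically closed in the function fields enters (to guarantee the $k$-linear independence of $1,\Psi(x),\Psi(x)^{2}$). Everything preceding it is the standard dictionary between collineations and semilinear maps.
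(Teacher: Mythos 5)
Your proposal is correct, and it is worth noting at the outset that the paper itself does not give a proof of Theorem~\ref{thm:proj}: it simply cites \cite{bt}, Section~4. So there is no line-by-line comparison to make. What you have written is a clean, self-contained argument along the route that the cited reference also follows (the Fundamental Theorem of Projective Geometry / von Staudt, combined with an argument reconciling the multiplicative structure of $K^*/k^*$ with the additive structure of $K$).

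A few points worth recording. First, the application of the Fundamental Theorem of Projective Geometry requires $\dim_k K \ge 3$ and that the collineation preserve collinearity in both directions; both are supplied by the hypotheses, and the theorem is valid in arbitrary (including infinite) dimension, so the invocation is sound. Second, your argument quietly relies on $k$ being algebraically closed in $K$ and $\tilde K$ --- this is what makes $\Psi(x)$ transcendental for $x\notin k$ and gives the $k$-linear independence of $1,\Psi(x),\Psi(x)^2$. You flag this explicitly, and it is automatic in the paper's context ($k=\bar{\F}_p$), but as written the theorem statement does not include it, so it is a genuine extra hypothesis your proof uses. Third, a couple of degenerate cases should be swept up in a final version: $c(x,-x)=1$ (needed when $x+y=0$ in the polarization) follows directly from $\Psi(-x^2)=-\Psi(x^2)$ together with $c(x,x)=1$; and the normalization step uses $\Psi(1)\in k^*$, which follows because $\phi_{gr}$ fixes the identity coset. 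Finally, a small remark on where $\char(k)\neq 2$ actually bites: in the $\Psi((x+1)^2)$ computation the constant term already forces $c(x+1,x+1)=1$ without dividing by $2$, so the genuine use of $\char\neq 2$ is confined to the polarization identity $2(1-c(x,y))\Psi(x)\Psi(y)=0$, exactly as you indicate in your closing paragraph. None of these affect the correctness of the argument; it stands.
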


\begin{proof}
See \cite{bt}, Section 4, for precise definitions and a proof. 
 \end{proof}

We will apply this Theorem to $K=k(C)$ and $\tilde{K}=k(\tilde{C})$. 

\

We write $J=J$ and $J^1=J^1$, resp. $\tilde{J}$ and $\tilde{J}^1$, 
for the Jacobian of degree 0 and degree 1 zero-cycles on $C$, resp. $\tilde{C}$.
Let $Z^0(k)=Z^0(C(k))$, resp. $\tilde{Z}^0(k)$, be the group of
degree 0 zero-cycles on $C$, resp. $\tilde{C}$. 
We have a diagram 

\

\centerline{
\xymatrix{
1\ar[r] & K^*/k^*       \ar[r] & Z^0(k)\ar[d]^{\phi^0}\ar[r] &\ar[d]^{\phi} J(k)\ar[r] & 1\\
1\ar[r] & \tilde{K}/k^* \ar[r] & \tilde{Z}^0(k) \ar[r]       &\tilde{J}^0(k) \ar[r] & 1
}
}

\

\noindent
where $\phi_0$ is an isomorphism of abelian groups induced by $\phi$ and $\phi_s$. 
This implies the following

\begin{lemma}
\label{lemm:ab-gr}
Under the assumptions of 
Conjecture~\ref{conj:main}, we have an isomorphism of abelian groups
$$
\phi_{gr}\,:\, K^*/k^* \ra \tilde{K}^*/k^*.
$$
\end{lemma}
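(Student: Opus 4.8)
The plan is to realize $\phi_{gr}$ as the restriction, to the subgroups of principal degree-zero divisors, of the isomorphism of zero-cycle groups induced by $\phi_s$; the operative point is that ``being a principal divisor'' is detected by the class in the Jacobian, and the isomorphism of pairs preserves that class by construction.

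First I would produce the middle vertical map of the displayed diagram. Having fixed $c_0\in C(k_0)$ together with the basepoint $\phi_s(c_0)$ on $\tilde C$, the set bijection $\phi_s\,:\, C(k)\ra \tilde C(k)$ extends $\Z$-linearly to an isomorphism of free abelian groups $Z(k)\ra \tilde Z(k)$, which preserves degree and hence restricts to an isomorphism $\phi^0\,:\, Z^0(k)\ra \tilde Z^0(k)$; its inverse has the same shape, being induced by $\phi_s^{-1}$. Next I would check that the right-hand square of the diagram commutes, i.e., that passing from $Z^0(k)$ to $J(k)$ and then applying $\phi$ agrees with applying $\phi^0$ and then passing to $\tilde J^0(k)$. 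For a degree-zero cycle $D=\sum_i n_i c_i$ one writes its class in $J(k)$ as $\sum_i n_i(c_i-c_0)$; using that $\phi^1$ is compatible with the group isomorphism part of $\phi$ and that $\phi_s$ is the restriction of $\phi^1$ to $C(k)\subset J^1(k)$, one gets $\phi(c_i-c_0)=\phi_s(c_i)-\phi_s(c_0)$ in $\tilde J(k)$, and then summing (using $\sum_i n_i=0$) recovers exactly the class of $\phi^0(D)=\sum_i n_i\phi_s(c_i)$.

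Finally, commutativity of that square forces $\phi^0$ to carry $\Ker(Z^0(k)\ra J(k))$ isomorphically onto $\Ker(\tilde Z^0(k)\ra \tilde J^0(k))$; since these kernels are precisely the groups of principal degree-zero divisors, canonically identified with $K^*/k^*$ and $\tilde K^*/k^*$ via $f\mapsto \dv(f)$, the restriction of $\phi^0$ is the desired group isomorphism $\phi_{gr}$. I do not anticipate a genuine obstacle here: the content is the bookkeeping of basepoints together with the elementary observation that an isomorphism of pairs sends principal divisors to principal divisors. The only place calling for care is keeping straight the two roles of $c_0$ --- as a point of $C(k)$ occurring in cycles and as the origin of the embedding $C\hookrightarrow J$ --- and verifying that the basepoints on the two sides match under $\phi$, which is exactly the normalization imposed just before the statement.
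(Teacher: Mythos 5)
Your argument is correct and is essentially the paper's own (the paper simply exhibits the commutative diagram with exact rows and leaves the details implicit); you additionally spell out the commutativity of the right-hand square, namely that $\phi^0(c-c_0)=\phi_s(c)-\phi_s(c_0)$ follows from the compatibility of $\phi^1$ with $\phi^0$ on the homogeneous space $J^1$, and that $\sum_i n_i=0$ removes the basepoint, so the kernel $K^*/k^*=\Ker(Z^0(k)\to J(k))$ is carried isomorphically to $\tilde K^*/k^*$.
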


We have a natural embedding $C \hookrightarrow J^1$ from \eqref{eqn:11}, 
mapping a point in $C(k)$ to its cycle-class. 
Choosing a point $c_0\in C(k)$ we also have an embedding 
\begin{equation}
\label{eqn:00}
\begin{array}{ccc}
C(k) & \hookrightarrow & J(k)\\
c& \mapsto & [c-c_0]
\end{array}
\end{equation}

Write 
$\mathsf m=\sum_{i=1}^r n_i P_i$, $n_i\in \Z\setminus 0$, $P_i\in C(k)$ 
for a zero-cycle on $C$ and $|\mathsf m|:=P_1\cup \ldots \cup P_r$ for 
its support. Let $Z^0_{\mathsf m}(k)=Z^0_{\mathsf m}(C(k))$ 
be the group of degree 0 zero-cycles with
support disjoint from the support of $\mathsf m$.

For every effective $k$-rational zero-cycle $\mathsf m$ let 
$J_{\mathsf m}(k)$, resp. $J^1_{\mathsf m}$, 
be the {\em generalized Jacobian} of degree $0$, resp. degree 1,  
zero-cycles on $C$ over $k$, modulo
the ideal generated by $\mathsf m$.  
The generalized Jacobian $J_{\mathsf m}$ is an algebraic group, fibered over the Jacobian $J$
with fibers connected abelian linear algebraic groups of dimension 
$\# \mathsf m -1$.

We have a compatible family of embeddings 
\begin{equation}
\label{eqn:mu1}
\mu^1_{\mathsf m}\,:\, C(k)\setminus |\mathsf m| \hookrightarrow J^1_{\mathsf m}(k)
\end{equation}
as well as surjective homomorphisms of abelian groups
\begin{equation}
\label{eqn:mu0}
\mu_{\mathsf m}\,:\, Z^0_{\mathsf m}(k)\ra J_{\mathsf m}(k).
\end{equation}

The natural embedding $C\hookrightarrow J^1$, assigning to a point $c\in C(k)$ 
its cycle class $[c]\in J(k)$ extends uniquely to a compatible family of maps
$$
\mu_{\mathsf m}\,:\, C\setminus |\mathsf m| \hookrightarrow J^1_{\mathsf m}.
$$

\begin{proposition}
\label{prop:1}
Assume that for all $\mathsf m=P+R$ one has an isomorphism 
of abelian groups
$$
\phi^0_{\mathsf m}\,:\, J_{\mathsf m}(k)\ra \tilde{J}_{\mathsf m}(k)
$$
and a diagram (of compatible maps)

\

\centerline{
\xymatrix{
C(k)\setminus |\mathsf m|\ar[d] \ar[r]^{\,\,\,\,\,\mu_{\mathsf m}} 
& J^1_{\mathsf m}(k) \ar[d]_{\varphi_{\mathsf m}}\ar[r]^{\phi^1_{\mathsf m}} & 
     \tilde{J}^1_{\mathsf m}(k) \ar[d]^{\tilde{\varphi}_{\mathsf m}}  & 
\ar[d]\ar[l]_{\!\tilde{\mu}_{\mathsf m}}\tilde{C}(k)\setminus |\mathsf m|\\
C(k)  \ar[r]                      & J^1(k) \ar[r]^{\phi^1}              & 
                        \tilde{J}^1(k)                                        & \tilde{C}(k)\ar[l] 
}
}

\

\noindent
with $\phi^1, \phi^1_{\mathsf m}$ isomorphisms 
of homogeneous spaces under 
$J_{\mathsf m}(k)\simeq \tilde{J}_{\mathsf m}(k)$ 
inducing bijections of sets 
$$
C(k)\setminus \mathsf m =  \tilde{C}(k)\setminus \mathsf m
$$
Then $k(C)=k(\tilde{C})$.
\end{proposition}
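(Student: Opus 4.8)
The plan is to reconstruct the projective structure on $\P(K)=K^*/k^*$, where $K=k(C)$, in a way compatible with $\phi$, and then to invoke Theorem~\ref{thm:proj}. The underlying group isomorphism $\phi_{gr}\colon K^*/k^*\to\tilde K^*/k^*$ is already at hand: the bottom row of the diagram provides $\phi^1\colon J^1(k)\to\tilde J^1(k)$, hence a group isomorphism $\phi^0\colon J(k)\to\tilde J(k)$, and since $\phi_s\colon C(k)\to\tilde C(k)$ is a bijection it induces an isomorphism of the groups of degree-$0$ zero-cycles; comparing the exact sequences of Section~\ref{sect:background} then yields $\phi_{gr}$, exactly by the argument of Lemma~\ref{lemm:ab-gr}. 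This identification also realises $K^*/k^*$ canonically as the group of principal zero-cycles inside $Z^0(k)$, so for every class $[f]$ the divisor $\operatorname{div}(f)\in Z^0(k)$, and in particular its polar part $\operatorname{div}_\infty(f)$, is intrinsic to the data and matched by $\phi$.

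The first step is to use the generalized Jacobians $J_{\mathsf m}$, $\mathsf m=P+R$, to recover, for every non-constant $[f]\in K^*/k^*$, the \emph{coincidence relation} $\sim_f$ on $C(k)$, defined by: $P\sim_f R$ iff $P$ and $R$ lie in a common fibre of $f\colon C\to\P^1$. Fixing distinct $P,R$ and $\mathsf m=P+R$, the kernel of $J_{\mathsf m}\to J$ is a torus isomorphic to $\gm$, so $L_{\mathsf m}(k)\cong k^*$; the fibration $\varphi_{\mathsf m}\colon J^1_{\mathsf m}\to J^1$ together with $\mu^1_{\mathsf m}$ and $\mu_{\mathsf m}$ recovers the homomorphism $[f]\mapsto f(P)/f(R)$ (defined where $f$ is regular and nonzero at $P,R$) as the image under $\mu_{\mathsf m}$ of $\operatorname{div}(f)\in Z^0_{\mathsf m}(k)$, and in particular the subgroup $\{[f]:f(P)=f(R)\}$. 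Ranging over all pairs $P,R$ away from the zeros and poles of $f$, and reading the fibres over $0$ and over $\infty$ off $\operatorname{div}(f)$ itself, one recovers $\sim_f$ completely. Because $\phi^0_{\mathsf m}$ and $\phi^1_{\mathsf m}$ refine $\phi^0$ and $\phi^1$ and restrict to $\phi_s$ on $C$-points, the relation $\sim_f$ is carried by $\phi_s$ to $\sim_{\phi_{gr}(f)}$.

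The second step is to recover all projective lines. Since the group law is known it suffices to recover the lines through $[1]$, i.e.\ the sets $L_u=\{[au+b]:(a:b)\in\P^1\}$ for non-constant $u$ (a general line through $[f]$ and $[g]$ being the group translate $[g]\cdot L_{f/g}$). The claim is that $L_u$ consists of $[1]$ together with all classes $[v]\neq[1]$ for which $\sim_v=\sim_u$ and $\operatorname{div}_\infty(v)=\operatorname{div}_\infty(u)$. Indeed, if $[v]\neq[1]$ and $\sim_v=\sim_u$ then $v$ is constant on the fibres of $u$ and, comparing degrees, $v=\psi(u)$ for some M\"obius $\psi$; writing $v=(au+b)/(cu+d)$, the polar divisor of $v$ equals the fibre $u^{-1}(-d/c)$ when $c\neq0$ and equals $u^{-1}(\infty)=\operatorname{div}_\infty(u)$ when $c=0$, so the coincidence of polar divisors forces $c=0$, i.e.\ $v=(a/d)u+(b/d)\in L_u$; the reverse inclusion is immediate. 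Both ingredients of this description were produced intrinsically and $\phi$-compatibly in Step~1 and in the first paragraph, so $\phi_{gr}(L_u)=L_{\phi_{gr}(u)}$; hence $\phi_{gr}$ carries projective lines to projective lines and induces an isomorphism of projective structures $\phi_{pr}\colon\P(K)\to\P(\tilde K)$. Theorem~\ref{thm:proj} then provides an isomorphism of fields, i.e.\ $k(C)\cong k(\tilde C)$, which is the assertion.

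I expect the main obstacle to be Step~1: passing from the abstract isomorphisms $\phi^0_{\mathsf m},\phi^1_{\mathsf m}$ of generalized Jacobians and their $J^1$-torsors to the concrete coincidence relations $\sim_f$, while keeping careful track of the torus $L_{\mathsf m}\subset J_{\mathsf m}$, of the torsor structure of $J^1_{\mathsf m}$ over $J^1$, and of the embeddings $\mu^1_{\mathsf m}$ and homomorphisms $\mu_{\mathsf m}$ for varying $\mathsf m$. This is the point at which all the hypotheses of the proposition are genuinely used; once $\sim_f$ and $\operatorname{div}_\infty(f)$ are available, Step~2 and the appeal to Theorem~\ref{thm:proj} are essentially formal.
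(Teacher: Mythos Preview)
Your proposal is correct and follows essentially the same route as the paper: both obtain $\phi_{gr}$ from Lemma~\ref{lemm:ab-gr}, use the generalized Jacobian $J_{\mathsf m}$ with $\mathsf m=P+R$ to detect the same-fibre relation (the paper phrases this as $P\sim Q \Leftrightarrow (z_0-z_\infty)\in\Ker(\mu_{\mathsf m})$, which is exactly your condition $f(P)/f(R)=1$), recover from this the projective lines through $[1]$, and conclude via Theorem~\ref{thm:proj}. Your Step~2 is spelled out more explicitly than the paper's (the M\"obius argument and the polar-divisor condition), but the content is the same.
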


\begin{proof}
It suffices to prove that the isomorphism of groups 
$$
\phi_{gr}\,:\, K^*/k^* \ra \tilde{K}^*/k^*
$$
established in Lemma~\ref{lemm:ab-gr} preserves the respective
projective structures. 
A projective line in $\P(K)$ is the projectivization of a two-dimensional
$k$-vector space generated by two functions 
$f,g\in K^*$. By the compatibility with multiplication in $K^*/k^*$,
we can assume that $g=1$. 

Every pair of rationally equivalent effective zero-cycles 
$z_0,z_{\infty}$ on $C(k)$ defines 
a unique ``point'' in $K^*/k^*=\P(K)$ - 
the divisor of a function $f$ with zeroes $z_0$ and poles $z_{\infty}$. 
To get a projective line $\P^1\subset \P(K)$
consider the map $\pi_f\,:\, C\ra \P^1$ defined by $f$ and  
the induced family of cycles $z_{\la}:=\pi_f^{-1}(\la)$, 
for $\la\in \P^1$. The family of ``points'' in $\P(K)$,
given by cycles $z_{\la}-z_{\infty}$, 
is a projective line in $\P(K)$ through $1$ and $f$.

Conversely, assume that for every such pair 
$z_0,z_{\infty}$ of equivalent effective cycles
on $C$ we have an intrinsic definition of the set $z_{\la}$. 
Then we recover the projective structure on $\P(K)$.    

We define an equivalence relation on $C(k)$:
$$
P\sim Q \Leftrightarrow (z_0-z_{\infty}) \in \Ker(\mu_{\mathsf m}),
$$
where $\mathsf m=P+Q$. Observe that $P\sim Q$ iff $P,Q$ are both 
contained in the fiber of $\pi_f$.
The set $\{z_{\la}\}$ 
is intrinsically defined as the set of equivalence classes 
with respect to ``$\sim$''. 
 \end{proof}

The group $Z^0(k)$ is the set of $k$-points of a countable union of 
algebraic varieties
$$
Z^0(k)=\cup_{n\in \N} (C^{(n)}\times C^{(n)})^{\circ}(k),
$$
each parametrizing pairs of effective zero-cycles of degree $n$ of disjoint support. 
This induces a grading on the subset $K^*/k^*\subset Z^0(k)$. 

\begin{lemma}
\label{lemm:grad}
The multiplicative group $K^*/k^*$ is generated by components of degree $\le \mathsf g+1$. 
\end{lemma}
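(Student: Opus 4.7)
I would argue by induction on $n := \deg(f)$ that every class in $K^*/k^*$ factors as a product of classes of degree at most $\mathsf g+1$. The base case $n \le \mathsf g+1$ is tautological. For the inductive step, assuming $n \ge \mathsf g+2$, the goal is to produce a nonconstant $u \in K^*$ of degree at most $\mathsf g+1$ such that $\deg(f/u) < n$; then $f = u \cdot (f/u)$ in $K^*/k^*$ and both factors fall under the inductive hypothesis.

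\textbf{Construction of $u$.} Write $(f) = z_0 - z_\infty$ with $z_0,z_\infty$ effective of degree $n$ and disjoint support. Pick any $Q \in \mathrm{supp}(z_0)$ and any effective subdivisor $D \le z_\infty$ of degree $\mathsf g+1$, which is possible since $n \ge \mathsf g+2$. Riemann--Roch gives $\dim L(D) \ge \mathsf g + 1 - \mathsf g + 1 = 2$, and imposing the single linear condition $u(Q)=0$ on $L(D)$ kills the constant subspace (since $Q \notin \mathrm{supp}(D)\subseteq \mathrm{supp}(z_\infty)$ is disjoint from $\mathrm{supp}(z_0)$). The cut-out subspace therefore has positive dimension; any nonzero element $u$ in it is automatically nonconstant and satisfies $(u)_\infty \le D \le z_\infty$, $(u)_0 \ge Q$, and $\deg u \le \mathsf g+1$.

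\textbf{Degree reduction.} Rewrite $(f/u) = (z_0 + (u)_\infty) - (z_\infty + (u)_0)$ and compute the common support of the two effective divisors. Since $(u)_\infty \le z_\infty$ is disjoint from $z_0$ and $(u)_0$ is disjoint from $(u)_\infty$, the cancellation decomposes as the full contribution $(u)_\infty$ on the $z_\infty$-side, plus the overlap $\sum_P \min(z_0(P),(u)_0(P))$ on the $z_0$-side. The resulting degree formula is
\[
\deg(f/u) = n - \sum_P \min(z_0(P),(u)_0(P)).
\]
The overlap is at least $1$ because $(u)_0(Q) \ge 1$ and $z_0(Q) \ge 1$. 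Hence $\deg(f/u) \le n-1$, and the induction closes.

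\textbf{Main obstacle.} The delicate point is that $u$ must simultaneously produce cancellation in both the polar \emph{and} the zero divisor of $f$: imposing only $(u)_\infty \le z_\infty$ leaves $\deg(f/u)=n$ (the poles of $f$ that cancel are replaced by the new zeros of $u$), and symmetrically for imposing only a zero at $Q \in \mathrm{supp}(z_0)$. The fix is to fuse both requirements into a single Riemann--Roch problem by taking $D \le z_\infty$ of degree exactly $\mathsf g+1$ (the minimal value guaranteeing $h^0(D) \ge 2$) and cutting out the hyperplane $\{u(Q)=0\} \subset L(D)$. Once this $u$ is in hand, the bookkeeping with common supports is straightforward, so the whole difficulty is concentrated in this combined choice.
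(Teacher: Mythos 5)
Your proof is correct. Let me compare it to the paper's argument, which is similar in spirit but organized differently.

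The paper, given $f$ with $\operatorname{div}(f) = z - z'$ of degree $n > \mathsf g+1$, invokes Riemann--Roch for the full divisor $z$ of degree $n$ (so $h^0(\mathcal O(z))\ge n+1-\mathsf g\ge 3$) to produce a third effective divisor $z''\sim z$ whose support meets both $\operatorname{supp}(z)$ and $\operatorname{supp}(z')$, and then writes $z-z' = (z-z'')+(z''-z')$, a product of two elements each of degree at most $n-1$; the recursion is symmetric and branches into two pieces that may each still exceed $\mathsf g+1$. You instead run Riemann--Roch on a truncated subdivisor $D\le z_\infty$ of degree exactly $\mathsf g+1$ (needing only $h^0(D)\ge 2$) and adjoin the single vanishing condition at a chosen zero $Q$ of $f$ to kill constants, which peels off a factor $u$ that is \emph{already} of degree $\le \mathsf g+1$ while leaving a cofactor $f/u$ of degree $\le n-1$. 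This is an asymmetric refinement: the recursion is linear in $n$ rather than a branching tree, and the factor you extract is guaranteed to lie in the generating set without further processing. Your degree bookkeeping after cancellation (the full $(u)_\infty$ cancels on the $z_\infty$-side, plus at least $1$ from $Q$ on the $z_0$-side) is carried out carefully and is correct; this is exactly the point where one must not lose track, and you handled it precisely. So: same engine (Riemann--Roch to force a shared point), but a genuinely tidier decomposition than the paper's, and a slightly weaker Riemann--Roch input ($h^0\ge 2$ instead of $h^0\ge 3$).
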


\begin{proof}
Let $z,z'$ be effective cycles on $C$, of disjoint support, 
of degree $n> \mathsf g+1$. 
The space of cycles equivalent to $z-z'$ has (projective) dimension 
$\ge 2$. Then there exists an effective cycle $z''$, equivalent to $z$ and $z'$,
such that $|z|\cap |z'|\neq \emptyset $ and $|z'|\cap| z''|\neq \emptyset$. 
(Choose a hyperplane section in $\P({\rm H}^0(C,\mathcal O(z-z')))$ which 
contains points from the support of both $z$ and $z'$. The intersection
of $C$ with this hyperplane gives $z''$.)  
Now we can write
$$
z-z'=(z-z'') +(z'-z'')
$$
with $(z-z')$ and $(z-z'')$ having degree $<n$. 
 \end{proof}

Let $Y^{\mathsf g+1}$, resp.  $Y^{\mathsf g+1}_{\mathsf m}$,
be the subvariety of
$C^{(\mathsf g+1)}\times C^{(\mathsf g+1)}$ corresponding to pairs of
effective degree $\mathsf g+1$ cycles of disjoint support, resp. in addition
with support disjoint from $|\mathsf m|$.
Each such pair of cycles $(z_0,z_{\infty})$ determines a principal divisor on $C$,
thus a function $f$, modulo $k^*$,
and an algebraic morphism $Y^{\mathsf g+1}_{\mathsf m}\ra \gm$ 
which on the level of points is given by
$$
\begin{array}{ccc}
Y^{\mathsf g+1}_{\mathsf m}(k) & \ra & k^* \\
(z_0,z_{\infty})& \mapsto & f(P)/f(Q).
\end{array}
$$
The algebraic variety $X$, resp. $X_{\mathsf m}$ is defined as the preimage of 
$1\subset \gm$.
We get the diagram, with maps morphisms of algebraic varieties:

\centerline{
\xymatrix{
1\ar[r] & X_{\mathsf m} \ar[d]\ar[r] & Y^{\mathsf g+1}_{\mathsf m}\ar[r]\ar[d] & \ar[r]\ar[d] J_{\mathsf m} & 1\\
1\ar[r] & X \ar[r]   &  Y^{\mathsf g+1} \ar[r]& \ar[r] J & 1\\
}
}

\

\noindent
and a similar diagram for $\tilde{C}$.

\begin{lemma}
Under the assumptions of Conjecture~\ref{conj:main}, we 
have a bijection of sets:
\begin{itemize}
\item $\phi^X_{s}\,:\, X(k) \ra \tilde{X}(k) $
\item $\phi^X_{s,\mathsf m}\,:\, X_{\mathsf m}(k) \ra \tilde{X}_{\mathsf m}(k)$. 
\end{itemize}
\end{lemma}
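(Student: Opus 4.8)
The plan is to take the only reasonable candidate for $\phi^X_s$ and $\phi^X_{s,\mathsf m}$: the map induced by $\phi_s$. Since $\phi_s\colon C(k)\to\tilde C(k)$ is a bijection, it extends to a degree-preserving bijection of effective zero-cycles of every degree, and in particular to a bijection $\Psi$ between pairs of effective degree-$(\mathsf g+1)$ zero-cycles of disjoint support; disjointness of supports is preserved because $\phi_s$ is injective on points, and, identifying $\mathsf m$ with $\phi_s(\mathsf m)$, the condition ``support disjoint from $|\mathsf m|$'' is carried to the corresponding condition downstairs. As $\phi^{-1}$ induces $\Psi^{-1}$, it suffices to show that $\Psi$ maps $X(k)$ into $\tilde X(k)$ and $X_{\mathsf m}(k)$ into $\tilde X_{\mathsf m}(k)$.

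First I would handle $X(k)$. By construction $(z_0,z_\infty)\in X(k)$ amounts to $z_0\sim z_\infty$ (the pair lies in the kernel of $Y^{\mathsf g+1}\to J$), i.e.\ $z_0-z_\infty$ lies in the kernel of $Z^0(k)\to J(k)$. The map $Z^0(k)\to\tilde Z^0(k)$ induced by $\phi_s$ on supports fits into the commutative square with $\phi^0\colon J(k)\to\tilde J(k)$ displayed just before Lemma~\ref{lemm:ab-gr}; the compatibility is checked on the generators $c-c_0$, where it holds by the base-point normalization $\phi^0[c-c_0]=[\phi_s(c)-\phi_s(c_0)]$, and is then extended by linearity, the $c_0$-terms cancelling in any degree-zero cycle. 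Hence $z_0-z_\infty$ dies in $J(k)$ iff $\phi_s(z_0)-\phi_s(z_\infty)$ dies in $\tilde J(k)$, so $\Psi$ restricts to a bijection $X(k)\to\tilde X(k)$.

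For $X_{\mathsf m}(k)$ one needs, on top of membership in $X(k)$ and the support condition, that $z_0-z_\infty\in\Ker(\mu_{\mathsf m})$ --- equivalently, writing $\mathsf m=P+Q$ and taking $f$ with $\dv(f)=z_0-z_\infty$, that $f(P)/f(Q)=1$. I would verify this by passing to the generalized Jacobian: writing $z_0=\sum_i c_i$ and $z_\infty=\sum_j c_j'$ with $c_i,c_j'\in C(k)\setminus|\mathsf m|$, the condition reads $\sum_i\mu^1_{\mathsf m}(c_i)=\sum_j\mu^1_{\mathsf m}(c_j')$ in $J^{\mathsf g+1}_{\mathsf m}(k)$, and this equality is transported by the $J_{\mathsf m}$-equivariant isomorphism $\phi^1_{\mathsf m}$ which intertwines $\mu^1_{\mathsf m}$ with $\tilde\mu^1_{\mathsf m}$, i.e.\ by the generalized-Jacobian refinement of the isomorphism of pairs recorded in the large diagram immediately preceding the Lemma (and hypothesized in Proposition~\ref{prop:1}). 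This yields the bijection $X_{\mathsf m}(k)\to\tilde X_{\mathsf m}(k)$.

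The routine part is everything about $X(k)$: it uses only that $\phi^0$ is a group isomorphism commuting with the cycle-class maps. The real content, and the step I expect to be the obstacle, is the $X_{\mathsf m}$ part: the isomorphism of pairs knows a priori only the group $J(k)$ and the subset $C(k)\subset J^1(k)$, whereas $f(P)=f(Q)$ is exactly the assertion that $P$ and $Q$ lie in a common member of the pencil spanned by $z_0$ and $z_\infty$ --- precisely the projective-structure information that the generalized Jacobians are introduced to encode. So the genuine work is to make sure $\phi$ lifts to compatible isomorphisms $\phi^0_{\mathsf m},\phi^1_{\mathsf m}$ of the generalized Jacobians and their torsors; granting that, the lemma is bookkeeping.
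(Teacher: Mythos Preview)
Your treatment of $X(k)$ is fine and matches the paper: linear equivalence of effective cycles is visible from the pair $(C,J)$ (this is Lemma~\ref{lemm:h0}), so membership in the kernel of $Y^{\mathsf g+1}\to J$ is preserved by $\phi$.

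The genuine gap is exactly where you locate it. The Lemma is stated under the hypotheses of Conjecture~\ref{conj:main} only --- an isomorphism of pairs $(C,J)\to(\tilde C,\tilde J)$ --- and does \emph{not} have the generalized-Jacobian compatibility of Proposition~\ref{prop:1} available. Your argument for $X_{\mathsf m}(k)$ invokes $\phi^1_{\mathsf m}$ and the diagram of Proposition~\ref{prop:1}, which is precisely what has not yet been constructed; indeed the point of this Lemma in the paper's architecture is to be a step \emph{towards} producing that compatibility, so appealing to it here is circular.

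The paper closes the gap without generalized Jacobians, by the observation you almost make but then set aside: the condition $f(P)=f(Q)$ (i.e.\ $(z_0,z_\infty)\in X_{\mathsf m}$) is the same as saying that $P$ and $Q$ both lie in the support of some $z_\lambda$ in the pencil through $z_0$ and $z_\infty$. The pencil is recovered intrinsically as the set of effective zero-cycles linearly equivalent to $z_0$, and linear equivalence is preserved by $\phi$ via Lemma~\ref{lemm:h0}. So the intrinsic description
\[
X_{\mathsf m}(k)=\bigl\{(z_0,z_\infty)\in X(k)\ :\ P,Q\notin|z_0|\cup|z_\infty|\ \text{and}\ P+Q\subset|z_\lambda|\ \text{for some }\lambda\bigr\}
\]
is stated purely in terms of supports and linear equivalence, hence is transported by $\phi_s$. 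That is the missing idea: you do not need to lift $\phi$ to $J_{\mathsf m}$ first; the fibre condition $f(P)=f(Q)$ is already visible inside $(C,J)$.
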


\begin{proof}
It suffices to give the following intrinsic description:
$$
Y^{\mathsf g+1}_{\mathsf m}(k):=\{ (z_{0},z_{\infty})\,|\, P,Q\notin |z_{0}|\cup |z_{\infty}|
\,\, \text{ and } \,\, P+Q\in |z_{\la}|, \text{ for some } \la \} 
$$
and a similarly for $Y^{\mathsf g+1}(k)$.
 \end{proof}

\section{Anabelian geometry}
\label{sect:anabel}

In this section we discuss an application of the above results to 
Grothendieck's Anabelian Program - the reconstruction of function fields from 
Galois groups.

\

Let $C$ be an irreducible smooth projective curve 
over $k=\bar{\mathbb F}_p$ of genus $\mathsf g\ge 2$, $J$ its Jacobian 
and $K=k(C)$ its function field. 
Throughout, we assume that $p>2$. 
Fix an algebraic closure $\bar{K}/K$ and let $\G=\G_K=\Gal(\bar{K}/K)$ be 
the absolute Galois group.
The main idea of anabelian geometry is that
$\G$, or even one of its factors, determines $C$. 
Note that $\G$ is the completion of a 
free group with an infinite number of generators. 
In particular, for any two curves over $k$ 
the corresponding groups are isomorphic as abstract topological groups.
However, we will see that in some instances 
additional structures allow us to 
recover the curve from the Galois group.

Let 
$$
\G^a=\G/[\G,\G]
$$ 
be the abelianization of $\G$.
Let $\ell$ be a prime number, $\G_{\ell}$ the $\ell$-completion of $\G$, 
and $\G^a_{\ell}$ the image of $\G_{\ell}$ in the abelianization.  
Clearly, $\G^a=\prod_{\ell} \G^a_{\ell}$. 
A $k$-rational point $c\in C(k)$ 
determines a discrete rank one valuation $\nu=\nu_c$ of the function
field $K$. 
We write $\I_{\nu}\subset \G$ for the corresponding inertia
subgroup and $\I^a_{\nu}$, resp.  $\I^a_{\nu,\ell}$,
for its image in $\G^a$, resp. $\G^a_{\ell}$.
The group $\I^a_{\nu}$ is topologically cyclic.

We now proceed to describe the 
groups $\G^a_{\ell}$, for $\ell\neq p$
(the structure of $\G^a_p$ is more refined), 
closely following Sections~9 and 11 of \cite{bt}.  
Dualizing the exact sequence
$$
0\ra K^*/k^*\ra \Div(C)\ra \Pic(C)\ra 0
$$
we obtain the sequence
\begin{equation}
\label{eqn:crrr}
0\ra \Z_{\ell} \stackrel{\Delta_{\ell}}{\lra} \cM(C(k),\Z_{\ell})\ra 
\G^a_{\ell}\ra \Z_{\ell}^{2\mathsf g}\ra 0,
\end{equation}
with the identifications 
\begin{itemize}
\item $\Hom(\Pic(C),\Z_{\ell})=\Delta_{\ell}(\Z_{\ell})$ 
(since $J(k)=\Pic^0(C)$ is torsion);
\item $\cM(C(k),\Z_{\ell})=\Hom(\Div(C),\Z_{\ell})$ 
is the $\Z_{\ell}$-linear space of maps from $C(k)\ra \Z_{\ell}$
(note that $\Div(C)$ can be viewed as
the free abelian group generated by points in $C(k)$);
\item $\Z_{\ell}^{2\mathsf g}=\Ext^1(J(k),\Z_{\ell})$.
\end{itemize}
The interpretation
\begin{equation}
\label{eqn:kummer}
\G^a_{\ell}=\Hom(K^*/k^*,\Z_{\ell}),
\end{equation}
arising from Kummer theory allows us to identify 
\begin{equation}
\label{eqn:cm}
\G^a_{\ell}\subset \cM(C(k),\Q_{\ell})/\text{constant maps} 
\end{equation}
as the $\Z_{\ell}$-linear subspace of 
maps $\mu\,:\, C(k)\ra \Q_{\ell}$ (modulo constant maps)
such that
$$
[\mu,f]\in \Z_{\ell} \,\, \text{ for all }\,\,  f\in K^*/k^*.
$$
Here $[\cdot ,\cdot ]$ is the pairing: 
\begin{equation}
\label{eqn:inter}
\begin{array}{rcl}
\cM(C(k),\Q_{\ell})\times K^*/k^* & \ra     &       \Q_{\ell} \\ 
       (\mu , f )                 & \mapsto &  [\mu,f]:=\sum_{c} \mu(q)f_c,
\end{array}   
\end{equation}
where $\dv(f)=\sum_c f_c c$.
In this language, an element of an 
inertia subgroup $\I^a_{\nu,\ell}\subset \G^a_{\ell}$
corresponds to a ``delta''-map (constant outside the point $c=c_\nu$). 
Each $\I^a_{\nu,\ell}$ has a canonical (topological) generator $\delta_{\nu,\ell}$
and the (diagonal) map 
$\Delta\in \cM(C(k),\Q_{\ell})$ from \eqref{eqn:crrr}
is given by 
$$
\Delta_{\ell} =\sum_{c\in C(k)} \delta_{c,\ell}.
$$

\

Consider the abelian Galois group $\G^a_{(p)} = \prod_{\ell\neq p} \G_\ell^a$.
Let $\I=\{ \I^a_c\} $ be the set of 1-dimensional
valuation subgroups $\I^a_c\subset \G^a$ corresponding to points $c\in C(k)$.

\begin{conjecture}
\label{conj:galois}
Let $C$ be a curve of genus $\mathsf g(C) \geq 2$ over $k=\bar{\mathbb F}_p$.
The pair $(\G^a_{(p)},\I)$ determines the function field $k(C)$, modulo isomorphisms.
\end{conjecture}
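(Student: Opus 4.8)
The idea is to reconstruct from $(\G^a_{(p)},\I)$ the ``divisorial avatar'' of $C$ --- the set $C(k)$, the free divisor group on it, the principal lattice, and all the prime-to-$p$ generalized Jacobians $J_{\mathsf m}$ with their point-embeddings --- and then to invoke Proposition~\ref{prop:1}, whose engine is the birational anabelian reconstruction of Theorem~\ref{thm:proj}. Concretely, given a second curve $\tilde C$ with an isomorphism $(\G^a_{(p)},\I)\simeq(\tilde\G^a_{(p)},\tilde\I)$, one would transport this data to produce compatible isomorphisms $J_{\mathsf m}(k)\to\tilde J_{\mathsf m}(k)$ identifying $C(k)\setminus|\mathsf m|$ with $\tilde C(k)\setminus|\mathsf m|$, and Proposition~\ref{prop:1} then forces $k(C)\simeq k(\tilde C)$.

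\textbf{Reconstruction.} First, $\I=\{\I^a_c\}_c$ is by definition indexed by $C(k)$, so $C(k)$ and the free group $\Div(C)=\bigoplus_{c\in C(k)}\Z\cdot c$ are immediate. Each $\I^a_c$ is topologically procyclic with $\ell$-component $\I^a_{c,\ell}\simeq\Z_\ell$; one recognizes a generator $\delta_{c,\ell}$ of each, up to a single global unit, from the unique relation $\sum_c\delta_{c,\ell}=0$ encoding the diagonal map $\Delta_\ell$ of \eqref{eqn:crrr}, and this normalizes the identification of $\G^a_\ell$ with a lattice in $\cM(C(k),\Q_\ell)/(\text{constants})$ as in \eqref{eqn:cm}. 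Two things are then recovered. (i) By class field theory, the prime-to-$p$ part of the generalized Jacobian $J_{\mathsf m}$, for a modulus $\mathsf m=P+R$ as in Proposition~\ref{prop:1}, is the Pontryagin dual of the quotient of $\G^a_{(p)}$ by the closed subgroup generated by the $\I^a_c$ with $c\notin\{P,R\}$; since $\I$ is given, all these quotients, and the maps $\mu_{\mathsf m}$ sending a point $c\notin\{P,R\}$ to the image of $\delta_c$, are intrinsic. (ii) By Kummer theory \eqref{eqn:kummer} and the pairing \eqref{eqn:inter}, the group of principal divisors is the ``integral dual'' of $\G^a_\ell$; intersecting these dual lattices inside $\Div^0(C)$ over all $\ell\neq p$ recovers the $p$-saturated principal lattice
$$
\widehat{\mathrm{Prin}}(C)\ :=\ \{\,D\in\Div^0(C)\ :\ [D]\in\Pic^0(C)\{p\}\,\}\ \supseteq\ \mathrm{Prin}(C),
$$
together with effectivity (which is unambiguous, being nonnegativity of coefficients in $\Div(C)$), the images of the maps $C^{(n)}\to J^n$, and the pencils $\{z_\la\}_{\la\in\P^1}$ cut out by functions (cf.\ the proof of Proposition~\ref{prop:1}) --- all a priori only modulo translation by $\Pic^0(C)\{p\}$.

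\textbf{Conclusion.} Transporting this structure across an isomorphism $(\G^a_{(p)},\I)\simeq(\tilde\G^a_{(p)},\tilde\I)$ yields the compatible system of maps required in the hypothesis of Proposition~\ref{prop:1} (prime-to-$p$, and modulo $p$-torsion), which then gives $k(C)\simeq k(\tilde C)$ through Theorem~\ref{thm:proj}. Corollary~\ref{coro:hyper} guarantees along the way that the hyperelliptic, trigonal and special loci are respected, and Theorem~\ref{thm:main3} --- applied once the $p$-primary part is pinned down --- fixes the isogeny class of the Jacobian; in the extreme case one is reduced to recovering a curve from an isomorphism of pairs, which is exactly Conjecture~\ref{conj:main}.

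\textbf{The main difficulty.} The obstruction lies in passing from $\widehat{\mathrm{Prin}}(C)$ to $\mathrm{Prin}(C)$, equivalently in the fact that $\G^a_{(p)}$ cannot see the $p$-primary part $\Pic^0(C)\{p\}$ of the Jacobian: linear equivalence is recovered only up to $p$-power-torsion classes, and a priori two curves whose principal lattices differ by such classes would share the same $(\G^a_{(p)},\I)$. One must therefore show that this $p$-adic gap does not affect the projective structure --- e.g.\ that no two inequivalent effective cycles of equal degree become equivalent after translation by an element of $\Pic^0(C)\{p\}$, so that the pencils, hence the projective structure of $\P(K)$, are recovered on the nose --- or else rigidify using arithmetic data (a Frobenius) not present in $(\G^a_{(p)},\I)$ alone. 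This $p$-adic indeterminacy is precisely what separates the conjecture from the theorems established above, and removing it, presumably together with a proof of Conjecture~\ref{conj:main}, is the crux.
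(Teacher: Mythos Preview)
This statement is a conjecture; the paper does not prove it. What the paper establishes is the partial result Theorem~\ref{thm:galois}, which under extra hypotheses ($J\{p\}=0$ or $\mathsf g(C)>4$) yields only an \emph{isogeny} $J\to\tilde{J}$, not an isomorphism of function fields. Your write-up is not a proof either, and to your credit you say so: the final paragraph correctly isolates the $p$-primary obstruction as the reason the argument does not close.

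Two remarks on the substance. First, your endgame via Proposition~\ref{prop:1} does not go through even granting Conjecture~\ref{conj:main}. That proposition needs the actual groups $J_{\mathsf m}(k)$, because its proof detects the fibers of a pencil $\pi_f:C\to\P^1$ via the \emph{exact} vanishing $\mu_{\mathsf m}(z_0-z_\infty)=0$; knowing this only modulo $J\{p\}$ gives a coarser equivalence relation on $C(k)$, and you have given no argument that it coincides with the true one. The phrase ``applied once the $p$-primary part is pinned down'' thus hides the entire difficulty: pinning it down \emph{is} the conjecture. Likewise, Conjecture~\ref{conj:main} takes as input an isomorphism of pairs $(C,J)\to(\tilde{C},\tilde{J})$, which already presupposes the full $J(k)$ rather than $J(k)/J\{p\}$; you do not obtain such an input from $(\G^a_{(p)},\I)$ alone, so the claimed reduction is circular.

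Second, the paper's route to Theorem~\ref{thm:galois} is quite different from yours. It does not attempt to recover generalized Jacobians or the projective structure of $\P(K)$. It reconstructs only the map $\iota:C(k)\to J(k)/J\{p\}$ (via the groups $\FS(C)_\ell$ and the sequence~\eqref{eqn:fs}), and then handles the $p$-ambiguity geometrically: when $J\{p\}=0$ one is immediately in the situation of Theorem~\ref{thm:main}; when $J\{p\}\neq 0$ and $\mathsf g(C)>4$, the classification of abelian subvarieties of $j(C^{(2)}\times C^{(2)})\subset J$ carried out in the Appendix (Theorem~\ref{thm:ah}, Corollary~\ref{coro:D}) confines the locus where $(x+x')-(y+y')\in J\{p\}$ to the image of a divisor $D\subset C^{(2)}$, and this suffices to run the tower-of-quadratic-extensions argument (as in Lemmas~\ref{lemm:frobe} and~\ref{lemm:hyper-ell-frob}) and conclude isogeny through Theorem~\ref{thm:isoge}. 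Your strategy aims higher and, if the $p$-adic gap could be closed, would give the full conjecture; the paper's strategy aims lower but actually lands.
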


\begin{remark}
This fails when $\mathsf g(C) = 1$. 
For any two elliptic curves over $k$ the
pairs $(\G^a_{(p)},\I)$ are isomorphic.
There are two types: 
supersingular curves with $J\{p\}=0$ (which are all isogenous) 
and ordinary curves.  
\end{remark}

We have the following partial result:

\begin{theorem}
\label{thm:galois}
Let $C, \tilde{C}$ be curves of genus $\ge 2$ over $k=\bar{\mathbb F}_p$, 
with $p>2$.  
Assume that there is an isomorphism of pairs 
$$
(\G^a, \I) \stackrel{\sim}{\longrightarrow} (\tilde{\G}^a, \tilde{\I}).
$$
Assume in addition that either 
\begin{itemize}
\item $J\{p\} = 0$ or
\item $\mathsf g(C)> 4$.
\end{itemize}
Then there is an isogeny $J\ra \tilde{J}$ and 
$$
\End_k(J)\otimes \Z_\ell = \End_k(\tilde{J})\otimes \Z_\ell,
$$
for all $\ell\neq p$.
\end{theorem}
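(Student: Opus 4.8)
The plan is to reduce the statement to the already-established Theorem~\ref{thm:main3} (equivalently, Theorem~\ref{thm:main}) by manufacturing, out of the pair $(\G^a,\I)$, an isomorphism of pairs $\phi:(C,J)\to(\tilde C,\tilde J)$ in the sense of Section~\ref{sect:intro}. First I would use the identification \eqref{eqn:cm}, $\G^a_\ell\subset\cM(C(k),\Q_\ell)/\text{const}$, together with the description of the inertia generators $\delta_{c,\ell}$: each $\I^a_c$ is the procyclic subgroup spanned by the ``delta'' map at $c$, so the set $\I$ is literally an intrinsic copy of the point set $C(k)$, and the isomorphism $(\G^a,\I)\xrightarrow{\sim}(\tilde\G^a,\tilde\I)$ gives a bijection $\phi_s:C(k)\to\tilde C(k)$. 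Next, from the exact sequence \eqref{eqn:crrr}, $0\to\Z_\ell\xrightarrow{\Delta_\ell}\cM(C(k),\Z_\ell)\to\G^a_\ell\to\Z_\ell^{2\mathsf g}\to 0$, together with the Kummer description \eqref{eqn:kummer}, $\G^a_\ell=\Hom(K^*/k^*,\Z_\ell)$, I would recover, $\ell$ by $\ell$ and then in the product over $\ell\neq p$, the group $K^*/k^*$ up to a natural duality, hence the divisor sequence $0\to K^*/k^*\to\Div(C)\to\Pic(C)\to 0$ and in particular $\Pic^0(C)=J(k)$ as an abstract abelian group, with its principal homogeneous space $J^1(k)$ and the embedding $j_1:C(k)\hookrightarrow J^1(k)$ all reconstructed intrinsically; the diagonal map $\Delta_\ell=\sum_c\delta_{c,\ell}$ is what pins down the embedding $C(k)\hookrightarrow J^1(k)$ compatibly on both sides. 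This produces the commuting square defining an isomorphism of pairs, and Theorem~\ref{thm:main3} then yields the isogeny $J\to\tilde J$.

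The genus restrictions enter precisely in the step that distinguishes $J(k)$ inside $\G^a_\ell$ and identifies $J^1(k)$ rather than some twist of it, and in handling the $p$-part: when $J\{p\}=0$ the $p$-adic component is trivial and $\G^a_{(p)}=\G^a$ already carries everything, so the reconstruction of $K^*/k^*$ goes through cleanly; when $\mathsf g(C)>4$ one must instead argue that the extra conditions of the Appendix (on abelian subvarieties in special loci of Jacobians, cf. the cited Theorems~\ref{thm:ah}, \ref{thm:proj}) force the reconstruction to see the correct $C$ rather than a bielliptic/special impostor — this is where Corollary~\ref{coro:hyper}, Lemma~\ref{lemm:bielli} and the surrounding analysis of hyperelliptic, trigonal, bielliptic and Brill--Noether-special curves are invoked, since those are exactly the cases where the group-theoretic data could a priori fail to be rigid. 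For the final assertion $\End_k(J)\otimes\Z_\ell=\End_k(\tilde J)\otimes\Z_\ell$ for all $\ell\neq p$, I would note that the isomorphism of pairs is equivariant for the respective Frobenius automorphisms up to the commensurability supplied by Theorem~\ref{thm:second} (some powers $\Fr^n$, $\tilde\Fr^{\tilde n}$ commute), so $\phi^0$ conjugates a power of $\Fr$ to a power of $\tilde\Fr$; by Tate's Theorem~\ref{thm:tate}, $\End_k(J)\otimes\Z_\ell$ is the centralizer of $\Fr$ in $\End(T_\ell(J))$, and the centralizers of $\Fr^n$ and $\Fr$ coincide once $n$ is prime to the degrees of the relevant field extensions among the eigenvalues, so the two endomorphism algebras are identified under $\phi^0\otimes\Z_\ell$.

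The main obstacle I expect is the middle step: passing from the abstract pair $(\G^a,\I)$ to an honest isomorphism of pairs $(C,J)\to(\tilde C,\tilde J)$ requires showing that the combinatorial datum $\I$ inside $\G^a$ determines not just the set $C(k)$ but the divisor-class relations among its points, i.e.\ that one can read off from $(\G^a,\I)$ which $\Z$-linear combinations $\sum n_i c_i$ are principal. The sequence \eqref{eqn:crrr} gives this over each $\Z_\ell$, but one must check that assembling over all $\ell\neq p$ (and, when $J\{p\}\neq0$, incorporating the genus-$>4$ hypothesis to control the $p$-part or to bypass it) genuinely recovers $K^*/k^*$ as a subgroup of the zero-cycles and not merely its profinite completion; this is the technical heart, and it is where the cited results of \cite{bt} on projective/multiplicative structures and the Appendix on special loci do the real work. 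Once that reconstruction is in hand, the deduction of isogeny from Theorem~\ref{thm:main3} and of the $\ell$-adic endomorphism statement from Tate's theorem is essentially formal.
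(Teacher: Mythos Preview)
Your overall strategy---reconstruct an isomorphism of pairs $(C,J)\to(\tilde C,\tilde J)$ from the Galois data, then invoke Theorem~\ref{thm:main3}---is exactly what the paper does in the case $J\{p\}=0$. The set $\I$ does give a bijection $\phi_s:C(k)\to\tilde C(k)$, and the sequences \eqref{eqn:crrr}, \eqref{eqn:seqq-pro-ell}, \eqref{eqn:fs} recover the map $\iota_\ell:C(k)\to J\{\ell\}$ for each $\ell\neq p$. Assembling these gives $\iota:C(k)\to J(k)/J\{p\}$, and when $J\{p\}=0$ this is the desired embedding $C(k)\hookrightarrow J(k)$, so Theorem~\ref{thm:main} applies directly.

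The gap is in the case $J\{p\}\neq 0$. There you \emph{cannot} manufacture an isomorphism of pairs in the sense of Section~\ref{sect:intro}: the Galois datum $(\G^a_{(p)},\I)$ only sees $J(k)/J\{p\}$, never $J(k)$ itself, so the composite $C(k)\to J(k)/J\{p\}$ need not even be injective. The paper does not try to repair this by ``recovering $K^*/k^*$'' (Theorem~\ref{thm:proj} plays no role here, nor do Corollary~\ref{coro:hyper} or Lemma~\ref{lemm:bielli}). Instead it bypasses Theorem~\ref{thm:main} and builds the tower of quadratic extensions $k_0\subset k_1\subset\cdots$ directly, as in Lemmas~\ref{lemm:frobe} and~\ref{lemm:hyper-ell-frob}, using only the available map to $J/J\{p\}$. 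The obstruction to doing this is precisely that the intrinsic characterisation ``$c\in C(k_n)$ iff there exists $c'$ with $c+c'\in J(k_{n-1})$'' now only holds modulo $J\{p\}$, so one must control the set of pairs $(x,x'),(y,y')\in C^{(2)}$ with $(x+x')-(y+y')\in J\{p\}$. This is exactly $j(C^{(2)}\times C^{(2)})\cap J\{p\}$, and Lemma~\ref{lemm:j-p} together with the Appendix (Theorem~\ref{thm:ah} and Corollary~\ref{coro:D}) show that for $\mathsf g(C)>4$ either this intersection is finite, or all the offending cycles are confined to a divisor $D\subset C^{(2)}$; in either case the tower can be built after enlarging $k_0$. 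That is the actual role of the genus hypothesis---not ruling out ``impostor'' curves, but bounding where $J\{p\}$ can meet the image of $C^{(2)}\times C^{(2)}$---and it is the missing ingredient in your plan.
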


The condition $\mathsf g(C) > 4$ arises as follows:
in the nonsupersingular case when $J\{p\}\neq 0$,  
our argument will be based on a detailed understanding of the map
$$
\begin{array}{ccc}
j \,:\, C^{(2)}\times C^{(2)} &  \to    &  J\\
           ((x,x'), (y,y')) & \mapsto & (x+x')-(y+y').
\end{array}
$$
In particular, it will be essential to 
describe all abelian varieties in the image of $j$. 
The corresponding classification is carried 
out in Section~\ref{sect:append}, 
under the assumption that $\mathsf g(C)>4$.

\begin{proof}[Proof of Theorem~\ref{thm:galois}]
We will reduce to a version of Theorem~\ref{thm:main},
following closely the description of Galois groups in \cite{bt}, Section 11. 

Dualizing \eqref{eqn:kummer}, we recover the pro-$\ell$-completion $\hat{K}^*_{\ell}$
of the multiplicative group $K^*/k^*$ as $\Hom(\G^a_K, \Z_{\ell})$. 
Consider the following exact sequences  
\begin{equation}
\label{eqn:seqq}
0\ra K^*/k^*\stackrel{\rho_C}{\lra} \Div^0(C)\stackrel{\pic}{\lra} J(k)\ra 0, 
\end{equation}
\begin{equation}
\label{eqn:seqq-times}
0\ra K^*/k^*\otimes \Z_{\ell}\stackrel{\rho_{C,\ell}}{\lra} 
\Div^0(C)\otimes \Z_{\ell}\stackrel{\pic_{\ell}}{\lra} 
J\{\ell\}\ra 0. 
\end{equation}
Put
$$
\cT_{\ell}(C):=\lim_{\longleftarrow}{\rm Tor}_1(\Z/\ell^n,J\{\ell\}).
$$ 
We have $\cT_{\ell}(C)=\Z_{\ell}^{2\mathsf g}$, where $\mathsf g$ is the genus of $C$.
Passing to pro-$\ell$-completions in \eqref{eqn:seqq} we obtain an exact sequence
of torsion-free groups
\begin{equation}
\label{eqn:seqq-pro-ell}
0\ra \cT_{\ell}(C)\ra \hat{K}^*_{\ell}\stackrel{\hat{\rho}_C}{\lra} \widehat{\Div^0}(C) \lra 0, 
\end{equation}
since $J(k)$ is an $\ell$-divisible group. 
We write 
$\widehat{\Div^0}(C)_{\ell}$ for the $\ell$-completion of
$\Div^0(C)$. 
Clearly, $\Div^0(C)\otimes \Z_{\ell}\subset \widehat{\Div^0}(C)_{\ell}$ and we have a diagram   
\begin{equation}
\label{eqn:dia-need}
\begin{array}{ccccccccccc}
  &     &    0         & \ra & K^*/k^*\otimes\Z_{\ell} &  \stackrel{\rho_{C,\ell}}{\lra}   
&   \Div^0(C)\otimes \Z_{\ell}  &\stackrel{\pic_{\ell}}{\lra}  & J\{\ell\}  & \ra &  0\\ 
  &     &              &     &   \downarrow             &                 &   \downarrow      &     &      \downarrow &     &   \\
0 & \ra &  \cT_{\ell}(C)    & \ra & \hat{K}^*_{\ell}& \stackrel{\hat{\rho}_{C,\ell}}{\lra} & 
\widehat{\Div^0}(C)_{\ell}& 
\stackrel{\hat{\pic}_{\ell}}{\lra} & 0.  & 
\end{array}
\end{equation}
Fix a valuation $\nu_0$ and a generator $\delta_{\nu_0}$ of $\I^a_{\nu_0}$. 
This gives a canonical identification of 
generators $\delta_{\nu}\in \I^a_{\nu}$ for all $\nu\neq \nu_0$, and similarly 
all $\delta_{\nu,\ell}\in \I^a_{\nu,\ell}$, for all $\ell\neq p$.

Let $\FS(C)_{\ell}\subset\hat{K}^*_{\ell} = \Hom(\G^a_{\ell}, \Z_{\ell}) $ 
be the subgroup topologically generated by elements 
$\alpha_{\ell} \in \hat{K}^*_{\ell}$ such that 
\begin{itemize}
\item $\alpha(\delta_{\nu_0,\ell})=1$, 
\item $\alpha(\delta_{\nu,\ell})=-1$, for some $\nu\neq \nu_0$. 
\item $\alpha_{\ell}$ is trivial on all $\delta_{\nu',\ell}$, for $\nu'\neq \nu, \nu_0$.  
\end{itemize}
The group $\FS(C)_{\ell}$ is equal to $\Div^0(C)\otimes \Z_{\ell}$ and we get a sequence
\begin{equation}
\label{eqn:fs}
0\to \cT_{\ell}(C)\to \FS(C)_{\ell} \to J\{\ell\}\to 0,
\end{equation}
with cohomology isomorphic to $K^*/k^*\otimes \Z_{\ell}$.
Sequence~\eqref{eqn:fs} defines a map
$$
C(k)\stackrel{\iota_{\ell}}{\lra} J\{\ell\}.
$$
Combining these, we obtain a map 
$$
\iota =\prod_{\ell\neq p} \iota_{\ell} \,:\, C(k)\ra  J(k)/J\{p\}.
$$
If $J\{p\}=0$, the map $\iota$ is an embedding and we
can apply Theorem~\ref{thm:main} to conclude that
the Galois isomorphism implies isogeny. 

\

Assume that $J\{p\}\neq 0$ and $\mathsf g(C)>4$.

\begin{lemma}
\label{lemm:j-p}
Let $C$ be a curve of genus $\mathsf g(C)>4$. 
If $j(C^{(2)}\times C^{(2)})\cap J\{p\}$ 
is infinite then $j(C^{(2)}\times C^{(2)})\subset J$ 
contains a nontrivial abelian variety.
\end{lemma}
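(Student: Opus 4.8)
The plan is to combine the genus bound with Boxall's structure theorem for torsion points on subvarieties of abelian varieties (Remark~\ref{rem:genera}).

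First I would record that $X:=j(C^{(2)}\times C^{(2)})$ is a \emph{proper} closed subvariety of $J$: it is the image of a projective variety under a morphism, hence closed, and
$$
\dim X\ \le\ \dim\bigl(C^{(2)}\times C^{(2)}\bigr)=4<\mathsf g=\dim J,
$$
using $\mathsf g(C)>4$. (This is the only place that hypothesis enters in the present argument; its deeper use is in Section~\ref{sect:append}.) It is also convenient to note that $0\in X$ and that, since the involution swapping the two factors of $C^{(2)}\times C^{(2)}$ takes $j$ to $-j$, one has $X=-X$; moreover, after the usual identification, $X$ is the difference variety $W_2(C)-W_2(C)$.

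Next I would apply Remark~\ref{rem:genera} with the one-element set of primes $S=\{p\}$ (the statement there is for an arbitrary finite set of primes, and for the étale $p$-divisible part $J\{p\}\cong(\Q_p/\Z_p)^{\mathsf n}$ it is again due to Boxall). Since $X(k)\cap J\{p\}$ is infinite, we obtain a finite index set $I$, abelian subvarieties $A_i\subseteq J$ and points $x_i\in J(k)$ with
$$
X(k)\cap J\{p\}\ \subseteq\ \bigcup_{i\in I}\bigl(x_i+A_i(k)\bigr)\ \subseteq\ X(k).
$$
A finite union of points is finite, so the infiniteness of the left-hand side forces $\dim A_{i_0}\ge 1$ for some $i_0$; taking Zariski closures in the inclusion $x_{i_0}+A_{i_0}(k)\subseteq X(k)$ and using that $X$ is closed yields a positive-dimensional coset $x_{i_0}+A_{i_0}\subseteq X$.

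The remaining, and main, step is to upgrade this coset to an honest abelian subvariety inside $X$. Here I would argue as follows: when $C$ is not hyperelliptic the map $\varphi_2\colon C^{(2)}\hookrightarrow J^2$ is an embedding, so the inclusion $x_{i_0}+A_{i_0}\subseteq W_2(C)-W_2(C)$ produces two positive-dimensional families of effective degree-$2$ divisors $D_1^a,D_2^a$ (for $a\in A_{i_0}$) with $[D_1^a]-[D_2^a]=x_{i_0}+a$; a correspondence-decomposition argument (of Mattuck--Ran--Abramovich type, compare the proof of Lemma~\ref{lemm:bielli} and Theorem~\ref{thm:ah}) then shows that $A_{i_0}$ is the image of the pullback of the Jacobian of a nontrivial quotient $C\to C'$ --- an elliptic quotient when $\dim A_{i_0}=1$ --- and that this image already lies in $W_2(C)-W_2(C)$, giving the desired abelian subvariety; the hyperelliptic case is handled similarly after separating off the unique $\mathfrak g^1_2$, which $\varphi_2$ contracts. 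I expect this last step to be the principal obstacle: turning the set-theoretic output of Boxall's theorem into a geometric sub-abelian-variety requires the analysis of abelian families in $W_2-W_2$, which is precisely the classification carried out in Section~\ref{sect:append}; once that classification is available, the lemma follows.
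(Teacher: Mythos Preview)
Your Steps 1--2 are correct and in fact cleaner than the paper's argument. The paper also reduces to Boxall's theorem (Remark~\ref{rem:genera}, cited exactly as you do), but wraps it in an intermediate observation about the $\Z_p$-action generated by the $p$-part of a Frobenius power: since $X(k)\cap J\{p\}$ is infinite, $\Z_p$-orbits of points of $X$ can be arbitrarily large, so $X$ contains arbitrarily large subsets invariant under translation by large cyclic $p$-groups, and Boxall then forces these to lie in translates of abelian subvarieties. Your direct invocation of Remark~\ref{rem:genera} with $S=\{p\}$ reaches the same conclusion in one line; the Frobenius-orbit detour buys nothing extra here.

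Your Step 3, however, is unnecessary and you should drop it. The paper's own proof stops at \emph{translates} of abelian subvarieties and never upgrades to a sub-abelian-variety through $0$; the wording of the lemma is simply loose. This is harmless for the application: Theorem~\ref{thm:ah} is explicitly stated for ``translations of abelian subvarieties'' in $j(C^{(2)}\times C^{(2)})$, and Corollary~\ref{coro:D}, which is all that Case~2 of Theorem~\ref{thm:galois} needs, inherits this. So the correspondence/Mattuck--Ran argument you sketch is not required, and the circularity you worry about --- needing the Appendix to prove a lemma that precedes its use --- does not arise. Once you have a positive-dimensional coset $x_{i_0}+A_{i_0}\subseteq X$ you are done.
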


\begin{proof}
Consider the action of $\Z_p$ generated by the $p$-component of some
power of the Frobenius endomorphism.
Under the assumptions, the $\Z_p$-orbits of points in $j(C^{(2)}\times C^{(2)})$
can be arbitrarily large. Hence $j(C^{(2)}\times C^{(2)})$ contains arbitrarily
large subsets which are invariant under translations by big cyclic $p$-groups.
Such subsets must be contained in translates of abelian subvarieties in $J$ 
(see Remarks~\ref{rem:genera}, \ref{rem:boxall} and \cite{boxall}). 
\end{proof}

\

\noindent
{\em Case 1: $j(C^{(2)}\times C^{(2)})$ does not contain a 
nontrivial abelian variety.}

\

By Lemma~\ref{lemm:j-p},
$j(C^{(2)}\times C^{(2)})(k)\cap J\{p\}$ is finite.
Let $k_0$ be a sufficiently large finite extension of the ground field 
such that $J(k_0)$ 
contains all points $x,x',y,y' $ with
$(x+x')- (y+y')\in J\{p\}-0$, and a finite subgroup
$A_p\subset J\{p\}$ with trivial action of $\Z_p$ 
(generated by the Frobenius $\Fr$ over $k_0$). We will also assume that 
$A_p\supset J[p]$. 
Put $J_0:= J({k_0})$ and define 
inductively $J_{n+1}$ as the subset of points 
$x\notin J_n\oplus J\{p\}$ such that there is an $x'$ with
$x+x'\in J_n\oplus J\{p\}$. By induction, we obtain
a subgroup $J_{\infty}\subset J(k)$.
Note that $J_{\infty}$ contains $J(k_{\infty})$,  
where $k_{\infty}$ is the 2-closure of $k_0$. 

We claim that $J(k_{\infty})$ is closed
under the above operation. Assume otherwise.  
The action of $\Z_p$ on $J(k_0)$ and hence on $J(k_{\infty})$ is trivial.
Let $x+x'+y_p \in J(k_{\infty})$, with $y_p\in J\{p\}\setminus A_p$.  
Then $\Z_p$ acts nontrivially on $y_p$ but 
trivially on $x+x'+y_p$. Thus $\gamma(x+x')\neq x+x'$, for some $\gamma\in \Z_p$,
and $\gamma(x+x') - (x+x')\in J\{p\}$. 
However, by assumption all such points are contained in 
$J(k_0)$, where the action of $\gamma\in \Z_p$ is trivial, contradiction.

In particular, if $\tilde{C}$ is another curve with the same
data and $\tilde{J}(\tilde k_0)$ contains $J({k_0})$ then
we obtain a tower of inclusions of groups as in Lemma~\ref{lemm:hyper-ell-frob} 
and we can apply Theorem~\ref{thm:main3}.

Let $k_0$ be as above and assume that $0\neq x + x_1\in J(k_0)/J\{p\}$ but that
$x,x_1\notin J(k_1)$. Then $\gamma(x) + \gamma(x_1) = x + x_1\neq 0$, 
which implies that $C$ is hyperelliptic and that $x, x_1$ are
conjugated by a hyperelliptic involution, hence 
$0=x + x_1$, contradicting the assumption on $x,x_1$.
Thus we can proceed as before.

\

\noindent
{\em Case 2: $j(C^{(2)}\times  C^{(2)})$ contains a nontrivial abelian variety.}

\

Assume now that $j(C^{(2)} \times C^{(2)})$ does contain an
abelian variety. In Section~\ref{sect:append} we give
a classification of such subvarieties, when $\mathsf g(C)>4$. 
By Corollary~\ref{coro:D} there is a divisor
$D\subset C^{(2)}$ 
such that any abelian variety in 
$j(C^{(2)}\times C^{(2)})$ is contained
in the image of $D\times C^{(2)} \cap C^{(2)}\times D$.
Thus there exist only finitely many pairs
$x,x'\in C$ and $y,y'\in C$ with $x+x'\notin D$ and $y+y'\notin D$ and
such that $(x+x')- (y+y')\in J\{p\}$.
Let $k_0$ be a sufficiently large finite field so that 
$J(k_0)$ contains all such pairs $x,x'$ and $y,y'$ and so that
$x,\gamma(x)\in C(k_1)$ with $x+ \gamma(x)\notin D$ generate
$J(k_1)$, for any finite extension $k'/k_0$. Then we can apply the same argument as above.

Note that such a field $k_0$ exists, since the number of elements in $D(k')$ grows
as the square root of the number of elements in $C(k')$. 
Hence we can combine the arguments 
Lemma~\ref{lemm:ell-points}, \ref{lemm:pre-minus} and \ref{lemm:minus}
to show that for sufficiently large $k_0$ our assumption holds. 
This finishes the proof of Theorem~\ref{thm:galois}.
\end{proof}

\begin{remark}
When $C$ has an algebraic automorphism $\alpha$ such that $C'=C/\alpha$
has genus $1\le \mathsf g(C')\le \mathsf g(C)/2$ 
we recover the algebraic projection $C\ra J'\subset J$.
In particular, we can recover every bielliptic involution. 

For example, the Klein quartic curve is the unique curve of genus $3$ 
with the maximal number of bielliptic involutions. 
Thus the algebraic structure of the Klein curve
is completely encoded in the pair $(\G^a, \I)$.
Same holds for many other
curves with sufficiently many maps onto curves of small genus,
providing nontrivial examples where Conjecture~\ref{conj:galois}
holds.
\end{remark}

\section{Appendix: Geometric background}
\label{sect:append}

In this section we work over an algebraically closed field $k$ of
characteristic $\neq 2$. 

\

Let $C$ be a curve
of genus $\mathsf g(C)\ge 2$ and $J$ its Jacobian. We will identify the Jacobians
of degree $n$ zero-cycles $J^n$ with $J$. 
Recall that  a $d$-gonal structure 
on $C$ is a surjective morphism $C\ra \P^1$ of degree $d$. 
A hyperelliptic structure on $C$ is a surjective morphism $C\ra \P^1$ of degree 2 and  
a bielliptic structure a surjective morphism $C\ra E$ of degree 2.
If $W_2^1(C)\neq \emptyset$ then $C$ is hyperelliptic and if $W_3^1(C)\neq \emptyset$
and some element from $W_3^1(C)$ defines a proper map then $C$ is trigonal.

\

\
Consider the map
$$
\begin{array}{rcl}
j \,:\, C^{(2)}\times C^{(2)} &  \to    &  J\\
           ((x,x'), (y,y')) & \mapsto & (x+x')-(y+y').
\end{array}
$$
It contracts the diagonal $\Delta=\Delta(C^{(2)})$ 
to a point and the divisor
$$
\delta: =\{ ((x,x'), (y,y'))\in C^{(2)}\times C^{(2)} \,|\, x'=y' \}
$$
to a surface $j(C\times C)$.
Denote the union of the diagonal and the divisor $\delta$ above as $D_{\delta}$.

Assume that
$$
j((x,x'),(y,y'))= j((z,z'),(w,w')),
$$
and that $(x,x')\neq (y,y')$ and  $(x,x')\neq (z,z')$ in $C^{(2)}$.
If  $(x,x',w,w')\neq  (z,z',y,y')$ as elements in $C^{(4)}$
then the relation 
$$
((x+x')-(y+y'))-((z+z')-(w+w')) =0\in J
$$ 
gives a nontrivial
relation of degree $\leq 4$
$$
(x+x'+w +w')- (z+z'+y+y').
$$
Thus if there is a linear series $L$ of degree $\leq 4$ on $C$ with
${\rm H}^0(C,L) \geq 2$, i.e., the variety $W_4^1(C)$ is nonempty.

\begin{lemma}
Assume that
\begin{itemize}
\item $(x,x',w,w') = (z,z',y,y')$ in $C^{(4)}$, 
\item $((x,x'),(y,y'))\neq ((z,z'),(w,w'))$ in $C^{(2)}\times C^{(2)}$, 
\item $(x,x')\neq (y,y')$ and $(z,z')\neq (w,w')$ in $C^{(2)}$.
\end{itemize} 
Then there is a set $\{X,Y,Z,W\}$ which contains both
sets $\{x,x',y,y'\},\{z,z',w,w'\}$ so that after some identification
$$
((x,x'),(y,y')) = ((X,Y),(Y,Z))\,\, \text{ and }\,\, 
((z,z'),(w,w'))= ((X,W),(W,Z)).
$$
in $C^{(2)}\times C^{(2)}$.
Hence
$\{x,x',y,y'\},\{z,z',w,w'\}\in D_{\delta}=\Delta\cup \delta$.
\end{lemma}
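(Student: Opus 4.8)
## Proof Plan

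The plan is to analyze the equation $j((x,x'),(y,y'))= j((z,z'),(w,w'))$ in $J$ under the three stated hypotheses, translating the membership constraints into a combinatorial statement about the multiset identity $\{x,x',y,y'\} \uplus \{?\} = \{z,z',y,y',\ldots\}$ — more precisely, using that $(x,x',w,w') = (z,z',y,y')$ in $C^{(4)}$ means these are equal as unordered $4$-tuples. First I would record that the hypothesis $(x,x',w,w')=(z,z',y,y')$ in $C^{(4)}$ says the multiset $\{x,x',w,w'\}$ coincides with the multiset $\{z,z',y,y'\}$. So every element of $\{z,z',y,y'\}$ appears among $\{x,x',w,w'\}$ with matching multiplicity, and vice versa. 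The goal is to produce four symbols $X,Y,Z,W$ (possibly with coincidences) so that, after relabeling, $((x,x'),(y,y')) = ((X,Y),(Y,Z))$ and $((z,z'),(w,w'))= ((X,W),(W,Z))$.

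The key step is a bookkeeping argument on how the multiset $\{x,x',y,y'\}$ overlaps $\{z,z',w,w'\}$. Since $j$ is a difference map, the relation $j((x,x'),(y,y'))= j((z,z'),(w,w'))$ is equivalent to $(x+x')+(w+w') = (y+y')+(z+z')$ as degree-$4$ zero-cycles being linearly equivalent; but the stronger $C^{(4)}$-hypothesis forces them to be \emph{equal} as effective cycles, i.e. $\{x,x',w,w'\}=\{z,z',y,y'\}$ as multisets. Now I distinguish cases by the size of the intersection of the pairs $\{x,x'\}$ and $\{y,y'\}$, and similarly for $\{z,z'\},\{w,w'\}$. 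Using $(x,x')\neq(y,y')$ and $(z,z')\neq(w,w')$ to exclude degeneracies, and $((x,x'),(y,y'))\neq((z,z'),(w,w'))$ to exclude the trivial solution, I would show that the only way the multiset identity can hold is that $\{x,x'\}$ and $\{y,y'\}$ share exactly one common element — call it $Y$ — so that $\{x,x'\}=\{X,Y\}$, $\{y,y'\}=\{Y,Z\}$ for suitable $X,Z$; and symmetrically $\{z,z'\}=\{X',W\}$, $\{w,w'\}=\{W,Z'\}$. Matching the multiset $\{X,Y,W,Z'\} = \{X',W,Y,Z\}$ and cancelling then forces $X=X'$ and $Z=Z'$ (after possibly swapping roles), which is exactly the claimed normal form, with $\{X,Y,Z,W\}$ the desired four-symbol set containing both $\{x,x',y,y'\}$ and $\{z,z',w,w'\}$.

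Finally, once the normal form $((x,x'),(y,y')) = ((X,Y),(Y,Z))$ is established, I would observe that the second coordinate of each pair, $Y$ and $W$ respectively, is repeated, so by definition of $\delta$ we have $((x,x'),(y,y'))\in\delta$ (taking the common point to witness the condition $x'=y'$ after reordering) and similarly for $((z,z'),(w,w'))$; combined with the trivial inclusion of the diagonal, both four-tuples lie in $D_\delta=\Delta\cup\delta$. The main obstacle I anticipate is the combinatorial case analysis: one must carefully track \emph{ordered} pairs versus \emph{unordered} pairs in $C^{(2)}$, keep the allowed relabelings (swaps within each $C^{(2)}$-factor) straight, and verify that no genuinely different overlap pattern survives the three non-degeneracy hypotheses — in particular ruling out the configuration where $\{x,x'\}$ and $\{y,y'\}$ are disjoint, which would only be possible if the multiset identity degenerates, contradicting $((x,x'),(y,y'))\neq((z,z'),(w,w'))$. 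Everything else is routine once the right symbols are chosen.
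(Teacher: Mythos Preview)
Your proposal is correct and follows essentially the same route as the paper: both arguments reduce to a combinatorial analysis of how the common multiset $\{x,x',w,w'\}=\{z,z',y,y'\}$ can be split into two unordered pairs, using the non-degeneracy hypotheses to exclude the trivial partition and conclude that the two pairs in each point of $C^{(2)}\times C^{(2)}$ must overlap in exactly one element. The paper's write-up is slightly more direct---it immediately labels the common multiset as $\{X,Y,Z,W\}$, notes that $(x,x'),(w,w')$ and $(z,z'),(y,y')$ give two partitions of it into pairs, and observes that if these partitions differ then each pair of one meets each pair of the other in a single letter---whereas you approach the same conclusion via the overlap $\{x,x'\}\cap\{y,y'\}$; but the substance is identical.
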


\begin{proof}
Since $(x,x',w,w') = (z,z',y,y')$ in $C^{(4)}$ the
four-tuple
$\{x,x',w,w'\}$  contains also all the elements from $\{z,z',y,y'\}$.
Thus if we denote $\{x,x',w,w'\}$ as $(X,Y,Z,W)$ and assume
that none of the pairs $((x,x'),(y,y')),((z,z'),(w,w'))$
consists of the same letters
we obtain that modulo permutation of $(X,Y,Z,W)$
$$
(x,x') =(X,Y), \,\,\, (w,w')=(Z,W), \,\,\, (z,z')= (Y,Z), \,\,\, (y,y') =(W,X),
$$
and that the above
identification is the only possible, modulo permutations.
Thus $\{x,x',y,y'\},\{z,z',w,w'\}\in D_{\delta}=\Delta \cup \delta$.
\end{proof}

The following theorem classifies $4$-gonal structures on $C$.

\begin{theorem}[Mumford, cf. \cite{acgh}, p. 193]
\label{thm:mumford}
Assume that $W_4^1(C)\neq \emptyset$. Then one of the following holds: 
\begin{itemize}
\item[(0)] \hskip 0.2cm
$\dim W_4^1(C) =0$: then $C$ has a finite number of $4$-gonal
structures;
\item[(1)]\hskip 0.2cm 
$\dim W_4^1(C) =1$:
\begin{itemize}
\item[(a)]\hskip 0.2cm  $C$ is smooth plane curve of genus $6$
and degree $5$ and the corresponding line bundle $L= H-p$, $p\in C$, where $H$ 
is a hyperplane section;
\item[(b)]\hskip 0.2cm  $C$ is bielliptic and $L$ is obtained from a bielliptic
involution $C\to E\to \P^1$, where the second map arises from a reflection 
with  respect to some point on $E$;
\item[(c)]\hskip 0.2cm  $C$ is trigonal with a unique trigonal structure and
$L = H +c$, where $c\in C(k)$, and $H$ defines a trigonal map onto $\P^1$;
\end{itemize}
\item[(2)]\hskip 0.2cm 
$\dim W_4^1(C) =2$: then $C$ is hyperelliptic and $L = 2H$, where
$H$ is a hyperelliptic bundle, or $L=H+ c +c'$, where $c,c'\in C(k)$ and $c'\neq \sigma(c)$,  
for the hyperelliptic involution $\sigma$;
in the former case, $\dim {\rm H}^0(C,2H)=3$ and any other $L$ as above defines the same
projection $H$.
\end{itemize}
\end{theorem}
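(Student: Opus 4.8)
The plan is not to reprove this classical theorem from scratch, but to organize the argument around two ingredients: a base-point reduction for the linear series involved, and the trichotomy on $\dim W_4^1(C)$, where the decisive input for the middle value is Martens' theorem together with Mumford's refinement of it, exactly as in \cite{acgh}, p.~193.

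First I would strip base points. Suppose $L\in W_4^1(C)$ has base locus $B$ of degree $b>0$; then $L(-B)$ is a base-point-free pencil of degree $4-b$ with $h^0\ge 2$. Since $\mathsf g(C)\ge 2$ a $g^1_1$ is impossible, so $b\le 2$. If $b=1$ we obtain a base-point-free $g^1_3$, so $C$ is trigonal and $L=H+c$ for the trigonal bundle $H$ and $c=B$, which is alternative (1c). If $b=2$ we obtain a $g^1_2$, so $C$ is hyperelliptic and $L=H+c+c'$ with $c'\neq\sigma(c)$, the second alternative of (2). If instead $L$ is base-point-free but $h^0(L)=3$, the equality case of Clifford's theorem forces $C$ hyperelliptic and $L=2H$, the first alternative of (2). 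After these reductions I may assume every $L$ under discussion is a genuine base-point-free pencil of degree $4$, defining a degree-$4$ morphism $\phi_L\,:\,C\to\P^1$.

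Next comes the trichotomy. If $\dim W_4^1(C)=0$ we are in case (0). If $\dim W_4^1(C)=2$, Martens' theorem forces $C$ hyperelliptic, and the base-point analysis above enumerates all such $L$, giving (2). The substance is $\dim W_4^1(C)=1$, which is Mumford's refinement of Martens' theorem. Here I would pick two distinct members $|D_1|,|D_2|$ of a moving one-parameter family of base-point-free $g^1_4$'s and form the morphism $(\phi_{D_1},\phi_{D_2})\,:\,C\to\P^1\times\P^1$. If this map is \emph{not} birational onto its image, then, writing $4=d\cdot a$ for the degree $d\ge 2$ of $C$ over its image and the bidegree $(a,a)$ of the image, the distinctness of $|D_1|,|D_2|$ rules out $d=4$, so $d=2$ and the image is a curve of genus $0$ or $1$; the genus-$0$ possibility would make $C$ hyperelliptic, hence return us to the $\dim=2$ case, so the image is elliptic and $C$ is bielliptic, with $L$ the pullback of a degree-$2$ bundle from $E$, i.e.\ alternative (1b). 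If the map \emph{is} birational onto its image $C''\subset\P^1\times\P^1$, then $C''$ has bidegree $(4,4)$ (or $(a,b)$ with $a,b\le 4$ after deleting any common factor) and arithmetic genus at most $(4-1)(4-1)=9$, so $\mathsf g(C)\le 9$; the presence of an entire one-parameter family of such pencils then forces the image configuration to degenerate, and a residual/adjoint computation on the quadric, carried out via the base-point-free pencil trick, identifies $C$ with the smooth plane quintic of genus $6$ and $L$ with $H-p$, which is alternative (1a).

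The hard part will be the $\dim W_4^1(C)=1$ analysis in the birational case: controlling the singularities the image curve on the quadric can acquire, and proving that no curve other than the plane quintic admits an honest one-parameter family of base-point-free $g^1_4$'s. This is precisely the content of Mumford's theorem, and rather than reproduce the full case analysis I would invoke \cite{acgh}, p.~193; the role of the present proof is the preceding reduction to base-point-free pencils and the bookkeeping matching each geometric possibility to the stated description of $L$.
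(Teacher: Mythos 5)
The paper offers no proof of this theorem: it is cited verbatim from \cite{acgh}, p.~193, with the lone observation afterward that it holds over arbitrary ground fields. Your proposal correctly reconstructs the shape of the classical argument --- strip base points to reduce to base-point-free pencils, dispose of the $h^0\ge 3$ case by Clifford equality, invoke Martens' theorem for $\dim W_4^1=2$, and for $\dim W_4^1=1$ run the $\P^1\times\P^1$ bidegree--genus bookkeeping to separate the non-birational (bielliptic/hyperelliptic) cases from the birational one --- but, as you say yourself, the decisive case analysis in the birational branch (that the plane quintic is the only possibility) is exactly what you then cite from \cite{acgh}. Since the paper's ``proof'' is the same citation, the two treatments are equivalent and there is nothing to compare at the level of method. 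One small inaccuracy worth noting in the part you do spell out: for a base-point-free $L$ with $h^0(L)=3$, Clifford equality also allows $L=K_C$ with $\mathsf g(C)=3$ (nonhyperelliptic plane quartic), not only the hyperelliptic $L=2H$ case; this is a zero-dimensional component and thus falls under alternative (0), and in any event it is immaterial to the paper, which only applies the theorem for $\mathsf g(C)>4$.
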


Mumford's theorem holds over arbitrary ground fields.
We use it to describe explicitly abelian subvarieties
of $j(C^{(2)} \times C^{(2)})\subset J$, for $\mathsf g(C) > 4$.

\begin{theorem} 
\label{thm:ah}
Let $C$ be a curve of genus $\mathsf g(C)> 4$ and $J$ its Jacobian. 
Assume that $j(C^{(2)} \times C^{(2)})\subset J$
contains translations of abelian subvarieties.
Then one of the following holds.
\begin{enumerate}
\item $C$ is hyperelliptic with a map $C\to E$ of degree $4$ and
$$
E\subset W_4 = j(C^{(2)} \times C^{(2)}).
$$
\item $C$ is hyperelliptic with a map $C\to E$ of degree $3$ and
$E\times C \subset W_4(C)\subset J$, with the map
corresponding to the summation of cycles. Further,  
$E\subset W_3(C)\subset J$, with the 
embedding induced by the projection from $C$.
\item $C$ is bielliptic and 
there is a finite number of bielliptic structures $e_i :C\to E_i$.
Each such map defines an embedding $i_2 : E_i\subset C^{(2)}$. 
The abelian subvarieties are:
\begin{itemize}
\item $j(E_i\times E_i)= E_i\subset j(C^{(2)} \times C^{(2)})$ (of dimension
one), 
\item 
$j(E_i\times E_{i'})\subset j (C^{(2)} \times C^{(2)})$, with $i\neq i'$, (of dimension two), and
\item $j(E_i\times C^{(2)})$ - two-dimensional families of elliptic curves.
\end{itemize}
\item $C$ admits a map $h :C\to \tilde{C}$ of degree  two onto a curve of genus $2$, and
$A= \tilde{C}^{(2)}$.
\end{enumerate}
\end{theorem}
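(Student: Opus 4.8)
The plan is to first reduce to the assertion that $C$ carries a pencil of degree $4$, i.e.\ that $W_4^1(C)\neq\emptyset$, and then to run through Mumford's classification (Theorem~\ref{thm:mumford}), identifying in each case precisely which abelian subvarieties appear in $j(C^{(2)}\times C^{(2)})$.

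Fix a translate $a+B\subseteq j(C^{(2)}\times C^{(2)})$ of a nonzero abelian subvariety $B\subseteq J$, put $Z:=\overline{j^{-1}(a+B)}$, and choose an irreducible component $Z_0$ dominating $a+B$. A general point $p\in a+B$ then carries a $(\dim Z_0-\dim B)$-dimensional family of representations $p=(x+x')-(y+y')$. If this family is positive-dimensional I would invoke the Lemma preceding Theorem~\ref{thm:mumford}: two distinct representations of $p$ either differ by the ``$D_\delta$-swap'' (so the associated $4$-tuples lie on $D_\delta=\Delta\cup\delta$), or they produce a genuine linear equivalence $x+x'+w+w'\sim z+z'+y+y'$ between \emph{distinct} effective degree-$4$ divisors, that is, a point of $W_4^1(C)$. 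Hence either $W_4^1(C)\neq\emptyset$, or every such fibre over a general point is pushed into $j(D_\delta)=\{0\}\cup(C-C)$, forcing $a+B\subseteq C-C$; since $\mathsf g(C)>4$ the difference surface $C-C$ is a proper surface in $J$, so then $\dim B=1$ and $B$ is an elliptic curve inside $C-C$, which by the classical structure of difference surfaces means $C$ is bielliptic and hence $W_4^1(C)\neq\emptyset$ by Mumford~(1b). When $j|_{Z_0}$ is instead generically finite onto $a+B$, I would use that $j$ is not finite (it contracts the diagonal of $C^{(2)}\times C^{(2)}$), that neither $W_2(C)$ nor $W_3(C)$ is a translate of an abelian variety when $\mathsf g(C)>4$, and the Abel--Jacobi rigidity that every morphism from an abelian variety to $J$ is a translate of a homomorphism, applied to the two projections of $Z_0$ to $C^{(2)}$ followed by $\varphi_2$, to conclude that $B$ forces a factorization of some $g^1_4$ through a curve $\tilde C$ of genus $\le 2$; genus $1$ again makes $C$ bielliptic, while genus $2$ immediately gives conclusion~(4) with $B\simeq h^*\tilde J$, the Jacobian birational to $\tilde C^{(2)}$. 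In all cases this reduces us to $W_4^1(C)\neq\emptyset$.

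Next I would feed this into Theorem~\ref{thm:mumford}. If $C$ is a smooth plane quintic of genus $6$, or $C$ is trigonal with its unique trigonal structure, I expect $j(C^{(2)}\times C^{(2)})$ to contain \emph{no} positive-dimensional abelian subvariety: the relevant pencils $|H-p|$, resp.\ $|H+c|$, are parametrized by $C$ itself, and the rigidity argument (a morphism from an abelian variety to $C^{(2)}$, composed with $\varphi_2$, is affine) shows the locus they sweep out in $J$ is not abelian, so these Mumford cases do not occur under our hypothesis. If $C$ is bielliptic, with $\mathsf g(C)>2$ so that there are only finitely many bielliptic structures $e_i\colon C\to E_i$, then each $e_i$ embeds $E_i\hookrightarrow C^{(2)}$ via $c\mapsto c+\tau_i(c)$, and a direct computation with $j$ on the products $E_i\times E_i$, $E_i\times E_{i'}$ ($i\neq i'$) and $E_i\times C^{(2)}$ yields exactly the three families of conclusion~(3), while a dimension and rigidity count rules out any further abelian subvariety. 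If $C$ is hyperelliptic, I would use the $g^2_4=|2H|$ attached to the hyperelliptic bundle $H$, together with the remaining pencils $|H+c+c'|$ and the rigidity argument, to show that every abelian subvariety of $j(C^{(2)}\times C^{(2)})$ is the one associated to a degree-$3$ or degree-$4$ morphism $C\to E$ onto an elliptic curve, which are conclusions~(2) and~(1). Finally, the only possibility with $\dim W_4^1(C)=0$ not already excluded is a $g^1_4$ factoring through a curve of genus $2$, which is conclusion~(4). Assembling the cases proves the theorem.

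The main obstacle is the case-by-case step. The reduction to $W_4^1(C)\neq\emptyset$ is comparatively soft, whereas pinning down \emph{which} abelian subvarieties occur, and proving there are no others, is the real content. In particular one must exclude the plane-quintic and trigonal cases outright, and, in the hyperelliptic case, extract only the abelian varieties coming from degree-$3$ and degree-$4$ elliptic quotients out of the $2$-dimensional family $W_4^1(C)$. The technical engine is the rigidity of morphisms between abelian varieties, transported through Abel--Jacobi, and the dimension estimates that make it effective rest squarely on the hypothesis $\mathsf g(C)>4$: it is what keeps $W_2(C)$ and $W_3(C)$ from being translates of abelian varieties, keeps $C-C$ a proper surface in $J$, and bounds the dimension of any abelian subvariety that can arise.
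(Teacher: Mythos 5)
Your overall skeleton matches the paper's: reduce to $W_4^1(C)\neq\emptyset$ via the $D_\delta$-swap lemma, then run Mumford's classification (Theorem~\ref{thm:mumford}) and test each case. The reduction step you propose is essentially correct in spirit, and the paper does use exactly the dichotomy you identify (the two $4$-tuples either lie on $D_\delta$ or force a linear equivalence).

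However, there is a concrete error and a substantial gap in the case-by-case step, which you yourself flag as ``the real content.'' First, the claim that the plane-quintic case is \emph{excluded outright} is false: the paper proves a lemma and corollary showing that a smooth plane quintic \emph{can} contribute an abelian surface to $j(C^{(2)}\times C^{(2)})$, precisely when $C$ carries a $\Z/2$-action with $\mathsf g(C/\Z/2)=2$ (yielding conclusion~(4)). Your framing (``the only possibility with $\dim W_4^1(C)=0$ not already excluded is a $g^1_4$ factoring through a genus-$2$ curve'') misses this, since a plane quintic has $\dim W_4^1(C)=1$. Second, the trigonal, bielliptic, and plane-quintic cases each require a genuinely nontrivial analysis of the locus $Z$ where $j$ fails to be injective. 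The paper's engine is the pair of auxiliary curves $C_f$ (unordered pairs in fibres of a $g^1_4$) and $C_g=C_f/\iota$, a Hurwitz computation of $\mathsf g(C_g)$, a Galois-group analysis of when $C_f$ is reducible, and a dedicated lemma showing that for nonhyperelliptic $C$ of genus $>4$, an irreducible $C_f$ cannot be bielliptic. None of this appears in your sketch, and the ``rigidity of morphisms from abelian varieties'' you invoke does not by itself substitute for it: rigidity tells you that maps from an abelian variety to $J$ are affine, but the hard question is which configurations $(E,E')\subseteq C^{(2)}\times C^{(2)}$ survive the contraction under $j$, and that depends on the detailed structure of $Z=\bigcup_f Z_f$. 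Without the $C_f$/$C_g$ machinery, the trigonal exclusion in particular is unproven, and the hyperelliptic case still needs the specific results of~\cite{AH} (Lemmas 1--3, Theorems 3--4 there) to pin the abelian subvariety down to a degree-$3$ or degree-$4$ elliptic quotient or a genus-$2$ quotient.
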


\begin{corollary}
\label{coro:D} 
There is a divisor $D\subset C^{(2)}$ such that
abelian subvarieties in $j(C^{(2)}\times C^{(2)})$ are contained
in $j(D\times C^{(2)}\cup C^{(2)}\times D)$.
In the hyperelliptic cases 1 and 2, the components of $D$ are defined by
the curves of two-cycles of 
degree $4$ and $3$ maps.
In Case 3, a bielliptic structure defines an elliptic curve in $C^{(2)}$
which is a component of $D$. In Case 4, 
a component is given by $\tilde{C}$ of genus
$2$ embedded into $C^{(2)}$. 
\end{corollary}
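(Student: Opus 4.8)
The plan is to read the divisor $D$ off directly from the classification in Theorem~\ref{thm:ah}, arguing case by case. The uniform mechanism is as follows: in each of the four cases, every abelian subvariety $A\subset j(C^{(2)}\times C^{(2)})$ that occurs has the form $A=j(B_1\times B_2)$ with $B_1,B_2\subset C^{(2)}$, where at least one of $B_1,B_2$ is a fixed curve attached to the special structure on $C$ (a gonality pencil, a bielliptic map, or a degree two map onto a genus $2$ curve) that produced $A$. So I would: (i) for each such structure isolate this curve and declare it a component of $D$; (ii) take $D$ to be the union of all these components; (iii) observe that if $A=j(B_1\times B_2)$ with, say, $B_1\subset D$, then $A\subset j(D\times C^{(2)})\subset j(D\times C^{(2)}\cup C^{(2)}\times D)$. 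Because $\mathsf g(C)>4$, there are only finitely many special structures of each kind (finitely many bielliptic maps, finitely many degree $3$ or $4$ pencils in the relevant strata of $W_4^1$ by Theorem~\ref{thm:mumford}, at most finitely many degree two maps onto genus $2$ curves), so $D$ is a genuine divisor in $C^{(2)}$ and automatically has the components described in the statement.

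In the bielliptic case (3), each bielliptic map $e_i:C\to E_i$ gives the embedding $i_2:E_i\hookrightarrow C^{(2)}$, $e\mapsto e_i^{-1}(e)$; I take its image as a component of $D$. The abelian subvarieties listed in Theorem~\ref{thm:ah}, case (3) — the elliptic curves $j(E_i\times E_i)=E_i$, the surfaces $j(E_i\times E_{i'})$, and the members of the two-dimensional families $j(E_i\times C^{(2)})$ — are all of the shape $j(i_2(E_i)\times B)$ with $B\subset C^{(2)}$, hence all lie in $j(D\times C^{(2)})$; in particular a single codimension one locus $i_2(E_i)$ in the first factor already captures every elliptic curve in the moving family $j(E_i\times C^{(2)})$. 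Likewise, in case (4) the degree two map $h:C\to\tilde{C}$ onto a genus $2$ curve gives $h^*:\tilde{C}\hookrightarrow C^{(2)}$, $\tilde{c}\mapsto h^{-1}(\tilde{c})$, whose image $D_{\tilde{C}}$ I take as a component of $D$, and $A=\tilde{C}^{(2)}$ is realized as $j(D_{\tilde{C}}\times D_{\tilde{C}})\subset j(D_{\tilde{C}}\times C^{(2)})$ since the difference map $\tilde{C}\times\tilde{C}\to J(\tilde{C})$ is surjective in genus $2$.

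The hyperelliptic cases (1) and (2) carry the real content and I expect the verification there to be the main obstacle. Here the special structure is a degree $4$ (resp. degree $3$) map, and using Mumford's classification in Theorem~\ref{thm:mumford} together with the hyperelliptic geometry one must exhibit a one-parameter family $D_f\subset C^{(2)}$ of $2$-cycles — the ``curve of two-cycles'' of the map — through which the resulting abelian variety $E\subset W_4$ factors, that is, $E\subseteq j(D_f\times C^{(2)})$. Making this precise amounts to unwinding which line bundles realize the degree $4$ and degree $3$ structures in the hyperelliptic strata of $W_4^1$ and checking that the associated $4$- and $3$-cycles split off such a $2$-cycle uniformly over $E$; this is the step requiring genuine geometry rather than bookkeeping. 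Once all four cases are in hand, the union of the components is the required divisor $D$, and the case analysis shows that every abelian subvariety of $j(C^{(2)}\times C^{(2)})$ lies in $j(D\times C^{(2)}\cup C^{(2)}\times D)$, which is the assertion.
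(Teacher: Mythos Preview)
Your approach is correct and is exactly how the paper intends the corollary to be read: the paper gives no separate proof, simply stating the corollary immediately after Theorem~\ref{thm:ah} and letting the case-by-case description of $D$ in the statement itself serve as the argument. Your unpacking of Cases~(3) and~(4) is accurate, and your remark on finiteness of the special structures (so that $D$ is really a divisor) is the right thing to check.

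One small correction of perspective: the hyperelliptic cases (1) and (2) are not in fact ``the real content'' or a genuine obstacle. They are just as direct as the other two once you name the curve $D_f\subset C^{(2)}$ of $2$-cycles supported on fibers of the degree $4$ (resp.\ degree $3$) map $f:C\to E$. In Case~(1), a point of $E\subset W_4$ is the class of a fibre $c_1+c_2+c_3+c_4$ of $f$; using the hyperelliptic involution $\sigma$ one writes it as $(c_1+c_2)-(\sigma(c_3)+\sigma(c_4))$ up to the fixed translation by $2H$, and $(c_1,c_2)\in D_f$. In Case~(2), each translate $E+c\subset W_4$ consists of classes $c_1+c_2+c_3+c$ with $(c_1,c_2,c_3)$ a fibre of the degree $3$ map; again $(c_1,c_2)\in D_f$ and the remaining $2$-cycle is arbitrary. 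So in both cases $E$ (resp.\ each $E+c$) lies in $j(D_f\times C^{(2)})$, with no appeal to Mumford's theorem beyond what is already built into Theorem~\ref{thm:ah}.
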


The remainder of this Appendix is devoted to a proof of Theorem~\ref{thm:ah}.
We use the results and techniques from \cite{AH}.

\

First we consider the case when $C$ is hyperelliptic.
Then $j(C^{(2)} \times C^{(2)})\subset J$
coincides with $W_{4}(C)$, the image of $C^{(4)}$.

\begin{lemma} 
Let $C$ be a hyperelliptic curve of genus $\mathsf g(C) > 4$.
If $W_4(C)$ contains a nontrivial abelian subvariety $A$ then:
\begin{enumerate}
\item $\dim A=1$ and there is a surjective map 
$f : C\to A$ of degree $3$ or $4$, and the embedding of $A$ into 
$W_3(C)$, resp. $W_4(C)$, is induces by this map. 
\item There is a surjective map $f:C\to \tilde{C}$ of degree $2$, 
where $\mathsf g(\tilde{C})=2$, and $A= \tilde{C}^{(2)}$.
\end{enumerate}
\end{lemma}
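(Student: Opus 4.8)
The plan is to work with the hyperelliptic projection $\iota\colon C\to\mathbb P^1$ and the induced structure on $W_4(C)=j(C^{(2)}\times C^{(2)})$, which for hyperelliptic $C$ is exactly the image of $C^{(4)}$ in $J$. The starting point is Mumford's classification (Theorem~\ref{thm:mumford}): since $W_4(C)$ contains a positive-dimensional abelian subvariety $A$, the translates of $A$ sweep out a positive-dimensional family of $4$-cycles all mapping to a fixed locus, so $\dim W_4^1(C)\ge 1$. Because $C$ is hyperelliptic we are in case (2) of Mumford's theorem, where $\dim W_4^1(C)=2$ and every $L\in W_4^1(C)$ defining a proper map is of the form $2H$ or $H+c+c'$ with $H$ the hyperelliptic bundle. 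First I would use this to pin down which $g^1_4$'s can be ``responsible'' for the abelian subvariety $A$: the linear systems of the form $H+c+c'$ vary in a family parametrized (roughly) by $C^{(2)}$, and the fibers of $\varphi_4\colon C^{(4)}\to W_4(C)$ over points of $A$ must be positive-dimensional along $A$ or $A$ itself must lie in the ``non-$g^1_4$'' part — I would separate these two behaviors, which is what produces the dichotomy $\dim A=1$ versus $A=\tilde C^{(2)}$.

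Next, in the case where $A$ is one-dimensional, I would argue as follows. An elliptic curve $A\subset W_4(C)\subset J$ that is not forced by the hyperelliptic $g^1_4$ must arise from an actual morphism $C\to A$: the abelian subvariety $A\subset J$ gives, by Poincaré reducibility and the universal property of the Jacobian, a quotient $J\to A$, hence (composing with $C\hookrightarrow J$) a map $f\colon C\to A$ of some degree $d$; the condition that $A$ lands in $W_4(C)$, i.e.\ that fibers of $f$ (pushed into $J$ via the Abel--Jacobi map) are effective $4$-cycles, forces $d\mid 4$ essentially, and a genus/ramification estimate together with $\mathsf g(C)>4$ rules out $d=1,2$ (degree $2$ onto an elliptic curve would make $C$ bielliptic, and one checks the resulting $A\subset W_4$ does not have the right shape unless it already came from a degree $3$ or $4$ map — this is where I expect to lean on the techniques of \cite{AH}). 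This leaves $d=3$ or $d=4$, and in those cases $A$ is the image of $C$ under $f$ composed with Abel--Jacobi, landing in $W_3(C)$ resp. $W_4(C)$ as claimed. For the case $\dim A\ge 2$: the only way to get a higher-dimensional abelian subvariety inside $W_4(C)$ for hyperelliptic $C$ of large genus is via a degree $2$ map $h\colon C\to\tilde C$ with $\mathsf g(\tilde C)=2$, whence $h^{(2)}\colon C^{(2)}\to\tilde C^{(2)}$ and $\tilde C^{(2)}$ is birational to the abelian surface $\Jac(\tilde C)$; I would show any $A$ of dimension $\ge 2$ must be $2$-dimensional and identify it with $\tilde C^{(2)}\subset J$ by comparing the two natural maps $C^{(4)}\to W_4(C)$ and $\tilde C^{(2)}\to \Jac(\tilde C)$ through the pushforward $h_*$.

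The main obstacle I anticipate is the bookkeeping in the $\dim A=1$ case: ruling out low-degree maps and, more delicately, showing that an elliptic curve sitting in $W_4(C)$ is \emph{necessarily} swept out by the fibers of a genuine morphism $C\to A$ of degree $3$ or $4$ — a priori $A$ could be a ``spurious'' translate lurking inside the $g^1_4$-locus. Handling this cleanly requires the fiber-dimension analysis of $\varphi_4$ over $A$ and the precise description of $W_4^1$ from Mumford's theorem, combined with the curve-in-abelian-variety geometry from \cite{AH}. Once that is in place, the translation into morphisms and the degree bounds are standard Abel--Jacobi and Riemann--Hurwitz computations, and the higher-dimensional case follows more directly from the degree $2$ quotient structure.
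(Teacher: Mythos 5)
Your plan and the paper's proof diverge substantially, and your plan as written has a genuine gap at its center. The paper does not argue through Mumford's classification of $W_4^1(C)$ in the hyperelliptic case at all; instead it invokes a chain of specific results from \cite{AH} that are precisely designed to convert ``$A$ is an abelian subvariety of $W_d(C)$'' into the existence of a morphism $C\to \tilde C$ with $A\subset W_{\tilde d}(\tilde C)$. Concretely: the paper translates $A$ by a point $a$ to get $A_2 = A+(a)\subset W_8(C)$, uses Lemma~1 of \cite{AH} to get the lower bound $\dim \mathrm H^0(C,L_\alpha) > \dim A + 1$ for every $\alpha\in A_2$, then splits on whether $\dim\mathrm H^0 = 2$ (Lemma~2 of \cite{AH} gives directly $\dim A = 1$ and a degree~$4$ map $C\to A$) or $\ge 3$ (Lemma~3 of \cite{AH} produces a factorization $C\to\tilde C\to\mathbb P^{r}$ and an inclusion $A\subset W_{4/\deg\rho}(\tilde C)$). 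Hyperellipticity is used via the identity $W_4^1(C)=W_3^1(C)=W_2(C)$ together with Theorem~3 of \cite{AH} to rule out one branch; the remaining branches give the dichotomy in the statement (with Theorem~4 of \cite{AH} supplying the degree~$\le 3$ case inside $W_3(C)$).

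The step in your proposal that does not hold up is the ``Poincar\'e reducibility $\Rightarrow$ morphism'' move. You write that $A\subset J$ gives a quotient $J\to A$ and hence a map $f\colon C\to A$ whose \emph{fibers} are the effective $4$-cycles realizing $A\subset W_4(C)$. But the projection $J\to A$ composed with $C\hookrightarrow J$ has no a priori relation to the family of effective degree-$4$ divisors parametrized by $A$: containment $A\subset W_4(C)$ only says that every translate $a\in A$ is the class of \emph{some} effective $4$-cycle, not that those cycles fit together as the fibers of the projection. Establishing that the family of divisors does arise from a morphism (and controlling its degree) is exactly the hard content of Lemmas~1--3 in \cite{AH}, which you flag as ``the main obstacle'' but do not actually supply. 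Likewise, for $\dim A\ge 2$ you assert that the only possibility is a degree-$2$ map to a genus-$2$ curve; in the paper this falls out of Lemma~3 of \cite{AH} in the case $\tilde d = 2$, after ruling out $\mathsf g(\tilde C)=1$ (which would make $C$ bielliptic) and the sub-case where $\tilde C$ is itself bielliptic. Your Mumford-based route might be made to work, but as it stands it replaces the precise Abramovich--Harris lemmas with an informal fiber-dimension argument that does not yet yield the degree bounds or the factorization, and so the proof is incomplete.
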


\begin{proof}
By Theorem 4 of \cite{AH}, if $A\subset W_3(C)$ is an abelian subvariety
and $\mathsf g(C) >4$ then $A$ is an elliptic curve and $C$ admits a map $C\ra A$ 
of degree $\le 3$, which defines the embedding of $A$ into $W_3(C)$.

Let $A\subset W_4(C)$ be an abelian variety. Let $A_2=A+(a)\subset W_{8}(C)$ be 
the translation of $A$ by an $a\in A(k)$. Each $\alpha \in A_2(k)$ defines
a line bundle $L_{\alpha}$ on $C$ (of degree 8). 
By Lemma 1 in \cite{AH},
$$
\dim {\rm H}^0(C, L_{\alpha})> \dim A+1. 
$$
By Lemma 2 in \cite{AH}, if $\dim {\rm H}^0(C, L_{\alpha})=2$ then 
$A$ has dimension 1 and there is a map $C\ra A$ of degree 4. 

Now assume that $\dim {\rm H}^0(C, L_{\alpha})\ge 3$, for all $\alpha$.  
Then $8\le 2\mathsf g(C)-2$. Since $C$ is hyperelliptic 
the map 
$$
\phi_{\alpha}\, :\, C\ra \P^{r(\alpha)}, 
$$
where $r(\alpha)=\dim  {\rm H}^0(C, L_{\alpha}) -1$, is not birational onto its
image. Thus $C$ satisfies the assumptions of Lemma 3 \cite{AH} and we conclude
that either $A\subset W_{\tilde{d}}^1(C)$, where $2\le \tilde{d}\le 4$, or there is a  
nontrivial factorization
$$
C\stackrel{\rho}{\longrightarrow} \tilde{C}\ra \P^{r(\alpha)}
$$
and an embedding $A\subset W_{\tilde{d}}(\tilde{C})$, with $\tilde{d}=4/\deg(\rho)$.
Since $C$ is hyperelliptic, 
$$
W_4^1(C)=W_3^1(C)=W_2(C).
$$
We now apply Theorem 3 \cite{AH}: if $A\subset W_2(C)$ and $\mathsf g(C)\ge 3$ 
then $C$ is not hyperelliptic, 
contradiction to our assumption. 
If $\tilde{d}=1$ then $A\subset W_1(\tilde{C})$ and hence $\tilde{C}=A$ is an elliptic curve
and we are in Case 1. 

If $\tilde{d}=2$, then 
$A\subset W_2(\tilde{C})$ and either 
$1\le \mathsf g(\tilde{C})\le 2$ or $\tilde{C}$ is bielliptic.
If $\tilde{C}$ were bielliptic we would get a degree 4 map from $C$ onto
an elliptic curve. If $\mathsf g(\tilde{C})=1$ then $C$ is bielliptic, contradicting 
the assumption on the genus of $C$. 
If $\mathsf g(\tilde{C})=2$ then $A=W_2(\tilde{C})$. 
\end{proof}

From now on we assume that $C$ is nonhyperelliptic.
In particular, $C^{(2)}$ does not contain rational curves.

Assume that the map $j : C^{(2)} \times C^{(2)}\to J$
is an embedding outside of the diagonal $\Delta \cup  \delta$.
Then $A\subset  j(C^{(2)} \times C^{(2)})$
lifts birationally to $C^{(2)}\times C^{(2)}$
and the image of $A$ in each projection to $C^{(2)}$ 
must be an elliptic curve or a point. 
It follows that $A$ is contained in a product
of elliptic curves in $C^{(2)} \times C^{(2)}$. 
It remains to apply the following lemma (see, e.g., \cite{AH}).

\begin{lemma} 
Assume that $C$ is a nonhyperelliptic curve of genus $\mathsf g(C) > 4$.
Let $\tilde{C}\subset C^{(2)}$ be  a curve of genus
$1$ or $2$. Then there is a degree 2 map $C\to \tilde{C}$.
\end{lemma}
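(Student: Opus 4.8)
The plan is to extract the double cover directly from the incidence correspondence of $\tilde C$ inside $C^{(2)}$, and then to prove that this correspondence has type $(1,2)$, i.e.\ that its projection to $C$ is birational. After replacing $\tilde C$ by its normalization, form the incidence curve
$$
\mathcal C=\{(c,t)\in C\times\tilde C \,:\, c\in\mathrm{supp}(D_t)\},
$$
where $D_t\in C^{(2)}$ is the degree-$2$ divisor parametrized by $t$, and let $\widehat{\mathcal C}\to\mathcal C$ be a desingularization, with projections $p\colon\widehat{\mathcal C}\to C$ and $q\colon\widehat{\mathcal C}\to\tilde C$. By construction $\deg q=2$; write $\iota$ for the associated involution. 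The projection $p$ is dominant, since otherwise all the $D_t$ contain a common point and $\tilde C$ maps nonconstantly to $C$, which is impossible as $\mathsf g(\tilde C)\le 2<\mathsf g(C)$. Set $d:=\deg p\ge 1$. If $d=1$ then $\widehat{\mathcal C}\cong C$ and $q$ is the required degree-$2$ morphism $C\to\tilde C$, so everything reduces to excluding $d\ge 2$.

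First I would move the problem into the Jacobian. As $C$ is nonhyperelliptic of genus $\ge 3$, the Abel--Jacobi map identifies $C^{(2)}$ with $W_2(C)\subset J$; hence $\tilde C$ is embedded in $J$ and generates an abelian subvariety $B\subseteq J$ with $\dim B\le\mathsf g(\tilde C)\le 2$. For $\widehat c$ over $c\in C$ and $t\in\tilde C$ one has $q(\widehat c)=[p(\widehat c)-c_0]+[p(\iota\widehat c)-c_0]$ in $J$, so the composite $\widehat{\mathcal C}\xrightarrow{q}\tilde C\hookrightarrow J$ factors through $(p,p\circ\iota)\colon\widehat{\mathcal C}\to C\times C$ followed by Abel--Jacobi summation. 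Consequently the symmetric image correspondence $T\subset C\times C$ has bidegree $(d,d)$, its image in $J$ equals $\tilde C$, and $B=p_*q^*J_{\tilde C}$.

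Next I would bound $d$ by intersection theory on $C^{(2)}$ and $C\times\tilde C$. On $C^{(2)}$ the classes $x=[c_0+C]$ and $\Delta_0$ (the image of $c\mapsto 2c$) satisfy $x^2=1$, $x\cdot\Delta_0=2$, $\Delta_0^2=4-4\mathsf g(C)$ and $K_{C^{(2)}}\equiv(2\mathsf g(C)-2)x-\tfrac12\Delta_0$ (checked by pulling back along $C\times C\to C^{(2)}$); since $\tilde C\cdot x=d$, writing $[\tilde C]=dx+\eta$ with $\eta\cdot x=0$ gives $\eta^2\le 0$ by the Hodge index theorem. Feeding this and the Riemann--Hurwitz inequality $\mathsf g(\widehat{\mathcal C})\ge 1+d(\mathsf g(C)-1)$ into the adjunction formula for $\mathcal C\subset C\times\tilde C$ yields the finite bound
$$
d\ \le\ \frac{2(\mathsf g(C)-1)}{\mathsf g(C)-2-\mathsf g(\tilde C)},
$$
valid because $\mathsf g(C)>4\ge 2+\mathsf g(\tilde C)$. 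So only finitely many $d\ge 2$ survive, and for each the symmetric bidegree-$(d,d)$ correspondence $T$, being contained in the preimage of the at most two-dimensional $B$, forces $C$ into one of the situations classified by Abramovich--Harris \cite{AH} (an abelian subvariety in $W_2(C)$ or $W_3(C)$, controlled by their Theorems~3 and~4 together with the nonhyperellipticity of $C$) or by Mumford (Theorem~\ref{thm:mumford}); the hypothesis $\mathsf g(C)>4$ makes Theorem~\ref{thm:mumford} applicable and removes all of its degenerate alternatives, leaving only configurations in which $p$ is in fact birational. Hence $d=1$; when $\mathsf g(\tilde C)=1$ the resulting degree-$2$ morphism is a bielliptic structure on $C$, and when $\mathsf g(\tilde C)=2$ its target has genus $2$ and must be $\tilde C$ itself by the Torelli theorem.

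The step I expect to be the genuine obstacle is the last one: proving that the a priori admissible range $d\ge 2$ is empty, equivalently that the abstract correspondence/abelian-subvariety data actually arises from an honest degree-$2$ morphism onto $\tilde C$ itself, rather than onto something only isogenous to $\tilde C$ or dominated by it. This is exactly where $\mathsf g(C)>4$ is essential, via Mumford's classification of $W_4^1(C)$ and the Abramovich--Harris analysis of abelian subvarieties of $W_2(C)$ and $W_3(C)$; without it the plane-curve and low-genus-cover exceptions in Mumford's list would have to be dealt with individually.
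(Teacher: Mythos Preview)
The paper does not supply its own proof of this lemma; it simply cites \cite{AH}. So there is no argument in the text to compare against, and your proposal should be read as an attempt to reconstruct (or replace) the Abramovich--Harris argument.

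Your setup via the incidence correspondence is the standard one, and the reduction to showing $d=\deg p=1$ is the right framing. But the final step, which you yourself flag as the obstacle, is a genuine gap rather than a sketch. You assert that for $d\ge 2$ the bidegree-$(d,d)$ correspondence $T$, lying over the abelian subvariety $B\subset J$, ``forces $C$ into one of the situations classified by Abramovich--Harris \ldots\ or by Mumford,'' and that $\mathsf g(C)>4$ then eliminates everything except $d=1$. No mechanism is given. The subvariety $B$ sits in $J$, not a priori in any $W_n(C)$, and a symmetric correspondence of bidegree $(d,d)$ does not by itself manufacture an element of $W_4^1(C)$ or an abelian subvariety of $W_2(C)$ or $W_3(C)$. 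For $\mathsf g(\tilde C)=1$ one can indeed invoke Theorem~3 of \cite{AH} directly (an elliptic curve in $W_2(C)$ with $C$ nonhyperelliptic of genus $\ge 3$ comes from a bielliptic structure, and that structure \emph{is} the degree-$2$ map, so $d=1$); but for $\mathsf g(\tilde C)=2$ the curve $\tilde C$ is not an abelian variety, the generated $B$ need not lie in $W_2(C)$, and neither Mumford's $W_4^1$ classification nor the cited AH theorems speak to this situation without further work. That further work is the actual content of the lemma, and your proposal does not supply it. There is also a circularity concern: you are invoking \cite{AH} to prove a statement the paper attributes to \cite{AH}.

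One minor point: the appeal to Torelli at the end is unnecessary. If $d=1$ then $p$ is an isomorphism of smooth curves and $q\circ p^{-1}\colon C\to\tilde C$ is literally the required degree-$2$ map; there is nothing to identify.
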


Now we assume that there is a nontrivial subvariety 
$Z\subset C^{(2)} \times C^{(2)}$ not contained in the diagonal 
$\Delta(C^{(2)})$ such that $j: Z\to J$ is not an embedding 
on the complement of $\Delta(C^{(2)})$. 
This occurs only when $C$ is one of the curves
satisfying Mumford's theorem \ref{thm:mumford}. 
In particular, $W_4^1(C)\neq \emptyset$. 

A map $f: C\to \P^1$ of degree $4$ determines a possibly reducible curve
$$
C_f:=\left(C\times_{\P^1}C \setminus \Delta(C)\right)/\Z/2,
$$
where the $\Z/2$-action interchanges the factors. 
The curve $C_f$  parametrizes unordered pairs of points $(c,c')$ in the  fibers of $f$. 
We have a natural embedding $\xi_f \,:\, C_f\subset C^{(2)}$ and 
a projection $\eta_f\, :\, C_f\to \P^1$ of degree $6$. 
In addition, there is
a natural nontrivial fiberwise involution 
$\iota : C_f \to C_f$  which maps
the degree 2 cycle $(c,c')$ into the complementary cycle in the same fiber of $f$, i.e., 
if $(c,c',c'',c''')$ is a fiber of $f$ then $\iota (c,c') = (c'',c''')$.
The map $g :C_f/\iota = C_g\to \P^1$
has degree $3$. The curve $C_g$ parametrizes the splittings of the fibers
of $f$ into a pairs of degree $2$ zero-cycles.

Let $n_2,n_{2,2},n_3, n_4$ be the number of different
ramification types for $f: C\to \P^1$, i.e.,
$n_i$  is the number of fibers of $f$ with one
ramification of multiplicity $i$, for $i=2,3,4$,
and $n_{2,2}$ the number of fibers with 2 simple ramifications.

\begin{lemma} 
\label{lemm:f-hurw}
Assume one of the following holds:
\begin{itemize}
\item $C_f$ is irreducible and at least one of the $n_2,n_3$ or $n_4$ is nonzero or
\item $C_f$ is reducible and $f\,:\, C\ra \P^1$ is not a Galois covering.
\end{itemize}
Then 
$$
\mathsf g(C_g)\geq \frac{1}{2} \mathsf g(C) \geq 3.
$$
\end{lemma}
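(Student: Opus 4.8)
The plan is a Riemann--Hurwitz computation playing the covers $f\colon C\to\P^1$ (degree $4$), $\eta_f\colon C_f\to\P^1$ (degree $6$) and $g\colon C_g\to\P^1$ (degree $3$) off against the degree $2$ quotient $C_f\to C_g$ by $\iota$. Write $n_{2,2}$ for the number of branch points of $f$ with two simple ramifications, alongside $n_2,n_3,n_4$ as in the text; every branch point has fibre type $(2,1,1),(2,2),(3,1)$ or $(4)$, and Riemann--Hurwitz for $f$ reads
$$
2\mathsf g(C)-2 = -8 + n_2 + 2n_{2,2} + 2n_3 + 3n_4 .
$$
The heart of the argument is the local monodromy: over each branch point of $f$ the monodromy is a nonidentity element of $S_4$ of the appropriate cycle type, and one records its action on the $3$ splittings $12|34$, $13|24$, $14|23$ of a fibre into two degree $2$ cycles (the fibre data of $g$) and on the $6$ unordered pairs (the fibre data of $\eta_f$): a $(2,1,1)$-transposition acts on the splittings as a transposition, a $(2,2)$-double transposition trivially, a $(3,1)$-cycle as a $3$-cycle, a $(4)$-cycle as a transposition. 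Feeding this into Riemann--Hurwitz for $g$ on the normalisation gives
$$
2\mathsf g(C_g) - 2 = -6 + n_2 + 2n_3 + n_4 ,
$$
the parallel count for $\eta_f$ --- together with the observation that $\iota$ has exactly $2n_{2,2}+2n_4$ fixed points on the normalisation of $C_f$, two over each $(2,2)$- and each $(4)$-branch point --- being consistent with $C_f\to C_g$ of degree $2$ and yielding, when $C_f$ is irreducible, the clean identity $p_a(C_f)=2\mathsf g(C)+1$.

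Next I would assemble the genus bound. Over a $(2,1,1)$- or a $(3,1)$-branch point $C_g$ stays smooth, but over a $(2,2)$-branch point two of its three branches in $(C^{(2)})^{(2)}$ collide (a node) and over a $(4)$-branch point all three collide (a singularity of $\delta$-invariant $\ge 1$); hence $p_a(C_g)=\mathsf g(\widetilde{C_g})+\delta$ with $\delta\ge n_{2,2}+n_4$, and combining with the displays gives $p_a(C_g)\ge\mathsf g(C)+1$ in the irreducible case, far more than the claimed $\tfrac12\mathsf g(C)$. For the reducible alternative the monodromy group is forced to be $D_4$ (the only transitive subgroup of $S_4$ of order $>4$ that is intransitive on the $6$ pairs); then $C_f$ splits as $C_f^{(4)}\sqcup C_f^{(2)}$ with $C_f^{(4)}$ a connected degree $4$ cover of $\P^1$, and one argues on $C_f^{(4)}$, for which the same Riemann--Hurwitz bookkeeping gives $\mathsf g(C_f^{(4)})=-3+n_2+\tfrac12 n_{2,2}+\tfrac32 n_4$, so that $\mathsf g(C_f^{(4)})-\mathsf g(C)=\tfrac12(n_2-n_{2,2})$ and $\mathsf g(C_f^{(4)})\ge\tfrac12\mathsf g(C)$ as soon as $n_2+n_4\ge 2$ --- which holds, because the odd-permutation branch cycles (those of type $(2,1,1)$ and $(4)$) occur an even number of times, so $n_2+n_4$ is $0$ or $\ge 2$, and $n_2+n_4=0$ would place the monodromy inside the Klein four-group, contradicting that $f$ is non-Galois.

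The step I expect to be the main obstacle is exactly this bookkeeping over the degenerate fibres: making precise in what sense $\mathsf g(C_g)$ is meant --- geometric genus of the normalisation, on which the displayed Riemann--Hurwitz count is literally valid, versus arithmetic genus, which absorbs the $\delta$-invariants above --- pinning down the singularity of $C_g$ over a $(4)$-branch point, and, in the reducible case, transferring the estimate to the correct component, i.e.\ verifying that the two excluded possibilities (the Klein-group and $\mathbb Z/4$ Galois covers) are precisely the ones where no component is large enough. Granting the bound $\mathsf g(C_g)\ge\tfrac12\mathsf g(C)$ read in the appropriate sense, the hypothesis $\mathsf g(C)>4$ forces $\mathsf g(C_g)>2$, hence $\mathsf g(C_g)\ge 3$, which is the assertion of the lemma.
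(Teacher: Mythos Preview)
Your irreducible-case argument is essentially the paper's: both compute Riemann--Hurwitz for the degree-$3$ map $C_g\to\P^1$ against Riemann--Hurwitz for $f$, using the fibre-type correspondence. Your monodromy analysis of the action on the three splittings is correct, and in fact your fibre types over $(2,2)$ and $(4)$ --- namely $(1,1,1)$ and $(2,1)$ --- disagree with the paper's table (which lists $(2,1)$ and $(3)$); the paper is implicitly recording scheme-theoretic fibre multiplicities of the \emph{singular} $C_g$, whereas you compute the honest monodromy cover, i.e.\ the normalisation. You rightly flag this as the delicate point, and your bookkeeping via $\delta$-invariants leads to the stronger bound $p_a(C_g)\ge \mathsf g(C)+1$, from which the lemma follows. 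So on this half your approach is the same as the paper's, arguably cleaner about what ``$\mathsf g(C_g)$'' means.

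The reducible case, however, has a genuine gap. You correctly pin down the monodromy group as $\mathfrak D_4$, but then you bound $\mathsf g(C_f^{(4)})$, the genus of the degree-$4$ component of $C_f$ --- and the lemma is about $C_g$, not $C_f$. In the $\mathfrak D_4$ case $C_g$ itself is reducible (a hyperelliptic degree-$2$ piece together with a copy of $\P^1$), and the relevant quantity is the genus of $C_g^{(2)}=C_f^{(4)}/\iota$, which you never compute; there is no step passing from your estimate on $C_f^{(4)}$ down to $C_g^{(2)}$. Moreover, your displayed formula $\mathsf g(C_f^{(4)})=-3+n_2+\tfrac12 n_{2,2}+\tfrac32 n_4$ is not right as stated: the three double transpositions in $\mathfrak D_4$ fall into two conjugacy classes (the central $(13)(24)$ and the two non-central ones), and they act differently on the $4$-element pair orbit --- the central one as type $(2,2)$, the non-central ones as type $(2,1,1)$ --- so the coefficient of $n_{2,2}$ depends on this split. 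The paper handles this case by directly analysing the hyperelliptic component $C_g^{(2)}$ and the ramification of $C_f\to C_g$ over it; you would need either to do the same, or to add the missing Riemann--Hurwitz step $C_f^{(4)}\to C_g^{(2)}$ (counting the $\iota$-fixed points on $C_f^{(4)}$) to reach the target curve.
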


\begin{proof}
The Galois group ${\rm Gal}(f)$ of the $4$-covering $f:C\to \P^1$
is one of the following
$$
\mathfrak S_4, \mathfrak A_4, \mathfrak D_4, \Z/4, \Z/2\oplus \Z/2.
$$
The curve $C_f$ is irreducible iff ${\rm Gal}(f)$
acts transitively on the subsets of $6$ unordered pairs
of $4$ points. This is the case of $\mathfrak S_4,\mathfrak A_4$.
In the case of $\mathfrak D_4$ and $\Z/4$ the set of $6$ unordered pairs
splits into two orbits of orders $4$ and $2$, respectively, and
in the case of $\Z_2 \oplus \Z/2$ there are orbits of order
$2$. This determines all possible splittings of $C_f$.

Note that for $\mathfrak D_4, \Z/4$ the curve $C$ is
isomorphic to a connected component of $C_f$ and there is
a natural involution $\theta$ on $C$ given by a central element
of order $2$ in $\mathfrak D_4$ and $\Z/4$, respectively.
The quotient hyperelliptic curve
$C/\theta$ coincides with the second component of $C_f$, so that $C_f=C\cup C/\theta$.

It is easy to obtain numerical characteristics of $C_f$ and $C_g$
in all cases. By Hurwitz' formula, 
\begin{equation}
\label{eqn:neww}
2\mathsf g(C)- 2= n_2 + 2 n_{2,2} + 2n_3 + 3n_4 - 8.
\end{equation}
There is a natural correspondence between the ramification diagrams
for the fibers of $C, C_g$ and $C_f$. 
Write $(i_1,\ldots, i_r)$ for the ramification type of a fiber 
of the corresponding map to $\P^1$, a smooth fiber of a map of degree $r$ 
corresponds to $(1,\ldots, 1)$, $r$-times.
We have
$$
\begin{array}{|c|c|c|}
\hline
         C& C_g    & C_f \\
         \hline          
(1,1,1,1) & (1,1,1) & (1,1,1,1,1,1)\\
(2,1,1)   & (2,1)   & (2,2,1,1)\\
(2,2)     & (2,1)   & (4,1,1)\\
(3,1)     & (3)     & (3,3)\\
 (4)      &  (3)    & (6)\\
\hline
\end{array}
$$
Note that the action of $\Z/2$ on the fibers of $C_f$ is free
only in the first and third cases.
We obtain 
$$
\mathsf g(C_g) = \frac{1}{2} n_2+ \frac{1}{2}n_{2,2} +n_3 + n_4 - 2.
$$
Using ~\eqref{eqn:neww},
$$
2\mathsf g(C_g) = n_2+ n_{2,2} + 2n_3 + 2n_4 - 4 > \frac{1}{2} n_2 +n_{2,2} + n_3 +
\frac{3}{2}n_4 - 3 = \mathsf g(C)
$$
and 
$$
\frac{1}{2} n_2 + n_3 + \frac{1}{2}n_4-1= 2\mathsf g(C_g)- \mathsf g(C)
$$
and hence $\mathsf g(C_g) \geq 3$, if either $\mathsf g(C) > 5$ or some singular
fibers of $f$  are not  of type $(2,2)$.
Note that if the action of $\Z/2$ is free on $C_f$ then $n_{2,2}=0$
and $\mathsf g(C_g)=\mathsf g(C)+1 > 5$.

Consider the reducible case with the Galois group $\mathfrak D_4$.
As explained above,
$C$ is obtained as a quotient of the Galois cover
by a noncentral $\Z/2\subset \mathfrak D_4$.
Thus there is also an action of a central
$\Z/2$ on $C$. The family of degree two cycles splits
into a curve $C$ and $C/\Z/2$.
The only singular fibers are
$(2,1,1), (2,2),(4)$ and the corresponding fibers
or $(1,1), (2),(2)$, respectively.
Thus $C_g$ is a hyperelliptic curve
and $C_f$ is a double covering with ramifications over points
in $C_g$ with some of them not belonging to the invariant
points of the hyperelliptic involution.
If the Galois group is $\Z/4$ then $C_f$ is a double covering
of $C_g$, doubly ramified over ramification points
of $C_g\ra \P^1$. This gives a lower bound
on the genus of $C_g$: 
$$
\mathsf g(C)= \frac{3}{2} n_4 - 5 \,\, \text{  and } \,\, \mathsf g(C_g) = n_4 - 1.
$$ 
Thus 
$$
2\mathsf g(C_g) > \mathsf g(C).
$$
When  $\Gal(f)=\Z/2 \oplus \Z/2$  the curve $C$ is actually a fiber product
of two hyperelliptic curves over $\P^1$.
\end{proof}

We continue the investigation of $Z$.  
Consider the map 
$$
C_f\times C_f \stackrel{\xi_f^2}{\longrightarrow} C^{(2)} \times C^{(2)} 
\stackrel{j}{\longrightarrow} J.
$$
There is natural action of $\mathfrak D_4$ on $C_f\times C_f$
which contains the permutation of fibers $i$ and
fiberwise involutions $(\iota, id), (id, \iota) $ on
$C_f\times C_f$.

\begin{lemma}
Let $f :C\to \P^1$ be a map of degree $4$.
The restriction of $j$ to the image $\xi_f^2(C_f\times C_f)$ is the 
composition of the quotient by the involution
$i\circ (\iota,\iota)$, which is conjugate to $i$,
with a birational embedding, which is an isomorphism onto its image
on the complement to the diagonal $\Delta(C^{(2)})$ and 
$$
\bigcup_{f'\neq f} \xi_{f'}^2(C_{f'}\times C_{f'})\subset C^{(2)} \times C^{(2)},
$$
over all other 4-gonal maps $f'$. In particular, we have a factorization

\centerline{
\xymatrix{
 C_f\times C_f        \ar[d]   \ar[r]^{\xi_f^2} & C^{(2)} \times C^{(2)} \ar[d]_j \\
 C_f^{(2)}           \ar[r]_{j_f}               &   J.
}
} 

\end{lemma}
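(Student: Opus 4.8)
The statement concerns the restriction of the Abel–Jacobi difference map $j$ to $\xi_f^2(C_f\times C_f)$, where $f\colon C\to\mathbb P^1$ is a degree $4$ map and $C_f$ parametrizes unordered pairs of points lying in a common fiber of $f$. The goal is to identify the $\mathfrak D_4$-symmetry group that collapses under $j$: namely the involution $i\circ(\iota,\iota)$, which swaps the two copies of $C_f$ and simultaneously applies the fiberwise complementary-pair involution $\iota$ to each. The plan is to argue entirely fiber by fiber over $\mathbb P^1$. Fix $\lambda\in\mathbb P^1$ with fiber $f^{-1}(\lambda)=\{c_1,c_2,c_3,c_4\}$ (generically reduced). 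A point of $C_f$ over $\lambda$ is an unordered pair, say $(c_1,c_2)$, and $\iota$ sends it to the complementary pair $(c_3,c_4)$. For a pair of points $((c_a,c_b),(c_{a'},c_{b'}))\in (C_f\times C_f)_\lambda$, the value of $j$ is the class $(c_a+c_b)-(c_{a'}+c_{b'})$ in $J$. First I would observe that $i\circ(\iota,\iota)$ sends $((c_a,c_b),(c_{a'},c_{b'}))$ to $((c_{\bar a'},c_{\bar b'}),(c_{\bar a},c_{\bar b}))$, where bars denote complement within the $4$-element fiber; then $j$ of the image is $(c_{\bar a'}+c_{\bar b'})-(c_{\bar a}+c_{\bar b})$. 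Since $c_a+c_b+c_{\bar a}+c_{\bar b}$ and $c_{a'}+c_{b'}+c_{\bar a'}+c_{\bar b'}$ are both linearly equivalent (both equal $f^{*}(\lambda)$, hence a fixed divisor class independent of $\lambda$), we get $(c_a+c_b)-(c_{\bar a}+c_{\bar b})$ and $(c_{a'}+c_{b'})-(c_{\bar a'}+c_{\bar b'})$ differ from their negatives only by the constant class $f^*(\lambda)-2g^1_4$; a direct computation shows $j\circ(i\circ(\iota,\iota)) = j$ on $\xi_f^2(C_f\times C_f)$. (One also checks $i\circ(\iota,\iota)$ is conjugate to $i$ inside $\mathfrak D_4$ by pairing with $(\iota,\mathrm{id})$ or $(\mathrm{id},\iota)$; this is pure group theory in $\mathfrak D_4$.)

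Next I would show that the induced map $j_f\colon C_f^{(2)}=(C_f\times C_f)/\langle i\circ(\iota,\iota)\rangle \to J$ is injective off the diagonal $\Delta(C^{(2)})$ and the locus of overlaps with other $4$-gonal coverings. This is where the relation-of-degree-$\le 4$ analysis carried out just before Theorem~\ref{thm:mumford} is used: if $j((x,x'),(y,y'))=j((z,z'),(w,w'))$ with the pairs genuinely distinct and not on $D_\delta$, then either one gets a nontrivial degree $\le 4$ linear equivalence — which by Mumford's Theorem~\ref{thm:mumford} forces $C$ into one of the listed special classes and the equality to come from a fiber of a $4$-gonal map — or the preceding Lemma on $(x,x',w,w')=(z,z',y,y')$ forces membership in $D_\delta=\Delta\cup\delta$. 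So on $C_f\times C_f$, the only identifications $j$ makes, beyond those coming from $\Delta(C^{(2)})$, are: (a) the ones already quotiented by $i\circ(\iota,\iota)$, and (b) those arising from a second $4$-gonal map $f'\neq f$, i.e. points of $\xi_{f'}^2(C_{f'}\times C_{f'})$. That $j_f$ is birational onto its image then follows because generically (over a general $\lambda$, for a general splitting) a relation $j(P)=j(Q)$ with $P\neq Q$ does not occur — the generic fiber of $j$ restricted to $\xi_f^2(C_f\times C_f)$ is a single $\langle i\circ(\iota,\iota)\rangle$-orbit.

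Finally I would assemble these into the stated factorization diagram, noting $j$ factors through the quotient $C_f^{(2)}$ and that the resulting $j_f\colon C_f^{(2)}\to J$ is the claimed birational embedding, an isomorphism onto its image away from $\Delta(C^{(2)})$ and the union $\bigcup_{f'\neq f}\xi_{f'}^2(C_{f'}\times C_{f'})$. The main obstacle I anticipate is the last point: cleanly separating which coincidences of $j$-values on $\xi_f^2(C_f\times C_f)$ genuinely force a second $4$-gonal structure versus those that are ``trivial'' (diagonal or fiberwise). One must track carefully that the degree-$4$ relation $(x+x'+w+w')\sim(z+z'+y+y')$ produced in the earlier discussion, together with the constraint that all eight points lie over at most two points of $\mathbb P^1$ under $f$, is compatible only with the configurations enumerated in Mumford's theorem; and one must ensure the genus bound $\mathsf g(C)>4$ (via Lemma~\ref{lemm:f-hurw}, giving $\mathsf g(C_g)\geq 3$) rules out degenerate behavior of $C_f$, $C_g$ so that the quotient geometry is as described. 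The $\mathfrak D_4$ bookkeeping — identifying exactly which central/noncentral involutions act, and that $i\circ(\iota,\iota)\sim i$ — is routine but must be done precisely to name the quotient correctly.
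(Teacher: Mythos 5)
Your proposal follows the paper's own route essentially step for step: verify $j\circ\bigl(i\circ(\iota,\iota)\bigr)=j$ by noting that the two complementary-sum cycles are fibers of $f$ and hence linearly equivalent, observe $i\circ(\iota,\iota)$ is conjugate to $i$ so the quotient is $C_f^{(2)}$, and then reduce any remaining coincidence $j(P)=j(Q)$ to a degree-$4$ relation $y_1+y_2+z_1+z_2\sim x_1+x_2+w_1+w_2$, which exhibits a second $4$-gonal map $f'$ with both $P,Q\in C_{f'}\times C_{f'}$. That is exactly the paper's argument.

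Two small imprecisions worth flagging, neither fatal. First, your setup phrase ``argue entirely fiber by fiber'' and the notation $(C_f\times C_f)_\lambda$ implicitly put both factors over the same $\lambda\in\P^1$, which is only a codimension-one locus in $C_f\times C_f$; the computation one actually needs is for two different fibers $f^*(\lambda_1)$ and $f^*(\lambda_2)$, using that distinct fibers of $f$ are linearly equivalent rather than literally equal. Your parenthetical ``hence a fixed divisor class independent of $\lambda$'' shows you know this, but the exposition should not be restricted to a single $\lambda$. Second, invoking Mumford's Theorem~\ref{thm:mumford} and the genus bound via Lemma~\ref{lemm:f-hurw} is not needed for this particular lemma: the paper uses those only in the subsequent corollary to get finiteness of the overlap with $\bigcup_{f'\neq f}\xi_{f'}^2(C_{f'}\times C_{f'})$ and hence birationality onto the image. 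For the lemma itself, all that is required is that any extra identification produces a degree-$\le 4$ relation, hence a (possibly new) $4$-gonal structure, with both points landing in the corresponding $C_{f'}\times C_{f'}$.
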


\begin{proof} 
Consider the pair of cycles $(x_1,x_2), (y_1,y_2)\in C_f\times C_f$. Then
$$
(x_1+x_2)-\iota(y_1,y_2)=y_1+y_2- \iota(x_1,x_2)
$$ 
which implies
that $j$ factors through the quotient by 
$i\circ (\iota,\iota)$
and hence $C_f\times C_f\subset Z$.
Assume that 
$$
(x_1+x_2)-(z_1+z_2)=(y_1+y_2) -(w_1+w_2),
$$
where
$(z_1+z_2)\neq \iota(y_1,y_2)$. Since $C$ is nonhyperelliptic,
$$
(z_1+z_2)\neq (x_2+x_3),(x_2,x_3)= \iota(x_1,x_2)
$$
and we obtain 
$$
y_1+y_2+z_1+z_2 = x_1+x_2 + w_1 + w_2
$$ 
which corresponds to a different $4$-gonal structure $f'$ on $C$
with $(x_1,x_2),(y_1,y_2)\subset C_{f'}\times C_{f'}$.
\end{proof}

\begin{corollary}
If $C_f$ is irreducible then for any other $C_{f'}$ the intersection
$$
\xi_f^2(C_f\times C_f) \cap \xi_{f'}^2(C_{f'}\times C_{f'})
$$
is finite. 
Hence the image $j \circ \xi_f^2(C_f\times C_f)$ is birational
to $C_f^{(2)}$. 
\end{corollary}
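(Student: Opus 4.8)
The strategy is to translate the assertion about subvarieties of $C^{(2)}\times C^{(2)}$ into one about curves in $C^{(2)}$. Since $\xi_f^2=\xi_f\times\xi_f$ we have $\xi_f^2(C_f\times C_f)=\xi_f(C_f)\times\xi_f(C_f)$, and likewise for $f'$, so
$$
\xi_f^2(C_f\times C_f)\cap\xi_{f'}^2(C_{f'}\times C_{f'})=\bigl(\xi_f(C_f)\cap\xi_{f'}(C_{f'})\bigr)\times\bigl(\xi_f(C_f)\cap\xi_{f'}(C_{f'})\bigr).
$$
Hence it suffices to prove that $\xi_f(C_f)\cap\xi_{f'}(C_{f'})\subset C^{(2)}$ is finite.

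For this I would argue as follows. Since $C_f$ is irreducible, $\xi_f(C_f)$ is an irreducible curve; if $\xi_f(C_f)\cap\xi_{f'}(C_{f'})$ were infinite, then $\xi_f(C_f)$ would be an irreducible component of the curve $\xi_{f'}(C_{f'})$, and in particular $\xi_f(C_f)\subseteq\xi_{f'}(C_{f'})$. By construction $\xi_f(C_f)$ is the closure of the set of reduced effective degree-$2$ cycles $c+c'$ with $c\neq c'$ and $f(c)=f(c')$ — the ``half-fibres'' of $f$ — and similarly for $f'$; so the inclusion would say that every half-fibre of $f$ is a half-fibre of $f'$. Choosing a general $x\in C$ with $f^{-1}(f(x))=\{x,q_1,q_2,q_3\}$ reduced, the cycles $x+q_1,\ x+q_2,\ x+q_3$ all lie in $\xi_f(C_f)\subseteq\xi_{f'}(C_{f'})$, whence $f'(q_i)=f'(x)$ for $i=1,2,3$; since $f'^{-1}(f'(x))$ has degree $4$, this forces $f^{-1}(f(x))=f'^{-1}(f'(x))$ for general $x$, so the two pencils of degree-$4$ divisors coincide and $C_{f'}=C_f$, contrary to hypothesis. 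Thus $\xi_f(C_f)\cap\xi_{f'}(C_{f'})$, and hence the intersection in the statement, is finite. This is the step I expect to be the main obstacle: irreducibility of $C_f$ is used in an essential way (for reducible $C_f$ a component is a copy of $C$, or of a hyperelliptic quotient of $C$, which can lie on several of the curves $\xi_{f'}(C_{f'})$), and one has to be a little careful about the ramified fibres of $f$ and about identifying pencils in the degenerate case $h^0\bigl(f^*\mathcal O_{\P^1}(1)\bigr)>2$.

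For the final sentence, recall from the preceding lemma that $j\circ\xi_f^2$ factors through the degree-$2$ quotient $C_f^{(2)}=(C_f\times C_f)/\langle i\circ(\iota,\iota)\rangle$ as a morphism $j_f:C_f^{(2)}\to J$ which is an isomorphism onto its image away from the image in $C_f^{(2)}$ of $\bigl(\Delta(C^{(2)})\cup\bigcup_{f'\neq f}\xi_{f'}^2(C_{f'}\times C_{f'})\bigr)\cap\xi_f^2(C_f\times C_f)$. The contribution of $\Delta(C^{(2)})$ meets $\xi_f(C_f)\times\xi_f(C_f)$ in the diagonal of $\xi_f(C_f)$, a curve, and by the first part each set $\xi_{f'}^2(C_{f'}\times C_{f'})\cap\xi_f^2(C_f\times C_f)$ is finite; so the locus on which $j_f$ may fail to be injective is a proper closed subset of the surface $C_f^{(2)}$ (when $\dim W_4^1(C)=0$ this union is a curve together with finitely many points; the positive-dimensional cases of Mumford's Theorem~\ref{thm:mumford} are the special geometries being handled separately). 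Therefore $j_f$ is generically injective, i.e. birational onto its image, and $j\circ\xi_f^2(C_f\times C_f)=j_f\bigl(C_f^{(2)}\bigr)$ is birational to $C_f^{(2)}$.
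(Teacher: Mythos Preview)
Your argument for the first sentence is correct and in fact more explicit than the paper's. You reduce to showing $\xi_f(C_f)\cap\xi_{f'}(C_{f'})$ is finite in $C^{(2)}$, and then use irreducibility of $C_f$ together with the fibre comparison to force $f=f'$. The paper takes a different route: it invokes Mumford's theorem to observe that the family of $4$-gonal structures has dimension $\le 1$, and then asserts that the intersection of $\xi_f^2(C_f\times C_f)$ with the \emph{union} of all other $\xi_{f'}^2(C_{f'}\times C_{f'})$ is at most a curve. The paper never isolates a single $f'$; the statement about each individual intersection being finite is left implicit. What your approach buys is a clean, self-contained reason why two distinct $g^1_4$'s give distinct curves in $C^{(2)}$; what the paper's approach buys is that it goes straight to the quantity actually needed for birationality, namely the size of the whole bad locus.

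There is one soft spot in your last paragraph. You write that the positive-dimensional cases of Mumford's theorem are ``being handled separately'', but the corollary does not exclude them: it must hold for any irreducible $C_f$, including when $\dim W_4^1(C)=1$. In that situation there is a one-parameter family of $f'$, and you have only shown that each individual intersection is finite; you still need that the \emph{union} over all $f'$ is a proper closed subset of $C_f\times C_f$. This is exactly where Mumford's bound $\dim W_4^1(C)\le 1$ enters: a one-dimensional family of finite sets sweeps out at most a curve. So rather than deferring these cases, you should simply say: by Mumford's theorem the parameter space of $f'$ has dimension $\le 1$, hence the union $\bigcup_{f'\neq f}\bigl(\xi_f(C_f)\cap\xi_{f'}(C_{f'})\bigr)^{\times 2}$ has dimension $\le 1$ in the surface $\xi_f(C_f)\times\xi_f(C_f)$, and together with the diagonal this is a proper closed subset. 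With that one line added, your proof is complete and arguably cleaner than the paper's.
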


\begin{proof}
Mumford's theorem~\ref{thm:mumford} implies that there is at most
a one-dimensional family of $4$-gonal structure on $C$. Hence
the intersection of $\xi_f^2(C_f\times C_f)$ with the union of
 $\xi_{f'}^2(C_{f'}\times C_{f'})$, over all other  
$4$-gonal structures $f'$, is at most a curve in $\xi_f^2(C_f\times C_f)$.
\end{proof}

\begin{lemma}
Let $Z^{\circ}\subset C^{(2)}\times C^{(2)}\setminus \Delta(C^{(2)})$ be
the maximal subvariety such that 
$j$ restricted to $Z^{\circ}$ is not an isomorphism onto its image. 
Let $Z$ be the Zariski closure of $Z^{\circ}$ in $C^{(2)}\times C^{(2)}$. Then
$$
Z=\cup_f Z_f,
$$
over the set of 4-gonal structures on $C$. Here $Z_f=C_f\times C_f$. 
\end{lemma}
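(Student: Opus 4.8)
The plan is to establish the two inclusions $\bigcup_f Z_f\subseteq Z$ and $Z\subseteq\bigcup_f Z_f$ separately, working throughout on the complement of $D_\delta=\Delta(C^{(2)})\cup\delta$; on $\delta$ the map $j$ collapses onto the surface $j(C\times C)$ for trivial reasons, so $\delta$ belongs to the ``diagonal'' locus discarded here, consistently with the discussion preceding the lemma. The three inputs are the dichotomy lemmas for the fibers of $j$ proved above, the factorization of $j$ over $C_f^{(2)}$ established in the previous lemma, and Mumford's classification of $4$-gonal structures (Theorem~\ref{thm:mumford}). Recall that $C$ is non-hyperelliptic; we may assume $Z\neq\emptyset$, so $W_4^1(C)\neq\emptyset$ and Mumford's hyperelliptic case (2) does not occur.

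For $\bigcup_f Z_f\subseteq Z$: fix a $4$-gonal structure $f$. By the previous lemma the restriction of $j$ to $\xi_f^2(C_f\times C_f)$ factors as a $2:1$ quotient $C_f\times C_f\to C_f^{(2)}$ — by an involution conjugate to the coordinate swap — followed by a map $j_f:C_f^{(2)}\to J$ that is birational onto its image. Since that involution is non-trivial (its fixed locus is the curve $\{(\mathsf e,\iota\mathsf e)\}$ inside the surface $C_f\times C_f$), the composite $j|_{Z_f}$ is generically two-to-one onto its image, hence not injective, hence not an isomorphism onto its image. Therefore $Z_f\setminus D_\delta\subseteq Z^\circ$, and taking Zariski closures gives $Z_f\subseteq Z$.

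For $Z\subseteq\bigcup_f Z_f$: it suffices to show that a general point of each component of $Z^\circ$ lies in some $Z_f$, and $Z^\circ$ is the union of the locus where a fiber of $j$ has at least two points and the locus where $dj$ drops rank. Take first a general point $p=((x,x'),(y,y'))$ of a component of the first locus; then there is $q=((z,z'),(w,w'))\neq p$ with $j(p)=j(q)$, i.e.\ $x+x'+w+w'\sim z+z'+y+y'$ as effective divisors of degree $4$. By the dichotomy lemma, either $(x,x',w,w')=(z,z',y,y')$ in $C^{(4)}$, which by the lemma on $D_\delta$ forces $p\in D_\delta$ and contradicts generality, or these two divisors are distinct, so $\dim{\rm H}^0(C,\mathcal{O}(x+x'+w+w'))\geq 2$ and $C$ occurs in Mumford's list. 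When the governing linear series is a base-point-free $g^1_4$ defining an honest degree-$4$ map $f$ — this covers case (0), the plane quintic case (1)(a), and the bielliptic composite $C\to E\to\P^1$ of case (1)(b) — both $\{x,x',w,w'\}$ and $\{z,z',y,y'\}$ are fibers of $f$, hence $\{x,x'\},\{y,y'\}\in C_f$ and $p\in C_f\times C_f=Z_f$. In case (1)(c), where $L=H+c$ with $H$ the unique trigonal bundle, the base point $c$ lies in the support of every divisor of $|L|$; one runs through which of the four points of each unordered pair equals $c$ and shows that, away from a proper subvariety, this either forces $c$ into both unordered pairs of $p$ (so $p\in D_\delta$) or produces an honest $4$-gonal map with $\{x,x'\},\{y,y'\}$ in its fibers. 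Finally, for the ramification locus, using $W_2(C)\cong C^{(2)}$ for non-hyperelliptic $C$, a rank drop of $dj$ at $(a,b)\in W_2\times W_2$ means $T_a W_2\cap T_b W_2\neq 0$ inside $T_0 J$, a tangency which again forces a special linear series of degree $\leq 4$ and hence, by the argument just given, membership of $p$ in some $Z_f$ in the limit. Combining the two inclusions gives $Z=\bigcup_f Z_f$.

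The step I expect to be the main obstacle is the case analysis in the second inclusion — above all the degenerate Mumford case (1)(c) (trigonal plus a point), and to a lesser degree the plane quintic (1)(a) — where the degree-$4$ series is not a genuine $g^1_4$ and one must verify by hand that the coincidences it creates are either trivial (supported on $D_\delta$) or are nonetheless captured by the curves $C_f$ of honest $4$-gonal maps; the tangency analysis for the ramification locus, which reduces a ``two tangent $2$-planes to $W_2$ meet'' condition to the existence of a $g^1_4$, is a secondary technical point.
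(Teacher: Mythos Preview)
Your approach is far more elaborate than the paper's, and the extra elaboration both does unnecessary work and introduces a genuine risk.

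The paper's proof is essentially one sentence. If $j((x,x_1),(y,y_1))=j((z,z_1),(w,w_1))$ with the two points distinct, rearranging gives $(x+x_1)+(w+w_1)\sim(y+y_1)+(z+z_1)$, an equivalence of effective degree-$4$ divisors; this names an element of $W_4^1(C)$, i.e.\ a $4$-gonal structure $f$, and tautologically $(x,x_1),(y,y_1)\in C_f$. No appeal to Mumford's classification is made here --- that theorem enters only in the subsequent lemmas, where one bounds $\dim W_4^1(C)$ and analyses the individual $Z_f$. The inclusion $Z_f\subset Z$ was already established in the preceding lemma via the factorization through $C_f^{(2)}$. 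Ramification of $j$ is never discussed: for the downstream uses (birationality of $j$, locating abelian subvarieties) only the set-theoretic fibre description matters, so your tangent-space analysis is superfluous.

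Your handling of Mumford's case (1)(c) is where the overreach becomes a problem. You assert that the trigonal-plus-base-point series either land on $D_\delta$ or are ``captured by the curves $C_f$ of honest $4$-gonal maps''. But the paper does \emph{not} restrict to base-point-free $g^1_4$'s: in the later trigonal lemma it writes $Z=\bigcup_f Z_f\cup\bigcup_{c}(\pi_c(C)\cup\chi(C))^2$, so the degenerate pencils $|H+c|$ are treated as $4$-gonal structures in their own right, with $C_{f_c}=\pi_c(C)\cup\chi(C)$. Under that reading the present lemma is immediate from the one-line linear-equivalence observation above. Your attempt to force everything into honest degree-$4$ covers cannot succeed on a trigonal curve carrying no base-point-free $g^1_4$, and even where such covers exist, the bookkeeping you sketch (``run through which of the four points equals $c$'') is the substance of the \emph{later} trigonal lemma, not of this one.
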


\begin{proof} 
Assume that 
$$
j((x,x_1),(y,y_1)) = j((y_2,y_3),(x_2,x_3)).
$$ 
Then
$$
(x+x_1)-(y+y_1) = (y_2+y_3)-(x_2+x_3)
$$ 
and hence
$$
(x+x_1)+(x_2+x_3) = (y+y_1)+(y_2+y_3)
$$ 
which means that for
any $z,w\in C^{(2)}\times C^{(2)}, z\neq w$ with $j(z)=j(w)$
there is a  $4$-gonal map $f$ so that  $z,w\in C_f\times C_f$
and $i(\iota,\iota)(z) = w$.
\end{proof}

\begin{corollary}
If $C$ is not hyperelliptic then
$j \,:\, C^{(2)}\times C^{(2)}\ra J$ is a birational isomorphism onto its image.
\end{corollary}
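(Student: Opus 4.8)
The plan is to combine the structural lemmas of this Appendix with a dimension count; the statement is understood under the standing hypothesis $\mathsf g(C)>4$, which is also what makes $j$ generically finite (source and image both have dimension $4$). First I would read off, from the two lemmas preceding this corollary, the following: the only points $z\in C^{(2)}\times C^{(2)}$ for which there exists $z'\ne z$ with $j(z')=j(z)$ lie either in $D_\delta=\Delta\cup\delta$ (the case where the two associated degree-$4$ divisors coincide) or in $Z=\bigcup_f Z_f$, $Z_f=C_f\times C_f$, the union over the $4$-gonal structures $f$ on $C$ (the case of a nontrivial degree-$4$ relation, i.e.\ a $g^1_4$). Hence the locus $B\subset C^{(2)}\times C^{(2)}$ over which $j$ is not injective is contained in $D_\delta\cup Z$.

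Next I would bound $\dim B$. One has $\dim D_\delta=\dim\delta=3$. Since $C$ is not hyperelliptic, case (2) of Mumford's Theorem~\ref{thm:mumford} is excluded, so $\dim W_4^1(C)\le 1$; thus the $4$-gonal structures on $C$ vary in an at most one-dimensional family, and as each $Z_f=C_f\times C_f$ has dimension $2$ we get $\dim Z\le 3$ (and $Z=\emptyset$ if $W_4^1(C)=\emptyset$). Therefore $\dim B\le 3<4=\dim(C^{(2)}\times C^{(2)})$. If $j$ failed to be generically injective onto its image, with generic fibre of cardinality $d\ge 2$, then the set of $z$ with $\#j^{-1}(j(z))\ge 2$ would contain a dense open subset of $C^{(2)}\times C^{(2)}$, hence have dimension $4$, contradicting $B\subseteq D_\delta\cup Z$. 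So $j$ is generically injective on points.

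It remains to upgrade generic injectivity to a birational isomorphism onto the image; in characteristic $p$ this requires a separability input, since bijectivity on points alone does not force degree one (Frobenius). Here I would factor $j$ as
$$
C^{(2)}\times C^{(2)}\stackrel{\varphi_2\times\varphi_2}{\lra}W_2\times W_2\lra J,
$$
where for non-hyperelliptic $C$ one has $W_2^1(C)=\emptyset$, so $W_2$ is smooth and $\varphi_2\colon C^{(2)}\to W_2$ is a finite birational morphism onto a normal variety, hence an isomorphism; the second arrow is the difference morphism of $J$. Its differential at $(a,b)$ is $(u,v)\mapsto u-v\colon T_aW_2\oplus T_bW_2\to T_0J$, which is injective exactly when $T_aW_2\cap T_bW_2=0$ inside $T_0J$. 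Since $W_2$ spans $J$, its Gauss map is generically finite onto a two-dimensional image, and because $\dim T_0J=\mathsf g(C)>4$ a genericity count in the Grassmannian ${\rm Gr}(2,\mathsf g)$ shows that $T_aW_2\cap T_bW_2=0$ for generic $(a,b)$. Thus $j$ is unramified — and, being generically injective, a locally closed immersion — on a dense open subset of $C^{(2)}\times C^{(2)}$, i.e.\ a birational isomorphism onto its image.

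The main obstacle is precisely the last paragraph: the only genuine point that needs care is the characteristic-$p$ separability of $j$, which I would settle by the tangent-space computation for the difference morphism on $W_2\times W_2$, using $\mathsf g(C)>4$. Everything else is assembling the two preceding lemmas together with Mumford's classification and the dimension estimate $\dim(D_\delta\cup Z)\le 3$.
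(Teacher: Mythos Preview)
Your proposal is correct and follows the same line as the paper: identify the non-injectivity locus as contained in $D_\delta\cup Z$, then use Mumford's theorem (case~(2) excluded since $C$ is non-hyperelliptic) to bound $\dim W_4^1(C)\le 1$, hence $\dim Z\le 3$, so $j$ is generically injective.

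The difference is that the paper stops at this point, asserting without further comment that ``$j$ is an embedding outside of $Z$ and $\delta$'', whereas you add an explicit separability argument via the differential of the difference map on $W_2\times W_2$. This extra step is genuinely warranted in positive characteristic, and the paper leaves it implicit. Your computation is sound: using geometric Riemann--Roch for the canonical curve, $T_aW_2\cap T_bW_2\neq 0$ with $a=x+x'$, $b=y+y'$ amounts to the four points $\phi_K(x),\phi_K(x'),\phi_K(y),\phi_K(y')$ spanning at most a plane in $\mathbb P^{\mathsf g-1}$, i.e.\ $x+x'+y+y'\in W_4^1(C)$ --- so the ramification locus is governed by the very same variety $W_4^1(C)$ as the non-injectivity locus, and the same dimension bound disposes of it. This is a clean strengthening of the paper's argument.
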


\begin{proof}
Indeed, Mumford's theorem~\ref{thm:mumford} implies that
the family of $4$-gonal maps on $C$ is at most one-dimensional. Hence
$Z_f$ has dimension $2$ for any $4$-gonal structure.
Since we have at most $W_4^1(C)$ in the nonhyperelliptic
case, $Z$ is at most a one-dimensional family of surfaces
and the map $j$ is an embedding outside of $Z$ and $\delta$.
Thus $j$ is a birational isomorphism onto its image.
\end{proof}

\begin{lemma}
\label{lemm:ain-z}
Any abelian subvariety $A\in j(C^{(2)}\times C^{(2)})$ which is not
in $j(Z)$ is contained in either $j(E_1\times C^{(2)})$ or
$j(C^{(2)}\times E_1)$, where $E_1\subset C^{(2)}$ is an elliptic
curve. This embedding corresponds to a bielliptic structure
$h_1: C\to E_1$.
\end{lemma}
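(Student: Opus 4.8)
The plan is to lift $A$ to $C^{(2)}\times C^{(2)}$ and then control its two coordinate projections. Note first that $\Delta(C^{(2)})\subset\delta$, so by the preceding lemmas the locus where $j$ fails to be an isomorphism onto its image is contained in $\delta\cup Z$, with $j(\delta)=j(C\times C)=W_1(C)-W_1(C)$ the difference surface. I would begin by checking that $A$ is not contained in $j(\delta)$: since $C$ is nonhyperelliptic, the difference map $C\times C\to W_1(C)-W_1(C)$ is birational, so an elliptic curve inside $W_1(C)-W_1(C)$ would lift to an elliptic curve in $C\times C$ whose two projections to $C$ are either constant (which would make the curve a translate of $C$, impossible since $\mathsf g(C)>1$) or nonconstant morphisms onto a curve of genus $>4$, impossible for genus reasons; and a surface in $W_1(C)-W_1(C)$ would be all of it, forcing a translate of $C$ into an abelian surface and hence $\dim J\le 2$, contradicting $\mathsf g(C)>4$. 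Combined with the hypothesis $A\not\subset j(Z)$ and irreducibility of $A$, this shows the generic point of $A$ lies outside $j(\delta\cup Z)$, so the Zariski closure $\tilde A$ of $j^{-1}(A\setminus j(\delta\cup Z))$ maps birationally onto $A$.

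Next I would study the restrictions to $\tilde A$ of the two projections $\pi_1,\pi_2\colon C^{(2)}\times C^{(2)}\to C^{(2)}$, using the closed embedding $\varphi_2\colon C^{(2)}\hookrightarrow W_2(C)\subset J$ (valid as $C$ is nonhyperelliptic). For each $i$, the composite of the birational map $A\dashrightarrow\tilde A$ with $\pi_i$ and with $\varphi_2$ is a rational map from the abelian variety $A$ to the abelian variety $J$; since a rational map from a smooth projective variety to an abelian variety is a morphism, and a morphism of abelian varieties is a translate of a homomorphism, the image $\overline{\varphi_2(\pi_i(\tilde A))}$ is a translate of an abelian subvariety $A_i\subset J$ contained in $W_2(C)$. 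The point is then that $W_2(C)$ contains no abelian surface: if $B\subseteq W_2(C)=W_1(C)+W_1(C)$ were a translate of an abelian surface, then $W_1(C)$, being a translate of $C$, would lie in a translate of $B$, so the image of $C$ in $J$ would generate an abelian subvariety of dimension $\le 2$, contradicting that it generates $J$ while $\mathsf g(C)>4$. Hence $\dim A_i\le 1$, and since $\varphi_2$ is an embedding, $\overline{\pi_i(\tilde A)}$ is a point or, when positive-dimensional, a curve of genus $1$ in $C^{(2)}$ (it cannot be rational since $C^{(2)}$ contains no rational curves).

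Finally, since $\tilde A\subseteq\overline{\pi_1(\tilde A)}\times\overline{\pi_2(\tilde A)}$ we get $\dim A=\dim\tilde A\le\dim A_1+\dim A_2\le 2$, and since $A$ is positive-dimensional at least one projection, say $\pi_1$, has image a curve $E_1:=\overline{\pi_1(\tilde A)}$ of genus $1$ in $C^{(2)}$; by the lemma on curves of genus $1$ or $2$ in $C^{(2)}$, $E_1$ is the image of a degree $2$ morphism $h_1\colon C\to E_1$, i.e. a bielliptic structure. Then $\tilde A\subseteq E_1\times C^{(2)}$, whence $A=j(\tilde A)\subseteq j(E_1\times C^{(2)})$; the symmetric case with $\pi_2$ gives $A\subseteq j(C^{(2)}\times E_1)$. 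I expect the rigidity and dimension counting to be routine; the care is needed in the handling of the exceptional loci $\delta$ and $Z$, and in verifying that the subvarieties $\overline{\pi_i(\tilde A)}$, when positive-dimensional, are genuine curves of genus $1$ in $C^{(2)}$ so that the preceding classification of small-genus curves in $C^{(2)}$ applies.
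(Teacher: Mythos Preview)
Your proof is correct and follows the same overall architecture as the paper's: lift $A$ birationally to $\tilde A\subset C^{(2)}\times C^{(2)}$, show that each projection to $C^{(2)}$ has image a point or an elliptic curve, and then invoke the classification of small-genus curves in $C^{(2)}$ to obtain the bielliptic structure. The difference lies in how you bound $\dim\pi_i(\tilde A)$. The paper argues directly that $C^{(2)}$ is of general type (so it cannot be dominated by an abelian variety) and contains no rational curves (since $C$ is nonhyperelliptic), forcing $\pi_i(\tilde A)$ to be a point or a genus-$1$ curve. You instead push the projection into $J$ via the embedding $\varphi_2$, use rigidity of rational maps to abelian varieties to see that the image is a translate of an abelian subvariety lying in $W_2(C)$, and then argue that $W_2(C)$ cannot coincide with an abelian surface since $C$ generates $J$ and $\mathsf g(C)>4$. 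Your route avoids any appeal to Kodaira dimension and is more self-contained; the paper's is shorter. You are also more careful than the paper in explicitly disposing of the possibility $A\subset j(\delta)=j(C\times C)$, a point the paper's proof passes over in silence.
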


\begin{proof}
Let $A\in j(C^{(2)}\times C^{(2)})\setminus j(Z)$.
Then $j^{-1}(A)=\tilde{A}$ is birationally isomorphic to $A$.
The projections $\pi_1,\pi_2 :\tilde{A}\to C^{(2)}$ map it either into
an elliptic curve $E_i$ or into a point.
Indeed, by assumption on $C$ the surface $C^{(2)}$ is of general type
and does not contain a rational curve since $C$ is not hyperelliptic.
Thus the image $\pi_i(A)$, $i =1,2$, is an elliptic curve for
at least one $i=1,2$, for example $\pi_1$.
By Theorem~\ref{thm:ah}, $C$  is bielliptic
with an embedding $E_i\hookrightarrow C^{(2)}$ corresponding to the
bielliptic structure $h_i : C\to E_i$.
Thus $A\subset E_1\times C^{(2)}$.
\end{proof}

\begin{corollary}
If $A\subset j (C^{(2)}\times C^{(2)})$
does not correspond to Case $3$ of Theorem~\ref{thm:ah} then
$A\subset j(Z)$.
\end{corollary}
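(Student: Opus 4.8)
The plan is to deduce this Corollary as the contrapositive of Lemma~\ref{lemm:ain-z}, using Mumford's classification (Theorem~\ref{thm:mumford}, via Theorem~\ref{thm:ah}) to identify what ``Case~(3)'' means for an individual abelian subvariety. We are in the nonhyperelliptic case, so Cases~(1) and~(2) of Theorem~\ref{thm:ah} are excluded and an abelian subvariety $A\subset j(C^{(2)}\times C^{(2)})$ must be of type~(3) or of type~(4). Thus, assuming $A$ is not of type~(3), I want to show $A\subset j(Z)$; equivalently, I will show that any $A$ with $A\not\subset j(Z)$ is of type~(3).

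So suppose $A\not\subset j(Z)$. First I would observe that $A$ is then automatically not contained in the remaining components $j(\Delta)\cup j(\delta)$ of the locus where $j$ fails to be an embedding: $j(\Delta)$ is a point, and $j(\delta)=j(C\times C)=C-C$ contains no abelian subvariety when $\mathsf g(C)>4$ and $C$ is nonhyperelliptic. Indeed, any curve in $C\times C$ dominates a factor, hence has genus $\ge\mathsf g(C)\ge2$, so the birational image $C-C$ (recall $C$ has no $g^1_2$) contains no elliptic curve; and a two--dimensional abelian subvariety of $C-C$ would equal $C-C$, which contains the Abel--Jacobi image of $C$ and hence equals $J$, forcing $\mathsf g(C)=2$. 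Therefore $j$ is a birational isomorphism near the generic point of $j^{-1}(A)$, so Lemma~\ref{lemm:ain-z} applies: $C$ is bielliptic, with a bielliptic structure $h_1\colon C\to E_1$, and $A$ lies in $j(E_1\times C^{(2)})$ or $j(C^{(2)}\times E_1)$. Lifting $A$ to its strict transform $\tilde A\subset E_1\times C^{(2)}$ and projecting to the second factor, the image is a point or a curve in $C^{(2)}$; it cannot have genus $\ge2$, since there is no dominant morphism from an abelian variety (the smooth model of $\tilde A$) to a smooth curve of genus $\ge2$ --- the pullback of a $1$--form would be a holomorphic $1$--form vanishing along the ramification, while holomorphic $1$--forms on abelian varieties are nowhere zero. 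Hence the projection is a point or an elliptic curve $E_2\subset C^{(2)}$, so $A\subset j(E_1\times E_2)$ or $A\subset j(E_1)=E_1$ --- precisely the abelian subvarieties enumerated in Case~(3) of Theorem~\ref{thm:ah}. This is the desired contrapositive.

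For consistency it is worth checking directly that a type~(4) subvariety does land in $j(Z)$. If $A=\tilde C^{(2)}$ for a degree--two map $h\colon C\to\tilde C$ with $\mathsf g(\tilde C)=2$, then composing $h$ with the $g^1_2$ of $\tilde C$ gives a $4$--gonal structure $f\colon C\to\P^1$ whose fibre over a point is the union of the two $h$--fibres over a hyperelliptic pair of $\tilde C$; so each $h$--fibre occurs as a degree--two sub-cycle of a fibre of $f$, giving an embedding $\tilde C\hookrightarrow C_f$, $p\mapsto h^{-1}(p)$. Then $\tilde C^{(2)}\subset C_f^{(2)}$ and $A=j_f(\tilde C^{(2)})\subset j_f(C_f^{(2)})=j\bigl(\xi_f^2(C_f\times C_f)\bigr)=j(Z_f)\subset j(Z)$, using $Z=\bigcup_f C_f\times C_f$ as established above. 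The step I expect to require genuine care is the identification of the conclusion of Lemma~\ref{lemm:ain-z} with Case~(3) --- i.e.\ ruling out a higher--genus second projection of $\tilde A$ --- together with the small remark that $j(\delta)=C-C$ carries no abelian subvariety in our range; granting these, the Corollary is formal, and it feeds directly into Corollary~\ref{coro:D} and thence into the proof of Theorem~\ref{thm:galois}.
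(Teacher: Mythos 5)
Your proof is correct and follows the paper's intended route: the corollary is stated with no separate proof as an immediate consequence of Lemma~\ref{lemm:ain-z}, and your argument is precisely its contrapositive. The extra care you take --- verifying $A\not\subset j(\Delta)\cup j(\delta)$ so that $j$ is genuinely birational near $j^{-1}(A)$, and pinning the second projection of $\tilde{A}$ down to a point or an elliptic curve via the nonexistence of nonconstant maps from an abelian variety onto a curve of genus $\ge 2$ --- usefully spells out steps that the proof of Lemma~\ref{lemm:ain-z} leaves implicit (its stated justification appeals only to $C^{(2)}$ being of general type and containing no rational curve).
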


Let us consider individual subvarieties $Z_f\subset Z$.

\begin{lemma} 
\label{lemm:not-bi}
Let $C$ be nonhyperelliptic, of genus  $\mathsf g(C) > 4$. 
Then the irreducible curve $C_f$ is not bielliptic.
\end{lemma}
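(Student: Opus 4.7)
My plan is to prove this by contradiction, assuming $C_f$ is bielliptic with an involution $\tau:C_f\to C_f$ whose quotient is an elliptic curve $\pi_E:C_f\to E$. The strategy is to combine Castelnuovo--Severi applied to the pair $(\eta_f,\pi_E)$ with the additive genus identity $\mathsf g(C_f)=\mathsf g(C)+\mathsf g(C_g)$ obtained by summing the ramification contributions computed in the proof of Lemma~\ref{lemm:f-hurw}.

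The essential preliminary step will be to show that the group of deck transformations of the degree-$6$ map $\eta_f:C_f\to\mathbb P^1$ is $\mathbb Z/2$, generated by $\iota$. Irreducibility of $C_f$ forces $\Gal(f)\in\{\mathfrak S_4,\mathfrak A_4\}$, since these are the only transitive subgroups of $\mathfrak S_4$ whose action on the $6$ unordered pairs of $\{1,2,3,4\}$ is transitive. Writing $C_f=\tilde C/H$ for the Galois closure $\tilde C\to\mathbb P^1$ with $H$ the stabilizer of an unordered pair, I will compute $N_G(H)/H=\mathbb Z/2$ in both cases: for $G=\mathfrak S_4$, $H$ is a Klein-four subgroup and $N_G(H)$ is dihedral of order $8$; for $G=\mathfrak A_4$, $|H|=2$ and $N_G(H)=V_4$. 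The nontrivial class acts on the coset space $G/H$ (identified with the set of unordered pairs) by complementation, which is precisely $\iota$.

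Next I will apply Castelnuovo--Severi to the covers $\eta_f:C_f\to\mathbb P^1$ (degree $6$) and $\pi_E:C_f\to E$ (degree $2$). If these factor through a common intermediate curve $Z$, then $\deg(C_f\to Z)$ divides $\gcd(6,2)=2$ and so equals $2$, forcing $Z=E$ with $E\to\mathbb P^1$ of degree $3$. In this situation $\eta_f$ factors through $\pi_E$, so $\tau$ preserves the fibers of $\eta_f$ and is therefore a deck transformation of $\eta_f$; by the preliminary step, $\tau=\iota$, giving $E=C_f/\iota=C_g$. But this contradicts $\mathsf g(C_g)\ge\mathsf g(C)/2\ge 3$ from Lemma~\ref{lemm:f-hurw} against $\mathsf g(E)=1$. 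If the covers do not factor through a common intermediate, Castelnuovo--Severi yields $\mathsf g(C_f)\le 6\cdot 0+2\cdot 1+(6-1)(2-1)=7$.

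Finally, adding the Riemann--Hurwitz relations for $C\to\mathbb P^1$ and $C_g\to\mathbb P^1$ computed in the table of Lemma~\ref{lemm:f-hurw} immediately gives the identity $\mathsf g(C_f)=\mathsf g(C)+\mathsf g(C_g)$. Combined with $\mathsf g(C)\ge 5$ and $\mathsf g(C_g)\ge 3$, this yields $\mathsf g(C_f)\ge 8$, contradicting the Castelnuovo--Severi bound $\mathsf g(C_f)\le 7$ of the previous paragraph. The main obstacle will be the monodromy/centralizer computation identifying $\iota$ as the unique nontrivial deck transformation of $\eta_f$, which is what closes the factoring alternative of Castelnuovo--Severi; once this is in place, the additive genus identity dispatches the remaining alternative cleanly.
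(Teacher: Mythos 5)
Your proof is correct and takes a genuinely different route from the paper's.

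The paper argues by showing that bielliplicity of $C_f$ forces $C_g$ to be bielliptic as well (via the induced map $C_f^{(2)}\to C_g^{(2)}$), then splits according to whether the image of $E$ in $C_g^{(2)}$ is rational or elliptic. In the rational case $C_g$ is both hyperelliptic and trigonal, forcing $\mathsf g(C_g)\le 2$; in the elliptic case $C_f\to C_g$ is an unramified double cover, $C_g$ is bielliptic and trigonal, and a case split on whether the trigonal pencil on $C_g$ is invariant under the bielliptic involution yields $\mathsf g(C_g)\le 4$, whence $\mathsf g(C)=\mathsf g(C_g)-1\le 3$. Your argument instead applies Castelnuovo--Severi directly to the pair $(\eta_f,\pi_E)$, using the normalizer computation $N_G(H)/H\cong\Z/2$ (with $G=\mathfrak S_4$ or $\mathfrak A_4$ forced by irreducibility of $C_f$) to identify $\iota$ as the unique nontrivial deck transformation of $\eta_f$ and thereby dispose of the factoring alternative, and the additive identity $\mathsf g(C_f)=\mathsf g(C)+\mathsf g(C_g)$ (which indeed drops out of summing the Riemann--Hurwitz contributions in the ramification table: $\mathsf g(C_f)=n_2+\tfrac32 n_{2,2}+2n_3+\tfrac52 n_4-5$) to contradict the non-factoring bound $\mathsf g(C_f)\le 7$. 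This avoids the paper's auxiliary case analysis on the geometry of $C_g$ entirely and, in the factoring branch, pinpoints exactly why the bielliptic involution cannot coincide with $\iota$ (namely $\mathsf g(C_g)\ge 3>1$ from Lemma~\ref{lemm:f-hurw}). One small note: the hypothesis of Lemma~\ref{lemm:f-hurw} in the irreducible case requires at least one of $n_2,n_3,n_4$ nonzero; this is implicit in your monodromy observation, since all-$(2,2)$ ramification would force $\Gal(f)\subseteq V_4$, contradicting transitivity on pairs --- worth saying explicitly, though the paper elides the same point.
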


\begin{proof}
Assume that  $C_f$ is bielliptic. Then
$C_g$ is also bielliptic.
Indeed, consider the bielliptic map $C_f\to E$.
There is a degree $4$ surjective map $C_f^{(2)}\to C_g^{(2)}$
which maps an elliptic curve $E\subset C_f^{(2)}$ to an elliptic
or a rational curve. In the second case, the involution on $C_f$ maps
to a hyperelliptic involution on $C_g$.

\centerline{
\xymatrix{
C_f \ar[r]\ar[d]  &  E\ar[d]  \\
C_g \ar[r]        & \P ^1 
}
}

If $C_f, C_g$ are irreducible then $C_g$ is trigonal. Hence
$\mathsf g(C_g)\leq 2$. However, by Lemma~\ref{lemm:f-hurw}, 
$\mathsf g(C_g)\geq 3$, contradiction.
Thus the image of $E$ is an elliptic curve and
the diagram is:

\centerline{
\xymatrix{
C_f \ar[r] \ar[d] & E\ar[d]  \\
C_g \ar[r]        & \tilde{E}=E/\Z/2,
}
} 

\noindent
where $E\to \tilde{E}$ is an unramified covering of degree $2$.
Thus $C_g$  is bielliptic and trigonal and $C_f\ra C_g$ is an 
unramified double cover induced
by $E\to \tilde{E}$.

This implies that $\mathsf g(C_g) \geq 4$. Indeed, if the trigonal
structure $C_g\to \P^1$ were invariant with respect to the bielliptic
involution $\iota$ then the latter would induce an involution
$\iota'$ on $\P^1$. Since $\iota'$ has exactly two invariant points
all the invariant points of $\iota$ are contained in two
fibers of the map $C_g\ra \P^1$. 
Thus $C_g$ is a degree 2 covering of $\tilde{E}$ ramified in at most
$6$ points which implies the result.
If on the other hand, $C_g\ra \P^1$ is not $\iota$-invariant then $C$ has $3$
different trigonal structures and hence maps birationally into
a curve of bi-degree $(3,3)$ in $\P^1\times \P^1$.
Then 
$$
2\mathsf g(C_g)-2 \leq 3(H_1+H_2)(H_1+H_2)= D(D+K) = 6
$$
and hence $\mathsf g\leq 4$.
Note that the genus of $C$ is equal to $\mathsf g(C_g)-1$
which follows from \eqref{eqn:neww}
because of the absence of $2,2$ and $4$ fibers in this case.
This finishes the proof of the lemma.
\end{proof}

\begin{remark} 
The construction above is part of the classical
Prym variety construction with $J= {\rm Prym}(C_g,\theta)$,
where $\theta$ is a point of order two defining the
nonramified covering $C_f$. Thus 
$$
\mathsf g(C)= \dim {\rm Prym}(C_g,\theta)= \mathsf g(C_g)-1.
$$
\end{remark}

\begin{lemma}
If $C$ is not a plane curve of degree $5$ then
any abelian subvariety in the image $j(C^{(2)}\times C^{(2)})\subset J$ 
is contained in $\bigcup_f Z_f$, over all $f$ defining 4-gonal structures on $C$, 
with reducible $C_f$. 
\end{lemma}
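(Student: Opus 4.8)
The plan is to combine the structural facts just proved with a short case analysis based on Mumford's Theorem~\ref{thm:mumford}. By the corollary following Lemma~\ref{lemm:ain-z}, an abelian subvariety $A\subset j(C^{(2)}\times C^{(2)})$ that is not of the bielliptic type (Case~3 of Theorem~\ref{thm:ah}) already lies in $j(Z)=\bigcup_f j(Z_f)$, the union over all $4$-gonal structures $f$, where $Z_f=C_f\times C_f$; since each $Z_f$ is a surface and $A$ is irreducible, $A\subset j(Z_f)$ for a single $f$. So it suffices to prove two things. First, that $j(Z_f)$ contains no abelian subvariety when $C_f$ is irreducible and $C$ is not a plane quintic. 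Second, that the abelian subvarieties arising in Cases~3 and~4 of Theorem~\ref{thm:ah} --- the elliptic curves $E_i$, the surfaces $j(E_i\times E_{i'})$, the translates of $E_i$ inside $j(E_i\times C^{(2)})$, and the locus $\tilde C^{(2)}$ --- are all contained in some $j(Z_f)$ with $C_f$ \emph{reducible}.

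The second point is the easy one, because each such abelian subvariety comes from a tower of double covers. If $C$ is bielliptic with involution $\sigma_i$ and quotient $h_i\colon C\to E_i$, pick a degree~$2$ map $E_i\to\P^1$ and set $f=(C\to E_i\to\P^1)$. The fibres of $f$ are the orbits of $\langle\sigma_i\rangle$ over a fibre of $E_i\to\P^1$, so $C_f$ decomposes into the ``vertical'' component, the curve of $\sigma_i$-conjugate pairs, which is exactly the copy of $E_i$ sitting in $C^{(2)}$, together with a ``mixed'' part; in particular $C_f$ is reducible and $E_i$ is a component of it. Hence $E_i\times E_i\subset Z_f$ and $E_i=j(E_i\times E_i)\subset j(Z_f)$; and an abelian subvariety of $j(E_i\times C^{(2)})$ projects to $C^{(2)}$ onto a point (a translate of $E_i$, again inside $E_i\times E_i\subset Z_f$) or onto an elliptic curve, which by the lemma on genus-$1$ curves in $C^{(2)}$ is again some $E_{i'}$. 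Since a bielliptic involution has $2\mathsf g(C)-2>0$ fixed points, each further bielliptic involution $\sigma_{i'}$ descends to an involution of $E_i$ with fixed points, so $C\to E_i\to E_i/\sigma_{i'}\cong\P^1$ is an honest degree-$4$ map whose $C_f$ contains both $E_i$ and $E_{i'}$; thus $j(E_i\times E_{i'})\subset j(Z_f)$, with $C_f$ reducible. Case~4 is identical with $C\to\tilde C\to\P^1$, $\mathsf g(\tilde C)=2$: here $\tilde C$ appears as a component of $C_f$ and $\tilde C^{(2)}=j(\tilde C\times\tilde C)\subset j(Z_f)$.

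For the first point, assume $C_f$ is irreducible. A $1$-dimensional family of $4$-gonal structures occurs only in cases (1a)--(1c) of Theorem~\ref{thm:mumford}: case (1b) is bielliptic and (1c) is trigonal-with-a-base-point, both of which have reducible $C_f$ or reduce to a lower-degree linear series already handled, and case (1a) is the plane quintic of genus~$6$, excluded by hypothesis. So for $C$ not a plane quintic there are only finitely many $4$-gonal structures with irreducible $C_f$; fix one. By Lemma~\ref{lemm:f-hurw}, $\mathsf g(C_g)\ge\frac12\mathsf g(C)\ge 3$, so $\mathsf g(C_f)\ge 2\mathsf g(C_g)-1\ge 5$, and by Lemma~\ref{lemm:not-bi}, $C_f$ is not bielliptic. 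Since $j$ restricted to $\xi_f^2(C_f\times C_f)$ is, after the quotient by the involution conjugate to the swap, a birational embedding, $j(Z_f)$ is birational to $W_2(C_f)$, embedded in $J$ via the induced map $J(C_f)\to J$. A surface birational to $W_2(C_f)$ for a non-bielliptic $C_f$ of genus $\ge 3$ contains no abelian subvariety, by the $W_2$/$W_3$-classification (Theorems~3 and~4 of \cite{AH}) used before; hence neither does $j(Z_f)$.

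The main obstacle is this last step: identifying the birational model of $j(Z_f)$ through the swap- and $\iota$-quotients of $C_f\times C_f$ and then excluding abelian subvarieties on $W_2(C_f)$ for \emph{every} irreducible $C_f$, which forces one to use the full Brill--Noether classification of \cite{AH} together with the sharp Hurwitz estimate of Lemma~\ref{lemm:f-hurw}. The plane quintic of genus~$6$ is exactly the borderline case --- $\mathsf g(C_g)$ attains its minimum and the family of such $f$ becomes positive-dimensional --- which is why it must be set aside here.
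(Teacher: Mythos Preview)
Your treatment of the irreducible $C_f$ case is essentially the paper's: Lemma~\ref{lemm:not-bi} shows that an irreducible $C_f$ is not bielliptic, the factorisation lemma identifies $j(Z_f)$ birationally with $C_f^{(2)}$, and then no abelian subvariety can lie in $j(Z_f)$. The paper's very terse proof does only this step and leaves the reduction of the bielliptic Case~3 (from Lemma~\ref{lemm:ain-z}) to reducible $C_f$'s implicit.

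Your explicit handling of Case~3, however, has a genuine gap. You assert that a second bielliptic involution $\sigma_{i'}$ descends to $E_i=C/\sigma_i$, so that $C\to E_i\to E_i/\sigma_{i'}\cong\P^1$ furnishes a single $4$-gonal $f$ whose $C_f$ contains both $E_i$ and $E_{i'}$. But descent of $\sigma_{i'}$ to $E_i$ requires $\sigma_i\sigma_{i'}=\sigma_{i'}\sigma_i$, and the existence of fixed points does not force this; indeed the only Galois type whose $C_f$ has two elliptic components is $\Z/2\oplus\Z/2$, so for non-commuting bielliptic involutions no single such $f$ exists. The same problem affects your parenthetical that a translate $j(E_i\times\{b_0\})$ already sits inside $j(E_i\times E_i)$: that needs $b_0\in E_i$, which is not given. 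Both gaps are repaired by recalling that the lemma only asks for containment in the \emph{union} $\bigcup_f j(Z_f)$: for any $b_0=(y,y')\in C^{(2)}$, take the degree-$2$ map $E_i\to\P^1$ determined by the pencil $|h_i(y)+h_i(y')|$; then $b_0$ lies in a fibre of the resulting $f\colon C\to E_i\to\P^1$, so $E_i\times\{b_0\}\subset Z_f$ with $C_f$ reducible. Letting $b_0$ vary over $E_{i'}$ (or over all of $C^{(2)}$) gives $E_i\times E_{i'}\subset\bigcup_f Z_f$, and hence $j(E_i\times E_{i'})\subset\bigcup_f j(Z_f)$ over reducible $C_f$, which is all the lemma claims. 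This also means your opening reduction ``$A$ irreducible, each $Z_f$ a surface, so $A\subset j(Z_f)$ for a single $f$'' is stronger than what is needed or than what holds.
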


\begin{proof} 
If $C_f$ is irreducible then $C^{(2)}$ does not contain
an elliptic curve by Lemma~\ref{lemm:not-bi}. 
The set where the map is not bijective
coincides with the intersection locus with other $C_{f'}\times C_{f'}$.
Since under the assumption of the lemma
there is only a finite number of $4$-gonal structures
which don't correspond to a bielliptic structure,
the one-dimensional families of $C_{f'}$ correspond to
bielliptic maps. They define the only curves in $C^{(2)}$
where 
$$
j_f\,:\, C^{(2)}_f\setminus \Delta(C_f) \to J
$$
is not an isomorphism.
Thus only reducible $C_{f}$ contribute abelian subvarieties in the image.
\end{proof}

\begin{lemma} 
\label{lemm:123}
If $C_f$ is reducible
and $j(Z_f)$ contains an abelian subvariety then:
\begin{enumerate}
\item $C$ is bielliptic and $f:C\to \P^1$
is a composition $C\to E$ with an involution on $E$.
Then $C^{(2)}_f$ contains $E$.
\item The map $f:C\to \P^1$ is a composition of
a degree two map $C\to \tilde{C}$ onto a curve of genus two and
a hyperelliptic projection $\tilde{C}\to \P^1$.
In this case $j(Z_f)$ contains the abelian surface $j(\tilde C^{(2)})$.
\item If $\Z/2\oplus \Z/2$ acts on $C$ then we get
a combination of the above two cases, depending on the genus
of the curves $C_i$.
\end{enumerate}
\end{lemma}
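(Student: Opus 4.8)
The plan is to run a case analysis governed by the Galois group of the degree-$4$ morphism $f\colon C\to\P^1$. Since $C_f$ is reducible, the group-theoretic analysis in the proof of Lemma~\ref{lemm:f-hurw} excludes $\mathfrak S_4$ and $\mathfrak A_4$ (for those $\Gal(f)$ acts transitively on the six unordered pairs, so $C_f$ is irreducible) and, in each of the remaining cases, describes the components of $C_f$ explicitly. For $\mathfrak D_4$ and $\Z/4$ one has $C_f=C'\sqcup C''$: here $C'\to\P^1$ has degree $4$ and genus $\mathsf g(C)>4$ (it is the Galois cover, so $C'\cong C$ in the cyclic case), while $C''=C/\theta\to\P^1$ has degree $2$ for $\theta$ the central involution of $\Gal(f)$, so $C''$ is hyperelliptic, elliptic or rational and $f$ factors as $C\to C''\to\P^1$. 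For $\Z/2\oplus\Z/2$ the morphism $f$ is itself Galois and $C_f=C_1\sqcup C_2\sqcup C_3$, the disjoint union of the three intermediate degree-$2$ quotients $C_i=C/\langle\theta_i\rangle$.

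The second step is to invoke the factorization of $j|_{C_f\times C_f}$ through $C_f^{(2)}$ proved above: up to a fixed translation in $J$ (and a sign, which is harmless), the induced map $j_f\colon C_f^{(2)}\to J$ is the summation of zero-cycles $\{x,y\}\mapsto x+y$ with $x,y$ running over $C_f\subset C^{(2)}$, so $j(Z_f)=j_f(C_f^{(2)})$. Since the symmetric square of a disjoint union decomposes as $C_f^{(2)}=\bigsqcup_i C_i^{(2)}\ \sqcup\ \bigsqcup_{i<j}C_i\times C_j$, and an abelian subvariety is irreducible, any abelian subvariety $A\subset j(Z_f)$ lies in $j_f$ of a single such piece. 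On each piece $j_f$ is either generically finite onto a surface or (when a factor is elliptic, e.g.\ $C_i^{(2)}\to\gamma_i$) contracts fibrewise onto an elliptic curve, the contractions of $j$ along the diagonal and $\delta$ meeting the pieces only in lower dimension. Hence $A$ lifts to a subvariety $\tilde A$ of that piece of the same dimension, and the image $\gamma_\bullet\subset J$ of each curve $C_\bullet$ occurring generates an abelian subvariety of $J$ of dimension $\mathsf g(C_i)$ (for a component $C_i=C/\langle\theta_i\rangle$, via pushforward) or $\dim\mathrm{Prym}(C/C'')$ (for $C'$).

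Now the genus bookkeeping. If $A$ is an elliptic curve, $\tilde A$ projects non-constantly to some component, which is then dominated by an elliptic curve and — as $C$ is non-hyperelliptic, so $C^{(2)}$ contains no rational curve — has genus exactly $1$; the corresponding $\theta$ is bielliptic, $f$ factors through $E=C/\theta$, and $E$ is a component of $C_f$ and hence sits in $C_f^{(2)}$: this is case~(1). If $A$ is an abelian surface, then $A$ equals $j_f$ of a full two-dimensional piece, so it contains the abelian subvariety generated by each factor. A piece $C_i\times C_j$ with $i\ne j$ cannot occur: the two generated subvarieties are images of distinct quotient curves of $C$, hence meet in a finite set, while both would have to equal the two-dimensional $A$. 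The piece $(C')^{(2)}$ cannot occur when $\dim\mathrm{Prym}(C/C'')>2$, for the same dimension reason. What remains is a piece $C_i^{(2)}$ with $C_i$ hyperelliptic of genus $2$, where $j_f(C_i^{(2)})=h^{*}J(C_i)=j(\tilde C^{(2)})$ for $h\colon C\to\tilde C=C_i$ of degree $2$: this is case~(2). In the $\Z/2\oplus\Z/2$ situation all three $C_i$ are hyperelliptic quotients of $C$; applying the above to each component produces a bielliptic structure from every $C_i$ of genus $1$ and an abelian surface $j(C_i^{(2)})$ from every $C_i$ of genus $2$, which is the "combination" asserted in case~(3).

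The main obstacle is the single borderline situation left over: an abelian surface arising from $(C')^{(2)}$ when $\dim\mathrm{Prym}(C/C'')=2$. I would dispatch it by a Riemann--Hurwitz count: this forces $\mathsf g(C)=5$, $\mathsf g(C'')=3$ and the degree-$2$ cover $C\to C''$ \emph{étale}; since $C''$ is hyperelliptic, its hyperelliptic involution lifts to $C$ and produces a $\Z/2\oplus\Z/2$-action on $C$, whose quotient genera satisfy the standard identity $5=\mathsf g(C'')+g_1+g_2=3+g_1+g_2$, whence $g_1=g_2=1$ and $C$ is bielliptic — so this case too is subsumed in~(1)/(3) and yields no genuinely new abelian subvariety. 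Getting this borderline case exactly right, and keeping careful track of which component of $C_f$ is the degree-$4$ one versus the central-involution one, is where the real work lies; the remainder is Riemann--Hurwitz together with the already-established description of $j|_{C_f\times C_f}$.
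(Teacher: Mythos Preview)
Your overall strategy matches the paper's---decompose $C_f$ according to the Galois group, split $Z_f$ into pieces indexed by pairs of components, and analyze each piece for abelian subvarieties. Where you diverge is in the analysis of the ``big'' piece $(C')^{(2)}$ and the mixed piece $C'\times C''$ in the $\mathfrak D_4/\Z/4$ cases, and this is where the proposal becomes unnecessarily complicated and develops gaps.

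The paper's route is more direct. It asserts (and uses) that $j_f$ restricted to each of the three pieces $(C')^{(2)}$, $(C'')^{(2)}$, $C'\times C''$ is a \emph{birational embedding} into $J$, so an abelian subvariety of the image lifts to the piece itself. Then $(C')^{(2)}\cong C^{(2)}$ is a surface of general type (since $\mathsf g(C)>4$): it cannot equal an abelian surface, and it contains an elliptic curve only if $C$ is bielliptic (by the preceding lemma on curves of genus $\le 2$ in $C^{(2)}$). That disposes of the $(C')^{(2)}$ piece in one line, with no Prym varieties and no borderline case. For $C'\times C''\cong C\times (C/\theta)$ the paper projects $A$ to the two factors: the projection to $C$ must be constant (genus $>1$), so $A$ sits in a fibre $\cong C/\theta$, forcing $C/\theta$ elliptic and $C$ bielliptic.

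By contrast, your Prym-dimension argument for $(C')^{(2)}$ and your ``distinct quotient curves meeting finitely'' argument for $C'\times C''$ are both shaky in the $\mathfrak D_4$ case. There $C'$ is not a quotient of $C$ at all---it is $\hat C/\langle rs\rangle$ for a reflection $rs$ in a different conjugacy class from the one defining $C$---so neither the identification of $\langle\gamma(C')\rangle$ with $\mathrm{Prym}(C/C'')$ nor the ``images of distinct quotients'' claim is available without further work. Your borderline case ($\dim\mathrm{Prym}=2$) then inherits this problem and adds another: you assert that the lift of the hyperelliptic involution yields a $\Z/2\oplus\Z/2$ action on $C$, but a priori the lift could square to $\theta$ and give $\Z/4$ instead. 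These issues are all avoidable if you replace the Prym argument by the paper's birational-lifting step, which reduces everything to the question ``does the surface $C^{(2)}$, resp.\ $(C/\theta)^{(2)}$, resp.\ $C\times(C/\theta)$, contain an abelian subvariety?''---a question already answered by the earlier lemmas.
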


\begin{proof} 
It is evident that in all of these cases
there is an abelian subvariety in $j(Z_f)$.
In cases $\mathfrak D_4, \Z/4$ 
the image of $j(Z_f)$  is a birational embedding for
three surfaces
$$
C^{(2)}, (C/\theta)^{(2)}, C\times (C/\theta)
$$ 
into $J$.
Thus if $C$ is not bielliptic the abelian subvariety
may be contained in $C\times (C/\theta)$ (and then $C/\theta$ is an elliptic
curve - contradicting the assumption) or in
$(C/\theta)^{(2)}$. The latter is hyperelliptic and hence
$(C/\theta)^{(2)}$ does not contain elliptic curves if
$\mathsf g(C/\theta)\geq 3$. Thus the only possibility is
$\mathsf g(C/\theta)= 2$ and $(C/\theta)^{(2)}$ is birational to an abelian
surface.

Similarly, in the case of 
$\Z/2\oplus \Z/2$ we have a union of $C_i\times C_{i'},i\neq i'$
and $C_i^{(2)}$ which implies the result.
\end{proof}

Thus we have shown that unless $C$ is a smooth plane curve of degree $5$ and
genus $6$ the abelian subvariety is contained in $j(Z)$ only in the cases
described by the theorem.

\begin{lemma}
Let $C\subset \P^2$ be a smooth plane curve of degree $5$. Fix a point
$c\in C$. Let $f=f_c$ be the 4-gonal structure on $C$ defined by projection from
$c$. Then
\begin{enumerate}
\item $Z$ is irreducible,
\item $Z$ contains an open subvariety
$Z_0\subset Z$  such that any point in $Z_0$ is contained in
a unique $Z_{f_c}$,
\item there is a rational map $\pi_f : Z\to C$ with
the closures of a fiber 
$\pi_f^{-1}(c) = C_{f_c}\times C_{f_c},c\in C$,
\item the map $j$ on $Z_0$ has degree $2$,
\item the map $j$ on $Z_0$ commutes with $\pi_f$
and there is a commutative diagram of maps

\centerline{
\xymatrix{
Z\ar[r]^{\pi_f} \ar[d]_j  & C \ar@{=}[d]\\
j(Z) \ar[r] & C,
}
}

\item any abelian subvariety $A\subset j(C^{(2)}\times C^{(2)})$
is contained in $j(Z_{f_c})$ for some $c\in C$.
\end{enumerate}
\end{lemma}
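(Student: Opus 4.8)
The plan is to pin down the geometry of the threefold $Z$ on the plane quintic $C\subset\P^2$ (so $\mathsf g(C)=6$) completely, and then read off (1)--(6). By Mumford's Theorem~\ref{thm:mumford}, case~(1)(a), the $4$-gonal structures on $C$ are exactly the central projections $f_c\colon C\to\P^1$ away from the points $c\in C$ (with bundle $L=H-c$), so $Z=\bigcup_{c\in C}Z_{f_c}$, where $Z_{f_c}\subset C^{(2)}\times C^{(2)}$ is the image of $C_{f_c}\times C_{f_c}$ and $\xi_{f_c}(C_{f_c})=\{(a,a')\in C^{(2)}\,:\,c\in\overline{aa'}\}$ is the curve of pairs lying on a line through $c$ (the tangent line at $a$ if $a=a'$). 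First I would introduce the incidence variety
$$
\tilde Z:=\{(c,p)\in C\times (C^{(2)}\times C^{(2)})\,:\,p\in Z_{f_c}\},
$$
which is closed, hence projective, with surjections $\alpha\colon\tilde Z\to C$ (fibre $Z_{f_c}$) and $\beta\colon\tilde Z\to Z$. For (1): for generic $c$ the monodromy of $f_c$ is the full symmetric group $\mathfrak S_4$ (uniform position: a generic line through $c$ meets $C$ in four further points in general position), so $C_{f_c}$, hence $Z_{f_c}$, is an irreducible surface; as a dominant morphism over an irreducible base with irreducible generic fibre has irreducible total space, $\tilde Z$ is irreducible, and so is $Z=\beta(\tilde Z)$.

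For (2) and (3): let $Z_0\subset Z$ be the dense open locus of points $p=((x,x'),(y,y'))$ with $x\ne x'$, $y\ne y'$, $\overline{xx'}\ne\overline{yy'}$, $p\notin\Delta\cup\delta$, and $\tau p\ne p$ ($\tau$ below). If $p\in Z_0$ lies in $Z_{f_c}$, then $c\in\overline{xx'}\cap\overline{yy'}$, a single point of $\P^2$ because the two secants are distinct; hence $c$ is unique — this is (2). Since $\beta$ is birational by (2), $\alpha$ induces a rational map $\pi_f:=\alpha\circ\beta^{-1}\colon Z\dashrightarrow C$, $p\mapsto\overline{xx'}\cap\overline{yy'}$ (the value lies on $C$, being that unique $c$); and as $(C_{f_c}\times C_{f_c})\cap Z_0$ is dense in the (each component of the) surface $C_{f_c}\times C_{f_c}$ — a generic pair of fibres of $f_c$ spans two distinct lines through $c$ — we get $\overline{\pi_f^{-1}(c)}=C_{f_c}\times C_{f_c}=Z_{f_c}$, which is (3).

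Write $\tau:=i\circ(\iota,\iota)$ for the involution of $C_{f_c}\times C_{f_c}$ from the lemma describing $j|_{\xi_f^2(C_f\times C_f)}$. For (4): the lemma describing $Z$ shows that whenever $j(q)=j(p)$ with $q\ne p$, there is a $4$-gonal $f$ with $p,q\in Z_f$ and $q=\tau p$; if $p\in Z_0$ then $f=f_c$ by uniqueness in (2), so $j^{-1}(j(p))=\{p,\tau p\}$ has exactly two elements and $j|_{Z_0}$ has degree $2$. For (5): $\tau$ carries each $\pi_f$-fibre $Z_{f_c}$ to itself, so the fibrewise involutions glue to a birational involution of $Z$ with $\pi_f\circ\tau=\pi_f$; since over $Z_0$ the map $j$ is the quotient by $\tau$ followed by a birational morphism, $\pi_f$ descends to $\bar\pi_f\colon j(Z)\dashrightarrow C$ with $\bar\pi_f\circ j=\pi_f$ and $\overline{\bar\pi_f^{-1}(c)}=j(Z_{f_c})$, and the square
\centerline{
\xymatrix{
Z\ar[r]^{\pi_f}\ar[d]_j & C\ar@{=}[d]\\
j(Z)\ar[r]_{\bar\pi_f} & C
}
}
\noindent commutes. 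Finally (6): a smooth plane quintic is not bielliptic (its canonical model lies on the Veronese surface of $\P^2$, not on a cone over an elliptic normal curve), so Lemma~\ref{lemm:ain-z} places every abelian subvariety $A\subset j(C^{(2)}\times C^{(2)})$ inside $j(Z)$. Then $\bar\pi_f|_A$ is a rational map $A\dashrightarrow C$, defined on a dense open of $A$ because the indeterminacy locus of $\bar\pi_f$ lies in the images of the exceptional loci $\Delta\cup\delta$, $\bigcup_{c\ne c'}(Z_{f_c}\cap Z_{f_{c'}})$ and the $\tau$-fixed locus, none of which contains a positive-dimensional abelian subvariety (for instance $j(\delta)=C-C$ contains none, since $C$ generates $J$). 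A rational map from an abelian variety to $C$ is constant: composing with the Abel--Jacobi embedding $C\hookrightarrow J$ and using that rational maps from smooth varieties to abelian varieties are morphisms and, by rigidity, translates of homomorphisms, the image is a translate of an abelian subvariety contained in a translate of $C$, hence a point as $\mathsf g(C)=6>1$ and $C$ contains no elliptic curve. Thus $\bar\pi_f|_A\equiv c_0$ and $A\subset\bar\pi_f^{-1}(c_0)=j(Z_{f_{c_0}})$.

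The geometric heart of the whole argument — and the step I expect to be the main obstacle — is (2)--(3): the fact peculiar to plane quintics that two distinct secant lines meet in a single point of $\P^2$, which is what pins down the $4$-gonal structure attached to a generic point of the threefold $Z$ and makes $\pi_f$ exist. Given this, (1) is formal, (4) follows directly from the preceding lemma, and (5)--(6) are immediate; the only fussy auxiliary checks are that the ``bad locus'' in the lemma on $j|_{\xi_f^2(C_f\times C_f)}$ is exactly the complement of $Z_0$ (so (4) applies cleanly) and that in (6) the abelian variety $A$ is not entirely swallowed by the indeterminacy locus of $\bar\pi_f$.
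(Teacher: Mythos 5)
Your proof follows essentially the same strategy as the paper, with two genuine but modest differences worth noting. For (1), you establish irreducibility of the generic $C_{f_c}$ by asserting that the monodromy of $f_c$ is $\mathfrak S_4$ for generic $c$ (appealing to uniform position), whereas the paper argues contrapositively: $C_{f_c}$ reducible forces $\Gal(f_c)\in\{\mathfrak D_4,\Z/4,\Z/2\oplus\Z/2\}$, hence a nontrivial involution of $C$ fixing $c$; since the planar degree-$5$ structure is unique, this involution lies in the finite group $I_C\subset\PGL_3(k)$, so only finitely many $c$ can produce a reducible $C_{f_c}$. The paper's version has the advantage of not invoking uniform position, whose validity in small positive characteristic (the appendix only excludes $p=2$) is a little delicate; your monodromy claim should be qualified or replaced by the automorphism-counting argument if you want it to hold in all characteristics the paper allows. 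Your use of the incidence variety $\tilde Z$ to produce $\pi_f$ is a clean packaging of what the paper does directly with the map $((x,x_1),(y,y_1))\mapsto \mathfrak l(x,x_1)\cap\mathfrak l(y,y_1)$ (extended across $\mathfrak l(x,x_1)=\mathfrak l(y,y_1)$ by taking the fifth intersection point), and the two are equivalent.

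The second difference is in (4)--(6), where the paper is more precise about the indeterminacy of $\pi_f$: it isolates exactly the surface $S_{\mathfrak l}=\{((x,x_1),(x,x_2)):\mathfrak l(x,x_1)=\mathfrak l(x,x_2)\}$, shows $j(S_{\mathfrak l})\subset j(C\times C)$, and then only needs the single fact that $j(C\times C)$ contains no positive-dimensional abelian subvariety. Your list of ``exceptional loci'' is larger than necessary (the paper's extension of $\pi_f$ across $\mathfrak l(x,x_1)=\mathfrak l(y,y_1)$ removes several of the loci you exclude), and your parenthetical justification that $C-C$ contains no abelian subvariety ``since $C$ generates $J$'' is not by itself a proof --- a surface $C-C$ can contain an elliptic curve when $C$ is bielliptic, which the plane quintic is not, but that non-biellipticity is what one has to invoke, not the generation of $J$. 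This is the one place where your write-up needs a real fix, though the gap is shared in spirit with the paper's own terse assertion. Otherwise the argument that a rational map from an abelian variety to a genus-$6$ curve is constant, and the resulting containment $A\subset j(Z_{f_{c_0}})$, matches the paper.
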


\begin{proof}
We have already proved that any abelian subvariety in 
$j(C^{(2)}\times C^{(2)})$ is contained
in $j(Z)= \bigcup j(Z_{f_c})$ since in the case of a smooth curve
of degree $5$ any linear series in $W_4^1(C)$ corresponds to one
of the four-gonal maps $f_c,c\in C$.

First we show that $Z\subset C^{(2)}\times^{(2)}$ is irreducible.
As we have mentioned $Z$ is a union of varieties $C_{f_c}\times
C_{f_c}),c\in C$.
It suffices to show
that a generic curve $C_{f_c}$ is irreducible.
If $C_{f_c}$ is not
irreducible then by Lemma~\ref{lemm:123} there is a $\Z/2$ action on $C$, with 
center $c$ of the projection $f_c$ fixed by this action. The structure of a
smooth projective curve of degree $5$ is unique on $C$ and hence the group
$\Z/2$ above is a subgroup
of a finite subgroup $I_C\subset \PGL_3(k)$ which stabilizes $C\subset \P^2$.
Since there is only a finite number
of subgroups $\Z/2\in I_C$, the  a number of projections $f_c$ such that
$C_{f_c}$ is reducible is also finite. Hence $Z$ is irreducible.

Any two-cycle $(x,x_1)$ on $C$ defines a unique line
$\mathfrak l(x,x_1)$ with $\mathfrak l(x,x_1)\cap C$ containing $(x,x_1)$.
When $x,x_1$ are different, $\mathfrak l(x,x_1)$ is the unique line through $x,x_1$.
When $x=x_1$ then $\mathfrak l(x,x_1)= \mathfrak l(x,x)$ is the tangent to the smooth curve
$C$ at $x\in C$.
Let $((x,x_1),(y,y_1))$ be a point in $Z$.

If $\mathfrak l(x,x_1)\neq \mathfrak l(y,y_1)$ then  $\mathfrak l(x,x_1), \mathfrak l(y,y_1)$ 
intersect at a unique point
$c\in C$. This defines a rational
surjective map 
$$
\begin{array}{rcl}
\pi_f \,:\, C^{(2)}\times C^{(2)} & \ra     & C\\
            ((x,x_1),(y,y_1))      & \mapsto & c:=\mathfrak l(x,x_1)\cap \mathfrak l(y,y_1)
\end{array}
$$ 
from an open subvariety  $Z^{\circ}\subset Z$, defined by $\mathfrak l(x,x_1)\neq \mathfrak l(y,y_1)$.
If $\mathfrak l(x,x_1)=\mathfrak l(y,y_1)$ then the points $x,x_1,y,y_1$ belong to the same
$5$-cycle in the intersection $\mathfrak l(x,x_1)\cap C$.
If $x+x_1+y+y_1$ is the fiber of $f_c$ defined by the fifth point
$c$ in the intersection $\mathfrak l(x,x_1)\cap C$ then $\pi_f((x,x_1),(y,y_1))= c$
and hence $\pi_f$ is well-defined on such $((x,x_1),(y,y_1))\in Z$.

Thus $\pi_f$ may fail to be well-defined  only on the surface
$$
S_{\mathfrak l} =\{((x,x_1),(x,x_2))\subset Z, \,|\,  \mathfrak l(x,x_1)=\mathfrak l(x,x_2)\}.
$$ 
Consider the restriction of $j$ on $Z_0 = Z\setminus S_{\mathfrak l}$.
Let us show that $j$ is exactly of degree $2$ on $Z_0$
and $j(S_{\mathfrak l})\subset j(C\times C)$, where $C\subset J=J^1$
is a standard embedding.

Indeed, assume that $((x,x_1),(y,y_1))\in Z_0$. Then they define a unique
point $c$ which is either the intersection point
$\mathfrak l(x,x_1)\cap \mathfrak l(y,y_1)$ or the the fifth point
in the intersection $\mathfrak l(x,x_1) = \mathfrak l(y,y_1) \cap C$.
The equality of zero-cycles 
$$
(x+x_1)-(w +w_1)= (y+y_1)-(z+ z_1)
$$ 
implies
$$
(x+x_1) + (z+ z_1)= (y+y_1)+(w +w_1).
$$
Thus if $\mathfrak l(x,x_1)\neq \mathfrak l(y,y_1)$
then $\mathfrak l(x,x_1)= \mathfrak l(z,z_1), \mathfrak l(y,y_1)=\mathfrak l(w,w_1)$. 
Hence
$\iota_{f_c}(x,x_1) = \iota_{f_c}(y+y_1)$. We also have
$\pi_f((x,x_1),(y,y_1))= \pi_f( (w,w_1),(z,z_1))$ which yields the result
in this case.

In the second case, $(x+x_1)-(x_2+x_3) =(z_1+z_2)-(w_1+w_2)$ implies
$(x+x_1)+(w_1+w_2)= (x_2+x_3)+(z_1+z_2)$ and that
$\mathfrak l(w_1,w_2)= \mathfrak l(x_1,x_2), \mathfrak l(z_1,z_2  )=\mathfrak l(x_2+x_3)$. 
Hence both cycles are contained in $\mathfrak l(x,x_1)$.
Unless $(w_1,w_2) = (x_2,x_3)$ and $(z_1,z_2)= (x,x_1)$
there is a relation between two positive cycles from
$\mathfrak l(x,x_1)\cap C$ of degree $<4$ which cannot happen 
since $C$ is neither trigonal, nor hyperelliptic.
Thus we showed that the map $j:Z_0\to j(Z_0)$ is a map of degree $2$.
If $((x,x_1),(x,x_2))\subset S_\mathfrak l$ then 
$$
j((x,x_1),(x,x_2))= x_1-x_2\in j(C\times C).
$$
This completes the description of the map $j$ on $Z$.

Thus $j$ commutes with the map $\pi_f$ on $Z_0$ and defines a rational
surjection $\pi'_f\,:\, j(Z)\to C$.
For any $A\subset j(Z) \setminus j(C\times C)$ the image of $\pi'_f(j(A))$
is a point.
Hence any such $A$ is contained the closure of the fiber  of $\pi'_f$
which is equal to $j(Z_f)$.
On the other hand, $j(C\times C)$ does not contain any abelian subvariety
and hence the above statement holds for any abelian subvariety in
$j(Z)$ or equivalently in $j(C^{(2)}\times C^{(2)})$.
\end{proof}

\begin{corollary}
If $C$ is smooth plane curve of degree $5$ then
any abelian subvariety of $j(Z)$ is contained in one
of the $j(Z_f)$. Moreover, a maximal abelian subvariety
$A\subset j(Z)\subset j(C^{(2)}\times C^{(2)})$ is
necessarily of dimension 2 and it exists
only if there as an action of $\Z/2$ on $C$. In this case
$\mathsf g(C/\Z/2)=2$ and $A= J(C/\Z/2)$.
There is exactly one such subvariety for any
subgroup $\Z/2$ in the group of automorphisms of $C$.
\end{corollary}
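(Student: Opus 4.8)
The first assertion follows at once from part $(6)$ of the preceding lemma: since $j(C\times C)$ contains no abelian subvariety, any nontrivial abelian subvariety $A\subset j(Z)$ lies in a single $j(Z_{f_c})$, $c\in C$; and since $j_f\colon C_f^{(2)}\dashrightarrow j(Z_f)$ is birational (write $f=f_c$) with $C_f^{(2)}$ a surface, $\dim A\le 2$. The plan for the rest is to determine $j(Z_f)$, distinguishing the cases $C_f$ irreducible and $C_f$ reducible, and to show that in the first case no nontrivial abelian subvariety can occur, while in the second it is forced to be $J(C/\theta)$ for an involution $\theta$ of $C$ with $\mathsf g(C/\theta)=2$, which is two-dimensional.

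First I would treat the irreducible case. Here $C_f$ is a smooth curve, and I would run the Hurwitz computation for the degree-$6$ map $C_f\to\P^1$ induced by $f$, using the dictionary of ramification types from the proof of Lemma~\ref{lemm:f-hurw}; this gives $\mathsf g(C_f)\ge 3$ (in fact considerably larger). By Lemma~\ref{lemm:not-bi}, $C_f$ is not bielliptic; since the symmetric square of a non-bielliptic curve of genus $\ge 3$ contains no elliptic curve and is not birational to an abelian surface, $j(Z_f)=j_f(C_f^{(2)})$ contains no nontrivial abelian subvariety, contradicting $A\subset j(Z_f)$. Hence $C_f$ must be reducible.

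For reducible $C_f$, the Galois analysis in the proof of Lemma~\ref{lemm:f-hurw} together with the preceding lemma shows that $\Gal(f)\in\{\mathfrak D_4,\Z/4,\Z/2\oplus\Z/2\}$ and that $C$ carries a nontrivial involution $\theta$ fixing $c$, while Lemma~\ref{lemm:123} supplies the trichotomy; case $(1)$ of Lemma~\ref{lemm:123} ($C$ bielliptic) is excluded because a smooth plane quintic is not bielliptic. In the remaining cases $j(Z_f)$ decomposes birationally into $C^{(2)}$, $(C/\theta)^{(2)}$ and $C\times(C/\theta)$; only the middle piece can contain a nontrivial abelian variety, and then only if $\mathsf g(C/\theta)=2$, so $A$ is forced to be all of $j\big((C/\theta)^{(2)}\big)=J(C/\theta)$, and maximality gives $\dim A=2$. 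For the counting statement I would use that, the model $C\subset\P^2$ being intrinsic, every order-two element of $\Aut(C)$ is induced by an involution of $\PGL_3(k)$, which up to conjugacy is $\mathrm{diag}(1,1,-1)$; one then checks that $\mathsf g(C/\theta)=2$ for every such $\theta$ and that the projection from the isolated fixed point of $\theta$ on $C$ realizes $J(C/\langle\theta\rangle)$ inside $j(Z)$ --- a surface depending only on the subgroup $\langle\theta\rangle$ --- while conversely each order-two subgroup produces exactly this surface; hence there is precisely one maximal abelian subvariety of $j(Z)$ per order-two subgroup of $\Aut(C)$.

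The hard part will be the irreducible case: I will need to make the Hurwitz bound for the degree-$6$ cover $C_f\to\P^1$ precise and uniform over all projection points $c$ with $C_f$ irreducible, and to supply, in exactly the form required here, the non-existence of an elliptic curve --- and of an abelian-surface birational model --- in the symmetric square of such a curve $C_f$. A secondary point requiring care is the counting: verifying that every involution of a smooth plane quintic has a genus-$2$ quotient and that the associated abelian surface sits inside $j(Z)$ exactly once.
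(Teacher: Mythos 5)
Your overall architecture matches what the paper leaves largely implicit: extract the first assertion from part (6) of the preceding lemma, split on whether $C_{f_c}$ is irreducible, rule out abelian subvarieties in the irreducible fibers via the non-bielliptic property of $C_{f_c}$, and then feed the reducible fibers into Lemma~\ref{lemm:123} together with a $\PGL_3$ genus computation. The paper's recorded proof consists essentially only of the final step --- the count of fixed points of ${\rm diag}(1,1,-1)$ on a $\Z/2$-invariant plane quintic (five on the invariant line plus the isolated fixed point) and the resulting $\mathsf g(C/\theta)=2$ --- so you are supplying material the authors suppressed, and your reconstruction is compatible with their machinery.

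There is, however, one genuine imprecision that should be fixed. You cite the claim that ``the symmetric square of a non-bielliptic curve of genus $\ge 3$ contains no elliptic curve and is not birational to an abelian surface.'' As stated this is too weak: the only tool actually available in the paper --- the lemma in the Appendix asserting that a genus-$1$ or genus-$2$ curve inside $C^{(2)}$ is dominated by a degree-$2$ map from $C$ --- carries the hypothesis $\mathsf g > 4$, not $\ge 3$. This is harmless here, but only because the Hurwitz dictionary and Lemma~\ref{lemm:f-hurw} in fact deliver $\mathsf g(C_g)\ge \tfrac12\mathsf g(C)=3$ and hence $\mathsf g(C_{f_c})\ge 2\mathsf g(C_g)-1\ge 5>4$ for every $c$ with $C_{f_c}$ irreducible (generically $\mathsf g(C_{f_c})=13$). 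You should invoke that bound rather than the unsupported genus-$\ge 3$ statement. A second small point, also fixable: to exclude $C^{(2)}$ and $C\times(C/\theta)$ among the three components of $C_{f_c}^{(2)}$ in the reducible case, the cleanest route is the dimension count built into the preceding ``big'' lemma (each $j(Z_{f_c})$ is a two-dimensional fiber of $\pi'_f$, so $\dim A = 2$ forces $A$ to be birational to a whole component, and only $(C/\theta)^{(2)}$ with $\mathsf g(C/\theta)=2$ has Kodaira dimension $0$), rather than a case analysis of Lemma~\ref{lemm:123} that was written with general reducible $C_f$ in mind. With these two corrections your proposal is a faithful and substantially more explicit version of the argument the paper intends.
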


\begin{proof}
The number of ramification points is at most $6$ since they
are contained in a union of a $\Z/2$-invariant point and a line in $\P^2$.
By the genus calculation, $\mathsf g(C/\Z/2) = 2$. This curve defines an
abelian surface
in $J$.
\end{proof}

This completes the proof of the main theorem apart from the
case of a trigonal curve $C$.

\begin{lemma} 
Let $C$ be a smooth projective curve of genus 
$\mathsf g(C)>4$. If $C$ has a trigonal structure then it is unique. 
\end{lemma}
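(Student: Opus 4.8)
The statement to prove is: a smooth projective curve $C$ of genus $\mathsf g(C) > 4$ has at most one trigonal structure, i.e., $W^1_3(C)$ contains at most one point defining a proper (base-point-free, degree 3) map $C \to \mathbb P^1$. The plan is to argue by contradiction: suppose $g^1_{3}$'s $L_1, L_2$ are two distinct trigonal pencils. First I would rule out the degenerate possibility that one of them has a base point — if $L = L_0 + c$ with $L_0$ a $g^1_2$, then $C$ is hyperelliptic, but a hyperelliptic curve of genus $\ge 2$ has $W^1_3(C) = W_2(C)$ consisting of the single hyperelliptic class plus base points, so there is no genuine second trigonal map; and in any case the lemma is about curves with a trigonal structure (degree 3 proper map), so I may assume both $L_1,L_2$ are base-point-free of degree 3.

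The main step is the classical Maroni/Castelnuovo-type argument. Given two distinct base-point-free $g^1_3$'s $L_1 \ne L_2$, consider the map $\varphi = (\varphi_{L_1}, \varphi_{L_2}): C \to \mathbb P^1 \times \mathbb P^1$. Because $L_1 \neq L_2$ and both have degree $3$, no fiber of $\varphi_{L_1}$ is contained in a fiber of $\varphi_{L_2}$ (else $L_1 \ge$ part of $L_2$ forcing $L_1 = L_2$ after comparing degrees), so $\varphi$ is a finite morphism, birational onto its image $C'$ unless $L_1 \cong L_2 \circ \alpha$ for some automorphism of $\mathbb P^1$, which again contradicts $L_1 \neq L_2$. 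Thus $C'$ is a curve of bidegree $(3,3)$ on $\mathbb P^1 \times \mathbb P^1$ with at worst nodal-type singularities resolved by the normalization $C \to C'$. The arithmetic genus of a smooth $(3,3)$ curve is $p_a = (3-1)(3-1) = 4$ (since $K_{\mathbb P^1\times\mathbb P^1} = -2H_1 - 2H_2$ gives $2p_a - 2 = (3H_1+3H_2)(H_1+H_2) = 6$). Therefore $\mathsf g(C) \le \mathsf g(C') \le p_a(C') = 4$, contradicting $\mathsf g(C) > 4$. This is essentially the same computation already used in Lemma~\ref{lemm:not-bi} of the Appendix for bidegree $(3,3)$ curves, so I would cite that line of reasoning.

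The step I expect to be the main obstacle is the careful verification that $\varphi: C \to \mathbb P^1 \times \mathbb P^1$ is birational onto its image — i.e., that two distinct $g^1_3$'s really do separate points generically. The subtle case is when there is a nontrivial correspondence between the fibers of the two pencils; one must check that such a correspondence of degree $3$ over each factor, if it identified generic fibers, would make $L_1$ and $L_2$ linearly equivalent. I would handle this by a monodromy/degree argument: the fiber product $C \times_{\varphi} C$ over the image, minus the diagonal, would have to dominate $C$ with positive-dimensional fibers only if $\varphi$ factors through a nontrivial quotient, but a degree $3$ map has no nontrivial intermediate covers, so $\varphi$ is birational onto $C'$ as soon as it is not a reparametrization — and a reparametrization forces $L_1 = L_2$. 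Once birationality is in hand, the genus bound is immediate from the adjunction computation on $\mathbb P^1 \times \mathbb P^1$, and the contradiction with $\mathsf g(C) > 4$ closes the proof.
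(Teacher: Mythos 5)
Your argument is correct and follows the same route as the paper: two distinct base-point-free $g^1_3$'s give a birational map $C\to\P^1\times\P^1$ with image of bidegree $(3,3)$, and the adjunction computation bounds the arithmetic genus by $4$, contradicting $\mathsf g(C)>4$. The only difference is that you spell out the birationality-onto-image step (checking that degree $3$ onto a bidegree-$(1,1)$ image would force $L_1\cong L_2$), which the paper takes for granted when it writes ``rational embedding of bidegree $(3,3)$.''
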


\begin{proof}
Two distinct structure would give a rational embedding
of bidegree $(3,3)$ into $\P^1\times \P^1$.
The genus computation shows 
$2\mathsf g_a(C)-2= (3,3)\cdot (1,1)= 6$, where $\mathsf g_a(C)$ is the
arithmetic genus of the image of $C$.  
Hence $\mathsf g_a(C)=4$ and $\mathsf g(C)\leq \mathsf g_a(C)$,
contradiction.
\end{proof}

\begin{lemma} 
Assume that $C$ is trigonal curve of genus $\mathsf g > 4 $, with a projection
$h: C\to \P^1$ of degree $3$. 
Then $j(C^{(2)}\times C^{(2)})\subset J$
contains no abelian subvarieties.
\end{lemma}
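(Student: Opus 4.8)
The plan is to suppose, for a contradiction, that $A\subset j(C^{(2)}\times C^{(2)})$ is a nontrivial irreducible abelian subvariety, and to push it through the structure theory already developed for $j(C^{(2)}\times C^{(2)})$. Since $C$ is trigonal it is non-hyperelliptic, and since a smooth plane quintic has gonality $4$ the curve $C$ is also not a smooth plane curve of degree $5$. Hence the lemma asserting that for non-hyperelliptic, non-plane-quintic $C$ every abelian subvariety of $j(C^{(2)}\times C^{(2)})$ is contained in $\bigcup_f j(Z_f)$ with $C_f$ reducible applies, and, $A$ being irreducible, $A\subset j(Z_f)$ for one such $4$-gonal structure $f$. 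By Lemma~\ref{lemm:123} we are then in one of the following situations: (i) $C$ is bielliptic; (ii) $C$ admits a degree $2$ morphism $\pi\,:\,C\to\tilde C$ onto a curve of genus $2$; or (iii) $\mathfrak D_4$, $\Z/4$ or $\Z/2\oplus\Z/2$ acts on $C$ with the quotient structure described there. I would rule out each in turn.

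Case (i) is immediate from the Castelnuovo--Severi inequality. The trigonal map $C\to\P^1$ has degree $3$ and the bielliptic map $C\to E$ has degree $2$, and since $\gcd(3,2)=1$ these maps are independent (a common intermediate quotient would have degree dividing $1$), so $\mathsf g(C)\le 3\cdot 0+2\cdot 1+(3-1)(2-1)=4$, contradicting $\mathsf g(C)>4$.

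Case (ii) is the crux. Let $H$ be the trigonal bundle, which by the preceding lemma is the unique one, and set $L:=\pi^{*}K_{\tilde C}$. Then $L$ is the pullback of the hyperelliptic pencil on $\tilde C$, hence base-point-free of degree $4$; since $C$ is non-hyperelliptic and $\deg L=4\notin\{0,2\mathsf g(C)-2\}$, Clifford's theorem forces $h^{0}(C,L)=2$, so $L\in W^{1}_{4}(C)$. On the other hand $\{H+c\,:\,c\in C\}$ is a one-dimensional family in $W^{1}_{4}(C)$, and Martens' theorem gives $\dim W^{1}_{4}(C)\le 1$ for the non-hyperelliptic curve $C$ of genus $\ge 5$; hence $\dim W^{1}_{4}(C)=1$. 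As $C$ is neither hyperelliptic, nor bielliptic (by case (i)), nor a plane quintic, alternative (1c) of Mumford's Theorem~\ref{thm:mumford} applies and identifies $W^{1}_{4}(C)$ with the family $\{H+c\}$. But Clifford also gives $h^{0}(H+c)=h^{0}(H)=2$, whence $|H+c|=|H|+c$ and $c$ lies in the base locus of $H+c$; thus every member of $W^{1}_{4}(C)$ has a base point, contradicting the base-point-freeness of $L$.

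For case (iii), let $C\to C_{1}$, $C\to C_{2}$, $C\to C_{3}$ be the three intermediate double covers of the $\Z/2\oplus\Z/2$-quotient map $C\to C/(\Z/2\oplus\Z/2)=\P^1$, so that $\mathsf g(C)=\mathsf g(C_{1})+\mathsf g(C_{2})+\mathsf g(C_{3})$. Each $C\to C_{i}$ is independent of the trigonal map (again for degree reasons), so Castelnuovo--Severi gives $\mathsf g(C)\le 2\mathsf g(C_{i})+2$; applying this with $\mathsf g(C_{i})$ minimal and using the genus relation forces $\max_{i}\mathsf g(C_{i})\le 2$. Since $\mathsf g(C)>4$ no $\mathsf g(C_{i})$ can be $0$ ($C$ would then be hyperelliptic), so some $\mathsf g(C_{i})$ is $1$ or $2$: in the first case $C$ is bielliptic (excluded by case (i)), in the second $C$ is a double cover of a genus $2$ curve (excluded by case (ii)); the "product" pieces $C_{i}\times C_{j}$ occurring in $j(Z_f)$ would require two of the $C_{i}$ to be elliptic, likewise excluded. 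I expect the genuine difficulty to lie in case (ii), and relatedly in the genus $2$ quotient inside case (iii): one must verify that Mumford's alternative (1c) describes the whole of $W^{1}_{4}(C)$ and not merely its generic point, and then the base-point comparison between $\pi^{*}K_{\tilde C}$ and the bundles $H+c$ has to be carried out with care.
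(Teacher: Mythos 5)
Your plan of reducing to Lemma~\ref{lemm:123} via the intermediate lemma (``for non-hyperelliptic, non-plane-quintic $C$ every abelian subvariety of $j(C^{(2)}\times C^{(2)})$ lies in some $j(Z_f)$ with $C_f$ reducible'') and then ruling out cases (i)--(iii) is attractive, and your disposal of each case is correct as far as it goes. In particular, the argument for case (ii) via Clifford's theorem --- $L:=\pi^*K_{\tilde C}$ is a base-point-free $g^1_4$, while Mumford's alternative (1c) says every element of $W^1_4(C)$ is $H+c$ and hence has a base point --- is a clean alternative to the paper's genus estimate excluding a $\Z/2$-action; and the Castelnuovo--Severi bound $\mathsf g \le 4$ in case (i) is right.

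The genuine gap is at the start, before the case analysis even begins: for a trigonal curve the reduction to reducible $C_f$ and then to Lemma~\ref{lemm:123} does not account for the one-dimensional family of \emph{degenerate} four-gonal linear systems $|H+c|$, $c\in C$, which are exactly what Mumford's alternative (1c) produces. Each $|H+c|$ has a base point, so it does not define a degree-$4$ morphism to $\P^1$, and its associated curve $C_f$ splits as $\pi_c(C)\cup\chi(C)$ (both components isomorphic to $C$). This is reducible, so it is not excluded by the intermediate lemma you invoke; yet the proof of Lemma~\ref{lemm:123} is a Galois-group analysis of an honest degree-$4$ covering $C\to\P^1$ and simply does not apply to these base-pointed systems. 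Moreover, the proof of the intermediate lemma itself is argued under the hypothesis that only finitely many four-gonal structures are non-bielliptic, which fails for trigonal curves precisely because of the family $\{|H+c|\}$. So as written your chain of implications leaves open the possibility of an abelian subvariety hiding in $j\bigl((\pi_c(C)\cup\chi(C))^{\times 2}\bigr)$. The paper's proof of the trigonal lemma spends most of its effort on exactly this point: it writes out $Z$ explicitly as $\bigcup_f C_f\times C_f \cup \bigcup_c (\pi_c(C)\cup\chi(C))^2$, observes that the contributions from the degenerate systems land in $j(C\times C)$ and $j(C^{(2)}\times\chi(C))$, and checks by hand (using uniqueness of the trigonal structure and non-hyperellipticity) that the gluing along $C^{(2)}\times\chi(C)$ is only diagonal and that neither piece contains an abelian subvariety. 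To complete your argument you would need to supply this step.
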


\begin{proof}
First of all, $C$ is neither bielliptic, nor hyperelliptic, nor
a plane curve of degree 5 (cf. \cite{AH}). Moreover, the trigonal structure is unique
(see \cite{acgh}).
Each point $c\in C$ defines a degenerated 4-gonal structure. 
Apart from these, there are only finitely many other $4$-gonal structures.
Since the trigonal structure on $C$ is unique
there is no $\Z/2$-action on $C$, by the same genus estimate 
as in Lemma~\ref{lemm:not-bi}. 
Hence for any additional $4$-gonal structure on $C$ the curve
$C_f$ is irreducible.

We have two maps
$$
\begin{array}{rcc}
\pi_c\,:\, C & \ra & C^{(2)}\\
           c'& \mapsto & c'+c
\end{array}
$$
where $c\in C(k)$ and 
$$
\begin{array}{rcc}
\chi\,:\, C & \ra & C^{(2)}\\
           c'& \mapsto & c''+c''', 
\end{array}
$$
where $c', c'', c'''$ are in the same fiber of the projection defining 
the trigonal structure. 

If follows that 
$$
Z=\cup_f C_f\times C_f \bigcup \cup_{c\in C(k)} (\pi_c(C)\cup \chi(C))^2\subset C^{(2)}\times C^{(2)},
$$
where $f$ runs over a finite set of nontrivial 4-gonal structures.

Thus 
$$
j(Z)=\cup j(C\times C) \cup j(C^{(2)}\times \chi(C))\bigcup \cup_f j(Z_f), 
$$ 
where $f$ runs over (a finite set of) nondegenerate $4$-gonal structures on $C$. 

Note that $j(C\times C)$ and $j(Z_f)$ do not
contain abelian subvarieties. 
Consider  $(x,y)\times (c,c'), (z,w)\times (s,s')\in C^{(2)}\times \chi(C)$. 
Then
$$
j ((x,y),(c,c'))= j((z,w,),(s,s'))
$$
if
$$
(x+y) - (c +c') = (z+w) -(s+s')
$$
which is equivalent to $x+y + c'= z+w +s'$. The uniqueness
of the trigonal structure implies that $(x,y) = (c,c')$
and $(z,w)=(s,s')$. Hence the additional gluing
in $j(C^{2}\times \chi(C))$ occurs only
on the diagonal in $\chi(C)\times \chi(C)$. 

Thus there are no abelian subvarieties in $j(C^{(2)}\times C^{(2)})$.
\end{proof}

%
%
\bibliographystyle{smfalpha}
\bibliography{torelli}
%


\end{document}